\documentclass[12pt]{amsart}
\usepackage{amsmath,graphics,amsfonts,amsthm}
\usepackage {amssymb}
\usepackage{mathrsfs}
\usepackage{inputenc}
\usepackage{tikz-cd} 
\usepackage{xypic}
\usepackage{mathtools}
\usepackage{soul}
\usepackage{extarrows}
\usepackage[left=1.5in,right=1.5in,top=1.5in,bottom=1.5in]{geometry}
\usepackage{hyperref}
\usepackage[pagewise]{lineno}

\theoremstyle{plain}
\newtheorem{theorem}{Theorem}[section]
\newtheorem{corollary}[theorem]{Corollary}
\newtheorem{lemma}[theorem]{Lemma}

\newtheorem{proposition}[theorem]{Proposition}
\newtheorem{definition-lemma}[theorem]{DefiniCorollarytion-Lemma}
\theoremstyle{remark}
\newtheorem{remark}[theorem]{Remark}
\newtheorem{claim}[theorem]{Claim}

\theoremstyle{definition}

\theoremstyle{definition}
\newtheorem{definition}[theorem]{Definition}
\theoremstyle{definition}

\numberwithin{equation}{section}

\def\Sing{\operatorname{Sing}}
\def\Stab{\operatorname{Stab}}
\def\Orb{\operatorname{Orb}}
\def\Bim{\operatorname{Bim}}
\def\Aut{\operatorname{Aut}}
\def\Supp{\operatorname{Supp}}
\def\Proj{\operatorname{Proj}}

\def\Diff{\operatorname{Diff}}
\def\rddelta{\lfloor \Delta\rfloor}
\def\rdgamma{\lfloor\Gamma\rfloor}
\def\Ex{\operatorname{Ex}}
\def\lct{\operatorname{lct}}
\def\PA{\operatorname{PA}}
\def\A{\operatorname{A}}
\def\MA{\operatorname{MA}}

\def\MPA{\operatorname{MPA}}
\def\Spec{\operatorname{Spec}}

\begin{document}
\title[ abundance for K\"ahler threefolds]{Abundance for semi log canonical  compact K\"ahler threefolds}
\author{Swapnajit Das}
\address{School of Mathematics, Tata Institute of Fundamental Research, Homi
Bhabha Road, Navy Nagar, Colaba, Mumbai 400005, India}
\email{das@math.tifr.res.in}

\begin{abstract}
    In this article we show that if $(X,\Delta)$ is a compact K\"ahler slc pair of dimension $3$ such that $K_X+\Delta$ is nef, then $K_X+\Delta$ is semiample. Moreover, we extend our result to a special subclass of Fujiki class $\mathcal{C}$ varieties, and show that in this subclass, if $(X, \Delta)$ is a slc pair and $K_X+\Delta$ is nef but not numerically trivial, then it is semiample. Moreover, we also construct a slc pair $(X, \Delta)$ in Fujiki class $\mathcal{C}$ such that $K_X+\Delta \equiv 0$ but $K_X+\Delta$ is not semiample. In particular, the slc abundance fails for Fujiki class $\mathcal C$ varieties.

\end{abstract}

\maketitle

\tableofcontents

\section{Introduction}\label{sec:int}

The Minimal Model Program (MMP) plays a central role in the birational classification of projective varieties. In dimension three, the MMP was completed in the 1990s. The abundance theorem, which states that a nef canonical divisor is semiample, was proved for projective threefolds by  Miyaoka, Kawamata, Keel, Matsuki and McKernan \cite{Miy87,Miy88a,Miy88b}, \cite{Kaw92}, \cite{KMM94, KMM04}.

The MMP was later extended to compact K\"ahler threefolds (see \cite{HP15,HP16,DH25}), providing an analogous bimeromorphic classification in the analytic setting. The log abundance theorem for compact K\"ahler threefolds has recently been established by Campana-H\"oring-Peternell \cite{CHP16}, and  O. Das and W. Ou (see \cite{DO24,DO25}).\\
\begin{theorem}\cite{CHP16,DO24,DO25}\label{abun}
    Let $(X,\Delta)$ be a lc pair such that $X$ is a compact K\"{a}hler variety of dimension $3$. If $K_X +\Delta$ is nef, then it is semiample.
    \end{theorem}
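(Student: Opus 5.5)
This statement is quoted from \cite{CHP16,DO24,DO25} rather than proved here. If I had to establish it, I would follow the standard route for abundance: reduce to the klt (or dlt) case, split according to the numerical dimension $\nu(K_X+\Delta)$ and the algebraic dimension of $X$, and treat the non-algebraic cases separately.

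The first step is to replace $(X,\Delta)$ by a $\mathbb{Q}$-factorial dlt modification $f\colon (Y,\Delta_Y)\to (X,\Delta)$, with $K_Y+\Delta_Y=f^*(K_X+\Delta)$. This uses the existence of dlt models in the K\"ahler MMP \cite{HP16,DH25}. Semiampleness then descends, so it is enough to treat $(Y,\Delta_Y)$.

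Next, if $Y$ is projective, which happens for instance when $h^{2,0}=0$ or when $Y$ is uniruled with a suitable MRC base, the result is classical log abundance for threefolds \cite{KMM94}. So assume $Y$ is not projective.

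If $K_Y+\Delta_Y$ is big, the K\"ahler basepoint-free theorem gives semiampleness directly. If $\nu=0$, I would first show that $K_Y+\Delta_Y$ is $\mathbb{Q}$-effective, using a nonvanishing theorem for K\"ahler threefolds. Once it is effective, numerical triviality forces it to be torsion, via the Beauville--Bogomolov-type decomposition for klt K\"ahler varieties with $c_1=0$ together with the lc-centre argument along $\lfloor\Delta_Y\rfloor$.

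For $\nu=1,2$, the key step is again nonvanishing: show $\kappa(K_Y+\Delta_Y)\ge 0$. I would do this by running the MMP on a suitable modification with an ample (K\"ahler) perturbation and using the positivity of $c_2$-type classes, in the style of Miyaoka's pseudo-effectivity of $c_2$ and its K\"ahler analogue in \cite{CHP16}. Once $\kappa\ge 0$, I would pass to the Iitaka fibration or the MRC/algebraic reduction. Then I would use adjunction to lc centres and the canonical bundle formula to compare $\kappa$ with $\nu$ and conclude semiampleness by induction on dimension. Here the surface and curve cases are known, including for slc pairs.

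I expect the main obstacle to be nonvanishing for non-algebraic threefolds with $\nu=1$ or $2$. Kodaira-type vanishing and Kawamata's projective arguments are not available there. In that case I would try to exploit that non-projective K\"ahler threefolds with pseudoeffective $K$ have algebraic dimension $\le 2$. The algebraic reduction map then gives a fibration whose general fibres are tori or K3/elliptic objects, and one can use the canonical bundle formula on that fibration.

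Extending from klt to lc pairs requires gluing sections across $\lfloor\Delta\rfloor$, which is the content of \cite{DO24,DO25}. That step also relies on abundance for the lc centres (surfaces and curves) via Koll\'ar's gluing theory, which is exactly the kind of slc input the rest of this paper develops.
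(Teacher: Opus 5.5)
You are right that the paper gives no proof of this statement. It is imported from \cite{CHP16,DO24,DO25} and used as a black box, so there is no internal argument to compare with. Read as an argument, however, your outline misses the case that separates the log statement from \cite{CHP16}. Your reduction handles uniruled $Y$ only when the MRC base is algebraic. Your plan for non-projective $Y$ with $\nu\in\{1,2\}$ (Miyaoka-type $c_2$ positivity, algebraic reduction of a threefold with pseudoeffective $K$) presupposes that $K_Y$ is pseudoeffective, i.e.\ that $Y$ is not uniruled. In that case, though, nonvanishing for the pair follows at once from \cite{CHP16}: $\kappa(K_Y+\Delta_Y)\geq\kappa(K_Y)\geq 0$ because $\Delta_Y\geq 0$. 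The genuinely new situation is $Y$ uniruled with MRC base a non-algebraic surface (a K3, a torus, or a non-algebraic elliptic surface). Then $Y$ is not projective, $\kappa(K_Y)=-\infty$, and $K_Y+\Delta_Y$ is nef only because $\Delta_Y$ has degree at least $2$ on the general rational fibre. The pair $(S\times\mathbb{P}^1, S\times\{0\}+S\times\{\infty\})$ of Section~\ref{sec:slc-abundance}, with $S$ a non-algebraic K3, is an instance. Here the Miyaoka-type input (generic semipositivity of $\Omega_Y$, $K_Y\cdot c_2\geq 0$ on a minimal model) is only available for non-uniruled $Y$. Nonvanishing and abundance must instead come from the $\mathbb{P}^1$-fibration over the non-algebraic base, using the horizontal part of $\Delta_Y$ and a canonical bundle formula on that surface. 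Nothing in your proposal addresses this case, and it is exactly what lies beyond \cite{CHP16} in \cite{DO24,DO25}.

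There are also several smaller points. When $\kappa(K_Y+\Delta_Y)=0$ the Iitaka fibration is trivial, so ``comparing $\kappa$ with $\nu$ via the Iitaka fibration and induction'' gives nothing. The case $\kappa=0<\nu$ must be excluded by a separate argument; it is not reachable by induction on dimension. For $K_Y+\Delta_Y$ big, the basepoint-free theorem does not apply directly to a dlt pair with $\lfloor\Delta_Y\rfloor\neq 0$ unless $K_Y+\Delta_Y$ is big on every lc centre. The correct remark is that bigness makes $Y$ Moishezon, hence projective by \cite{Nam02}, so this case is already covered by your projective reduction. For $\nu=0$, suppose $K_Y+\Delta_Y\sim_{\mathbb{Q}}D\geq 0$ with $D\equiv 0$. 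Pairing with $\omega^2$ for a K\"ahler class $\omega$ forces $D=0$, so no decomposition theorem is needed at that point; all the content is in the nonvanishing. Finally, the lc step is not a routine application of gluing in the K\"ahler setting. Fujino-type lifting from $\lfloor\Delta\rfloor$ uses finiteness of $B$-pluricanonical representations on the lc centres, and that finiteness fails for K3 components (Remark \ref{K3}).
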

 In \cite{Fu00}, Fujino extended the abundance theorem for semi log canonical projective threefolds. 

\begin{theorem}\cite[Theorem 0.1]{Fu00}
    Let $(X, \Delta)$ be a projective slc pair of dimension $3$. If $K_X+\Delta$ is nef, then it is semiample. 
\end{theorem}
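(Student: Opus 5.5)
The plan follows Fujino's strategy: pass to the normalization, use abundance there, and descend semiampleness back to $X$ by checking that the pluricanonical sections glue across the double locus. Throughout, all pairs are projective and of dimension $3$.

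\emph{Step 1: normalization.} Let $\nu\colon X^\nu\to X$ be the normalization, with conductor divisor $C\subset X^\nu$, and write $K_{X^\nu}+\Delta^\nu+C=\nu^*(K_X+\Delta)$. The pair $(X^\nu,\Delta^\nu+C)$ is lc, possibly disconnected, and $K_{X^\nu}+\Delta^\nu+C$ is nef because it is a pullback of a nef divisor. By log abundance for projective lc threefolds, this divisor is semiample.

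\emph{Step 2: descent.} Since $X$ is $S_2$ and has nodal codimension-one singularities, a section of $\nu^*\mathcal O_X(m(K_X+\Delta))$ descends to $X$ exactly when its restriction to $C^\nu$ is invariant under the gluing involution $\tau$. So it suffices to show two things:
\begin{itemize}
\item the restriction map to $C^\nu$ surjects onto the $\tau$-invariant part, or at least its image is large enough to separate points;
\item sections on $C^\nu$ of the restricted divisor $K_{C^\nu}+\Diff$ are generated by $\tau$-invariant ones.
\end{itemize}

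\emph{Step 3: the double locus.} The pair $(C^\nu,\Diff)$ is an lc surface pair, or a lower-dimensional slc pair after inductively normalizing further. The key input is the finiteness of the image of the pluricanonical representation, $\rho_m\colon \Bim(C^\nu,\Diff)\to \mathrm{GL}(H^0(m(K+\Diff)))$. With finiteness in hand, invariant sections generate, by averaging over a finite group or by taking a product of conjugates. Equivalently, one works on the lc Iitaka fibration image and uses the finiteness of $\mathrm{B}$-birational representations there. For surfaces and curves this finiteness is known by Fujino's results.

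\emph{Step 4: lifting.} One must extend $\tau$-invariant sections from $C$ to $X^\nu$. Here I would run a dlt modification and use Kawamata--Viehweg type vanishing or Kollár's injectivity for lc pairs to get surjectivity of
$$H^0(X^\nu,m(K_{X^\nu}+\Delta^\nu+C))\to H^0(C^\nu,m(K_{C^\nu}+\Diff)).$$
This requires $m(K+\Delta+C)-C$ to be of the form $K+\text{boundary}+\text{nef and log big}$. That form is available when $K+\Delta+C$ is big on each relevant stratum. Otherwise one passes to the Iitaka fibration and reduces to the case of numerical dimension $0$, where the finiteness of $\rho_m$ directly gives a torsion statement.

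\emph{Main obstacle.} The hardest step is the lifting and gluing in Step 4 when $K_{X^\nu}+\Delta^\nu+C$ is not big. In that case vanishing fails, and one must relate the semiample fibrations on the different components and on $C$ compatibly with $\tau$. My first approach would be Fujino's method: use the semiample fibration $f\colon X^\nu\to Z$. Then semiampleness on $X$ follows if the induced equivalence relation on $Z$ is finite, which in turn follows from the finiteness of $\rho_m$ applied to the log canonical centers.
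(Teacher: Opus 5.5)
Your Steps 1--3 agree with the strategy the paper attributes to Fujino: take a dlt modification $(Z,\Gamma)$ of the normalization, use finiteness of $B$-pluricanonical representations on the surface pairs in $\lfloor\Gamma\rfloor$, average to get invariant sections, and descend via $S_2$ and nodal codimension one. However, Step 4, which is the real content of \cite[Proposition 4.5]{Fu00} (Theorem \ref{imp} here), has a genuine gap.

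Kawamata--Viehweg vanishing for the klt pair $(Z,\{\Gamma\})$ does give surjectivity of $H^0(Z,m(K_Z+\Gamma))\to H^0(\lfloor\Gamma\rfloor,m(K_Z+\Gamma)|_{\lfloor\Gamma\rfloor})$ when $K_Z+\Gamma$ is big. Otherwise this map is genuinely not surjective, and the obstruction is not measured by $\tau$. Take $Z=S\times\mathbb{P}^1$ with $S$ an abelian or K3 surface and $\lfloor\Gamma\rfloor=S\times\{0\}+S\times\{\infty\}$. Then $K_Z+\Gamma\sim 0$, and the image of $H^0(Z,m(K_Z+\Gamma))\cong\mathbb{C}$ in $H^0(\lfloor\Gamma\rfloor,m(K_Z+\Gamma)|_{\lfloor\Gamma\rfloor})\cong\mathbb{C}^2$ is a line. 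That line is cut out, for $m$ even, by compatibility with the $\mathbb{P}^1$-link $S\times\{0\}\to S\times\{\infty\}$. This link is a $B$-birational map between two boundary components and is unrelated to the gluing involution. Your target $C^\nu$ is also too large: sections there must first glue along the double curves of $\lfloor\Gamma\rfloor$.

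This is why Fujino works with admissible sections. These are preadmissible and invariant under \emph{all} $B$-birational maps between components of $\lfloor\Gamma\rfloor$, not just under $\tau$. He proves only that $\PA(Z,m(K_Z+\Gamma))\to\A(\lfloor\Gamma\rfloor,m(K_Z+\Gamma)|_{\lfloor\Gamma\rfloor})$ is surjective. Your averaging in Step 3 does produce such sections, but Step 4 never uses that extra invariance, and without it the lift fails.

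The cases $\kappa(Z,K_Z+\Gamma)\in\{1,2\}$ are exactly where this matters. ``Pass to the Iitaka fibration and reduce to numerical dimension $0$'' does not handle them, because information on a general fibre says nothing about extending sections from $\lfloor\Gamma\rfloor$ to $Z$. The missing idea is Fujino's dichotomy for the semiample fibration $f\colon Z\to Y$ (\cite[Proposition 2.1, Lemma 2.3]{Fu00}; Proposition \ref{Pro} and Lemma \ref{Fano} here):
\begin{itemize}
\item Either every $\lfloor\Gamma\rfloor\cap f^{-1}(y)$ is connected. Then, using Koll\'ar-type torsion-freeness when $\lfloor\Gamma\rfloor$ is vertical, $f_*\mathcal{O}_{\lfloor\Gamma\rfloor}=\mathcal{O}_T$ for the seminormal image $T=f(\lfloor\Gamma\rfloor)$. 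So sections on $\lfloor\Gamma\rfloor$ are pulled back from $T$, and they extend to $Y$ for large $m$ because $\mathcal{O}_Y(1)$ is ample.
\item Or some fibre meets $\lfloor\Gamma\rfloor$ in two connected components. Then a $(K_Z+\Gamma-\epsilon\lfloor\Gamma\rfloor)$-MMP over $Y$ ends in a $\mathbb{P}^1$-fibration $u\colon Z'\to V$. Its horizontal boundary is either two components, each $B$-birational to $V$, or one component of degree two over $V$ carrying a $B$-birational involution. Admissible sections are invariant under this identification, so they descend to $V$ and pull back to $Z'$.
\end{itemize}

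The fallback in your last paragraph (show that $\tau$ generates a finite equivalence relation on the lc model and descend) is not Fujino's method. It is a genuinely different route via Koll\'ar's gluing theory, as in later work of Hacon--Xu. It can be made to work, but as written it is only an assertion. It needs three things you do not address:
\begin{itemize}
\item existence of the quotient by a finite equivalence relation;
\item finiteness of the $B$-birational actions on all lc centres, not only the divisorial ones;
\item descent of the ample line bundle to the quotient.
\end{itemize}
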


 In this article, we prove the following:

 \begin{theorem}\label{thm:mainimp}
    Let $(X,\Delta)$ be a compact K\"ahler slc pair of dimension $3$ such that $K_X+\Delta$ is nef. Then it is semiample.\\
    \end{theorem}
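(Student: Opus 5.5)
We follow Fujino's strategy from \cite{Fu00}, replacing the projective abundance input by Theorem~\ref{abun}. Let $\nu\colon X^\nu=\bigsqcup_i X_i\to X$ be the normalization. Write $K_{X^\nu}+\Theta=\nu^*(K_X+\Delta)$, where $\Theta$ is $\Delta^\nu$ plus the conductor divisor. Each $(X_i,\Theta_i)$ is then a compact K\"ahler lc threefold pair with $K_{X_i}+\Theta_i$ nef. By Theorem~\ref{abun}, $K_{X^\nu}+\Theta$ is semiample. The task is to descend a semiample multiple along $\nu$. Concretely, we must find $m$ such that some sections of $\mathcal O_{X^\nu}(m(K_{X^\nu}+\Theta))$ that generate it are compatible with the gluing involution $\tau$ on the normalization $D^\nu$ of the conductor. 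Such sections then descend to $X$.

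\textbf{Step 1 (gluing data).} Let $D\subset X^\nu$ be the conductor and $D^\nu\to D$ its normalization. Adjunction gives a dlt/lc pair $(D^\nu,\Theta_{D^\nu})$ of dimension $2$ with $K_{D^\nu}+\Theta_{D^\nu}=(K_{X^\nu}+\Theta)|_{D^\nu}$. The involution $\tau$ preserves $\Theta_{D^\nu}$. Sections on $X$ correspond to sections $s$ on $X^\nu$ whose restriction to $D^\nu$ is $\tau$-invariant. To make this usable, pass to a dlt blow-up: by the K\"ahler MMP \cite{HP16,DH25}, take a dlt model of each $(X_i,\Theta_i)$, so that we can work with dlt pairs throughout, as Fujino does.

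\textbf{Step 2 (B-pluricanonical representations).} The key finiteness statement is that the image of the bimeromorphic automorphism group acting on $H^0(m(K_{S}+\Theta_S))$ is finite. Here $(S,\Theta_S)$ is any lc stratum of dimension $\le 2$, which in this setting is a surface or curve. These strata are compact K\"ahler, and their abundance/Iitaka fibrations are projective over the base of the Iitaka fibration. For $\kappa=$ numerical dimension $>0$ the Iitaka base is projective, and Fujino's (and Gongyo's) finiteness of B-pluricanonical representations for projective lc pairs should apply after passing to the Iitaka fibration. For $K+\Theta\equiv 0$ on a surface, the pair is either projective (when $H^{2,0}$ considerations force it) or one must argue directly, e.g.\ on tori and K3 surfaces, where finiteness of the action on the torsion bundle still holds since $m(K+\Theta)\sim 0$ by abundance. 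With finiteness, we take $\tau$-invariant and group-invariant generating sections on $D^\nu$.

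\textbf{Step 3 (extension).} We lift the invariant sections from $D^\nu$ (more precisely, from $\lfloor\Theta\rfloor$ on the dlt model) to $X^\nu$. Fujino uses the Kawamata--Viehweg-type surjectivity $H^0(X^\nu,m(K+\Theta))\to H^0(\lfloor\Theta\rfloor, m(K+\Theta))$ for semiample $K+\Theta$, via Koll\'ar's torsion-freeness and injectivity. In the K\"ahler setting we replace this with the analytic version of Koll\'ar's injectivity theorem (Fujino, Matsumura) applied on the dlt model. Alternatively, we reduce to the projective case by working relative to the semiample fibration $X^\nu\to Z$, where $Z$ is projective, and using relative vanishing. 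The descended sections then give base-point-freeness of $m(K_X+\Delta)$ away from, and along, the double locus.

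\textbf{Main obstacle.} We expect the main difficulty to be the numerically trivial case, where the Iitaka base is a point and nothing is projective. There, finiteness of B-pluricanonical representations on non-algebraic K\"ahler surfaces and threefolds, together with the surjectivity of restriction, must be proved directly. We would first try to show that $m(K+\Theta)\sim 0$ on each component and on each stratum, with compatible trivializations. Then the gluing reduces to showing that a finite-order character of $\tau$ and the monodromy around cycles of strata is trivial after raising to a power. This is done by tracking the dual complex of the strata as in Fujino's combinatorial argument.
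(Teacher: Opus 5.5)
There is a genuine gap, and it is deeper than the obstacle you flag. In Step 2 you assert that on a numerically trivial surface stratum (K3 or torus) the B-pluricanonical representation is finite ``since $m(K+\Theta)\sim 0$''. This is false. Triviality of $mK_S$ says nothing about the scalar by which an automorphism acts on the one-dimensional space $H^0(S,mK_S)$. There are non-algebraic K3 surfaces with automorphisms acting on $H^0(K_S)$ by a scalar that is not a root of unity (Remark \ref{K3}). So you cannot average over the group to get invariant generating sections on $D^\nu$. Your plan for the main obstacle presupposes exactly what fails: it proposes to prove finiteness directly and then treats the monodromy around cycles of strata as a finite-order character.

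The paper's Fujiki-class counterexample (Theorem \ref{counterexample}) shows that no argument using only your inputs can work. Those inputs are Kählerness of the normalization and of the strata, abundance on each component, and $m(K+\Theta)\sim 0$. Gluing $S\times\{0\}$ to $S\times\{\infty\}$ in $S\times\mathbb{P}^1$ via such an automorphism gives an slc threefold with $K_X\equiv 0$ and $H^0(X,mK_X)=0$ for all $m$.

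The missing ingredient is a Kähler class on $X$ itself, not just on $X^\nu$. The paper pulls back a Kähler class $\omega$ on $X$ to the conductor surfaces. This restriction is invariant under the Galois identifications. Across the $\mathbb{P}^1$-link maps between the two disjoint bad boundary components of a $\kappa=0$ dlt component (Corollary \ref{cor:num-trivial}), it changes only by a class in $NS_{\mathbb{R}}$. Hence the composite $g$ around any closed chain of the gluing graph satisfies $g^*\alpha-\alpha\in NS(\bar S)_{\mathbb{R}}$ for a class with $\alpha^2>0$. Lemma \ref{lem:preservation-kahler} then forces the cycle scalar to be a root of unity. It uses the Hodge index theorem on $H^{1,1}$, non-positivity of the intersection form on $NS_{\mathbb{R}}$ of a non-projective surface, and a Kronecker argument on the eigenvalue of $g^*$ on $H^{2,0}$. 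Nothing in your outline plays this role.

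Your Step 3 has a related problem. The restriction $H^0(X^\nu,m(K+\Theta))\to H^0(\lfloor\Theta\rfloor,m(K+\Theta)|_{\lfloor\Theta\rfloor})$ is not surjective in general. For example, when $\kappa=0$ and $\lfloor\Theta\rfloor=S_1\sqcup S_2$, it is $\mathbb{C}\to\mathbb{C}^2$. Fujino only lifts admissible sections, using invariance under the link map $S_1\dashrightarrow S_2$ coming from Proposition \ref{Pro}. When $S_1,S_2$ are such bad K3 surfaces, that invariance cannot be imposed.

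This is why the paper weakens admissibility to minimally admissible sections. These require invariance only under involutions on bad components, under Galois identifications between bad components, and under maps between good components. In the non-numerically-trivial case, the uncontrolled scalars carried along chains of $\kappa=0$ components are then absorbed into a system of two-term linear conditions. These are satisfied by a vanishing argument on the image of a positive-Kodaira-dimension component (Theorem \ref{thm:abun-non-numerical}). Without these two mechanisms, neither the lifting nor the gluing in your outline goes through.
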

   This result is proved in Page No. \pageref{proof:mainthm}. 
Moreover, as an application, we prove the following.
\begin{corollary}\label{cor:mainimp}
     Let $(X,\Delta)$ be a connected, compact analytic Normalized-K\"ahler slc pair (see Definition \ref{def:NK}) of dimension $3$  such that $K_X+\Delta$ is nef. Then either $K_X+\Delta$ is semiample or it is numerically trivial, i.e. $K_X+\Delta \equiv 0$.
\end{corollary}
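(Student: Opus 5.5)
The plan is to reduce Corollary \ref{cor:mainimp} to Theorem \ref{thm:mainimp} through the normalization. The only Kähler input available is that the normalization is Kähler. Let $\nu\colon X^\nu\to X$ be the normalization, and write $K_{X^\nu}+\Delta^\nu+D=\nu^*(K_X+\Delta)$, where $D$ is the conductor. The pair $(X^\nu,\Delta^\nu+D)$ is lc, and by Definition \ref{def:NK} each connected component of $X^\nu$ is compact Kähler. Pulling back by the finite map $\nu$ keeps $\nu^*(K_X+\Delta)$ nef. Theorem \ref{abun} then shows it is semiample on each component, so each component has a numerical (Iitaka) dimension $\kappa_i\ge 0$.

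\textbf{Case 1: some $\kappa_i>0$.} I first show that all $\kappa_i>0$. If some component $X_j$ had $\nu^*(K_X+\Delta)|_{X_j}\equiv 0$, follow a chain of components meeting through the conductor (using that $X$ is connected). The gluing involution on the normalization $D^\nu$ of $D$ identifies the restrictions of $\nu^*(K_X+\Delta)$ to the two sides, because on $D^\nu$ both restrictions are equal to the pullback of the same Cartier-type class by adjunction/Poincaré residue. So a numerically trivial component forces numerical triviality on the neighbouring components along $D$. That by itself does not give a contradiction, because a semiample non-trivial class can still be trivial on a divisor.

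I therefore change strategy. I will not show that all $\kappa_i>0$. Instead, I will directly prove that $X$ admits a Kähler structure on a suitable model, or that semiampleness descends. The plan is to follow the argument of Theorem \ref{thm:mainimp} as proved in the paper, presumably via Fujino's descent of sections across the conductor using finiteness of $B$-representations. The place where that argument uses the Kähler property of $X$ itself is expected to be only where it produces a morphism from the pieces. When $K_X+\Delta\not\equiv 0$, at least one component gives a nontrivial Iitaka fibration. The lc-centres on $D^\nu$ then map to lower-dimensional (projective) bases, where the $B$-representation finiteness of \cite{Fu00} applies. This yields a gluing of the semiample fibrations, hence semiampleness on $X$.

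\textbf{Case 2: all $\kappa_i=0$.} Then $\nu^*(K_X+\Delta)\equiv 0$ on every component. Since $\nu$ is finite and surjective, $K_X+\Delta\equiv0$, which is the second alternative.

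The main obstacle is Case 1 on the components where the restriction is numerically trivial. There, semiampleness on $X^\nu$ gives torsion classes. Descending them to $X$ requires that the gluing data on the numerically trivial strata be of finite order. In the Kähler case this finiteness comes from the Hodge theory of $X$, which is exactly what fails in Fujiki class examples. My first attempt would be to show that every such stratum is connected through $D$ to a component with $\kappa>0$. The gluing automorphisms on that stratum would then be restrictions of birational automorphisms preserving a nontrivial fibration, and these automorphisms act through a projective base, where Fujino's finiteness theorem applies.
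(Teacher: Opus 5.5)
Your Case 2 is fine. You are also right that the non-numerically-trivial case has to be handled by running the slc descent on the K\"ahler dlt model $Z$ of the normalization. But Case 1 contains no actual argument, and the step you single out as the key one would fail.

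You claim that descent across the numerically trivial strata requires the gluing data there to have finite order, and you hope to get this from Fujino's finiteness on a projective base. The strata in question are the bad components of Lemma \ref{lem:bad}: non-projective surfaces $S$ with $\kappa(S)=0$, $\bar\Theta_S=0$ and infinite pluricanonical representation. Since $(K_Z+\Gamma)|_S\sim_{\mathbb{Q}}0$, the Iitaka fibration of any $\kappa\geq1$ component containing $S$ contracts $S$ to a point. So no projective base records the gluing automorphism, and the scalar accumulated along a chain of $\kappa=0$ components can have infinite order while $K_X+\Delta\not\equiv0$.

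For example, take $S,\sigma,\lambda$ as in the proof of Theorem \ref{counterexample}, let $C$ be an elliptic curve and $p\neq q$ points of $C$. Glue $S\times\{0\}\subset S\times\mathbb{P}^1$ to $S\times\{p\}\subset S\times C$ by the identity, and $S\times\{\infty\}$ to $S\times\{q\}$ by $\sigma$. Then:
\begin{itemize}
\item the resulting $X$ is Normalized-K\"ahler but not K\"ahler, by the argument of Remark \ref{rmk:jiang}, so making $X$ K\"ahler is not an option either;
\item $K_X$ is nef, with $\kappa=1$ on $S\times C$;
\item sections of $mK_X$ correspond to those $\eta\in H^0(C,m(K_C+p+q))$ with $\eta(p)=\pm\lambda^{m}\eta(q)$, values taken via residues.
\end{itemize}
This scalar is never a root of unity, yet such $\eta$ generate for $m\gg0$. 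So abundance holds here, but not for the reason you propose.

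The missing idea is that, away from the numerically trivial case, no finiteness is needed. One instead uses the freedom in choosing sections on the components where restriction is surjective.
\begin{itemize}
\item The paper works with minimally (pre)admissible sections (Definition \ref{def:ma-mpa}). On bad components these require invariance only under $B$-bimeromorphic involutions and under the Galois identifications, not under the whole group $\Bim(T,\bar\Theta_T)$.
\item With these, Proposition \ref{pro:bad-abun} and Lemma \ref{lem: bad0} give generation on each component. Surjectivity of restriction fails only for components $Y_i$ with $\kappa=0$ and $\lfloor\Gamma_{Y_i}\rfloor=S_{Y_i}\sqcup T_{Y_i}$, both bad.
\item These $Y_i$ are linked to the surjective part $Z_{i_0}$ by uniquely determined bad paths (Claim \ref{clm:well}). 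Compatibility along each path becomes a two-term linear equation $\lambda(m)s|_S=\mu(m)s|_T$ on $\MPA(Z_{i_0},2m(K_{Z_{i_0}}+\Gamma_{i_0}))\cong H^0(R,A)$.
\item Each bad $S_\beta$ maps to a point $c_\beta\in R$, and these points are distinct (Claim \ref{clm: distinct}). So the equations are conditions on finitely many values.
\item Vanishing of $H^1(R,I_{D_x}\otimes A^k)$ then shows that the constrained subspace $U_m$ still generates. One extends by the prescribed constants on the $Y_i$ and descends by Lemma \ref{lem:descend}.
\end{itemize}
None of this uses a K\"ahler class on $X$, which is exactly why the corollary follows from the proof of Theorem \ref{thm:abun-non-numerical}. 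A K\"ahler class on $X$ enters only through Lemma \ref{lem:preservation-kahler}, to force finite order around closed chains when $K_X+\Delta\equiv0$. That is precisely the alternative the corollary leaves open.
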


\noindent
\textbf{Fujino's strategy and our Idea:} One of the main ingredients of Fujino's proof in \cite{Fu00} is the finiteness of pluricanonical representation of log canonical pairs in dimension 2 (see \cite[Theorem 3.5]{Fu00}). This depends on a classical finiteness theorem on Moishezon manifolds due to Ueno (see \cite[Theorem 14.10, Page No. 182]{KU75}). \\
 Let $(X,\Delta)$ be a projective slc threefold and  $(Z,\Gamma)\to (X, \Delta)$ its dlt modification. Then using the finiteness of $B$-pluricanonical representation in dimension $2$, Fujino first proves that the set of all \emph{admissible} sections (see Definition \ref{def:pre&ad}) generate the line bundle $\mathcal O_{\rdgamma}(m(K_Z+\Gamma)|_{\rdgamma})$ (see \cite[Lemma 4.7]{Fu00}). Then using \cite[Proposition 4.5]{Fu00}, he shows that these admissible sections lift to the \emph{preadmissible} sections on $Z$ and generate the line bundle $\mathcal{O}_Z(m(K_Z+\Gamma))$. Then according to \cite[Lemma 4.2]{Fu00}, all preadmissible sections of $\mathcal{O}_Z(m(K_Z+\Gamma))$ descend to $X$, which then gives the global generation of $\mathcal O_X(m(K_X+\Delta))$. 
 
In the K\"ahler category, the finiteness of the pluricanonical representation fails for certain non-algebraic K3 surfaces (see Remark \ref{K3} for details). On the other hand, it turns out that if a compact K\"ahler surface has positive (log) Kodaira dimension, then the pluricanonical representation is finite (see Theorem \ref{plu}). However, this restricted finiteness result is not sufficient to implement Fujino’s general strategy outlined above.

To overcome this limitation, we introduce two new notions $-$ \emph{minimally admissible} and \emph{minimally preadmissible} sections (see Definition \ref{def:ma-mpa}), which generalize Fujino’s notions of admissible and preadmissible sections (see Definition \ref{def:pre&ad}). By working with these refined classes of sections, we are able to adapt Fujino’s strategy. One of the main difficulties in the proof of Theorem \ref{thm:mainimp} is lifting admissible sections for the components of the normalization of $X$ which has log-Kodaira dimension $0$. This poses a significant technical challenge. \\ 
We prove our main theorem \ref{thm:mainimp} in several steps. First we show if every irreducible component of normalization has positive Kodaira dimension, then abundance holds (See Corollary \ref{main}). Then we show if at least one component has positive Kodaira dimension, then also abundance holds (See Theorem \ref{thm:abun-non-numerical}). In the last step, we show abundance holds as well in numerically trivial case (See Theorem \ref{mainthm:num-trivial}). In this particular step, we explicitly use the existence of a K\"ahler class on $X$. Except the last step, the arguments of the previous two steps also work as well in case of Normalized-K\"ahler pairs (see Definition \ref{def:NK}) which helps us to prove  Corollary \ref{cor:mainimp}. Failure of finiteness of pluricanonical representation and non existence of K\"ahler class led us to the following counterexample. In fact the idea of the proof in numerically trivial case for slc K\"ahler threefold is motivated from this counterexample.
\begin{theorem}\label{counterexample}
 There exists an irreducible compact analytic variety  $X$ of dimension $3$ in Fujiki class $\mathcal C$ with slc singularities  such that $K_X$ is nef and $\nu(\tilde X, K_{\tilde X}+\tilde D)=\kappa(\tilde X, K_{\tilde X}+\tilde D)=0$, where $\mu:(\tilde X, \tilde D)\to X$ is the normalization morphism, but $K_X$ is not semiample. More specifically, we have $H^0(X, mK_X)=0$ for all sufficiently large and divisible positive integer $m>0$.\\
 
\end{theorem}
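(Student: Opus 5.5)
We construct $X$ by gluing a non-algebraic K3 surface fibration so that the gluing involution acts on the space of top forms by a root of unity of infinite order. Concretely, we would take $S$ a non-projective K3 surface admitting an automorphism $g$ with $g^*\sigma=\alpha\sigma$, where $\sigma$ spans $H^0(S,K_S)$ and $\alpha$ is not a root of unity. Such pairs exist by McMullen's Siegel-disk examples; these are exactly the examples in Remark \ref{K3} showing that the pluricanonical representation is infinite in the K\"ahler setting. We set $\tilde X=S\times\mathbb P^1$ and $\tilde D=S_0+S_\infty$, where $S_0=S\times\{0\}$ and $S_\infty=S\times\{\infty\}$. Then $K_{\tilde X}+\tilde D=p_1^*K_S+p_2^*(K_{\mathbb P^1}+0+\infty)\sim 0$, so $(\tilde X,\tilde D)$ is a dlt pair with $K_{\tilde X}+\tilde D$ trivial. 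In particular $\nu=\kappa=0$ for the normalized pair.

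Next we glue $S_0$ to $S_\infty$ via $g$, identifying $(s,0)\sim(g(s),\infty)$, to obtain $X$. The key steps are as follows.

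First, we check that the quotient exists as a compact analytic space and that it is slc. Since the identification is a fixed-point-free involution on the normalization of the double locus (swapping the two components), the gluing is a finite equivalence relation. The quotient exists analytically by the standard pushout construction: locally near $S_0\cup S_\infty$ it is a product of $S$ with a node, twisted by $g$. The result is a variety that is normal crossing in codimension one, $S_2$, and whose normalization is $(\tilde X,\tilde D)$ with $\tilde D$ the conductor. The slc condition then follows from the definition together with the fact that $g$ preserves the different, which is zero here.

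Second, we show that $X$ lies in Fujiki class $\mathcal C$, because it is the image of the compact K\"ahler manifold $\tilde X$. It is not K\"ahler, however, which is consistent with Theorem \ref{thm:mainimp}. $K_X$ is Cartier, and it is nef since $\mu^*K_X=K_{\tilde X}+\tilde D\sim 0$.

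Third, we compute $H^0(X,mK_X)$. Sections of $mK_X$ correspond to sections $s$ of $m(K_{\tilde X}+\tilde D)$ whose residues (Poincar\'e residues to the $m$-th power) on $S_0$ and $S_\infty$ are compatible under $g$. Since $H^0(\tilde X,m(K_{\tilde X}+\tilde D))=\mathbb C\cdot\omega^{m}$ with $\omega=\sigma\wedge dz/z$, the residues are $\sigma^m$ on $S_0$ and $-\sigma^m$ on $S_\infty$ (orientation sign from $dz/z$ at $\infty$; the sign disappears for even $m$). Compatibility then demands $(g^*\sigma)^m=\pm\sigma^m$, that is, $\alpha^m=\pm1$. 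This fails for all $m\ge1$ since $\alpha$ is not a root of unity. Hence $H^0(X,mK_X)=0$, and $K_X$ is not semiample: semiampleness plus numerical triviality would force $mK_X\sim0$ for some $m$.

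The main obstacle is the first step. One must make the analytic gluing rigorous, prove that $X$ is irreducible (clear, as $\tilde X$ is connected) and in class $\mathcal C$, and carefully identify the descent condition for pluri-log-canonical sections with residue compatibility, tracking signs correctly. For the descent we would invoke the slc description used for admissible sections (Definition \ref{def:pre&ad}, following Fujino's Lemma 4.2) in the analytic setting.
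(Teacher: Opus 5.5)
Your proposal is correct and follows essentially the same route as the paper: $\tilde X=S\times\mathbb P^1$ with $S$ a non-projective K3 surface carrying an automorphism that acts on $H^0(S,K_S)$ by a scalar that is not a root of unity, glued along $S\times\{0\}$ and $S\times\{\infty\}$ via that automorphism using the analytic pushout, with $H^0(X,mK_X)=0$ deduced from the one-dimensionality of $H^0(\tilde X,m(K_{\tilde X}+\tilde D))$ and the failure of the gluing compatibility on the conductor. Your residue bookkeeping is slightly more careful than the paper's plain invariance condition: the sign from $dz/z=-dw/w$ is exactly absorbed by the anti-invariance of residues at the node, so the precise condition is $\alpha^m=1$ rather than $\alpha^m=\pm1$ (harmless either way), and your opening phrase ``root of unity of infinite order'' should read ``scalar that is not a root of unity''.
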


This result is proved in Section \ref{sec:slc-abundance}, Page No. \pageref{proof:counterexample}.\\

\textbf{Development of the project:}
In our earlier version of this paper, we claimed the following three main results:
\begin{enumerate}
    \item Abundance holds for compact K\"ahler sdlt pairs of dimension $3$.
    
    \item Abundance does not hold in general for compact K\"ahler slc
    threefolds. More precisely, we claimed the existence of an
    irreducible compact K\"ahler slc threefold $X$ such that $K_X\equiv 0$ but not semiample.
    
    \item Let $(X,\Delta)$ be a compact K\"ahler slc pair, and let
    $
        \mu\colon
        (X',\Delta'+D')
        =
        \bigsqcup_{i=1}^{k}
        (X'_i,\Delta'_i+D'_i)
        \longrightarrow
        (X,\Delta)
    $
    be its normalization. If
    $
        \kappa\bigl(X'_i,
        K_{X'_i}+\Delta'_i+D'_i\bigr)\geq 1
    $
    for every $i\in\{1,\ldots,k\}$, then abundance holds for
    $(X,\Delta)$.
\end{enumerate}

Soon after the first version of this paper appeared on arXiv, Prof. J\'anos
Koll\'ar pointed out that the variety $X$ constructed in (2) admits an \'etale double cover
$
    f\colon Y\longrightarrow X
$
such that $Y$ has sdlt singularities. At that stage, $X$ was assumed to
be K\"ahler. Under this assumption, $Y$ would also be K\"ahler, and the
failure of abundance for $X$ would imply the failure of abundance for
$Y$. Thus, Prof. Koll\'ar's observation showed that our claim (1) was
incompatible with the proposed counterexample in  (2).

The positive result in claim (3), however, was unaffected by this
observation. Prof. Koll\'ar then asked whether claim (3) could be refined: namely, whether abundance holds for a compact connected
K\"ahler slc threefold $(X,\Delta)$ whenever $K_X+\Delta$ is nef and
not numerically trivial. In other words, he asked whether the claim $(3)$ can be extended to the case when $\kappa(X'_i, K_{X'_i}+D'_i+\Delta'_i)\geq 1$ for at least one $i\in\{1,2,...,k\}$ where $\mu: X'\to X$ is the normalization. We started working on this more
general situation and proved that abundance indeed holds in the
non-numerically trivial case (see
Theorem \ref{thm:abun-non-numerical}).

Approximatey two weeks after we proved Theorem \ref{thm:abun-non-numerical},  Zhiyuan Jiang pointed out that the variety $X$ occurring
in our proposed counterexample in (2) is in fact not K\"ahler, although its
normalization is K\"ahler, and hence $X$ lies in the Fujiki class $\mathcal C$ varieties (see Remark \ref{rmk:jiang}). We had a discussion and observed that if $X$ admitted a K\"ahler class, it would force a K\"ahler form to be invariant under the gluing automorphism, contradicting the fact that this automorphism has infinite order.
This observation changed the situation in the numerically trivial case
completely. The example from the earlier version no longer provided a
counterexample in the K\"ahler category. We were therefore led to
reconsider the numerically trivial case from a positive point of view.
 We realized that invariance of K\"ahler class under $\sigma$ is not essential where $\sigma$ is the gluing automorphism of the $K3$ surface $S$. Rather, it is enough to satisfy that the difference $\sigma^*\omega-\omega\in NS_{\mathbb{R}}$. More precisely, we establish Lemma \ref{lem:preservation-kahler}. Using this lemma, we prove
Theorem \ref{mainthm:num-trivial} and establish abundance also when
$K_X+\Delta$ is numerically trivial.

Combining the numerically trivial case with
Theorem \ref{thm:abun-non-numerical}, we obtain abundance for compact
K\"ahler slc pairs of dimension $3$ in full generality. Thus, the
development of the paper led from the proposed counterexample in the
earlier version to a substantially stronger positive result.

Finally, although the example in (2) constructed in the earlier version is
not K\"ahler and hence does not give a counterexample to slc abundance in
the K\"ahler category, it remains a valid counterexample to slc abundance in
the Fujiki class $\mathcal C$. On the positive side, we also establish abundance for
a special subclass of varieties in the Fujiki class $\mathcal C$ (see
Corollary \ref{cor:mainimp}).\\

The article is organized in the following manner: In Section \ref{sec:premin} we prove some preliminary results which will be used throughout the article. In Section \ref{sec:finiteness} we prove our special finiteness result for compact K\"ahler lc pair of dimension $2$. In Section \ref{sec:ad-and-pread} we prove some properties of admissible and preadmissible sections similar to those in \cite[Section 4]{Fu00} in the K\"ahler settings. Section \ref{sec:sdlt-abundance} is one of our main sections of this article. Here we define minimally admissible and preadmissible sections and some results related to these sections. We prove our main Theorem in non numerically trivial case  in Section \ref{sec:mainthm}. We prove our main theorem in numerically trivial case in Section \ref{sec:main result}. At last, we prove Theorem \ref{counterexample} in Section \ref{sec:slc-abundance}.\\

\textbf{Acknowledgments}: The author would like to express his sincere gratitude to Professor Omprokash Das for suggesting this problem and for his constant guidance, encouragement, and valuable discussions throughout the development of this work. The author is also grateful to Prof. Christopher Hacon, Prof. Osamu Fujino, Prof. Paolo Cascini, and Prof. James McKernan for their careful reading of an earlier draft of the manuscript and for their helpful comments and suggestions. The author is particularly indebted to Prof. J\'anos Koll\'ar for identifying an error in a previous arXiv version of this article and for the many insightful email exchanges that followed. The author is grateful to  Zhiyuan Jiang for pointing out that the counterexample in his earlier version of this article does not lie in the K\"ahler category and for fruitful discussions over Zoom. Finally, the author wishes to thank Mr. Arnab Roy for several helpful discussions.

\section{Preliminaries}\label{sec:premin}
In this section we collect some definitions and results which will be used throughout the article. 
\begin{definition}
    Let $X$ be a reduced complex space of pure dimension satisfying the following two properties :
    \begin{enumerate}
        \item $X$ is $S_2$.
        \item $X$ is normal crossing in codimension $1$.
    \end{enumerate}
    Let $\Delta \geq 0$ be an effective $\mathbb{Q}$-divisor on $X$ such that the support $\Delta$ does not contain any component of the conductor divisor of $X$, and $K_X+\Delta$ is $\mathbb{Q}$-Cartier.
    Let $X=\cup X_i$ be the unique decomposition into irreducible components and $\mu : X':= \sqcup X'_i \to X$ the normalization, where $\sqcup$ denotes the disjoint union. Let $\Theta$ and $\Theta_i$ be  $\mathbb{Q}$-divisors on $X'$ and $X'_i$, respectively, such that $K_{X'}+\Theta\sim_{\mathbb{Q}}\mu^*(K_X+\Delta)$ and $K_{X'_i}+\Theta_i \sim_{\mathbb{Q}}(K_{X'}+\Theta)|_{X'_i}$ for all $i$.\\
    We say that $(X,\Delta)$ is \emph{semi log canonical} or \emph{slc},  if $(X',\Theta)$ is log canonical.\\
    We say that $(X,\Delta)$ is a \emph{semi divisorial log terminal} or \emph{sdlt}, if $X_i'$ is isomorphic to $X_i$ for all $i$, and $(X',\Theta)$ is divisorial log terminal.\\
    \begin{definition}\label{def:NK}
        Let $(X,\Delta)$ be a compact analytic pair. $X$ is called Normalized-K\"ahler if its normalization $X'$ is K\"ahler. Note that this forms a subclass of the Fujiki class. We call $(X,\Delta)$, a Normalized-K\"ahler slc pair if $X$ is Normalized-K\"ahler and $(X,\Delta)$ has slc singularities.
    \end{definition}
    
    \begin{remark}
    Let $\mu:X' \to X$ be the normalization, and $D \subset X$ and $D' \subset X'$ the conductor divisors on $X$ and $X'$ respectively. Both $D$ and $D'$ are reduced divisors by \cite[Definition-Lemma 5.10]{Kol13}. Then $K_{X'}+D'+\Delta' \sim_{\mathbb{Q}} \mu^*(K_X+\Delta)$ where $\Delta'$ is the divisorial part of $\mu^{-1}(\Delta)$. In particular, $D'+\Delta'=\Theta$ as defined above.
    \end{remark}
    \begin{remark}\label{rmk:sdlt}
Let $(X,\Delta)$ be a sdlt pair. Since $X_i'$ and $X_i$ are isomorphic (by definition above), each $X_i$ is normal. The Non-normal locus of $X$ is exactly the union of intersections of irreducible components. In particular, the conductor divisor $D$ on $X$ is given by $\sum_{i\neq j}(X_i \cap X_j)$ and the conductor divisor $D'=\mu^{-1}{(D)}$ on $X'$.
    \end{remark}
\end{definition}
The following lemma tells us how to run relative MMP for a morphism of K\"ahler varieties when the base is projective.
\begin{lemma}\label{MMP}
   Let $f:X \to Y$ be a  surjective morphism with connected fibers. Let $\Delta \geq 0$ be an effective divisor such that $(X,\Delta)$ is a compact K\"{a}hler dlt pair of  dimension at most  $3$ and $Y$ is a projective variety. Then we can run  a terminating $K_X+\Delta$-MMP over $Y$.
   \end{lemma}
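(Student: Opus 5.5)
The plan is to reduce to the projective relative MMP machinery via the known Kähler MMP results in dimension at most $3$ (\cite{HP15,HP16,DH25}, and the relative versions in \cite{DO24,DO25}). The essential point is that, because $Y$ is projective, curves contracted by the MMP over $Y$ lie in fibres of $f$. So the relative cone theorem only needs to handle $K_X+\Delta$-negative extremal rays of $\overline{NA}(X/Y)$. Here $\overline{NA}(X/Y)$ is the closed cone generated by classes of curves contracted by $f$.

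\textbf{Step 1 (relative cone and contraction theorems).} I would invoke the relative cone theorem for compact Kähler dlt threefolds over a base, as established in \cite{DH25} (see also \cite{DO24}). This gives
\[\overline{NA}(X/Y)=\overline{NA}(X/Y)_{K_X+\Delta\geq 0}+\sum \mathbb{R}_{\geq 0}[\Gamma_i],\]
with rational curves $\Gamma_i$ contracted by $f$. Each $K_X+\Delta$-negative extremal ray $R$ of this relative cone admits a contraction $\varphi_R:X\to Z$ over $Y$, and $Z$ is again Kähler. To see that $Z$ is Kähler and the contraction exists, I would produce a nef class of the form $\alpha=\omega_{\text{sup}}+f^*H$, where $H$ is ample on $Y$. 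Since $H$ is ample on the projective base, $f^*H$ is positive on every curve not contracted by $f$. Hence a suitable supporting class of $R$ in $\overline{NA}(X/Y)$ plus a large multiple of $f^*H$ is positive on $\overline{NA}(X)\setminus R$. This is precisely the hypothesis needed to apply the absolute Kähler contraction theorem of \cite{HP16,DH25}. It is the main technical step: one must check that $f^*H$ large enough suffices uniformly. I would argue by compactness of a slice of $\overline{NA}(X)$, together with the fact that $\overline{NA}(X/Y)$ is exactly $\{\gamma\in\overline{NA}(X): f^*H\cdot\gamma=0\}$.

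\textbf{Step 2 (flips and preservation of the setup).} In the divisorial case, $Z$ is again a compact Kähler dlt pair with a morphism to $Y$. In the small case, the flip exists by the Kähler existence of flips in dimension $3$ (\cite{HP16,DH25}), which is local over $Z$ and hence compatible with the map to $Y$. Fibre type contractions terminate the program as a Mori fibre space over $Y$. In all cases the new variety is Kähler, dlt, maps to $Y$ with connected fibres, and has dimension at most $3$, so the process iterates. In dimension at most $2$ the same statements follow from the classical surface theory, with the same reduction.

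\textbf{Step 3 (termination).} Termination follows exactly as in the projective case. Divisorial contractions drop the Picard number, which is finite since $h^{1,1}<\infty$. Dlt flips in dimension $3$ terminate by the usual difficulty argument (Shokurov), which is local and analytic and therefore applies verbatim; this is already used in \cite{HP16,DH25}. I expect Step 1 to be the only genuine obstacle, namely producing a Kähler supporting class for relative extremal rays. The rest is inherited from the absolute Kähler MMP.
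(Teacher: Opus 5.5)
Your route (a relative cone theorem, then a hand-built supporting class fed into the absolute contraction theorem) is salvageable. However, the step you yourself identify as the crux fails as argued. Compactness of a slice of $\overline{NA}(X)$ cannot produce a uniform $N$. Suppose $\gamma_N\in\overline{NA}(X)\setminus R$ are normalized classes with $(\beta+Nf^*H)\cdot\gamma_N\le 0$, where $\beta$ is your relative supporting class. Then any limit $\gamma$ satisfies $f^*H\cdot\gamma=0$ and $\beta\cdot\gamma\le 0$, so $\gamma\in R$, and there is no contradiction. A closed convex cone can approach $R$ tangentially from outside the face $\overline{NA}(X)\cap(f^*H)^{\perp}$. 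Think of the cone over a planar convex set whose boundary is smooth at an endpoint of an edge lying on $\{f^*H=0\}$. In that situation $\beta+Nf^*H$ is negative near $R$ for every $N$. Put differently: on non-vertical curves you have $f^*H\cdot C\ge 1$, but nothing in your argument bounds $-\beta\cdot C$. The missing input is the absolute cone theorem with its length bound: every $(K_X+\Delta)$-negative extremal ray of $\overline{NA}(X)$ is spanned by a rational curve $C$ with $0<-(K_X+\Delta)\cdot C\le 6$.

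With that bound available, the paper's route is shorter and needs no relative cone theorem and no hand-built supporting class.
\begin{itemize}
\item Replace $H$ by a very ample multiple with $H\cdot\Gamma\ge 7$ for every curve $\Gamma\subset Y$.
\item Pick a general $D\in|f^*H|$, so that $(X,\Delta+D)$ is still dlt.
\item Run the \emph{absolute} $(K_X+\Delta+D)$-MMP of \cite{DH25}.
\end{itemize}
Any $(K_X+\Delta+D)$-negative extremal ray is $(K_X+\Delta)$-negative because $D$ is nef, so it is spanned by a curve $C$ as above. Then $(K_X+\Delta+D)\cdot C\ge -6+7>0$ unless $f_*C=0$. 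Hence every step contracts only vertical curves, factors over $Y$ by Lemma~\ref{rig}, and keeps $D$ a pullback of $H$. Each step is therefore a $(K_X+\Delta)$-step over $Y$. Existence of contractions and flips, and termination, are quoted directly from the absolute theory. This also makes your Step 3 unnecessary; if you keep it, the right justification is simply that flips over $Y$ are flips of $(X,\Delta)$, so absolute dlt termination applies. At the end, nefness of $K+\Delta+D$ gives relative nefness of $K+\Delta$, since $D$ is trivial on vertical curves. A Mori fibre space outcome is automatically over $Y$.
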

\begin{proof}
Let $(X,\Delta)$ be a  compact K\"{a}hler dlt pair of dimension at most $3$. Let $R$ be a $K_X+\Delta$-negative extremal ray over $Y$. By cone theorem \cite[Theorem 5.2]{DHP25}, we can choose a curve $C$ such that  $0<-(K_X+\Delta)\cdot C\leq 6$ where $R=\mathbb{R}_{\geq0}[C]$. We know that MMP can be run for compact K\"ahler dlt pair of dimension at most 3 by \cite{DH25}.\\

Since $Y$ is projective, we can choose a very ample Cartier divisor $H$ on $Y$ such that $H\cdot \Gamma\geq 7$ for every curve $\Gamma\subset Y$. As $H$ is very ample, the linear system $|f^*H|$ is base point free.  By Bertini's theorem, we can choose a divisor $D\in |f^*H|$ such that $(X,\Delta+D)$ is still dlt. For every vertical curve $C$, $(K_X+\Delta+D)\cdot C=(K_X+\Delta)\cdot C$ and hence $(K_X+\Delta+D)\cdot C<0$. Let $g_{R}:X \to Z$ be the contraction of $(K_X+\Delta+D)$-negative extremal ray $R=\mathbb{R}_{\geq0}[C]$.\\
Let $C_1$ be any curve contracted by $g_R$. Then $C_1$ is also contracted by $f$. Indeed if this is not the case, let $C_2=f(C_1)$. 
Then we have
\[ (K_X+\Delta +D)\cdot C_1=(K_X+\Delta)\cdot C_1 + H\cdot C_2 \geq -6+7 \]
which gives a contradiction.\\

Since every vertical curve for $g_{R}$ is also a vertical curve for $f$ and both $f$ and $g_{R}$ have connected fibers, it follows from Lemma \ref{rig}, that there exists a morphism $f_1:Z \to Y$ such that $f_1\circ g_{R}=f$. Finally since a $K_X+\Delta+D$-MMP is also a $K_X+\Delta$-MMP, we obtain either a minimal model or a Mori fiber space over $Y$ by \cite[Theorem 1.1 and 1.2]{DH25}.
\end{proof}
\begin{lemma}\label{lem:Top}
   Let $X$ and $Y$ be two topological spaces and let $f:X\to Y$ be a closed surjective map. Assume that  $Y$ is connected and that for every $y\in Y$, $f^{-1}(y)$ is connected . Then $X$ is connected. In particular, number of connected components of $X$ and $Y$ coincide.
   \end{lemma}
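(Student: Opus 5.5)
We argue by contradiction. Suppose $X$ is not connected while $Y$ is. Then there are disjoint nonempty closed subsets $A,B\subset X$ with $X=A\cup B$.

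First, each fibre lies entirely in one of the two pieces. For $y\in Y$, the fibre $f^{-1}(y)$ is connected and is covered by the disjoint closed sets $A\cap f^{-1}(y)$ and $B\cap f^{-1}(y)$. Hence one of these is empty, so $f^{-1}(y)\subset A$ or $f^{-1}(y)\subset B$.

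Second, this gives a splitting of $Y$. Since $f$ is closed, $f(A)$ and $f(B)$ are closed in $Y$. Since $f$ is surjective, $f(A)\cup f(B)=Y$. Both images are nonempty because $A$ and $B$ are. They are disjoint: if $y\in f(A)\cap f(B)$, the fibre $f^{-1}(y)$ meets both $A$ and $B$, contradicting the first step. So $Y$ is a union of two disjoint nonempty closed sets, contradicting the connectedness of $Y$. Hence $X$ is connected.

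For the statement about components, let $Y=\bigsqcup_j Y_j$ be the decomposition into connected components, and put $X_j:=f^{-1}(Y_j)$.

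\emph{Each $X_j$ is connected.} Connected components are closed, so $f|_{X_j}\colon X_j\to Y_j$ is still a closed map: a closed subset of $X_j$ is closed in $X$, and its image is closed in $Y$, hence in $Y_j$. This restriction is also surjective with connected fibres, so the first part applies and $X_j$ is connected.

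\emph{Distinct $X_j$ lie in distinct components of $X$.} If a connected component $W$ of $X$ met two different sets $X_j$ and $X_{j'}$, then $f(W)$ would be a connected subset of $Y$ meeting $Y_j$ and $Y_{j'}$, which is impossible.

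\emph{Conclusion.} The sets $X_j$ are pairwise disjoint and cover $X$. Each is connected, and by the previous point each is a maximal connected subset of $X$. So the $X_j$ are exactly the connected components of $X$, and the number of components of $X$ equals that of $Y$.

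The only delicate point is the closedness of the restriction $f|_{X_j}$, which uses that connected components of $Y$ are closed; the rest is formal. In the paper's analytic setting the components are finite in number, hence also open, so nothing more is needed there.
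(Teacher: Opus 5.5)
Your proof is correct and follows the same route as the paper's: split $X$ into two pieces, note that their images are closed sets covering the connected space $Y$, and use connectedness of the fibre over a common point to reach a contradiction. Your version is actually more careful. You take a separation of $X$ into disjoint nonempty closed sets, whereas the paper assumes $X$ has exactly two connected components. You also prove the statement about the number of components, including the closedness of $f|_{f^{-1}(Y_j)}$, which the paper leaves unproved.
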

\begin{proof}
Suppose, for contradiction, that $X$ is not connected. Assume that $X$ has two connected components, say $X_1$ and $X_2$. Since $f$ is surjective, we have $Y=f(X_1)\cup f(X_2)$. Since $f$ is closed, both $f(X_1)$ and $f(X_2)$ are closed. Since $Y$ is connected, $f(X_1) \cap f(X_2)$ must be non empty. Choose $y\in f(X_1) \cap f(X_2)$. Then $f^{-1}(y)$ intersects  both components $X_1$ and $X_2$. Since $f^{-1}(y)$ is connected, it must be contained in one connected component. This is a contradiction and hence $X$ is connected.
\end{proof}
\begin{lemma}\label{N}
    Let $(X,\Delta)$ and $(Z,\Delta')$ be two compact normal analytic varieties with  $K_X+\Delta$ and $K_{Z}+\Delta'$ $\mathbb{Q}$-Cartier. Let $f:X \to Y$ and $g:Z \to Y $ be surjective morphisms between normal analytic varieties and $p:X \dashrightarrow Z$  a bimeromorphic map such that $f=g\circ p$. Assume that 
    \begin{enumerate}
        \item $p^{-1}$ has no exceptional divisor, 
        \item $\Delta'=p_*\Delta$, and
        \item $K_X+\Delta\equiv_f 0, K_Z+\Delta' \equiv_g 0 $.
    \end{enumerate}
    Then $p$ is a $B$-bimeromorphic map (see Definition \ref{def:b-bimeromorphic}).
    \begin{proof}
        Let $W$ be a common resolution that resolves $p$ and satisfies the following commutative diagram:

\[\begin{tikzcd}
	& W \\
	X && Z \\
	& Y
	\arrow["\alpha"', from=1-2, to=2-1]
	\arrow["\beta", from=1-2, to=2-3]
	\arrow["p", dashed, from=2-1, to=2-3]
	\arrow["f"', from=2-1, to=3-2]
	\arrow["g", from=2-3, to=3-2]
\end{tikzcd}\]

We can write 
    \[ K_W\sim_{\mathbb{Q}} \alpha^*(K_X+\Delta) + E.  \]
    \[ K_W\sim_{\mathbb{Q}} \beta^*(K_Z+\Delta') +F.\]
Since $p^{-1}$ has no exceptional divisor, hence any $\alpha$-exceptional divisor is also $\beta$-exceptional. Therefore $E-F$ is $\beta$-exceptional. Notice that any vertical curve for $\beta$ is $\alpha^*(K_X+\Delta)$-trivial by (3), consequently,  $E-F\equiv_\beta 0$. Similarly, $F-E\equiv_\beta 0$  also holds. Therefore by the negativity lemma (see \cite[Lemma 1.3]{Wang21}) we get $E=F$. Then $\alpha^*(K_X+\Delta)\sim_{\mathbb{Q}}\beta^*(K_Z+\Delta')$. Hence, $p$ is $B$-bimeromorphic.
    \end{proof}
\end{lemma}
\begin{lemma}[Rigidity Lemma]\cite{BS95}\label{rig}
   Let $f:X \to Y$ be a proper holomorphic morphism with connected fibers between two normal analytic varieties. Let $g$ be a holomorphic morphism from $X$ to an analytic space $Z$ such that $g(F)$ is a point for some fiber $F$ of $f$. Then there exists an open set $U$ of $Y$ with $F\subset f^{-1}(U)$ and a holomorphic morphism $h:U \to Z$ such that $g|_{f^{-1}(U)}=h \circ f|_{f^{-1}(U)}$. If $f$ has fibers with equal dimension, i.e., $f$ is flat and $g$ is proper, then $U$ can be taken to be $Y$.
\end{lemma}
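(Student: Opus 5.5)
The plan is to prove the Rigidity Lemma (Lemma \ref{rig}) in two stages: first produce $h$ on a neighbourhood of $f(F)$, then globalise it under the flatness and properness hypotheses.

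\textbf{Local statement.} Put $y_0=f(F)$ and $z_0=g(F)$. Choose a Stein open neighbourhood $V$ of $z_0$ in $Z$, for instance a small coordinate neighbourhood in a local embedding of $Z$. The set $g^{-1}(V)$ is an open neighbourhood of $F$. Since $f$ is proper, the set $U_0=Y\setminus f(X\setminus g^{-1}(V))$ is open, contains $y_0$, and satisfies $f^{-1}(U_0)\subset g^{-1}(V)$.

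Because $V$ is Stein, $g$ is determined by the pullbacks $g^*\phi$ of finitely many holomorphic functions $\phi$ on $V$ that embed a neighbourhood of $z_0$; shrink $V$ if necessary so that this holds. Each $g^*\phi$ is a holomorphic function on $f^{-1}(U_0)$.

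Now $f_*\mathcal O_X=\mathcal O_Y$, because $f$ is proper with connected fibres and $Y$ is normal: the Stein factorisation of $f$ is finite and bimeromorphic onto the normal space $Y$, hence an isomorphism. So each $g^*\phi$ equals $f^*\psi$ for a unique $\psi\in\mathcal O(U_0)$. The tuple of the $\psi$ defines a holomorphic map $U_0\to\mathbb C^N$. I then have to check that this map lands in the image of $V$ and so gives $h:U_0\to V\subset Z$. This holds on $f(f^{-1}(U_0))=U_0$, because every point of $U_0$ is of the form $f(x)$ and $(\psi(f(x)))=(\phi(g(x)))$. By construction $g=h\circ f$ on $f^{-1}(U_0)$. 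Take $U=U_0$.

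\textbf{Global statement.} The main step, and the expected obstacle, is showing that the set $Y_1=\{y\in Y: g(f^{-1}(y))\text{ is a point}\}$ is all of $Y$. By the local statement $Y_1$ is open: for $y\in Y_1$ the fibre satisfies the hypothesis, so every fibre over $U$ is contracted by $g$. For closedness I will use equidimensionality and flatness. Take $y_n\to y$ with $y_n\in Y_1$. The fibres $f^{-1}(y_n)$ converge, as cycles in the Barlet space, to a cycle supported on $f^{-1}(y)$. Flatness and equal fibre dimension make every component of $f^{-1}(y)$ a limit of the $f^{-1}(y_n)$. Properness of $g$ lets $g$ be applied to these families, and the images $g(f^{-1}(y_n))$ are points; a limit of points is a point, so each component of $f^{-1}(y)$ is sent to a point. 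Since $f^{-1}(y)$ is connected, the whole fibre is sent to a single point, so $Y_1$ is closed. As $Y$ is connected, $Y_1=Y$.

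Finally, the local maps $h$ obtained over an open cover of $Y$ agree on overlaps, since $f$ is surjective and $h\circ f=g$. They therefore glue to a global $h$. Alternatively, one can cite \cite{BS95} directly for the details of both stages.
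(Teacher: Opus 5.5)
The paper gives no proof of this lemma; it only quotes it from \cite{BS95}. Your proposal therefore has to stand on its own, and it does: it is a correct, self-contained proof.

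\textbf{Local part.} This is the standard analytic argument: a Stein chart $V$ around $g(F)$, properness of $f$ to get $f^{-1}(U_0)\subset g^{-1}(V)$, and descent of the coordinate functions via $f_*\mathcal{O}_X=\mathcal{O}_Y$. Two small points should be made explicit.
\begin{enumerate}
\item ``Connected fibres'' has to include surjectivity of $f$. You use it both for $f_*\mathcal{O}_X=\mathcal{O}_Y$ and for $\Psi(U_0)\subset\Phi(V)$.
\item $\Psi=(\psi_i)$ must factor through $V$ as a morphism of complex spaces, not only set-theoretically. This holds because for every $\chi$ in the ideal of $\Phi(V)$ one has $f^*(\chi\circ\Psi)=\chi\circ\Phi\circ g=0$, and $f^*$ is injective. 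Alternatively, use that $Y$ is reduced.
\end{enumerate}
The gluing of the local maps $h$ at the end is justified by the same injectivity of $f^*$.

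\textbf{Global part.} Here your open--closed argument on $Y$ differs from the usual proof, which looks at the image $W$ of $(f,g)\colon X\to Y\times Z$.
\begin{enumerate}
\item The local part gives $\dim W=\dim Y$, so every component of a fibre of $X\to W$ has dimension at least $\dim X-\dim Y$.
\item By equidimensionality, each such fibre is then a union of components of a fibre of $f$.
\item Connectedness of the fibres of $f$ forces $W\to Y$ to be a finite bijection onto the normal space $Y$, hence an isomorphism. This produces $h$ globally in one stroke.
\end{enumerate}

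Your route also works, but the Barlet-space limit and the properness of $g$ are unnecessary. The only input you really use is that $f$ is an open map. Flat maps are open, and an equidimensional map onto a locally irreducible (e.g.\ normal) space is open by Remmert's open mapping theorem. So either reading of the paper's somewhat garbled hypothesis suffices.

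The closedness step then becomes elementary. Take $y_n\to y$ with $y_n\in Y_1$, and $x,x'\in f^{-1}(y)$. Openness yields $x_n\to x$ and $x_n'\to x'$ with $x_n,x_n'\in f^{-1}(y_n)$ for all large $n$. Hence $g(x)=\lim g(x_n)=\lim g(x_n')=g(x')$. This also makes your appeal to connectedness of $f^{-1}(y)$ in the closedness step superfluous.
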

\begin{lemma}\label{semi}
    Let $f:X \to Y $ be a surjective morphism with connected fibers between two compact analytic varieties. Assume that $X$ is normal and $Y$ is seminormal. Then $f_*\mathcal{O}_X=\mathcal{O}_Y$.
\end{lemma}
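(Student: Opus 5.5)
We prove Lemma \ref{semi} by factoring through the normalization of $Y$ and then using seminormality to descend. Set $\mathcal A:=f_*\mathcal O_X$. Since $f$ is proper, $\mathcal A$ is a coherent sheaf of $\mathcal O_Y$-algebras, and we have the natural injection $\mathcal O_Y\hookrightarrow\mathcal A$. This map is injective because $f$ is surjective and $Y$ is reduced: a function pulling back to zero vanishes on the dense set $f(X)=Y$.

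Consider the Stein factorization $X\xrightarrow{f'}Y'=\operatorname{Specan}_Y\mathcal A\xrightarrow{\nu}Y$. Here $\nu$ is finite and $f'_*\mathcal O_X=\mathcal O_{Y'}$. Since $X$ is normal, $Y'$ is normal. Since $f$ has connected fibers, the fibers of $\nu$ are single points, so $\nu$ is finite, surjective and bijective. We also need $\nu$ to be bimeromorphic. The scheme-theoretic fiber over a general point $y$ of each component is reduced, hence a reduced point, so $\nu$ is an isomorphism over a dense open set; therefore $\nu$ is bimeromorphic. Alternatively, $Y'$ is normal, so $\nu$ factors through the normalization $Y^n\to Y$ and $Y'\to Y^n$ is finite bimeromorphic onto a normal space, hence $Y'=Y^n$. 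We must take care here that distinct components of $Y$ are not handled wrongly. Since $\nu$ is bijective, each component of $Y'$ maps onto a distinct component of $Y$, and degree one follows from generic reducedness of $Y$ together with bijectivity (a finite map of degree $d>1$ between irreducible spaces has generic fibers of cardinality $d$).

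So $\nu:Y'\to Y$ is a finite, bijective, bimeromorphic morphism that is a homeomorphism, with $\mathcal O_Y\subset\nu_*\mathcal O_{Y'}=\mathcal A$. By definition, $Y$ seminormal means that for any finite birational homeomorphism (more precisely, any subring of the normalization whose points are in bijection over $Y$ with equal residue fields), the extension is trivial. Residue fields are $\mathbb C$ in the analytic setting, so bijectivity suffices. Hence $\mathcal A=\mathcal O_Y$, that is $f_*\mathcal O_X=\mathcal O_Y$.

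The main obstacle is pinning down the analytic definition of seminormality being used, namely that every finite bijective bimeromorphic map to $Y$ from a reduced space whose structure sheaf lies between $\mathcal O_Y$ and $\nu_*\mathcal O_{Y^n}$ is an isomorphism. If the paper's definition is instead the pointwise one (a meromorphic function that is continuous and locally bounded on the normalization fibers is holomorphic), we would verify instead that sections of $\mathcal A$ are continuous functions on $Y$. This holds because they are constant on the connected compact fibers of $f$, so they descend to continuous functions on $Y$ that are holomorphic on the normal locus. Seminormality then makes them holomorphic.
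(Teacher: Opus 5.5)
Your proposal is correct and follows essentially the same route as the paper: Stein factorization, bijectivity of the finite part $\nu$ from connectedness of the fibers of $f$, and seminormality of $Y$ (residue fields being $\mathbb{C}$) to force $\nu$ to be an isomorphism. Your explicit check that $\nu$ is bimeromorphic (degree one, via $Y'$ normal and bijectivity) fills in a step the paper leaves implicit when it asserts that any finite bijective morphism inducing isomorphisms on residue fields onto a seminormal space is an isomorphism.
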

\begin{proof}
    By Stein factorization, we can factor $f$ as
    
\[\begin{tikzcd}
	X & Z & Y
	\arrow["g", from=1-1, to=1-2]
	\arrow["\mu", from=1-2, to=1-3]
\end{tikzcd}\]
    where $g_*\mathcal{O}_X=\mathcal{O}_Z$ and $\mu:Z \to Y$ is a finite morphism.
    Since $f$ has connected fibers, and $g$ has also connected fibers by construction, it follows that $\mu$ is bijective. Since $\mu$ is a proper bijective  morphism, hit is a homeomorphism.  Moreover, $\mu$ induces isomorphism on residue fields. Finally, since $Y$ is seminormal, any finite bijective morphism that is an isomorphism on residue fields must be an isomorphism. Therefore, $\mu$ is an isomorphism. Hence $f_*\mathcal{O}_X=\mathcal{O}_Y$.
\end{proof}
The following lemma is a complex analytic version of Ehresmann's fibration theorem.
\begin{lemma}\cite{PP25,FG65}\label{biholo}
    Let $f:X \to Y$ be a proper submersion of complex manifolds. Suppose that the fibers are analytically isomorphic to a fixed complex manifold $F$. Then, $f$ is locally trivial, i.e., for every $s \in Y$, there exists an open neighbourhood $U$ of $s$ such that $f^{-1}(U)$ and $U\times F$ are biholomorphic.
\end{lemma}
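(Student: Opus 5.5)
The plan is to reduce to the classical Ehresmann fibration theorem and then upgrade topological/$C^\infty$ triviality to holomorphic triviality, using that all fibres are biholomorphic to $F$. This is the Fischer--Grauert theorem \cite{FG65}.

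\textbf{Step 1: reduce to a polydisc.} The statement is local on $Y$. Fix $s\in Y$ and replace $Y$ by a coordinate polydisc $U\ni s$. Since $f$ is a proper submersion, Ehresmann's theorem gives a $C^\infty$ trivialization $f^{-1}(U)\cong U\times F$ compatible with the projections. We may also argue by induction on $\dim Y$: a polydisc is a product of a disc and a lower-dimensional polydisc, so it suffices to construct holomorphic trivializations along coordinate directions and compose them.

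\textbf{Step 2: deformation theory.} We view $f$ as a family of complex structures $J_t$ on the fixed manifold $F$, $t\in U$. The Kodaira--Spencer map $\rho_t\colon T_tY\to H^1(X_t,T_{X_t})$ measures the infinitesimal variation. Because every fibre $X_t$ is biholomorphic to $F$, the family is isotrivial. The key claim is that isotriviality together with properness forces local holomorphic triviality.

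\textbf{Step 3: construct the trivialization.} We try to lift each coordinate field $\partial/\partial t_i$ on $U$ to a holomorphic vector field on $f^{-1}(U)$. The obstruction to such a lift is exactly the Kodaira--Spencer class. If every $\rho_t$ vanishes, the lifts exist locally, the local lifts patch using the compactness of the fibres, and integrating the commuting lifted fields gives a biholomorphism $f^{-1}(U)\cong U\times F$. This is the standard argument of Kodaira--Spencer and Grauert--Fischer.

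\textbf{Main obstacle.} The hardest step is proving $\rho_t=0$ when we only know that the fibres are pairwise biholomorphic. When $H^1(F,T_F)$ has jumping dimension or $F$ admits no semi-universal deformation, one cannot simply argue that the classifying map is constant. Here I would follow Fischer--Grauert. First consider the relative isomorphism space $\mathrm{Isom}_U(U\times F,X)$. Using Douady spaces and properness, this is an analytic space over $U$, and by hypothesis it surjects onto $U$. Choose an irreducible component dominating $U$. By generic smoothness, after shrinking, it admits a local holomorphic section over a dense open subset. A Baire-category argument then shows there is a local section through some point over each $s$. Such a section is precisely a local holomorphic trivialization. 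Making this rigorous, in particular handling non-reduced components and arranging the section through an arbitrary point $s$ rather than only generic ones, is the delicate part. In the paper I would simply cite \cite{FG65} and \cite{PP25} for it.
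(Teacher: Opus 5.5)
In the end you do what the paper does. The paper gives no argument for this lemma: it is the Fischer--Grauert theorem, quoted from \cite{FG65} (and \cite{PP25}), and you also cite these sources for the decisive step. The problem is the scaffolding around the citation, which should not be presented as a proof: one step is false as stated, and another is empty.

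\textbf{Step 3.} You claim that if every $\rho_t$ vanishes, the lifts of $\partial/\partial t_i$ exist and ``patch using the compactness of the fibres''. Pointwise vanishing of the Kodaira--Spencer map does not give local triviality in general. Take a family over a disc with fibre $F_2$ at $0$ and $\mathbb{P}^1\times\mathbb{P}^1$ elsewhere, and pull it back along $s\mapsto s^2$. Then $\rho_s=0$ for every $s$: for $s\neq 0$ because $\mathbb{P}^1\times\mathbb{P}^1$ is rigid, and at $s=0$ because $d(s^2)=0$ there. Yet the family is not locally trivial.

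What rescues the step in your situation is the following.
\begin{enumerate}
\item All fibres are $\cong F$, so $h^1(X_t,T_{X_t})$ is constant.
\item By Grauert, $R^1f_*T_{X/Y}$ is then locally free and commutes with base change. Hence pointwise vanishing gives $\rho=0$ as a map of sheaves.
\item On a Stein $U$, the obstruction to lifting lies in $H^1(f^{-1}U,T_{X/U})\cong H^0(U,R^1f_*T_{X/U})$, by Leray and Cartan's Theorem B, and there it vanishes.
\end{enumerate}
Properness enters through coherence and base change, not through patching. Also, the lifts need not commute, so you integrate them one coordinate at a time. This is the Kodaira--Spencer/Fr\"olicher--Nijenhuis criterion, and it still leaves $\rho_t=0$ unproved.

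\textbf{The Isom-space paragraph.} This does not prove $\rho_t=0$ either. Generic smoothness and Baire category give local sections of $\mathrm{Isom}_U(U\times F,X)\to U$ at best over a dense open subset of $U$. Producing a section through a point over an arbitrary $s$ is exactly the content of Fischer--Grauert, so the paragraph restates the theorem rather than reducing it. Moreover, such a section would already be the desired trivialization, making Steps 2--3 superfluous. The two halves of your sketch are therefore not logically connected, and the $C^\infty$ trivialization from Step 1 is never used.

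The honest version of your proof is the paper's: cite \cite{FG65} for the lemma and drop the partial argument, or repair Step 3 as above and still cite \cite{FG65} for $\rho_t=0$.
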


\begin{corollary}
    Let $f:X \to Y$ be a surjective morphism between normal compact  analytic varieties such that the general fibers are isomorphic to a fixed compact complex manifold $F$. Then there exists an analytic  open subset $U \subset Y$ such that $f^{-1}(U)$ is biholomorphic  to $ U\times F$.
\end{corollary}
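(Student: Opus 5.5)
The plan is to reduce the Corollary to Lemma \ref{biholo} by removing the bad locus of $f$ and then restricting to a small open set of the base. Let $\Sigma_X=\Sing(X)$ and $\Sigma_Y=\Sing(Y)$ be the singular loci. Both are proper closed analytic subsets, since $X$ and $Y$ are normal, hence reduced and irreducible on each component. Let $\Sigma_f\subset X\setminus \Sigma_X$ be the closed analytic set of smooth points of $X$ at which $df$ fails to be surjective.

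By generic smoothness (Sard's theorem in the analytic category), $f(\Sigma_f)$ is a proper closed analytic subset of $Y$. Closedness and analyticity come from Remmert's proper mapping theorem, since $f$ is proper because $X$ is compact. The image $f(\Sigma_X)$ is also closed analytic by Remmert.

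It may happen that $f(\Sigma_X)=Y$, for instance when $\Sigma_X$ dominates $Y$. To handle this, use the hypothesis that the general fiber is isomorphic to the smooth manifold $F$. This gives a dense Zariski open set $V_0\subset Y$ over which the fibers $X_y$ are smooth compact manifolds isomorphic to $F$. Over a point $y$ of the smooth locus of $Y$, if the fiber $X_y$ is smooth of the expected dimension and $f$ is flat near $X_y$, then $X$ is smooth along $X_y$. Indeed, a flat morphism with smooth base and smooth fibers has smooth total space. By generic flatness (Frisch), $f$ is flat over a dense Zariski open subset of $Y$.

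Set
\[
U := V_0 \setminus \bigl(\Sigma_Y \cup f(\Sigma_f) \cup N\bigr),
\]
where $N$ is the non-flat locus image, shrunk further if needed. Then $U$ is a nonempty Zariski open subset of $Y$. It is a complex manifold, and $f^{-1}(U)$ is a complex manifold on which $f$ is a proper submersion onto $U$ with all fibers biholomorphic to $F$. Lemma \ref{biholo} then gives, for any $s\in U$, a neighbourhood $U_s\subset U$ with $f^{-1}(U_s)\cong U_s\times F$. Replacing $U$ by this $U_s$ proves the claim.

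The main obstacle is making precise that the locus of $y$ with $X_y\cong F$ contains a nonempty open set on which the other conditions also hold. I will interpret "general fibers" as meaning that this holds on a dense subset whose complement lies in a proper analytic subset. Then intersecting with the Zariski open set above is nonempty. Only one point of the intersection is needed, together with the requirement that all fibers over a neighbourhood are isomorphic to $F$, which is what Lemma \ref{biholo} needs.

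If the hypothesis only gives isomorphism on a very general set, I would fall back on the Fischer–Grauert theorem. Over the smooth submersive locus $U$, the fibers form a proper smooth family. The set of $y$ with $X_y\cong F$ is then a countable union of analytic subsets, so by Baire one of them is all of $U$, and this again yields isomorphic fibers on an open set.
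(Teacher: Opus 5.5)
Your argument is correct. It has the same skeleton as the paper's proof: discard a nowhere dense closed analytic subset of $Y$ so that over the remaining open set $f$ is a proper submersion of manifolds with every fibre isomorphic to $F$, then apply Lemma \ref{biholo}.

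The real difference is how you handle $\Sing(X)$. The paper sets $W=Y\setminus\bigl(f(\Sing(X))\cup\Sing(Y)\bigr)$ and cites \cite[Proposition 1.21]{PR96} for the submersion locus. It then intersects with a Zariski open set of fibres isomorphic to $F$, but it never checks that $f(\Sing(X))\neq Y$, so $W$ could a priori be empty. You noticed this and dealt with it using Frisch's generic flatness. You combined it with the local criterion that a morphism flat over a regular point with regular fibre has regular total space there and surjective differential. This shows $\Sing(X)$ misses the general fibre, which is exactly what makes the paper's $W$ nonempty. Your route is slightly longer but closes a step the paper leaves tacit; the paper's is shorter because it leans on citations. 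The criterion does need $X_y\cong F$ as complex spaces, not just after reduction. That is the natural reading, and it is the one the paper uses when it takes a Zariski open set of such fibres.

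Three small corrections.
\begin{enumerate}
\item $\Sigma_f$ is closed only in $X\setminus\Sigma_X$. So $f|_{\Sigma_f}$ need not be proper, and Remmert's theorem does not give that $f(\Sigma_f)$ is closed. You can simply drop $f(\Sigma_f)$ from the definition of $U$: your flatness criterion already makes $f$ submersive along every fibre over the open set $V_0\setminus(\Sigma_Y\cup N)$.
\item The Baire fallback is wrong as stated. The isomorphism locus need not be a countable union of closed analytic subsets. For example, a family of $\mathbb{P}^1\times\mathbb{P}^1$'s over a disc degenerating to the Hirzebruch surface $\mathbb{F}_2$ has the punctured disc as its isomorphism locus. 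That set is not a countable union of closed analytic subsets of the disc, since each proper one is discrete.
\item You do not need that fallback under the Zariski-open reading of ``general'' that both you and the paper adopt.
\end{enumerate}
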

\begin{proof}
    Let $Z:=f(\Sing(X))\cup \Sing(Y)$ and $W:=Y\setminus Z$. Then $f^{-1}(W) \to W$ is a proper holomorphic map between complex manifolds. By  \cite[Proposition 1.21]{PR96}, we can replace $W$ by a smaller open subset over which  $f$ is a submersion. Now
     choose a Zariski open  set $W'$ such that the fiber over each point  is isomorphic to $F$. Then replace $W$ by $W \cap W'$, we get our required result by applying Lemma \ref{biholo} for $X=f^{-1}(W)$ and $Y=W$.
\end{proof}

\begin{theorem}[Classification Theorem]\label{thm:classification}
    Let $X$ be a compact K\"ahler manifold of dimension $2$. Assume that $X$ is minimal, i.e., it does not contain any $(-1)$-curve, and Kodaira dimension $\kappa(X, K_X)=0$. Then $X$ is one of the following:
\begin{enumerate}
    \item $X$ is an Enriques Surface. In this case $X$ is projective.
    \item $X$ is a bi-elliptic surface. In this case $X$ is projective.
    \item $X$ is a $K3$ surface.
    \item $X$ is a torus.
\end{enumerate}
    
\end{theorem}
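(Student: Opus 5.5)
The plan is to reduce to the Enriques–Kodaira classification of compact complex surfaces and use the Kähler hypothesis only to rule out the non-Kähler classes. Since $X$ is a smooth minimal compact complex surface with $\kappa(X)=0$, the classification lists exactly six possibilities:
\begin{enumerate}
    \item $K3$ surfaces;
    \item Enriques surfaces;
    \item complex tori;
    \item bi-elliptic (hyperelliptic) surfaces;
    \item primary Kodaira surfaces;
    \item secondary Kodaira surfaces.
\end{enumerate}
This list is characterized by the pair $(b_1, 12\,\mathrm{K}_X$-torsion order$)$, i.e.\ by $b_1(X)$ together with the order of $K_X$ in $\operatorname{Pic}(X)$, which is $1,2,3,4$ or $6$. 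So it suffices to discard the Kodaira surfaces and then check projectivity in the Enriques and bi-elliptic cases.

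\textbf{Excluding Kodaira surfaces.} A compact Kähler manifold has even first Betti number by Hodge theory. Primary Kodaira surfaces have $b_1=3$, and secondary Kodaira surfaces have $b_1=1$. Both are therefore non-Kähler, which leaves exactly the four classes in the statement.

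\textbf{Projectivity of Enriques and bi-elliptic surfaces.} In both cases $q(X)=h^{0,1}$ and $p_g(X)=h^{2,0}$ are known:
\begin{enumerate}
    \item an Enriques surface has $q=p_g=0$;
    \item a bi-elliptic surface has $q=1$ and $p_g=0$.
\end{enumerate}
For a compact Kähler surface with $p_g=h^{2,0}=0$ we have $H^2(X,\mathbb{C})=H^{1,1}(X)$, so every class in $H^2(X,\mathbb{Q})$ is of type $(1,1)$. Fix a Kähler class $[\omega]$. Since $H^{1,1}$ is spanned by rational classes, we can approximate $[\omega]$ by a rational class, and this class lies in the open Kähler cone. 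By the Lefschetz $(1,1)$-theorem a multiple of it is $c_1(L)$ for a line bundle $L$ carrying a positive metric. Kodaira's embedding theorem then shows $L$ is ample, so $X$ is projective. This is the Kodaira criterion $h^{2,0}=0 \Rightarrow$ projective.

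The only potential subtlety is making sure the classification list being quoted applies to all compact complex surfaces and not only to algebraic ones. I will cite the general Enriques–Kodaira classification (Barth–Hulek–Peters–Van de Ven, Ch.~VI) for this. Beyond that the argument is purely a matter of quoting invariants, so I expect no real obstacle.
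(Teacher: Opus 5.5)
Your proof is correct. The first half matches the paper's: both quote the Enriques--Kodaira list of minimal surfaces with $\kappa=0$ from Barth--Hulek--Peters--Van de Ven, and both discard the primary and secondary Kodaira surfaces because their first Betti numbers ($3$ and $1$) are odd.

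The difference is in the projectivity step. The paper reads off from the table that Enriques and bi-elliptic surfaces have algebraic dimension $a(X)=2$, so they are Moishezon. It then applies Moishezon's theorem that a Moishezon K\"ahler manifold is projective. You use only $p_g=h^{2,0}=0$. Then $H^2(X,\mathbb{R})=H^{1,1}_{\mathbb{R}}(X)$ and rational classes are dense, so the open K\"ahler cone contains a rational class. A multiple of it is $c_1(L)$ for a positively curved line bundle $L$ by Lefschetz $(1,1)$ and the $\partial\bar\partial$-lemma, and Kodaira embedding finishes.

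The paper's route is a one-line citation, but it rests on the deeper Moishezon theorem and on the algebraic-dimension column of the table. Yours is self-contained Hodge theory and needs only the more basic invariant $p_g$. It also proves the more general fact that every compact K\"ahler surface with $h^{2,0}=0$ is projective. That is exactly the criterion the paper itself invokes in Case I of the proof of Theorem~\ref{S}.

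Your remark that the list is distinguished by $b_1$ and the order of $K_X$ is true but plays no role in the argument.
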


\begin{proof}
    Since $\kappa(X, K_X)=0$, from Table $10$ of \cite[Chapter VI, page 244]{BHPV04} we see that $X$ is one of the following: (i) Enriques surface, (ii) Bi-elliptic surface, (iii) Primary Kodaira surface, (iv) Secondary Kodaira surface, (v) $K3$ surface, or (vi) Abelian surface. From the same table we see that in case of (iii) and (iv), the first betti number $b_1(X)$ is odd, hence they are not K\"ahler. Therefore $X$ cannot be of type (iii) or (iv) (as it is K\"ahler). The only thing that remains to show is that in case (i) and (ii), $X$ is projective. From the Table $10$ again we notice that in case of (i) and (ii), the algebraic dimension $a(X)=2=\dim X$, and hence, $X$ is Moishezon. Since $X$ is also K\"ahler, from Moishezon's theorem it follows that $X$ is projective. 
\end{proof}

\begin{lemma}\label{abelian}
    Let $X$ be a $2$-dimensional torus equipped with an elliptic fibration $\phi:X \to C$ where $C$ is a curve. Then general fibers of $\phi$ are isomorphic.
\end{lemma}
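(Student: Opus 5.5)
Write $X=\mathbb{C}^2/\Lambda$ and let $\phi:X\to C$ have connected fibres. The idea is to show that $\phi$ is, up to a translation, the quotient map of $X$ by a one-dimensional subtorus. Then every fibre is a translate of that subtorus, and so all fibres are isomorphic, not just the general ones.

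First, $C$ is an elliptic curve. The general fibre is an elliptic curve, hence the fibration is non-constant, so the map $H^0(C,\Omega^1_C)\to H^0(X,\Omega^1_X)$ is injective. Also, any rational curve would lift to $X$ (the universal cover of a rational curve is itself), and a torus contains no rational curves. Combining these facts gives $g(C)\ge 1$. On the other hand, $\mathrm{Alb}(C)=\mathrm{Jac}(C)$ receives a surjection from $\mathrm{Alb}(X)=X$. Since $\dim \mathrm{Jac}(C)=g(C)\le 2$, and $g(C)=2$ is impossible (the image of $X\to C\to \mathrm{Jac}(C)$ would be a curve generating a surface, while a homomorphism image of $X$ is a subtorus), we get $g(C)=1$.

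Next, translate the origin so that $\phi(0)=0$ in the group law of $C$. By the universal property of tori, any holomorphic map between complex tori is a group homomorphism followed by a translation. So $\phi$ is a surjective group homomorphism $X\to C$.

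Set $E:=\ker\phi$. Its identity component $E^0$ is a one-dimensional subtorus, and $\phi$ factors as $X\to X/E^0\to C$, where the second map is an isogeny. Because $\phi$ has connected fibres, this isogeny must be an isomorphism, so $E=E^0$. Consequently every fibre $\phi^{-1}(c)$ is a coset $x+E$, and translation by $x$ is a biholomorphism $E\cong x+E$. This proves the claim, in fact for all fibres.

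The only step needing care is the reduction to a homomorphism. This rests on the standard fact that a holomorphic map from a complex torus to a complex torus (here the elliptic curve $C$) is affine. That fact follows by lifting the map to the universal covers: the derivative of the lift descends to a holomorphic map on the compact torus $X$, so it is constant, and the lift is affine.

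If one prefers not to use the connected-fibre hypothesis in the form above, it can be replaced by Stein factorization. Even if $\ker\phi$ is disconnected, its components are translates of $E^0$, so the connected fibres of the Stein factorization are again all isomorphic to $E^0$.
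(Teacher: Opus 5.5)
Your proof has a genuine gap at the step ruling out $C\cong\mathbb{P}^1$. From the moment $g(C)=1$ is known, the strategy works. Before that, the listed facts do not give $g(C)\ge 1$:
\begin{itemize}
\item Injectivity of $\phi^*\colon H^0(C,\Omega^1_C)\to H^0(X,\Omega^1_X)$ only gives the upper bound $g(C)\le 2$.
\item The Albanese argument is silent when $g(C)=0$, since $\mathrm{Jac}(C)$ is then a point.
\item The remark that every map $\mathbb{P}^1\to X$ is constant constrains maps \emph{into} $X$, not the map $\phi$ out of $X$. To get a rational curve in $X$ from a rational base you would need a section of $\phi$, and an elliptic fibration need not have one (a multisection over $\mathbb{P}^1$ can have any genus).
\end{itemize}
You can only apply the affine-map property to $\phi$ once $C$ is known to be a torus, so this hole matters.

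It is easy to fill. Since $h^0(X,\Omega^1_X)=2>1=h^0(F,\Omega^1_F)$ for a smooth fibre $F$, some nonzero holomorphic $1$-form $\omega$ on $X$ restricts to zero on the fibres. Then $\omega=h\,d\phi$ with $h$ meromorphic. Because $d\omega=0$, $h$ is constant on the connected fibres, so $\omega=\phi^*\eta$ for a $1$-form $\eta$ on $C$. A pole-order check along the fibre components, including multiple ones, shows $\eta$ is holomorphic. Hence $g(C)\ge 1$; this is the standard inequality $q(X)\le g(C)+g(F)$.

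With that repair your proof is correct, but it takes a different route from the paper. The paper never identifies the base. It applies the same fact, that holomorphic maps between tori are affine ([BL04, Proposition 1.2.1]), to the inclusion of a general fibre $F\hookrightarrow X$. After translating, $F$ is a one-dimensional subtorus, and one considers the quotient $q\colon X\to X/F$. Any other general fibre $F'$ is disjoint from $F=q^{-1}(q(0))$, so the irreducible curve $q(F')$ cannot be all of $X/F$. Hence $q(F')$ is a point and $F'$ is a translate of $F$. This is shorter and sidesteps the genus issue entirely.

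Your route costs the identification of $C$ but proves more. It shows $C$ is an elliptic curve and $\phi$ is a homomorphism up to translation. It also shows every fibre, not just a general one, is a reduced translate of the connected kernel, so $\phi$ is a smooth isotrivial fibration. The paper only needs the weaker statement, to conclude that the $j$-invariant is constant and $M_C=0$.
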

\begin{proof}
Let $F$ be a general fiber of $\phi$. Then $F$ admits a closed embedding into $X$. By \cite[Proposition 1.2.1]{BL04}, we may assume that $F$ is  a subgroup of $X$. Let $q: X \to X/F$ be the quotient  morphism of group varieties. Note that dim$X/F=1$. Let $F'$ be any other general fiber of $\phi$. Since $\Supp F \cap \Supp F' =\phi$, we have $q(F')=${pt} which implies that  $F'$ is a translate of $F$. In particular, any two general fibers are translates of each other and hence, they are isomorphic.\\
\end{proof}
\begin{lemma}\label{action}
    Let $G$ be an infinite group acting on a set $X$. If the orbit of every element of $X$ is finite then, there exists a subgroup $H$ of $G$ of finite index which fixes three elements of $X$.
\end{lemma}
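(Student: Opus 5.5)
The plan is to use the orbit–stabilizer correspondence together with the fact that a finite intersection of finite-index subgroups again has finite index. The statement tacitly needs $X$ to have at least three elements, which is the case in our applications. If $|X|<3$, the same argument with all elements of $X$ gives a finite-index subgroup fixing every point of $X$. The hypothesis that $G$ is infinite plays no role in the argument; it only ensures that $H$ is a nontrivial, in fact infinite, subgroup, which is presumably how the lemma is used later.

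First, fix three distinct elements $x_1,x_2,x_3\in X$. For each $i$, the map $G/\Stab(x_i)\to \Orb(x_i)$, $g\Stab(x_i)\mapsto g\cdot x_i$, is a well-defined bijection of sets. This is the standard orbit–stabilizer argument: $g x_i=h x_i$ if and only if $h^{-1}g\in\Stab(x_i)$. Hence $[G:\Stab(x_i)]=|\Orb(x_i)|<\infty$ by hypothesis.

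Next, set $H:=\Stab(x_1)\cap\Stab(x_2)\cap\Stab(x_3)$. Clearly $H$ fixes $x_1,x_2,x_3$. It remains to see that $H$ has finite index, and here I would use the elementary bound $[G:A\cap B]\le [G:A]\,[G:B]$. To prove it, consider the map $G/(A\cap B)\to G/A\times G/B$ given by $g(A\cap B)\mapsto (gA,gB)$. It is well defined and injective, since $gA=g'A$ and $gB=g'B$ give $g^{-1}g'\in A\cap B$. Applying this twice yields
\[
[G:H]\le \prod_{i=1}^3 |\Orb(x_i)|<\infty .
\]
Finally, since $G$ is infinite and $[G:H]$ is finite, $H$ is itself infinite.

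There is no real obstacle here. The only point requiring care is the implicit assumption $|X|\ge 3$, together with checking that the coset maps above are well defined and injective, both of which are immediate.
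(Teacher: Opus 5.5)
Your proof is correct and is essentially the paper's argument. The paper applies orbit--stabilizer to the diagonal action on $X\times X\times X$, using $\Orb(x,y,z)\subset \Orb(x)\times\Orb(y)\times\Orb(z)$; since $\Stab(x,y,z)=\Stab(x)\cap\Stab(y)\cap\Stab(z)$, your injection $G/(A\cap B)\hookrightarrow G/A\times G/B$ is exactly that orbit inclusion, and your insistence on three \emph{distinct} points (hence $|X|\ge 3$) is a sensible clarification, since that is what the later application to $\mathbb{P}^1$ needs.
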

\begin{proof}
  Consider the induced action of $G$  on the product $X \times X \times X$. For $x\in X$, let $\Orb(x)$ denote the orbit of $x$ under the action of $G$ on $X$. Then for any $(x,y,z)\in X \times X \times X$ , we have $\Orb(x,y,z) \subset \Orb(x) \times \Orb(y)\times \Orb(z)$,  where $\Orb(x,y,z)$ denotes the orbit of $(x,y,z)$ under the action of $G$ on $X \times X \times X$ . In particular, $\Orb(x,y,z)$ is finite. By Orbit-Stabilizer theorem, $\Stab(x,y,z)$ is a subgroup of $G$ of finite index which fixes $x,y$  and $z$.
\end{proof} 
\begin{lemma}\label{lem:Canonical bundle}
    Let $ f:X \to Z$ be a morphism with connected fibers between normal compact K\"ahler  varieties. Let $\Delta\geq 0$ be a $\mathbb{Q}$- divisor such that $(X,\Delta)$ is lc. Assume that $K_X+\Delta \sim_{\mathbb{Q}} f^* D$ for some $\mathbb Q$-divisor $D$ on $Z$, and  $K_Z$  is represented by a Weil divisor. Let $S$ be an irreducible component of $\rddelta$ such that the restriction morphism $f|_{S}:S \to Z$ is generically finite. Then there exists an effective $\mathbb Q$-divisor $B_Z$ on $Z$ such that $(Z,B_Z)$ is lc and $K_X+\Delta \sim_{\mathbb{Q}} f^*(K_Z+B_Z)$.
\end{lemma}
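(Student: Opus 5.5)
The plan is to use adjunction to the lc center $S$ and then push the resulting boundary down to $Z$ as a discriminant divisor. Throughout we may work with $\mathbb{Q}$-linear equivalence, so we can fix a representative $D$ with $K_X+\Delta\sim_{\mathbb{Q}} f^*D$.

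\textbf{Defining $B_Z$.} Since $f$ has connected fibers and $f|_S$ is generically finite onto its image, $f(S)=Z$: the image is irreducible, and the dimension count together with the existence of $S$ forces $\dim Z=\dim S=\dim X-1$. So $f$ has relative dimension one. We then define $B_Z$ by the canonical bundle formula, as the discriminant part. For each prime divisor $P\subset Z$, let $t_P$ be the lc threshold of $f^*P$ with respect to $(X,\Delta)$ over the generic point of $P$, and set
\[
B_Z:=\sum_P (1-t_P)P ,\qquad M_Z:=D-K_Z-B_Z .
\]
This is a finite sum. Because $(X,\Delta)$ is lc, every $t_P\ge 0$, so every coefficient of $B_Z$ is at most $1$.

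\textbf{Showing $M_Z\sim_{\mathbb{Q}}0$.} The general fiber $F$ is a curve with $(K_X+\Delta)|_F\equiv 0$. Since $S$ dominates $Z$ and $S\subset\lfloor\Delta\rfloor$, the divisor $\Delta|_F$ has a coefficient-one point, so $F\cong\mathbb{P}^1$. Hence the moduli part $M_Z$ is $\mathbb{Q}$-trivial. More precisely, $\deg\Delta|_F=2$ with a point of coefficient $1$, so the generic fiber is either $(\mathbb{P}^1,p+q)$ with $p,q$ both coefficient one, or $(\mathbb{P}^1,p+\sum a_i q_i)$. In both cases the moduli part of the lc-trivial fibration is torsion: the j-invariant-type map is constant. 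This is the known statement that for $\mathbb{P}^1$-fibrations with a horizontal lc center the moduli b-divisor is trivial, proved via Kawamata/Ambro.

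A cleaner route avoids the general canonical bundle formula by using adjunction to $S$. Write $(K_X+\Delta)|_{S^n}=K_{S^n}+\Delta_{S^n}$, where $(S^n,\Delta_{S^n})$ is lc by adjunction. The morphism $g:=f|_{S^n}:S^n\to Z$ is generically finite with $K_{S^n}+\Delta_{S^n}\sim_{\mathbb{Q}} g^*D$. We then apply Riemann--Hurwitz, i.e.\ the lc analogue of \cite[Prop.~5.20]{KM98}: if $g$ is finite and $K_{S^n}+\Delta_{S^n}=g^*(K_Z+B)$, then $(Z,B)$ is lc exactly when $(S^n,\Delta_{S^n})$ is lc. Since $g$ is only generically finite, we first take its Stein factorization $S^n\to S'\to Z$ and handle the birational part with the negativity lemma \cite{Wang21}, which shows that $K_{S'}+\Delta_{S'}$ pulls back correctly. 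Then we set $B_Z$ by $K_{S'}+\Delta_{S'}=\nu^*(K_Z+B_Z)$ for the finite map $\nu:S'\to Z$, i.e.\ $B_Z:=\frac{1}{\deg\nu}\nu_*(\ldots)$ computed divisor by divisor via ramification. Effectivity of $B_Z$ is the delicate point. We use the fiberwise description: over a prime $P$, the coefficient of $P$ in $B_Z$ equals $1-t_P$, and $t_P\le 1$ because $S$ meets $f^{-1}(P)$, so it is nonnegative.

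\textbf{Main obstacle.} The main obstacle is making the two constructions agree: the discriminant coefficient and the Hurwitz-type formula on $S$ must match, and $D\sim_{\mathbb{Q}}K_Z+B_Z$ must hold rather than merely $g^*D\sim_{\mathbb{Q}} g^*(K_Z+B_Z)$. For the latter we push forward by the generically finite map $g$ and use $g_*g^*=\deg g$, which works because $K_Z$ is a Weil divisor and $Z$ is normal. Log canonicity of $(Z,B_Z)$ then follows from the finite-cover comparison of discrepancies. Alternatively, it follows from inversion of adjunction-type arguments on $X$: any divisor over $Z$ with discrepancy $<-1$ would pull back to one over $X$ with discrepancy $<-1$ after base change, contradicting that $(X,\Delta)$ is lc.
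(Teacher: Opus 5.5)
Your skeleton matches the paper's: adjunction to $S$, then descent along the generically finite map $S\to Z$ to an lc pair $(Z,B_Z)$, then pushforward using that $K_Z$ is a Weil divisor to get $D\sim_{\mathbb Q}K_Z+B_Z$. The paper does the descent step by citing \cite[Lemma 1.1]{FG12}. You try to prove that step by hand, and there is a genuine gap there.

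After Stein factorization, your negativity-lemma treatment of the bimeromorphic part is fine. But you then only know $K_{S'}+\Delta_{S'}\sim_{\mathbb Q}\nu^*D$ for the finite part $\nu\colon S'\to Z$. You have not justified defining $B_Z$ by the equality $K_{S'}+\Delta_{S'}=\nu^*(K_Z+B_Z)$. Writing $K_{S'}=\nu^*K_Z+R_\nu$, this needs $\Delta_{S'}+R_\nu$ to be a pullback divisor. That means $1-(1-d_Q)/e_Q$ must be the same for all primes $Q$ over a given prime $P\subset Z$, and a $\mathbb Q$-linear equivalence alone does not give this.

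Suppose instead you put $B_Z:=\frac{1}{\deg\nu}\nu_*(R_\nu+\Delta_{S'})$. Then $K_Z+B_Z\sim_{\mathbb Q}D$ and $B_Z\geq 0$ are immediate: the coefficient of $P$ is $1-\frac{1}{\deg\nu}\sum_{Q\mapsto P}f_Q(1-d_Q)\in[0,1]$, so effectivity is not the delicate point. But now there is no crepant equality, so the finite-cover comparison of discrepancies (Koll\'ar--Mori, Prop.~5.20) tells you nothing about lc-ness of $(Z,B_Z)$.

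Two further problems compound this. Your effectivity argument via the discriminant coefficients $1-t_P$ presupposes that the discriminant and the Hurwitz-type boundary coincide. You flag this as the main obstacle but never establish it. Your first route (triviality of the moduli part, lc-ness via ``base change'') rests on projective canonical bundle formula results that are not available as stated in the K\"ahler setting.

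The missing idea is the Galois averaging behind \cite[Lemma 1.1]{FG12}:
\begin{enumerate}
\item Pass to a Galois closure $\tilde\nu\colon\tilde S\to Z$ with group $G$, factoring through $S'$.
\item Pull $(S',\Delta_{S'})$ back crepantly to a sub-lc pair $(\tilde S,\Delta_{\tilde S})$.
\item Replace $\Delta_{\tilde S}$ by $\bar\Delta:=\frac{1}{|G|}\sum_{g\in G}g^*\Delta_{\tilde S}$. Discrepancies are affine in the boundary, so $(\tilde S,\bar\Delta)$ is still sub-lc, and still $K_{\tilde S}+\bar\Delta\sim_{\mathbb Q}\tilde\nu^*D$.
\item Now $\bar\Delta+R_{\tilde\nu}$ is $G$-invariant, so $K_{\tilde S}+\bar\Delta=\tilde\nu^*(K_Z+B_Z)$ holds exactly. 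Prop.~5.20 then gives that $(Z,B_Z)$ is lc.
\item A direct computation shows this $B_Z$ is the norm-defined divisor above, hence effective.
\end{enumerate}
In your relative-dimension-one situation $\deg\nu\le 2$, so alternatively it would suffice to show that $\Delta_{S'}$ is invariant under the covering involution. That, however, requires a separate connectedness/$\mathbb P^1$-link argument, which you do not supply.

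Two smaller points. The paper's argument never uses relative dimension one. Also, $f(S)=Z$ does not follow from $f$ having connected fibers; it is part of what the hypothesis ``$f|_S\colon S\to Z$ is generically finite'' means.
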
 
\begin{proof}
First replacing $(X,\Delta)$ by a dlt model as in \cite[Lemma 2.28]{DH25}, we may assume that $(X,\Delta)$ is a $\mathbb{Q}$-factorial dlt pair. Then $S$ is normal (being a dlt center), and by adjunction we have $(K_X+\Delta)|_S\sim_\mathbb{Q}K_S+\Delta_S$ such that $(S,\Delta_S)$ is dlt. Clearly, we have $K_S+\Delta_S\sim_{\mathbb{Q}}(f|_S)^*D$. Then by \cite[Lemma 1.1]{FG12} (which holds equally well for analytic varieties), there is an effective $\mathbb{Q}$-divisor $B_Z\geq 0$ such that $(Z,B_Z)$ is lc and $K_S+\Delta_S \sim_{\mathbb{Q}}(f|_S)^*(K_Z+B_Z)$. Now recall that $K_Z$ is a Weil divisor, and since $f|_S$ is generically finite, it follows that $K_S$ is also a Weil divisor. Therefore applying $(f|_S)_*$ as a cycle pushforward on both sides of $(f|_S)^*D \sim_{\mathbb{Q}}(f|_S)^*(K_Z+B_Z)$, we get that $D \sim_{\mathbb{Q}}(K_Z+B_Z)$. Substituting back to original relation, we conclude that $K_X+\Delta \sim_{\mathbb{Q}}f^*(K_Z+B_Z)$.

\end{proof}
\begin{lemma}\label{lem:divisor}
    Let $X$ be a $\mathbb{Q}$-factorial compact K\"ahler surface. Then $K_X$ is represented by a  divisor.
\end{lemma}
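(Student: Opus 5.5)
The goal is to show that $K_X$ is the first Chern class of a genuine holomorphic line bundle, or equivalently that there is a Weil divisor $W$ with $\mathcal O_X(W)\cong\omega_X$. The plan is to pass to a resolution, take a canonical divisor there (where it is automatic), and push it forward. The key point is that on a normal surface the singular locus is finite, so the canonical sheaf is determined by its restriction to the smooth locus.

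\textbf{Step 1 (resolution and a divisor upstairs).} Let $\pi\colon Y\to X$ be a minimal resolution of singularities. Then $Y$ is a smooth compact surface, and it is K\"ahler, since a resolution of a compact K\"ahler surface is K\"ahler (equivalently, $b_1(Y)$ is even). On a smooth compact K\"ahler surface the canonical bundle has a meromorphic section: if $a(Y)>0$ this is clear, and in general one can use that $K_Y$ or some twist $K_Y+A$ by an effective divisor has a nonzero section. This is the step that needs care, since a priori $Y$ could have no divisors at all. I would treat it as follows.
\begin{itemize}
\item If $\kappa(Y)\ge 0$, a section of $mK_Y$ gives a $\mathbb{Q}$-divisor. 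That is not enough by itself, so instead use that on any smooth compact complex surface every line bundle $L$ has a nonzero meromorphic section: choose a point $p$ and twist by a high power of the ideal at the blow-up, or use that $H^0(Y,L\otimes \mathcal O(nE))$ is nonzero for a suitable exceptional curve after blowing up.
\item A cleaner route is to note that $\mathcal O_Y(K_Y)$ admits a meromorphic section locally everywhere. Globally, every line bundle on a reduced compact complex space with a Cartier divisor admits a nonzero meromorphic section exactly when it comes from a Cartier divisor.
\end{itemize}
I expect this to be the main obstacle. I would handle it by blowing up a point $b\colon Y'\to Y$ and using Riemann--Roch: $\chi(K_{Y'}+nE)$ grows quadratically in $n$, so $h^0(K_{Y'}+nE)+h^2(K_{Y'}+nE)>0$. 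Here $h^2(K_{Y'}+nE)=h^0(-nE)=0$ for $n>0$. A nonzero section then yields an effective divisor $G\in|K_{Y'}+nE|$, so $K_{Y'}\sim G-nE$ is a divisor.

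\textbf{Step 2 (push forward).} Set $W:=(\pi\circ b)_*(G-nE)$, a Weil divisor on $X$. On the smooth locus $X^\circ=X\setminus\Sing X$, the map $\pi\circ b$ is an isomorphism away from finitely many points (the blown-up point and the exceptional curves). So $\mathcal O_{X^\circ}(W)\cong\omega_{X^\circ}$ away from a finite set. Since both sheaves are reflexive of rank one on a normal surface, this isomorphism extends across codimension-two points. Therefore $\mathcal O_X(W)\cong\omega_X$, i.e.\ $K_X\sim W$ is represented by a divisor. $\mathbb{Q}$-factoriality is not needed for this step. It only makes $W$ $\mathbb{Q}$-Cartier, which is what is needed for the application in Lemma \ref{lem:Canonical bundle}.
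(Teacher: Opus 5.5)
Your Step 2 is fine. It is the same push-forward step the paper ends with (the paper phrases it as $K_Y+B=f^*K_X$), and your reflexive-extension argument across the finite singular set is a clean way to do it.

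\textbf{The gap is in Step 1, which is the only real content.} Your Riemann--Roch estimate has the wrong sign. Since $E^2=-1$ and $K_{Y'}\cdot E=-1$,
\[
\chi(K_{Y'}+nE)=\chi(\mathcal O_{Y'})+\tfrac12(K_{Y'}+nE)\cdot nE=\chi(\mathcal O_{Y'})-\tfrac{n(n+1)}{2}\longrightarrow-\infty ,
\]
so it gives no lower bound for $h^0$. More fundamentally, twisting by an exceptional curve cannot create sections. We have $K_{Y'}+nE=b^*K_Y+(n+1)E$ and $b_*\mathcal O_{Y'}((n+1)E)=\mathcal O_Y$, hence $H^0(Y',K_{Y'}+nE)\cong H^0(Y,K_Y)$ for every $n\ge 0$. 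So whenever $p_g(Y)=0$ your linear system is empty and the argument produces nothing. The preliminary bullets do not rescue this. It is false that every line bundle on a smooth compact surface has a nonzero meromorphic section: a nontrivial element of $\mathrm{Pic}^0$ of a $2$-torus containing no curves has none, since the divisor of such a section would have to be $0$. The second bullet only restates the goal.

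\textbf{What is missing is a global input that handles $p_g(Y)=0$.} The most economical one is a dichotomy on $p_g$:
\begin{itemize}
\item If $p_g(Y)>0$, a nonzero section of $\omega_Y$ gives an effective canonical divisor.
\item If $p_g(Y)=0$, then $h^{2,0}(Y)=0$, so $H^2(Y,\mathbb R)=H^{1,1}(Y,\mathbb R)$. The K\"ahler cone then contains a rational class, and $Y$ is projective by Kodaira's criterion. Hence $\omega_Y\cong\mathcal O_Y(D)$ for some divisor $D$. This is the same fact the paper uses in Case I of the proof of Theorem \ref{S}.
\end{itemize}
The paper's own proof gets there differently, by running a $K_Y$-MMP. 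Either the output has $K$ nef, hence semiample by abundance for K\"ahler surfaces, or it is a Mori fiber space, which forces $Y$ to be projective. The $p_g$ dichotomy avoids the MMP and abundance entirely, and it yields $K_Y$ itself as a divisor, not just a multiple. Combined with your Step 2, it gives a complete proof.
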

\begin{proof}

Let $f: Y \to X$ be a resolution. Then we can write $K_Y+B=f^*K_X$. Now we can run $K_Y$-MMP. There are two possible outcomes:
\begin{enumerate}
    \item We obtain a morphism $g: Y \to Y'$ a morphism such that $K_{Y'}$ is nef. Then $K_{Y'}$ is semiample. Therefore, $K_{Y'}$ is represented by a divisor. This implies that $K_Y$ is also represented by a divisor.\\
 \item We obtain a Mori fiber space $g:Y \to Z$. Because $Y$ is a surface, we have $\dim Z\leq 1$. In this situation, $Y$ is projective. Hence $K_Y$ is represented by a divisor. 
 \end{enumerate}
 In both cases $K_Y$ is represented by a divisor. Since $K_Y+B=f^*K_X$, it follows that $K_X$ is also represented by a divisor.
\end{proof}

\section{Finiteness of Pluricanonical Representation}\label{sec:finiteness}
In this section we prove the finiteness of the pluricanonical representation for compact K\"ahler surfaces of positive (log) Kodaira dimension. Fujino's proof of the same result in \cite[Theorem 3.5]{Fu00} relies on a classical finiteness theorem for Moishezon manifolds due to Ueno (see \cite[Theorem 14.10]{KU75}), which is unavailable to us. Our approach is very different. We use the classification of compact K\"ahler surfaces with a detailed case-by-case analysis, see Theorem \ref{S}, Theorem \ref{plu}.

We start with the definition of pluricanonical representation and prove some elementary properties of it.
\begin{definition}\label{def:b-bimeromorphic}
    Let $(X,\Delta_X)$ and $(Y,\Delta_Y)$ be two log pairs of normal compact analytic varieties. Let $f:X \dashrightarrow Y$ be a bimeromorphic map. We say that  $f:(X,\Delta_X) \dashrightarrow (Y,\Delta_Y)$ is $B$-bimeromorphic if there exists a common resolution $p:Z \to X, q:Z \to Y$ such that $K_Z+\Delta_Z \sim _{\mathbb{Q}}p^*(K_X+\Delta_X)\sim _{\mathbb{Q}} q^*(K_Y+\Delta_Y) $. \\
    Now we can extend this definition for pure $n$-dimensional normal compact reduced analytic space as follows:\\
    Let $(Y,\Gamma):= \sqcup_{i=1}^{k}(Y_i,\Gamma_i)$ and $(Y',\Gamma'):= \sqcup_{i=1}^{k}(Y'_i,\Gamma'_i)$ be two such pairs.\\
    We say $f:(Y,\Gamma) \dashrightarrow (Y',\Gamma') $ is a $B$- bimeromorphic map (resp. morphism) if $Y \dashrightarrow Y'$ is a proper bimeromorphic map (resp. morphism) and there exists a common resolution $\alpha: Z \to Y,\beta: Z \to Y'$ of $f$ such that $\alpha^*(K_Y+\Gamma)\sim_{\mathbb{Q}}\beta^*(K_{Y'}+\Gamma')$ . That is, there exists a permutation $\sigma$ such that $f_i:Y_i \dashrightarrow Y'_{\sigma(i)}$ is a proper bimeromorphic map (resp. morphism) and a common resolution $\alpha_i:Z_i \to Y_i, \beta_i:Z_i \to Y'_{\sigma(i)}$ of $f_i$ such that $\alpha^*_i(K_{Y_i}+\Gamma_i)\sim _{\mathbb{Q}} \beta_i^*(K_{Y'_\sigma(i)}+\Gamma'_{\sigma(i)})$  for all $i\in\{1,2,\ldots, k\}$.\\
  We define
  \[\Bim(X,\Delta):=\{g : (X,\Delta) \dashrightarrow (X,\Delta)\;|\; g \mbox{ is } B\mbox{-bimeromorphic}\} \]
    \[\Aut(X,\Delta):=\{g: X \to X \;|\; g \mbox{ is automorphism and } g^*\Delta=\Delta\}.\]

Note that $\Bim(X,\Delta)$ and $\Aut(X,\Delta)$ have natural group structures under composition of maps.
Since Bim($X,\Delta)$ acts on $H^0(X,\mathcal{O}_X(m(K_X+\Delta)))$ for every integer $m$ such that $m(K_X+\Delta)$ is Cartier, we can define $B$- pluricanonical representation $\rho_m :\Bim(X,\Delta) \to \Aut H^0(X,m(K_X+\Delta))$.\\

\begin{remark}\label{rmk:b-bimeromorphic}
We note that if $f:(X,\Delta)\dashrightarrow (Y,\Gamma) $ is a $B$-bimeromorphic map, then it induces isomorphism $\Bim(X,\Delta) \cong  \Bim(Y,\Gamma)$ via $\sigma \to f\circ \sigma \circ f^{-1}$.
\end{remark}

\end{definition}
\begin{lemma}\label{terminal}
    Let $(X,\Delta)$ be a terminal pair of dimension  $2$. Then $\Bim(X,\Delta)\cong \Aut(X,\Delta)$.
    \begin{proof}
        Let $\sigma: (X,\Delta)\dashrightarrow (X,\Delta)$ be a $B$-bimeromorphic map. Let $W$ be a common resolution satisfying the following commutative diagram:

\[\begin{tikzcd}
	& W \\
	{(X,\Delta)} && {(X,\Delta)}
	\arrow["\alpha"', from=1-2, to=2-1]
	\arrow["\beta", from=1-2, to=2-3]
	\arrow["\sigma", dashed, from=2-1, to=2-3]
\end{tikzcd}\]

such that $\alpha^*(K_X+\Delta)\sim_{\mathbb{Q}} \beta^*(K_X+\Delta)$.
We may write log equations for the maps $\alpha$ and $\beta$.
\[K_W+(\alpha^{-1})_*\Delta\sim_{\mathbb{Q}}\alpha^*(K_X+\Delta) +E\]
\[ K_W +(\beta^{-1})_*\Delta\sim_{\mathbb{Q}}\beta^*(K_X+\Delta)+F\]
Let $D$ be a $\sigma$-exceptional divisor. Let $D'$ be strict transform of $D$ under the morphism $\alpha$ on $W$. Then $D'$ is also $\beta$-exceptional. Since $(X,\Delta)$ is a terminal pair, discrepancy $a(D',X,\Delta)>0$. Again since $D$ is a divisor on $X$, the discrepancy $a(D',X,\Delta)\leq 0$. This is a contradiction. Hence $\Ex(\sigma)$ has codimension at least $2$ in $X$. Similarly, we can show $\Ex(\sigma^{-1})$ has codimension at least $2$. In particular, $\alpha$ and $\beta$ exceptional divisors coincide. Since $X$ is a surface, $\sigma$ is an isomorphism. Moreover, $\sigma_*\Delta=\Delta$  and hence $\sigma \in \Aut(X,\Delta)$.
    \end{proof}
\end{lemma}

Let $h:X \to Y$ be a fibration between normal analytic varieties. Let $\Delta$ be an effective $\mathbb{Q}$-divisor  on $X$.\\
For every prime divisor $P\subset Y$, define $B(h;\Delta):=\sum_{P\subset Y} (1-\lct(X,\Delta,h^*P))P$ where $\lct(X,\Delta,f^*P)=\sup\{t\; |\; K_X+\Delta+th^*P  \mbox{ is lc }\}$.
\begin{theorem}\label{thm:Fujita Canonical}
    Let $f:X \to C$ be a relatively minimal elliptic surface. Let $\Delta\geq 0$ be an effective $\mathbb{Q}$-divisor such that $(X,\Delta)$ is klt. Assume that $K_X+\Delta \sim_{\mathbb{Q}}f^*L$ for some line bundle $L$ on $C$. Then we can write $K_X+\Delta \sim_{\mathbb{Q}}f^*(K_C+B+M)$ such that $B=B(f;\Delta)$ and the pair $(C,B+M)$ is also klt.
\end{theorem}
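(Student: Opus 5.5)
The plan is to reduce the statement to Kodaira's canonical bundle formula for elliptic surfaces, after first showing that $\Delta$ is a $\mathbb{Q}$-linear combination of fibers.

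\emph{Step 1: $\Delta$ is a combination of fibers.} Since $f$ is relatively minimal, $K_X\equiv_f 0$; indeed $K_X\cdot E=0$ for every component $E$ of every fiber. As $K_X+\Delta\sim_{\mathbb{Q}}f^*L$, we get $\Delta\cdot E=0$ for every vertical curve $E$. First, $\Delta\cdot F=0$ for a general fiber $F$, and $\Delta\geq 0$, so $\Delta$ contains no horizontal component. Write $\Delta=\sum_P \Delta_P$ with $\Delta_P$ supported on $f^{-1}(P)$. Then $\Delta_P\cdot E=0$ for all components $E$ of $f^{-1}(P)$. By Zariski's lemma, $\Delta_P=a_P f^*P$ for some $a_P\in\mathbb{Q}_{\geq 0}$, where $f^*P$ is the full fiber with multiplicities. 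Hence $\Delta\sim_{\mathbb{Q}} f^*\bigl(\sum_P a_PP\bigr)$.

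\emph{Step 2: canonical bundle formula for $K_X$.} Kodaira's canonical bundle formula holds for relatively minimal elliptic fibrations of compact complex surfaces, so it applies in our Kähler setting. It gives
\[K_X\sim_{\mathbb{Q}} f^*(K_C+B_0+M),\]
where $12M\sim j^*\mathcal{O}_{\mathbb{P}^1}(1)$ is the moduli part, so $M$ is semiample (nef of degree $\geq 0$), and $B_0=\sum_P b_PP$ is the discriminant part. Its coefficients $b_P$ come from Kodaira's table of singular fibers, with $b_P=1-1/m$ for multiple fibers ${}_mI_n$. Combining with Step 1 gives
\[K_X+\Delta\sim_{\mathbb{Q}} f^*\Bigl(K_C+\sum_P (b_P+a_P)P+M\Bigr).\]

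\emph{Step 3: identification $b_P+a_P=1-\lct(X,\Delta,f^*P)$.} Since $\Delta$ near $f^{-1}(P)$ equals $a_Pf^*P$, we have
\[\lct(X,\Delta,f^*P)=\lct(X,0,f^*P)-a_P.\]
So it suffices to check $b_P=1-\lct(X,0,f^*P)$ fiber type by fiber type. For example:
\begin{itemize}
\item type $II$ gives $\lct=5/6$ and $b_P=1/6$;
\item type $III$ gives $3/4$;
\item type $IV$ gives $2/3$;
\item type $I_n^*$ gives $1/2$;
\item type ${}_mI_n$ gives $1/m$;
\item for the remaining types the analogous standard values hold.
\end{itemize}
This is the classical computation of Kawamata and Fujita showing that the discriminant of an elliptic fibration equals the lct-defined divisor. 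It is the step I expect to be the main obstacle: it requires going through Kodaira's list, passing to log resolutions of the non-normal-crossing fibers ($II,III,IV$ and their starred versions), and computing discrepancies. Once it is established, $B=B(f;\Delta)$.

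\emph{Step 4: klt property.} The pair $(X,\Delta)$ is klt, so $\lct(X,\Delta,f^*P)>0$ for every $P$, and therefore every coefficient of $B$ is $<1$. Since $M$ is semiample on the curve $C$, we may replace $M$ by a $\mathbb{Q}$-linearly equivalent effective divisor $\tfrac1N G$. Here $G$ is a general member of a free linear system $|NM|$, so it is reduced and disjoint from $\Supp B$. Then $(C,B+M)$ is klt, which proves the statement.
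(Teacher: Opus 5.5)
Your proposal is correct and follows essentially the same route as the paper. The shared steps are: the canonical bundle formula $K_X\sim_{\mathbb{Q}}f^*(K_C+B(f;0)+M)$; the shift $B(f;\Delta)=D+B(f;0)$ for $\Delta=f^*D$, coming from $\lct(X,\Delta;f^*P)=\lct(X,0;f^*P)-a_P$, which the paper takes from its cited references while you unpack it fiber type by fiber type; and replacing $M$ by $\frac{1}{N}$ times a general reduced member of $|NM|$ to get klt. Your Step 1 via Zariski's lemma also supplies the justification for $\Delta=f^*D$, which the paper simply asserts.
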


\begin{proof}
    By Fujita's canonical bundle formula (see \cite[8.2.1]{Cor07}), we have
\begin{equation}
    K_X \sim_{\mathbb{Q}}f^*(K_C+B(f;0)+M)
\end{equation}
where $M \sim _{\mathbb{Q}}\frac{1}{12}j^*\mathcal{O}(1)$ and $j:C \to \mathbb{P}^1$ is the $j$-invariant function of smooth fibers of $f$.\\
Since $\Delta=f^*D$, we have $B(f;\Delta)=D+B(f;0)$ by \cite[Lemma 7.4(ii)]{SP09}. Hence we have,
\begin{equation}
    K_X+\Delta\sim _{\mathbb{Q}}f^*(K_C+B(f;\Delta)+M).
\end{equation}
Note that since $(X,\Delta)$ is klt, every coefficient of $B(f;\Delta)$ is strictly less than $1$. Since $C$ is smooth $(X,B(f;\Delta))$ is also klt. Observe that $M$ is effective. If $M=0$, then we are done. If $M\neq 0$, then it is a non zero ample divisor. Choose a sufficiently large $n\in \mathbb{N}$ such that $nM \sim S$ where $S$ is  a reduced divisor. Then $M \sim_{Q} \frac{1}{n} S$. We can choose $n$ to be very large such that $B(f;\Delta)+\frac{1}{n}S)$ is still klt. Taking $J=\frac{1}{n}S$, we are done.
\end{proof}

\begin{lemma}\label{lem: Image}
    Let $X$ be a compact K\"{a}hler manifold such that $(X,\Delta)$ is a lc pair. Assume that $K_X+\Delta$ is semiample. Let $Y=\Proj \bigoplus_{d\geq 0} H^0(dm(K_X+\Delta))$ and let $\phi:X \to Y$ be the morphism obtained from Iitaka fibration. The morphism $\phi$ induces a homomorphism $\chi: \Aut(X,\Delta) \to \Aut Y$ . Then the image of $\chi$ is contained in $\Aut(Y,B)$, where $K_X+\Delta \sim_{\mathbb{Q}}\phi^*(K_Y+B+J)$ and $B$ and $J$ are respectively the discriminant part and the moduli part .
\end{lemma}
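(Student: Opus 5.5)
First I show that $\chi$ exists. Fix $g\in\Aut(X,\Delta)$. Since $g$ is an automorphism with $g^*\Delta=\Delta$, we have $g^*(K_X+\Delta)\sim_{\mathbb Q}K_X+\Delta$; more precisely, $g$ acts on the graded ring $R=\bigoplus_{d\ge0}H^0(X,dm(K_X+\Delta))$ by graded ring automorphisms. So it induces an automorphism $\chi(g)$ of $Y=\Proj R$ satisfying $\phi\circ g=\chi(g)\circ\phi$. Equivalently, $\phi\circ g$ contracts exactly the fibres of $\phi$, and Lemma \ref{rig} gives the factorization. Since $\phi$ has connected fibres and $\phi_*\mathcal O_X=\mathcal O_Y$, the map $\chi$ is a homomorphism.

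Next I show that $\chi(g)$ preserves $B$. Write $h=\chi(g)$. The discriminant is
$B=B(\phi;\Delta)=\sum_P(1-\lct(X,\Delta,\phi^*P))P$,
where $P$ runs over prime divisors of $Y$ and the log canonical threshold is computed over the generic point of $P$ (one may shrink $Y$ to a big open set where $\phi^*P$ makes sense as a $\mathbb Q$-Cartier divisor). From $\phi\circ g=h\circ\phi$ we get $g^*\phi^*P=\phi^*h^*P$, and $h^*P=h^{-1}(P)$ is again a prime divisor. Since $g$ is an automorphism of the pair $(X,\Delta)$, the pair $(X,\Delta+t\,\phi^*(h^{-1}P))=g^*(X,\Delta+t\,\phi^*P)$ is lc if and only if $(X,\Delta+t\,\phi^*P)$ is lc. Hence
$\lct(X,\Delta,\phi^*h^{-1}P)=\lct(X,\Delta,\phi^*P)$.
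The coefficient of $h^{-1}P$ in $B$ therefore equals the coefficient of $P$, so $h^*B=B$. This is exactly the statement that $h\in\Aut(Y,B)$.

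The main obstacle is making the pullback and lc-threshold manipulations rigorous when $Y$ is singular and $P$ is not $\mathbb Q$-Cartier. I would handle this by working over the smooth locus $Y^\circ$ of $Y$, whose complement has codimension at least $2$. The set $Y^\circ$ is $h$-invariant because $h$ is an automorphism. Every prime divisor $P$ meets $Y^\circ$, and $B$ is determined by its restriction to $Y^\circ$, so the argument above applies there verbatim.

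If the statement is intended to cover the moduli part as well, note that $J$ is only defined up to $\mathbb Q$-linear equivalence. Still, $h^*(K_Y+B+J)\sim_{\mathbb Q}K_Y+B+J$ because $g^*(K_X+\Delta)\sim_{\mathbb Q}K_X+\Delta$ and $\phi^*$ is injective on $\mathbb Q$-divisor classes. Combined with $h^*B=B$ and $h^*K_Y\sim K_Y$, this gives $h^*J\sim_{\mathbb Q}J$, which is all that is needed.
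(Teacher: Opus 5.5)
Your proposal is correct and follows the paper's argument. Both proofs read off the coefficient of $P$ in $B$ as $1-\lct(X,\Delta;\phi^*P)$, then use $g^*\phi^*P=\phi^*\chi(g)^*P$ and the $g$-invariance of $(X,\Delta)$ to conclude that $\chi(g)$ permutes prime divisors of $Y$ while preserving these coefficients.

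Your extra details fill in points the paper leaves implicit without changing the route: constructing $\chi$ through the graded action on the section ring, taking the threshold over the generic point of $P$, restricting to the smooth locus of $Y$, and the remark on $J$.
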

\begin{proof}
     For every prime divisor $P\subset Y$, the coefficient of $P$ in $B$ is given by $1-\operatorname{lct}(X,\Delta; \phi^*P)$. Let $g\in \Aut(X,\Delta)$ and set $g':=\chi(g)$. Then we have the following commutative diagram:

\[\begin{tikzcd}
	X && X \\
	C && C
	\arrow["g", from=1-1, to=1-3]
	\arrow["\phi", from=1-1, to=2-1]
	\arrow["\phi"', from=1-3, to=2-3]
	\arrow["{g'}"', from=2-1, to=2-3]
\end{tikzcd}\]
Now we have,
    \begin{align*}
\operatorname{mult}_P(B)
  &= 1 - \operatorname{lct}(X, \Delta; \phi^*P) \\
  &= 1 - \operatorname{lct}(X,\Delta; g^*\phi^*P) \ \ [g^*(K_X+\Delta)=(K_X+\Delta)] \\
  &= 1 - \operatorname{lct}(X, \Delta; \phi^*g'^*P) \\
  &= 1 - \operatorname{lct}(X, \Delta; \phi^*Q) \ \ [g'^*P=Q] \\
  &= \operatorname{mult}_Q(B).
\end{align*}
This shows that $g'^*B=B$ and hence, $g'\in \Aut(Y,B)$.
\end{proof}
\begin{theorem}\cite[Theorem 3.3]{Fu00}\label{C}
    Let $(C,\Delta)$ be a compact K\"{a}hler lc curve. Then there is a positive integer $m$ such that $\rho_{km}(\Bim(C,\Delta))$ (see Definition \ref{def:b-bimeromorphic}) is finite for every $k \in \mathbb{N}$.
\end{theorem}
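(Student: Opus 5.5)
Since $C$ is a compact connected curve, it is a compact Riemann surface and hence projective; a lc pair $(C,\Delta)$ is simply a smooth projective curve with $\Delta=\sum d_iP_i$, $0\le d_i\le 1$. The plan is the classical case split on the sign of $\deg(K_C+\Delta)$, as in \cite[Theorem 3.3]{Fu00}. First, by Lemma \ref{terminal}-type reasoning (a bimeromorphic self-map of a smooth projective curve is an automorphism, and $B$-bimeromorphic forces $g^*\Delta=\Delta$ after comparing $K_C+\Delta$ on a common resolution, which is $C$ itself), we get $\Bim(C,\Delta)=\Aut(C,\Delta)$. Fix $m$ with $m\Delta$ integral, so $m(K_C+\Delta)$ is Cartier.

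\textbf{Case $\deg(K_C+\Delta)<0$.} Then $H^0(C,km(K_C+\Delta))=0$ for all $k\ge1$, so the representation is trivially finite.

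\textbf{Case $\deg(K_C+\Delta)>0$.} Then $\Aut(C,\Delta)$ is finite. If $g(C)\ge 2$, Hurwitz gives finiteness of $\Aut(C)$. If $g(C)=1$, $\deg\Delta>0$, so $\Delta$ has a point with positive coefficient, and $\Aut(C,\Delta)$ permutes the finite set $\Supp\Delta$; the subgroup fixing a point is a subgroup of the finite automorphism group of an elliptic curve with origin, so $\Aut(C,\Delta)$ is finite. If $C=\mathbb P^1$, $\deg\Delta>2$ forces $|\Supp\Delta|\ge3$; a finite-index subgroup fixes three points (this is where Lemma \ref{action} fits naturally), hence is trivial, so $\Aut(C,\Delta)$ is finite. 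The image of a finite group is finite.

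\textbf{Case $\deg(K_C+\Delta)=0$.} This is the main step. Here $m(K_C+\Delta)$ has degree $0$ and we may need to enlarge $m$ so that $m(K_C+\Delta)\sim0$: for $g=1$, $\Delta=0$ and $K_C\sim0$; for $C=\mathbb P^1$, $\deg\Delta=2$ and any degree zero divisor on $\mathbb P^1$ is trivial. So $H^0(km(K_C+\Delta))$ is one-dimensional, spanned by a section $\omega$, and each $g$ acts by a scalar $\lambda_g\in\mathbb C^*$. For $g=1$, $\omega=(dz)^{\otimes km}$ and $g^*dz=\zeta\,dz$ with $\zeta$ a root of unity of order dividing $4$ or $6$ (translations act trivially), so the image is finite. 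For $\mathbb P^1$ the subcases are: $\Delta=P+Q$ with coefficients $1$; $\Delta=\tfrac12(P_1+\dots+P_4)$; $\Delta=P+\tfrac12(Q_1+Q_2)$; or more generally coefficients summing to $2$. When $\Supp\Delta$ has $\ge3$ points the group is finite as before. When $\Delta=P+Q$, take coordinates $0,\infty$; the group consists of $z\mapsto az$ and $z\mapsto a/z$, and the generator $(dz/z)^{\otimes km}$ is invariant up to the sign $(-1)^{km}$. In all cases the image of $\rho_{km}$ is contained in $\{\pm1\}$ or in roots of unity of bounded order, so it is finite. The only subtlety is choosing $m$ uniformly (e.g.\ divisible by the denominators of $\Delta$ and making $m(K_C+\Delta)$ linearly trivial), after which finiteness holds for every multiple $km$.
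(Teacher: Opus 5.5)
Your proof is correct, but it takes a different route from the paper's. Your first sentence (a compact curve is a compact Riemann surface, hence projective) is already the paper's entire argument: the paper notes that every compact complex space of dimension $1$ is projective and then quotes Fujino's projective result \cite[Theorem 3.3]{Fu00} as is. You instead reprove that projective statement by the trichotomy on $\deg(K_C+\Delta)$, and each case checks out:
\begin{itemize}
\item For positive degree you use Hurwitz when $g(C)\ge 2$. When $g(C)=1$ you use the finite stabilizer of a point of $\Supp\Delta$. On $\mathbb{P}^1$, an automorphism fixing the at least three points of $\Supp\Delta$ is the identity.
\item For degree zero you use the explicit characters on $(dz)^{\otimes km}$ and $(dz/z)^{\otimes km}$.
\end{itemize}
Your route is self-contained and gives the finite image explicitly; the paper's route is simply shorter.

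Two small additions would make your version complete.
\begin{enumerate}
\item Your step ``$B$-bimeromorphic forces $g^*\Delta=\Delta$'' relies on reading Definition \ref{def:b-bimeromorphic} as in Fujino, with an equality of pulled-back log canonical divisors rather than a literal $\sim_{\mathbb Q}$. Under the literal reading, every automorphism of $\mathbb{P}^1$ would qualify and the action on $H^0$ would not be well defined, so say explicitly which definition you use.
\item You assume $C$ is connected, whereas $\Bim$ is defined for disjoint unions. Add the one-line reduction: the subgroup of $\Bim(C,\Delta)$ preserving each of the finitely many components has finite index, and its image lies in the product of the (finite) images for the individual components.
\end{enumerate}
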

\begin{proof}
    This follows from \cite[Theorem 3.3]{Fu00} by simply observing that any compact complex analytic space of dimension $1$ is projective, for example, see \cite[Lemma 1.20 and Remark 1.21]{DO24}.
\end{proof}

\begin{theorem}\label{S}
    Let $(X,\Delta)$ be a compact K\"ahler klt pair of dimension $2$ such that $K_X+\Delta$ is nef with Kodaira dimension $\kappa(X,K_X+\Delta)=1$. Then there is a positive integer $m$ such that $\rho_{km}(\Bim(X,\Delta))$ is finite for every $k \in \mathbb{N}$.
\end{theorem}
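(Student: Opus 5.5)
Reduce to the Iitaka fibration and use Theorem \ref{C} on the base curve. By abundance for compact K\"ahler surfaces (Theorem \ref{abun} in dimension $2$), $K_X+\Delta$ is semiample. Since $\kappa=1$, the Iitaka fibration $\phi\colon X\to C$ is a morphism onto a smooth projective curve with connected fibres, and $K_X+\Delta\sim_{\mathbb Q}\phi^*A$ for an ample $\mathbb Q$-divisor $A$ on $C$. Hence $H^0(X,m(K_X+\Delta))=\phi^*H^0(C,mA)$ for divisible $m$. Every $g\in\Bim(X,\Delta)$ preserves the pluricanonical ring, so it descends to an automorphism $g'$ of $C$. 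The action of $g$ on $H^0(X,m(K_X+\Delta))$ then agrees, via $\phi^*$, with an action of $g'$ on $H^0(C,mA)$, possibly twisted by a scalar character.

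\emph{Reduction to automorphisms.} Take a log resolution and run the relative MMP over $C$ using Lemma \ref{MMP}. Alternatively, pass to a minimal resolution and then to a terminalization on which $\Bim$ acts by automorphisms, as in Lemma \ref{terminal}. By Remark \ref{rmk:b-bimeromorphic} and Lemma \ref{N}, this replaces $(X,\Delta)$ by a $B$-bimeromorphic model on which $\Bim$ equals $\Aut$. I would aim for a terminal (hence smooth) model $(X',\Delta')$ with $K_{X'}+\Delta'\equiv_\phi0$, obtained by extracting the divisors of discrepancy $\le 0$ and contracting. The general fibre is then an elliptic curve (or $\mathbb P^1$ if $\Delta'$ is horizontal), and $X'\to C$ can be taken relatively minimal.

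\emph{Canonical bundle formula on the base.} By Theorem \ref{thm:Fujita Canonical}, or its $\mathbb P^1$-fibre analogue via \cite[Lemma 1.1]{FG12} and Lemma \ref{lem:Canonical bundle}, write $K_X+\Delta\sim_{\mathbb Q}\phi^*(K_C+B+M)$ with $(C,B+M)$ klt. Lemma \ref{lem: Image} shows that $g'$ preserves $B$. For $M$, I would use its invariant description: in the elliptic case $M=\frac1{12}j^*\mathcal O(1)$ is determined by the $j$-map, which is $g'$-equivariant since $g$ maps fibres isomorphically onto fibres. So $g'\in\Aut(C,B)$ also preserves the class of $M$.

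\emph{Finiteness.} The main obstacle is that $M$ is only defined up to $\mathbb Q$-linear equivalence, so I cannot literally write $g'\in\Bim(C,B+M)$. First I would try a Fujino-style argument. Since $\deg(K_C+B+M)>0$, if $g'$ preserves $B+M$ on the nose, Theorem \ref{C} applies directly. If not, I would split into cases:
\begin{enumerate}
\item $\deg(K_C+B)>0$. Then $\Aut(C,B)$ is finite (for instance, $C$ has genus at least $2$, or $B$ has enough points). The image of $\Bim(X,\Delta)$ in $\Aut C$ is then finite, and the kernel acts fibrewise. On $\phi^*H^0(C,mA)$ the kernel acts by a character. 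Applying the argument to the finite Galois quotient $X\to X/G$, this character has finite order for divisible $m$: its values are roots of unity, since fibre automorphisms preserving the relative log form act with finite order on $\omega$ of an elliptic curve.
\item $\deg(K_C+B)\le0$ and $M\ne0$. Then the $j$-map is nonconstant, so $\{g'\}$ preserves the finite set $j^{-1}(\infty)\cup j^{-1}(0)\cup j^{-1}(1728)$, which is nonempty with at least three points after adding ramification. Lemma \ref{action} then gives a finite-index subgroup fixing three points, hence acting trivially on $C$ when $C\cong\mathbb P^1$, or finitely otherwise.
\end{enumerate}
In both cases the kernel's action on sections reduces to the scalar action on the relative canonical form, which has finite order. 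Combining these steps gives an $m$ for which $\rho_{km}$ has finite image.
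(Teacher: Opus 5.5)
Your overall skeleton is the paper's: pass to a terminal model so that $\Bim=\Aut$, descend to $\Aut(C)$, show the image there is finite, and show the kernel acts by scalars of bounded order by restricting to a general fibre. Where you differ is in how you bound the image.

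The paper runs through the Enriques--Kodaira classification:
\begin{itemize}
\item non-minimal $X$ (a horizontal $(-1)$-curve, projective by \cite{Lin22}), $\kappa(X)=-\infty$, Enriques and bielliptic surfaces are all projective and go to \cite{Fu00};
\item for a torus the general fibres are isomorphic, so $M=0$ and $\Aut(C,B_C)$ is finite;
\item for a K3 surface, an infinite image gives (via Lemma \ref{action}) an infinite orbit of isomorphic fibres, forcing $j$ constant and again $M=0$;
\item for $\kappa(X)=1$ it quotes \cite{PS20}.
\end{itemize}
Your dichotomy is: either $\deg(K_C+B)>0$, and the image lies in the finite group $\Aut(C,B)$; or $\deg M>0$, so $j$ is nonconstant and $j\circ g'=j$ puts the image in the group of automorphisms of $C$ over $\mathbb P^1$, of order at most $\deg j$. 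This is the paper's K3 argument made uniform. For elliptic fibrations it is correct and cleaner, needing neither the classification, nor \cite{PS20}, nor the projectivity detour for non-minimal surfaces. The set $j^{-1}(\{0,1728,\infty\})$ and Lemma \ref{action} are superfluous.

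The genuine gap is the $\mathbb P^1$-fibred case, which you allow (horizontal $\Delta$) but do not actually treat.
\begin{itemize}
\item Lemma \ref{lem:Canonical bundle} needs a component of $\lfloor\Delta\rfloor$ generically finite over the base. There is none, because $(X,\Delta)$ is klt.
\item Even granting some formula $K_X+\Delta\sim_{\mathbb Q}\phi^*(K_C+B+M)$, your case (2) rests on $M=\frac1{12}j^*\mathcal O(1)$. This has no analogue for conic bundles, where $M$ reflects the moduli of the boundary points on the fibre, and nothing you say rules out $\deg(K_C+B)\le 0$ there.
\end{itemize}
The fix is the paper's: a surface with a $\mathbb P^1$-fibration is uniruled, so $h^{2,0}(X)=0$, the K\"ahler surface $X$ is projective, and \cite{Fu00} applies.

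Two smaller points.
\begin{itemize}
\item A model that is both terminal and relatively minimal need not exist. Contracting a vertical $(-1)$-curve $E$ with $(K_X+\Delta)\cdot E=0$ produces a divisor of discrepancy $\le 0$. This is harmless: $B$ is defined by log canonical thresholds and $M$ by $j$, both invariant under crepant modifications, so let automorphisms of the terminal model act and compute $B,M$ on the relatively minimal one.
\item In the kernel step, drop the quotient $X\to X/G$; $G$ does not act on $X$. What you need is a bound independent of $g$, since an infinite group of roots of unity would not do. In the elliptic case $\Delta\cdot F=0$ forces $\Delta$ to be vertical, so $\Delta|_F=0$. Then $\Aut(F)$ acts on $H^0(F,K_F)$ through $\mu_4$ or $\mu_6$, hence through $\mu_{12}$, so the kernel acts trivially once $12\mid m$. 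The paper gets the same uniformity from \cite{YG13}.
\end{itemize}
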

\begin{proof}
    Since $(X,\Delta)$ is a klt pair, we have a terminal model, say $f:(X',\Delta')\to (X,\Delta)$ such that $K_{X'}+\Delta'\sim_{\mathbb{Q}}f^*(K_X+\Delta)$ and $(X', \Delta')$ has $\mathbb Q$-factorial terminal singularities. Then by Lemma \ref{terminal}, $\Bim(X',\Delta')\cong \Aut(X',\Delta')$. Since $\Delta'\geq 0$ is effective, $X$ has also terminal singularities, and hence, $X$ is smooth. Replacing $(X,\Delta)$ by $(X',\Delta')$ and $\Bim(X,\Delta)$ by $\Aut(X,\Delta)$, we may assume that  $X$ is a compact K\"ahler manifold of dimension $2$. Since $K_X+\Delta$ is nef and  $\kappa(X, K_X+\Delta)=1$, by Theorem \ref{abun}, $K_X+\Delta$ is semiample, and hence  we can define a morphism with connected fibers $\phi:X \to C$ to a smooth projective curve $C$ such that $K_X+\Delta\sim_{\mathbb Q} \phi^* L$ for some ample $\mathbb Q$-divisor $L$ on $C$. Now we run a $K_X$-relative MMP over $C$, this contracts all $(-1)$-curves in the fibers of $\phi$. Then replacing $X$ by the output of this MMP and using Remark \ref{rmk:b-bimeromorphic}, we may assume that $\phi:X\to C$ is a relatively minimal fibration, i.e., $K_X$ is $\phi$-nef.
 Recall that $K_X+\Delta\equiv_\phi 0$ and $\Delta$ is effective, and thus for a general fiber $F$ of $\phi$, $\mbox{deg}(K_F)=K_X\cdot F= -\Delta\cdot F\leq 0$. This implies that the general fibers of $\phi$ are either $\mathbb{P}^1$ or elliptic curves, i.e., $\phi$ is either a $\mathbb{P}^1$-bundle over $C$ or a minimal elliptic fibration over $C$.\\

 Assume that $X$ is not a minimal surface. Since $\phi:X\to C$ is relatively minimal fibration, there is no $(-1)$-curve in the fibers of $\phi$.  On the other hand, since $X$ is not minimal, there must be a $(-1)$-curve, say $D$ on $X$. Then $D$ dominates $C$, as it is not contained in any fiber of $\phi$. Then  by \cite[Corollary 2.7]{Lin22}, $X$ is projective. Therefore finiteness of pluricanonical representation follows from \cite[Theorem 3.4]{Fu00}.  So from now we will assume that $X$ is a minimal surface.\\

Now we will use Enriques–Kodaira classification of minimal surfaces and show that in each case $\rho_{km}(\Bim(X,\Delta))$ is finite for every $k\in\mathbb N$. We will split our discussion into three main cases based on the Kodaira dimension of $X$.\\

\noindent
\textbf{Case I}: $\kappa(X, K_X)= -\infty$.\\
In this case $H^2(X, \mathcal O_X)=H^0(X, K_X)^*=0$, and thus from the proof of \cite[Lemma 2.43]{DH25} it follows that $X$ is projective. Hence, by \cite[Theorem 3.4]{Fu00}, $\rho_{km}(\Bim(X,\Delta))$ is finite for all $k\in\mathbb N$.\\

\noindent
\textbf{Case II} : $\kappa(X, K_X)=0$.\\
In this case by Theorem \ref{thm:classification}, we know that $X$ is one of the following:
\begin{enumerate}
    \item $X$ is a projective Enriques Surface. 
    \item $X$ is a projective bi-elliptic surface.
    \item $X$ is a $K3$ surface.
    \item $X$ is a torus.
\end{enumerate}
Since $X$ is projective in case (1) and (2), the finiteness of $\rho_{km}(\Bim(X,\Delta))$  follows from \cite[Theorem 3.4]{Fu00}.

So from now on we will assume that $X$ is either a $K3$ surface or a $2$-dimensional torus with an elliptic fibration $\phi:X\to C$. Choose $m\in \mathbb{N}$ such that $m(K_X+\Delta) \sim \phi^*\mathcal{O}_C(1)$. Then $C=\mbox{Proj}\bigoplus_{d\geq 0} H^0(dm(K_X+\Delta))$. Let $\chi:\Aut(X,\Delta)\to \Aut C$ be the morphism induced by $\phi$. Then we have the following commutative diagram:
\begin{equation}\label{diag:aut}
\begin{tikzcd}
	{\Aut(X,\Delta)} && {\Aut_\mathbb{C}(H^0(m(K_X+\Delta))} \\
	& {\Aut C}
	\arrow[ "\rho_m", from=1-1, to=1-3]
	\arrow[ "\chi", from=1-1, to=2-2]
	\arrow[from=1-3, to=2-2]
\end{tikzcd}
\end{equation}
From the diagram above we get the following exact sequence of groups
\begin{equation}\label{eqn:aut}
    1 \to K\to \rho_m(\Aut(X,\Delta))\to \chi(\Aut(X,\Delta))\to 1.
\end{equation}

To show that $\rho_m(\Aut(X,\Delta))$ is finite, it is enough to show that both $\chi(\Aut(X,\Delta))$ and $K$ are finite.
First we will show that $\chi(\Aut(X,\Delta))$ is finite. We will consider the torus and $K3$ surface cases separately. 

First assume that $X$ is a $2$-dimensional torus. Now by Theorem \ref{thm:Fujita Canonical}, we have
     \begin{equation}\label{eqn:cbf}
         K_X +\Delta \sim_{\mathbb{Q}} \phi^*(K_C+B_C+M_C),
     \end{equation}
     where $B_C$ is the discriminant divisor and $M_C$ the moduli part. The moduli divisor $M_C$ is a $\mathbb{Q}$-divisor corresponding to the ample $\mathbb{Q}$-line bundle $\frac{1}{12}j^*\mathcal{O}(1)$, where $j:C \to \mathbb{P}^1$ is the $j$-invariant function of the smooth elliptic fibers. Now by Lemma \ref{abelian}, the general fibers of $\phi: X \to C$ are isomorphic elliptic curves, hence $j:C \to \mathbb{P}^1$ is the constant morphism; in particular,  $M_C=0$. Therefore we have
     \[K_X +\Delta \sim_{\mathbb{Q}} \phi^*(K_C+B_C)\]
Now observe that $(C, B_C)$ is klt by Theorem \ref{thm:Fujita Canonical} and $K_C+B_C\sim_{\mathbb{Q}}\mathcal{O}_C(1)$ is ample. This implies that $\Aut(C,B_C)$ is finite by \cite[Proposition 3.4]{LS25}, and hence, $\chi(\Aut(X,\Delta))$ is finite by Lemma \ref{lem: Image}.\\

Now we will consider the other non-projective case, namely, $\phi:X\to C$ is an elliptic $K3$ surface. Then $C \cong \mathbb{P}^1$. [Note that if $g(C)\geq 1$, then $C$ admits a nonzero global holomorphic 1-form. Pulling it back via $\phi$, yields a non zero global holomorphic $1$ form on $X$. However, since $X$ is a $K3$ surface,we have $H^0(X,\Omega_X^1)=0$ (equivalently, $H^1(X,\mathcal{O}_X)=0$), which is a contradiction.]\\
If the image of $\chi$  in diagram \eqref{diag:aut} is finite, then we are done. So we assume that $\mbox{im}(\chi)=G$ is an infinite group. Note that $G$ acts faithfully on $\mathbb{P}^1$. If the cardinality of the orbit $\Orb(p)$ is finite for every $p\in \mathbb{P}^1 $, then by Lemma \ref{action} there exists a subgroup $H\subset G$ of finite index that fixes three points on $\mathbb{P}^1$; in particular, $H$ fixes $\mathbb{P}^1$. On the other hand, since $G$ is infinite, $H$ must be non-trivial, this contradicts the faithfulness of the action. Hence, there exists a $p_0\in \mathbb{P}^1$ such that $\Orb(p_0)$ contains infinitely many points. Then the fibers of $\phi$ over the points of $\Orb(p_0)$ are isomorphic to each other. Since $\phi$ is an elliptic fibration, this implies that the $j$- invariant morphism from $C=\mathbb{P}^1 \to \mathbb{P}^1$  which is a constant. Then by a similar argument as in Case II, it follows that the moduli part $M_C=0$ in the canonical bundle formula \eqref{eqn:cbf}. We also have $(C, B_C)$ is klt, and $K_C+B_C$ is ample, and consequently $\Aut(C, B_C)$ is finite by \cite[Proposition 3.4]{LS25}; hence, $G\subset \Aut(C, B_C)$ is finite, which is a contradiction.\\

Now we will show that $K$ is finite.\\
Note that for every $g\in K$ there exists a $\lambda\in\mathbb{C}^\times$ such that $g^*s=\lambda s$ for all $s\in H^0(X, m(K_X+\Delta))$. Hence we can identify $K$ as a subgroup of $\mathbb{C}^{\times}$.
To show $K$ is finite, it is enough to show there exists a natural number $N$ such that $g^N=1$ for all $g\in K$. We now proceed to establish this. Since $g\in K$, it follows that $\chi(g)=\mbox{id}$. Hence $g$ induces an automorphism on every fiber of $\phi$. Fix a general fiber $F \subset X$.
    Consider the following commutative diagram :
  
\[\begin{tikzcd}
	F && X \\
	F && X
	\arrow["i", hook, from=1-1, to=1-3]
	\arrow["{g|_F}"', from=1-1, to=2-1]
	\arrow["g", from=1-3, to=2-3]
	\arrow["i", hook, from=2-1, to=2-3]
\end{tikzcd}\]
which  induces the following  exact sequence:
\begin{equation}
    0 \to \mathcal{O}_X(m(K_X+\Delta)-F) \to \mathcal{O}_X(m(K_X+\Delta))\to \mathcal{O}_F(m(K_F+\Delta_F))\cong \mathcal{O}_F \to 0
\end{equation}
Since $m(K_X+\Delta)$ is globally generated, we may choose a section $s \in H^0(X,m(K_X+\Delta))$ such that $s|_F \neq 0$. As $H^0(F, m(K_F+\Delta_F))\cong H^0(F,\mathcal{O}_F) \cong \mathbb{C}$, we obtain the following diagram  :

\[\begin{tikzcd}
	0 & {H^0(m(K_X+\Delta)-F)} & {H^0(m(K_X+\Delta))} & {H^0(m(K_F+\Delta_F))} & 0 \\
	0 & {H^0(m(K_X+\Delta)-F)} & {H^0(m(K_X+\Delta))} & {H^0(m(K_F+\Delta_F))} & 0
	\arrow[from=1-1, to=1-2]
	\arrow[from=1-2, to=1-3]
	\arrow[from=1-3, to=1-4]
	\arrow["{g^*}", from=1-3, to=2-3]
	\arrow[from=1-4, to=1-5]
	\arrow["{(g|_F)^*}", from=1-4, to=2-4]
	\arrow[from=2-1, to=2-2]
	\arrow[from=2-2, to=2-3]
	\arrow[from=2-3, to=2-4]
	\arrow[from=2-4, to=2-5]
\end{tikzcd}\]

Since $(X,\Delta)$ is klt, we may choose $F$ such that $(F,\Delta_F)$ is also klt. Note that $(g|_F) \in \Aut(F,\Delta_F)$ and this also acts by multiplication of $\lambda$. Applying \cite[Proposition 4.9]{YG13} for the pair $(F,\Delta_F)$, we obtain $\lambda^N=1$. Since this number does not depend on $g$,
we get our desired result.\\

\textbf{Case III}: $\kappa(K_X)=1$\\
Recall the following exact sequence \eqref{eqn:aut}
\[1 \to K\to \rho_m(\Aut(X,\Delta))\to \chi(\Aut(X,\Delta))\to 1.\]
Since $X$ is a minimal surface with $\kappa(X, K_X)=1$, $K_X$ is nef, and hence it is semiample by Theorem \ref{abun}. Let $\psi:X \to Y$ be the semiample fibration to a curve $Y$. Now observe that $H^0(X, mK_X) \subset H^0(X, m(K_X+\Delta))$ for all sufficiently divisible $m\in \mathbb{N}$, this implies that $\psi:X\to Y$ factors through $\phi$. However, since $\phi$ and $\psi$ both have connected fibers, $C\to Y$ is an isomorphism (as both of them are curves).

Now consider the following commutative diagram:
\[\begin{tikzcd}
	{\Aut(X,\Delta)} && {\Aut(C)} \\
	{\Aut(X)} && {\Aut(Y)}
	\arrow["\chi", from=1-1, to=1-3]
	\arrow[hook, from=1-1, to=2-1]
	\arrow["\cong", from=1-3, to=2-3]
	\arrow["\eta"', from=2-1, to=2-3]
\end{tikzcd}\]
By \cite[Proposition 1.2]{PS20}, we know that $\mbox{im}(\eta)$ is finite. Thus, from the diagram above, it follows that $\mbox{im}(\chi)$ is finite. 
Then again by a similar argument  as in Case II above, it follows that the kernel $K$ in \eqref{eqn:aut} is finite. This completes the proof of our theorem. 
\end{proof}

The following result shows that the finiteness of the pluricanonical representation holds for log canonical compact K\"ahler surfaces of positive (log) Kodaira dimension. This is one of the fundamental technical tools of this article. 
\begin{theorem}\label{plu} 
 Let $(X,\Delta)$ be a compact K\"{a}hler lc surface such that $K_X+\Delta$ is nef. Assume that one of the following conditions holds:
 \begin{enumerate}
     \item the Kodaira dimension $\kappa(X, K_X+\Delta)>0$, or
     \item $\kappa(X, K_X+\Delta)=0$ and $\Delta\neq 0$.
 \end{enumerate}
 Then there is a positive integer $m$ such that $\rho_{km}(\Bim(X,\Delta))$ is finite for every $k \in \mathbb{N}$.
\end{theorem}
\begin{proof}
By  Theorem \ref{abun}, $K_X+\Delta$ is semiample. Let $\phi:X \to Y$ be the semiample fibration with connected fibers. We can find a dlt pair $(Z,\Gamma)$ such that $K_Z+\Gamma\sim_\mathbb{Q} K_X+\Delta$ by \cite[Lemma 2.28]{DH25}. We can replace $(X,\Delta)$ by $(Z,\Gamma)$ using Remark \ref{rmk:b-bimeromorphic}. From now on we may assume that $(X,\Delta)$ is a $\mathbb{Q}$-factorial dlt pair.

We will split our discussion into three main cases based on the Kodaira dimension of $K_X+\Delta$.\\

\textbf{Case I:} $\kappa(X,K_X+\Delta)=2$\\   
Since $K_X+\Delta$ is a big divisor, the variety $X$ is Moishezon. Moreover, as  $X$ is a K\"ahler variety with rational singularities, it follows from \cite[Theorem 1.6]{Nam02}, $X$ is projective. Then we are done by \cite[Theorem 3.5, Case 1]{Fu00}.\\

 The proof closely follows the argument of the proof of \cite[Theorem 3.5, Case 2]{Fu00}. The essential input is our Theorem \ref{S}, which is used at a crucial step. For the reader's convenience, we reproduce the argument here.\\
 
\textbf{Case II:} $\kappa(X,K_X+\Delta)=1$\\
    Let $h:X' \to X $ be a minimal resolution and $K_{X'}+\Delta'=h^*(K_{X}+\Delta)$ with $\Delta'$ i
    effective. We may assume that $X$ is a two-dimensional compact K\"{a}hler manifold. By Theorem \ref{abun}, we can define a morphism $\phi_{|m(K_X+\Delta)|}: X\to Y $ with connected fibers for a sufficiently large and divisible $m$.\\
   By using Lemma \ref{MMP} we can contract $(-1)$-curves in the fibers, and  we may assume that $X$ is minimal elliptic compact K\"{a}hler surface or a $\mathbb{P}^1$-bundle. If the horizontal component $\rddelta^h \neq 0$, we may take an irreducible component $S$ of $\rddelta^h$. Then we have the following exact sequence
\[0\to H^0(X, m(K_X+\Delta) -S)\to H^0(X,m(K_X+\Delta))\to H^0(S,m(K_X+\Delta)|_S).\]
Since $\phi(S)=Y$, we get that  $H^0(X,m(K_X+\Delta)- S)=0$. 
Then 
  \[  H^0(X,m(K_X+\Delta)) \subset H^0(S,m(K_X+\Delta)|_S). \]
  Since $S$ is a curve we are done by Theorem \ref{C}.\\
  Note that in the above situation, it follows from \cite[Corollary 2.7]{Lin22} that $X$ is projective. For completeness, we include the argument here.\\
  
  We may assume that $\rddelta^h=0$. When $\phi:X \to Y$ is a $\mathbb{P}^1$-bundle, we have $\lfloor \Delta \rfloor = \sum \phi^*p_i$ for some $p_i\in Y$. For elliptic case, we have $K_X \sim_{\mathbb{Q},\phi} 0 $ and $\Delta \equiv_\phi 0$. Then $\Delta=\sum a_i \phi^* p_i$ with $a_i>0$ (see \cite[VIII.3,VIII.4]{AB83}). Define $B:=\sum_{p_i \in \phi(\lfloor \Delta \rfloor)} p_i$. Since $Y$ is projective we can choose an ample Cartier divisor $H$ such that $k(K_X+\Delta)\sim \phi^*H$. Let $\mathcal{O}_Y(-B)$ be the ideal sheaf associated to the support of $B$.  $H$ being ample, $\mathcal{O}_Y(-B)\otimes \mathcal{O}_Y(tH)$ is globally generated for sufficiently large $t$. Then we have $tH \sim B+P$ for some effective divisor $P$. Then $tk(K_X+\Delta)\sim \phi^*(B+P)$. Take $A:=\phi^*B$. Then we get
\[ (l+t)(k(K_X+\Delta) -\frac{1}{l+t}A)\sim lk(K_X+\Delta)+\phi^*P, \]
and since $\phi^*P$ is effective we have 
\[ H^0(X,lk(K_X+\Delta)) \subset H^0(X,(l+t)k(K_X+\Delta - A')), \] where $A':=\frac{1}{(l+t)k}A$. Define $\Gamma=\Delta-A'$.
 $(X,\Gamma)$ be a klt pair. Note that $K_X+\Gamma \sim _{\mathbb{Q}}\phi^* L$. If $\kappa(K_X+\Gamma)=2$ then $X$ is projective. If $K_X+\Gamma$ is nef and $\kappa(K_X+\Gamma)=1$, then we are done by Theorem \ref{S}. If $K_X+\Gamma$ is not nef, then there exists a horizontal curve $C_1$ which dominates $C$ satisfying $(K_X+\Gamma)\cdot C_1 < 0$. Then by \cite[Corollary 2.7]{Lin22}, we get $X$ is projective. When $X$ is projective, we are done by \cite[Theorem 3.5, Case I]{Fu00}.\\
 
\textbf{Case III:} $\kappa(X,K_X+\Delta)=0$ and $\Delta\neq 0$ (cf. \cite[Theorem 3.5 Case 3]{Fu00}).\\  
In this case we can show Bim $(X,\Delta)$ acts trivially on $H^0(X,km(K_X+\Delta))$.  By Theorem \ref{imp} we can take a preadmissible section $s$. Then By Lemma \ref{restriction} $g^*(s|_{\lfloor \Delta \rfloor})=s|_{\lfloor \Delta \rfloor}$ for every $g\in $ Bim $(X,\Delta)$. Since $\kappa(K_X+\Delta)=0$, $H^0(X,l(K_X+\Delta))$ is $1$ dimensional for every $l\in \mathbb{N}$ and hence $g^*s=s$ for every $g$.  Hence, we have the result. Note that in this case $X$ is uniruled and hence $X$ is projective.
\end{proof}

\begin{remark}\label{K3}
  Note that there exists a non-algebraic $K3$ surface that admits a non-symplectic automorphism of infinite order acting on  holomorphic two forms via multiplication by a complex number which is not a root of unity. Consequently, $\rho(\Bim(X))$ is an infinite group (see \cite[ Example 2.3]{LS25}).
\end{remark}

\section{Admissible and Preadmissible sections}\label{sec:ad-and-pread}
In this section, we introduce two special classes of pluricanonical sections, called preadmissible and admissible sections, and show how they play a key role in proving abundance for semi log canonical pairs.\\
We begin our discussion with the following proposition. This is an analytic adaptation of Fujino's Proposition \cite[Proposition 2.1]{Fu00}.
\begin{proposition}\label{Pro}\cite[Proposition 2.1]{Fu00}
    Let $(X,\Delta)$ be a $\mathbb{Q}$-factorial compact  K\"{a}hler dlt pair of dimension  $n\leq 3$. Let $f:X \to Y$ be a proper surjective morphism to a normal projective variety with connected fibers. Assume that $K_X+\Delta\sim_{\mathbb{Q},f} 0$. Then one of the following holds.
   \begin{enumerate}
       \item dim $Y=0$.
       \subitem (1.1) $\lfloor \Delta \rfloor$ is connected.
       \subitem (1.2) $\lfloor \Delta \rfloor$ has two components $\Delta_1$ and $\Delta_2$, and there exists a meromorphic map $v: X \dashrightarrow (V,P)$ with general fiber $\mathbb{P}^1$. Moreover, there exists an irreducible component $D_i \subset  \Delta_i $ such that the restriction of $v$ on $D_i$ induces $B$-bimeromorphic map to $(V,P)$.
       \item  dim $Y\geq 1$.
       \subitem (2.1) $\lfloor \Delta \rfloor \cap f^{-1}(y)$ is connected for every $y\in Y$.
       \subitem (2.2) The number of connected components of $\lfloor \Delta \rfloor \cap f^{-1}(y)$ is at most $2$ for every $y\in Y$.
       There exists a meromorphic map $v:X \dashrightarrow (V,P)$ with general fiber $\mathbb{P}^1$. The horizontal part $\rddelta^h$ satisfies one of the following :\\
        (i) $\rddelta^h= D_1$, which is irreducible and has the deg $[D_1:V]=2$, there also exists a $B$-bimeromorphic involution on $D_1$ over $V$.\\
        (ii) $\rddelta^h =D_1+D_2$ such that $D_i$ is irreducible and restriction of $v$ on $D_i$ is a $B$-bimeromorphic map to $(V,P)$ for $i=1,2$.
   \end{enumerate} 
\end{proposition}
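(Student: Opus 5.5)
We follow Fujino's proof of \cite[Proposition 2.1]{Fu00}, which is a connectedness argument driven by an MMP with scaling. The only genuinely projective input there is the MMP and cone theorem, and here we replace them by Lemma \ref{MMP} and the K\"ahler MMP of \cite{DH25}. Fix a point $y\in Y$, or the whole of $X$ if $\dim Y=0$.

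\textbf{Setting up the MMP.} Suppose $\lfloor\Delta\rfloor\cap f^{-1}(y)$ is disconnected. We perturb the boundary to $\Delta-\epsilon\lfloor\Delta\rfloor$, so that $K_X+\Delta-\epsilon\lfloor\Delta\rfloor\equiv_f -\epsilon\lfloor\Delta\rfloor$. Since $Y$ is projective, Lemma \ref{MMP} lets us run a terminating $(K_X+\Delta-\epsilon\lfloor\Delta\rfloor)$-MMP over $Y$. Every step of this MMP is $(K_X+\Delta)$-trivial over $Y$. Hence all intermediate models stay $\mathbb{Q}$-factorial dlt with $K+\Delta\sim_{\mathbb{Q},f}0$, and by Lemma \ref{N} each step is $B$-bimeromorphic. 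Because $-\lfloor\Delta\rfloor$ is not $f$-pseudoeffective over a neighbourhood of $y$ (the floor meets the fiber), the MMP cannot end in a minimal model there. It therefore ends in a Mori fiber space $X'\to V$ over $Y$.

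\textbf{Tracking the components.} Next we show that the number of connected components of the floor in the fiber does not change along the MMP. A flip or divisorial contraction is $(K+\Delta)$-trivial, so it is an isomorphism near the lc centres, and connectedness of the lc locus in fibers is preserved. For this we use Koll\'ar--Shokurov connectedness over $Y$, which holds in the K\"ahler setting by relative Kawamata--Viehweg vanishing. We also use Lemma \ref{lem:Top} to pass connectedness between $X$ and $X'$ via the closed maps with connected fibers.

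On the Mori fiber space $X'\to V$, the relative Picard number is one and $\lfloor\Delta'\rfloor$ is relatively ample. Since $\lfloor\Delta'\rfloor$ is disconnected over $y$, a relative connectedness argument forces the general fibre to be $\mathbb{P}^1$. The horizontal part of $\lfloor\Delta'\rfloor$ then has degree $2$ over $V$: either one component $D_1$ of degree $2$, or two sections $D_1,D_2$. Each fibre $\mathbb{P}^1$ meets $\Delta'$ in degree $2$ and contains exactly two points of coefficient one, so $\Delta'^h=\lfloor\Delta'\rfloor^h$. Adjunction then gives that $D_i\to(V,P)$ is $B$-bimeromorphic, where $P$ is the discriminant from the canonical bundle formula. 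In case (i), the deck involution of $D_1/V$ is $B$-bimeromorphic. Finally, $v$ is the composite $X\dashrightarrow X'\to V$, and the vertical components account for the bound of at most $2$ connected components per fibre.

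\textbf{Main obstacle.} The hardest step is connectedness in the analytic setting. We must justify the Koll\'ar--Shokurov connectedness lemma for proper maps from a K\"ahler dlt space, using analytic relative vanishing (Takegoshi / Nakayama) and $R^1f_*\mathcal{O}_X(-\lfloor\Delta\rfloor)=0$. We must also ensure the MMP from Lemma \ref{MMP} preserves $\mathbb{Q}$-factoriality and dlt-ness. When $\dim Y=0$, $Y$ is a point and Lemma \ref{MMP} still applies.
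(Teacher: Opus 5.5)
Your overall architecture matches the paper's: a perturbed $(K_X+\Delta-\epsilon\lfloor\Delta\rfloor)$-MMP over $Y$ via Lemma \ref{MMP}, invariance of the number of connected components as in Lemma \ref{Conn}, then Lemma \ref{Fano} on the Mori fibre space. There is a genuine gap, however, at the step that decides how the MMP ends. You claim that $-\lfloor\Delta\rfloor$ is not $f$-pseudoeffective near $y$ \emph{because the floor meets the fibre}. This is fine for $\dim Y=0$, but false for $\dim Y\ge 1$. Take $X=E\times\mathbb{P}^1\to\mathbb{P}^1$ with $E$ an elliptic curve and $\Delta=E\times\{0\}+E\times\{\infty\}$. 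The floor is $f^{-1}(0)+f^{-1}(\infty)$ and meets $f^{-1}(0)$, yet $-\lfloor\Delta\rfloor\equiv_f 0$ is $f$-nef. Your reason never uses that $\lfloor\Delta\rfloor\cap f^{-1}(y)$ is disconnected, and that is exactly what must be exploited.

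The missing statement is: if the MMP ends with $-\lfloor\Delta'\rfloor$ nef over $Y$, then $\lfloor\Delta'\rfloor$ is connected in every fibre of $h$. The paper proves this in two steps. First it excludes horizontal components: on a general fibre of $h$, minus a nonzero effective divisor cannot be nef. Then, for a purely vertical floor, it uses \cite[Lemma 4.4]{Fu00} together with Koll\'ar's theorem \cite[10.15.1]{Kol95} to get the exact sequence $0\to h_*\mathcal{O}(-\lfloor\Delta'\rfloor)\to\mathcal{O}_Y\to h_*\mathcal{O}_{\lfloor\Delta'\rfloor}\to 0$. This gives connectedness in all fibres, hence (2.1) by Lemma \ref{Conn}. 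Relative Kawamata--Viehweg vanishing cannot do this, since $-(K+\Delta')\equiv_h 0$ is not $h$-big when $\dim Y<\dim X$. Also, the vanishing $R^1f_*\mathcal{O}_X(-\lfloor\Delta\rfloor)=0$ in your last paragraph is false for $f$ itself precisely in the situations of (2.2), where $\mathcal{O}_Y\to f_*\mathcal{O}_{\lfloor\Delta\rfloor}$ is not surjective. It is available for the bimeromorphic MMP steps, not for $f$.

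There are also some smaller errors.
\begin{enumerate}
\item An MMP step here is never an isomorphism near the lc centres. The contracted ray $R$ satisfies $(K+\Delta)\cdot R=0$ and $(K+\Delta-\epsilon\lfloor\Delta\rfloor)\cdot R<0$, so $\lfloor\Delta\rfloor\cdot R>0$ and the exceptional locus always meets $\lfloor\Delta\rfloor$. Invariance of the number of connected components must come, as in Lemma \ref{Conn}, from \cite[Theorem 17.4]{Kol92} applied on a common crepant resolution of the $B$-bimeromorphic step (Lemma \ref{N}), together with Lemma \ref{lem:Top}.
\item The sentence ``a relative connectedness argument forces the general fibre to be $\mathbb{P}^1$'' needs content. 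One option: when $\dim V\le n-2$, show that each component of $\lfloor\Delta'\rfloor$ is either $u$-ample or a pullback from $V$ (\cite[Proposition 3.1]{CHP16}); ample divisors on fibres of dimension $\ge 2$ are connected, so the floor is connected over $Y$. The other option is the paper's: $\dim V\le 1$ makes $V$, hence $X'$, projective, so $X$ is Moishezon with rational singularities and hence projective \cite[Theorem 1.6]{Nam02}; then invoke Fujino's projective result.
\item Disconnectedness is also what excludes a single horizontal section of degree $1$, since such a section meets every vertical component.
\item When $\dim Y=0$, the same observation excludes case (i). This leaves two horizontal components lying in different connected components, as (1.2) requires.
\end{enumerate}
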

   Before proving the main proposition, we prove the following lemma.
   \begin{lemma}\cite[Lemma 2.3]{Fu00}\label{Fano}
       Let $(X,\Delta)$ be a $\mathbb{Q}$-factorial compact K\"{a}hler log canonical pair of dimension $n$ where $n=2$ or $3$, and $\lfloor \Delta \rfloor \neq 0$. Let $h:X \to Y$ be a proper surjective  morphism onto a normal compact analytic variety $Y$ with connected fibers. Assume that the following conditions hold:
       \begin{enumerate}
           \item $(X,\Delta-\epsilon \lfloor \Delta \rfloor)$ is klt for $\epsilon>0$ sufficiently small positive rational number.
           \item $K_X+\Delta\sim_{\mathbb{Q},h} 0$.
           \item There is a $K_X+\Delta -\epsilon \lfloor \Delta \rfloor$-extremal Fano contraction $u:X \to V$ over $Y$ such that $\dim V=n-1 $.
       \end{enumerate}
       Then the horizontal part $\rddelta^h$ of $\lfloor \Delta \rfloor$ is one of the following :
       \begin{enumerate}
           \item $\rddelta^h= D_1$ which is irreducible and $\deg[\rddelta^h:D_1]=2$
           \item $\rddelta^h=D_1$ which is irreducible and $ \deg[\rddelta^h:D_1]=1$
           \item $\rddelta^h=D_1+D_2$ where $D_i$ is irreducible and $\deg[D_i:V]=1$ for $i=1,2$.
       \end{enumerate}
       In case of  $(1)$ and $ (3)$, the number of connected components of $\lfloor \Delta \rfloor \cap h^{-1}(y)$ is at most $2$ for every $y\in Y$.\\
       In case $(2),  \lfloor \Delta \rfloor \cap h^{-1}(y)$ is connected for every $y\in Y$.\\
       Moreover, there is a $\mathbb{Q}$-divisor $P$ on $V$ such that $(V,P)$ is $\mathbb{Q}$-factorial log canonical pair of dimension $2$ and $(K_X+\Delta)|_{D_i} =u|_{D_i}^*(K_V+P)$ for $i=1,2$.\\
       In case $(1)$, there is a $B$-bimeromorphic involution $\iota:(D_1,\Diff(\Delta-D_1)) \dashrightarrow (D_1,\Diff(\Delta-D_1))$
over $V$.
In case $(3), u|_{D_i}:D_i \to V$  is a $B$-bimeromorphic morphism for $i=1,2$.

 \begin{proof}
    Let $\rddelta^h$ and $\rddelta^v$ be the horizontal and vertical parts of $\lfloor \Delta \rfloor$ with respect to the morphism $u$, respectively . For a general fiber $F$ of $u$, we have $\rddelta^v\cdot F=0$. Since $\lfloor \Delta \rfloor$ is relatively $u$-ample, we get $\rddelta^h\neq 0$. Again, the general fiber of $u:X \to V$ is $\mathbb{P}^1$. Since $K_X+\Delta$ is numerically trivial, we have 
     $(K_X+\Delta)\cdot F=0$ implies $ \deg K_F +\Delta\cdot F=0 $ implies $ \Delta\cdot F=2$.
     Hence $\deg [\rddelta^h :V] \leq 2$. Since $u$ is an extremal contraction, we have $\rddelta^v =u^*M$ for some $\mathbb{Q}$-Cartier divisor $M$ on $V$ by \cite[Proposition 3.1]{CHP16}. By Lemma \ref{lem:Canonical bundle} and Lemma \ref{lem:divisor}, we get $K_X+\Delta \sim_{\mathbb{Q}}u^*(K_V+P)$ such that $(V,P)$ is lc. Therefore, $K_{D_i}+\Diff(\Delta-D_1)\sim_{\mathbb{Q}}(u|_{D_i})^*(K_V+P)$ for $i=1,2$. In case $(1)$, since $\deg[D_1:V]=2$, there is a $B$-bimeromorphic involution over $V$. In case of $(3)$, $\deg[D_i:V]=1$, so $D_i$ and $V$ are $B$-bimeromorphic. For more detailed arguments, one can check \cite[Proof of Lemma 4.5, Page No. 20]{Fu25}.

 \end{proof}      
   \end{lemma}
   Now we will prove Proposition \ref{Pro}.
    \begin{proof}[Proof of Proposition \ref{Pro}]
      If $f$ is bimeromorphic, then by \cite[Theorem 17.4]{Kol92} we are in case $(1.1)$ and $(2.1)$. Therefore, we may assume that $\dim Y< \dim X$.\\
 We run $K_X+\Delta -\epsilon \lfloor \Delta \rfloor$-MMP over $Y$ for $0<\epsilon \ll 1$ by Lemma \ref{MMP}. Let $Z$ be the output of this MMP and let $h:Z \to Y$ be the induced morphism. Denote by  $p:X \dashrightarrow Z$ the composition of divisorial contractions and flips in this process.\\
 Assume that $(Z,p_*\Delta - \epsilon p_*\lfloor \Delta \rfloor)$ is a minimal model. Then $K_Z+p_*\Delta - \epsilon p_*\lfloor \Delta \rfloor$is $h$- nef. Since $K_Z+p_*\Delta$ is numerically $h$ trivial, it follows that $-p_*\lfloor \Delta \rfloor$ is $h$-nef. If dim $Y=0$, then $-p_*\lfloor \Delta \rfloor$ is nef. For any  K\"{a}hler form $\omega$, we obtain $(-p_*\lfloor \Delta \rfloor)\omega^{n-1}\geq 0$. On the other hand, since $p_*\lfloor \Delta \rfloor$ is effective and $\omega$ is a K\"{a}hler form we have $(p_*\lfloor \Delta \rfloor)\omega^{n-1}>0$ which yields a contradiction. Hence $p_*\lfloor \Delta \rfloor=0$ implies that $\lfloor \Delta \rfloor=0 $ by Lemma \ref{Conn}.\\
 
Therefore, we may assume that $ 0<\dim Y< \dim X$. Suppose that $p_*(\lfloor \Delta \rfloor)$ has no horizontal component with respect to $h$. Then by \cite[Lemma 4.4]{Fu00}, we obtain the following exact sequence:
 \[ 0 \to h_*\mathcal{O}_Z(-p_*\lfloor \Delta \rfloor)\to \mathcal{O}_Y \to h_*\mathcal{O}_{p_*\lfloor \Delta \rfloor }\to 0\]
  Note that \cite[Lemma 4.4]{Fu00} uses \cite[Theorem 1-2-7]{KMM87}. In our setting we may instead apply \cite[10.15.1 Page No. 121]{Kol95}.\\
 Define  $T:=h(p_*\lfloor \Delta \rfloor)$. Then $\mathcal{O}_{T}=h_*\mathcal{O}_{p_*\lfloor \Delta \rfloor }$. In particular, $p_*(\lfloor \Delta \rfloor) \cap h^{-1}(y)$ is connected for every $y$. We may assume that $(p_*\rddelta)^h\neq 0$. Since every horizontal component intersects every fiber, it intersects in particular any general fiber $F$ of $h$. Note that $F$ is either a K\"ahler curve or a K\"ahler surface. In both cases $-(p_*\lfloor \Delta \rfloor)|_{F} $ is nef on $F$ which leads to a contradiction. Therefore, $p_*(\lfloor \Delta \rfloor) \cap h^{-1}(y)$ is connected for every $y$. By Lemma \ref{Conn}, we are in case $(2.1)$.\\
 
Now suppose that there exists an extremal Fano contraction $u:Z \to V$ over $Y$. Then $-(K_Z+ p_*\Delta - \epsilon p_*\lfloor \Delta \rfloor)$ is $u$- ample.\\
If $\dim V \leq 1$, then $V$ is projective. Since $u$ is a projective morphism, it follows that $Z$ is also projective which implies that $X$ is projective. In these cases, the assertion follows from  \cite[Proposition 2.1]{Fu00}. If $\dim X=2$, then $\dim Z=2$ and hence $\dim V\leq 1$. This implies that $X$ is projective. 
Thus we may assume that $\dim X=3, \dim V=2$. If $\dim Y\geq 1$, then the case $(2.2)$ follows from Lemma \ref{N} and Lemma \ref{Fano}.
Assume that $\dim Y=0$. If $p_*\lfloor \Delta \rfloor$ is connected, then $\lfloor \Delta \rfloor$ is connected by Lemma \ref{Conn} and we are done. Hence we may assume that $p_*\lfloor \Delta \rfloor$ is not connected.
Since every horizontal component intersects every fiber, it must also intersect every vertical component. In particular, if the horizontal part has only one irreducible component, then $p_*\lfloor \Delta \rfloor$  is connected. Hence by Lemma \ref{Fano}, we are in (3). In this case, $\rddelta$  has two connected components, each of these connected component contains a horizontal component. Since $u$ is an extremal Fano contraction, each horizontal component is relatively ample over $V$.\\

\textit{Claim:} $u|_{D_i}:D_i \to V$ is finite for $i=1,2$.\\
By Stein factorization, we obtain the following commutative diagram:

\[\begin{tikzcd}
	& {\bar{V}} \\
	{D_i} && V
	\arrow[from=1-2, to=2-3]
	\arrow["{\bar{u}_{D_i}}", from=2-1, to=1-2]
	\arrow["{u|_{D_i}}"', from=2-1, to=2-3]
\end{tikzcd}\]
It suffices to show $\bar{u}_{D_i}$ is finite. Suppose, for instance $i=1$. If the morphism is not finite, then there exists a fiber of positive dimension. Since the morphism is projective, we may choose a curve $C$ contained in such a fiber. Then $C \subset $ supp $D_1$. Moreover, we have  $D_2.C >0$ and hence  supp $D_1\cap$ supp $D_2\neq \emptyset$. This implies that $p_*\lfloor \Delta \rfloor$ is connected, contradicting our assumption.
Therefore, $u|_{D_i}$ is finite. Since $V$ is normal, $u|_{D_i}$ is an isomorphism by Zariski's main theorem. Consequently, we are in case $(1.2)$.
  \end{proof}

The following lemma is used in the proof of Proposition \ref{Pro}.
\begin{lemma}\label{Conn}
    Let $f:X \to Y,h:Z\to Y,p:X\dashrightarrow Z$ be as in the proof of Proposition \ref{Pro}. Then for every $y\in Y$, number of connected components of $\lfloor \Delta \rfloor \cap f^{-1}(y)$ is equal to the number of connected components of $p_*\lfloor \Delta \rfloor \cap h^{-1}(y)$.
    \begin{proof}
Since $p$ is a composition of flips and divisorial contractions,  it is enough to verify that the number of connected components remains unchanged at each step of the MMP. Hence we may assume that  $p$  is either a flipping contraction  or a divisorial contraction.\\
Since the MMP is $K_X+\Delta$-relatively trivial, by Lemma \ref{N}, the map of every MMP step must be $B$-bimeromorphic. Let $p:(X,\Delta) \dashrightarrow (X',\Delta')$ be the $B$-bimeromorphic map. Choose a common crepant resolution satisfying $K_W+\Gamma \sim_{\mathbb{Q}}g_1^*(K_X+\Delta)\sim g_2^*(K_{X'}+\Delta')$. By Kollar-Shokurov connectedness principle as in \cite[Theorem 17.4]{Kol92} and Lemma \ref{lem:Top}, we obtain our desired result.
    \end{proof}
\end{lemma}

We define admissible and preadmissible sections similarly as defined in \cite{Fu00}.\\
\begin{definition}\cite[Definition 1.5]{Fu00}\label{def:pre&ad}
    Let $(X,\Delta)$ be a compact K\"{a}hler sdlt n-fold and let $m$ be a sufficiently large and divisible integer. Let $\mu: (X',\Theta):\sqcup_{i=1}^{k} (X'_i,\Theta_i)\to X=\cup X_i $ be the normalization morphism. We define inductively on dimension as follows:
\begin{itemize}
    \item $s \in H^0(X,m(K_X+\Delta))$ is called preadmissible if $\mu^*s|_{(\sqcup \lfloor \Theta_i \rfloor)} \in H^0((\sqcup \lfloor \Theta_i \rfloor),m(K_{X'}+\Theta))|_{\sqcup \lfloor \Theta_i \rfloor}$ is admissible.
    \item $s \in H^0(X,m(K_X+\Delta))$ is called admissible if $s$ is preadmissible and $g^*(s|_{X'_i})=s|_{X'_j}$ for every $B$-bimeromorphic map $g:(X'_i,\Theta_i) \dashrightarrow (X'_j,\Theta_j),$ for all $i,j$.
\end{itemize}
\end{definition} 
\begin{lemma}\cite[Lemma 4.2]{Fu00}\label{end}
Let $(X,\Delta)$ be a compact K\"{a}hler slc $n$-fold. Let $\mu:X' \to X$ be the normalization morphism such that $K_{X'}+\Theta\sim_{\mathbb{Q}}\mu^*(K_X+\Delta)$. Let $h:(Y,\Gamma) \to (X',\Theta)$ be a proper bimeromorphic morphism such that $(Y,\Gamma)$ is dlt and  $K_Y+\Gamma \sim _{\mathbb{Q}}h^*(K_{X'}+\Theta)$. Then every section in $\PA(Y,m(K_Y+\Gamma))$ descends to a section on $(X,\Delta)$.
\begin{proof}
    By the definition of slc, the variety $X$ satisfies Serre's condition $S_2$ and has normal crossing in codimension $1$. Hence, the statement follows directly from the definition of preadmissible sections.  For a detailed argument, we refer the reader to  \cite[Lemma 4.9]{Xu19}.

\end{proof}
\end{lemma}

The following theorem is proved by Fujino for projective dlt pairs \cite[Proposition 4.5]{Fu00} . We will prove this result for compact K\"{a}hler dlt pairs.

\begin{theorem}\cite[Proposition 4.5]{Fu00}\label{imp}
   Let $(X,\Delta) := \sqcup_{i=1}^{k} (X_i,\Delta_i)$ be a compact K\"{a}hler $\mathbb{Q}$-factorial dlt pair of dimension at most $3$. Let $m$ be a sufficiently large and divisible positive integer. Assume that
    \begin{enumerate}
        \item $K_X+\Delta$ is nef,
        \item $\A(\lfloor \Delta \rfloor, m(K_X+\Delta)|_{\lfloor \Delta \rfloor})$ generates $\mathcal{O}_{\lfloor \Delta \rfloor}(m(K_{\lfloor \Delta \rfloor }+\Diff (\Delta -\lfloor \Delta \rfloor)))$.
    \end{enumerate}
Then 
\[ \PA(X,m(K_X+\Delta)) \to \A ( \lfloor \Delta \rfloor, m(K_X+\Delta)|_{\lfloor \Delta \rfloor}) \]
 is surjective and $\PA(X,m(K_X+\Delta))$ generates $\mathcal{O}_{X}(m(K_X+\Delta))$.
 \end{theorem}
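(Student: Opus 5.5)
We follow the structure of Fujino's proof of \cite[Proposition 4.5]{Fu00} and replace each projective input with its K\"ahler counterpart from Section \ref{sec:premin} and Section \ref{sec:ad-and-pread}. The argument has two parts: first show surjectivity of $\PA(X,m(K_X+\Delta))\to \A(\lfloor \Delta\rfloor, m(K_X+\Delta)|_{\lfloor\Delta\rfloor})$, then deduce global generation. By Theorem \ref{abun}, $K_{X_i}+\Delta_i$ is semiample on each component. Let $f_i:X_i\to Y_i$ be the associated Iitaka fibration, with $Y_i$ projective and $K_{X_i}+\Delta_i\sim_{\mathbb Q,f_i}0$. This places us in the setting of Proposition \ref{Pro}, so we may work one component at a time.

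\textbf{Step 1: lifting sections.} Fix a component $X_i$ and take $m$ divisible enough that $m(K_{X_i}+\Delta_i)\sim f_i^*H_i$ with $H_i$ very ample. The obstruction to extending a section from $\lfloor\Delta_i\rfloor$ lies in $H^1(X_i, m(K_{X_i}+\Delta_i)-\lfloor\Delta_i\rfloor)$. Write
\[
m(K_{X_i}+\Delta_i)-\lfloor\Delta_i\rfloor = K_{X_i}+\{\Delta_i\}+(m-1)(K_{X_i}+\Delta_i).
\]
Here $(X_i,\{\Delta_i\})$ is klt because $X_i$ is $\mathbb Q$-factorial dlt, and $(m-1)(K_{X_i}+\Delta_i)$ is a pullback of an ample class from $Y_i$. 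Kollár's torsion-freeness and injectivity theorem for K\"ahler morphisms \cite[10.15]{Kol95}, together with relative Kawamata--Viehweg vanishing over $Y_i$, therefore show that
\[
H^0(X_i,m(K_{X_i}+\Delta_i))\to H^0(Y_i, H_i\otimes f_{i*}\mathcal O_{\lfloor\Delta_i\rfloor})
\]
is surjective, as in the proof of \cite[Lemma 4.4]{Fu00}. Consequently a section on $\lfloor\Delta_i\rfloor$ lifts exactly when it comes from $f_{i*}\mathcal O_{\lfloor\Delta_i\rfloor}\otimes H_i$. It then suffices to show that every admissible section on $\lfloor\Delta_i\rfloor$ is constant along the connected components of the fibres of $\lfloor\Delta_i\rfloor\to Y_i$, and that it glues compatibly across those components.

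\textbf{Step 2: the case analysis of Proposition \ref{Pro}.} This is the main obstacle. In cases (1.1) and (2.1) the fibre $\lfloor\Delta_i\rfloor\cap f_i^{-1}(y)$ is connected, so $f_{i*}\mathcal O_{\lfloor\Delta_i\rfloor}=\mathcal O_T$ with $T=f_i(\lfloor\Delta_i\rfloor)$, and every section lifts. In cases (1.2) and (2.2) there are two components $D_1,D_2$, or an involution on $D_1$, related through the $\mathbb P^1$-fibration $v$ by $B$-bimeromorphic maps to $(V,P)$. By Lemma \ref{Conn} we may pass to the MMP output $Z$, and Lemma \ref{N} ensures the maps involved are $B$-bimeromorphic. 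Admissibility then forces the restrictions $s|_{D_1}$ and $s|_{D_2}$, respectively $s|_{D_1}$ and $\iota^*s|_{D_1}$, to correspond under these maps. This is exactly the compatibility needed for $s|_{\lfloor\Delta_i\rfloor}$ to descend to a section of $H_i\otimes f_{i*}\mathcal O_{\lfloor\Delta_i\rfloor}$. The delicate point is to make this matching precise on the sdlt space $\lfloor\Delta\rfloor$: we must check that $\Bim$-invariance in the definition of admissible sections (Definition \ref{def:pre&ad}) applies to these particular $B$-bimeromorphic maps, which for $\dim\lfloor\Delta\rfloor\le 2$ are genuine maps between components of the normalization. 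The lifted section is preadmissible by construction, because its restriction to $\lfloor\Delta\rfloor$ is the given admissible section.

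\textbf{Step 3: global generation.} Base points of $\PA(X,m(K_X+\Delta))$ on $\lfloor\Delta\rfloor$ are excluded by hypothesis (2) combined with Step 1. Away from $\lfloor\Delta\rfloor$, we use that sections coming from $H^0(Y_i,H_i)$ and vanishing on $\lfloor\Delta_i\rfloor$ have zero restriction, hence are trivially admissible there and so are preadmissible. Since $H_i$ is very ample, these sections include ones vanishing along $f_i(\lfloor\Delta_i\rfloor)$ but not at any prescribed point outside it, which gives generation off $\lfloor\Delta\rfloor$. If $\dim Y_i=0$, then $K_{X_i}+\Delta_i\sim_{\mathbb Q}0$. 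In that case either $\lfloor\Delta_i\rfloor=0$ and every section is preadmissible, or a nonzero admissible section on $\lfloor\Delta_i\rfloor$ exists by (2) and lifts by Step 1 to a nowhere-vanishing section.
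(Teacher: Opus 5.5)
\textbf{Gap in Step 1.} The problem is in Step 1, and it sits exactly where admissibility is supposed to matter. By the projection formula, every section of $m(K_{X_i}+\Delta_i)|_{\lfloor\Delta_i\rfloor}=(f_i^*H_i)|_{\lfloor\Delta_i\rfloor}$ is already a section of $H_i\otimes f_{i*}\mathcal O_{\lfloor\Delta_i\rfloor}$. So your claimed surjectivity says that \emph{every} section on $\lfloor\Delta_i\rfloor$ lifts, and your criterion ``lifts exactly when it comes from $f_{i*}\mathcal O_{\lfloor\Delta_i\rfloor}\otimes H_i$'' is vacuous.

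The claim is false. Take $X=S\times\mathbb P^1$ with $S$ a K3 surface and $\Delta=S\times\{0\}+S\times\{\infty\}$. Then $K_X+\Delta\sim 0$ and $Y$ is a point, but $H^0(X,\mathcal O_X)=\mathbb C$ while $H^0(\lfloor\Delta\rfloor,\mathcal O_{\lfloor\Delta\rfloor})=\mathbb C^2$.

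Your vanishing inputs do not give what you need. Since $(m-1)(K_{X_i}+\Delta_i)$ is $f_i$-trivial, relative Kawamata--Viehweg says nothing unless $f_i$ is bimeromorphic. Koll\'ar's torsion-freeness only kills the connecting map $f_{i*}\mathcal O_{\lfloor\Delta_i\rfloor}\to R^1f_{i*}\mathcal O_{X_i}(-\lfloor\Delta_i\rfloor)$ when $f_i(\lfloor\Delta_i\rfloor)\neq Y_i$, because its image is then torsion; that is how Fujino's Lemma 4.4 and the paper's Case I(c) use it. Your connected cases (1.1) and (2.1) are essentially fine. In the disconnected cases (1.2) and (2.2), however, $\lfloor\Delta_i\rfloor$ dominates $Y_i$ with disconnected fibres, and you have no lifting mechanism at all. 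The same applies to your $\dim Y_i=0$ sentence in Step 3.

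\textbf{What the disconnected cases need.} There you must construct the lift explicitly, and the matching it requires is not ``exactly'' admissibility. The paper proceeds as follows.
\begin{enumerate}
\item It works on the MMP output $u\colon X'\to V$. The restriction of the admissible $s$ to the horizontal part is invariant under the involution (resp.\ compatible under the link $D_1\dashrightarrow D_2$) over $V$, so it descends to some $t\in H^0(V,m(K_V+P))$.
\item Using $H^0(X,m(K_X+\Delta))\cong H^0(V,m(K_V+P))$, it sets $w=u^*t$ and checks that $w|_{\lfloor\Delta'\rfloor}=s$.
\item On the horizontal part this check is the local computation on $U\times\mathbb P^1$ from \cite[12.3.4]{Kol92}. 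The Poincar\'e residues of $\eta\wedge dz/z$ along $z=0$ and $z=\infty$ are $\eta$ and $-\eta$. Hence restrictions of global sections satisfy $g^*(w|_{D_2})=(-1)^m w|_{D_1}$, whereas admissibility demands $g^*(s|_{D_2})=s|_{D_1}$. The two conditions agree only for even $m$, which is why the paper imposes evenness.
\end{enumerate}
Your ``values along fibres'' picture identifies $m(K_X+\Delta)|_{D_i}$ with log pluricanonical forms on $D_i$ in two different ways, via $f^*H$ and via adjunction, and silently loses this sign.

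Two further pieces are also missing. On vertical components $E_j=u^*C_j$ a separate argument is needed: seminormality of $C_j\subset\lfloor P\rfloor$, Lemma \ref{semi}, and injectivity of restriction to a component of $E_j\cap\lfloor\Delta'\rfloor^h$ dominating $C_j$. Moving sections and admissibility between $X$ and the MMP output also needs the isomorphism of restricted section spaces through a common crepant resolution, not just Lemma \ref{Conn}.

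A minor point in Step 3: sections vanishing on $\lfloor\Delta\rfloor$ vanish on all of $f^{-1}(f(\lfloor\Delta\rfloor))$. Generation at points there must therefore come from the lifted sections, which are pullbacks from $Y_i$.
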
 
 
 \begin{proof}
    We first assume that $X$ is $\mathbb{Q}$-factorial compact K\"ahler irreducible dlt pair.\\
By Theorem \ref{abun}, for a sufficiently large and divisible integer $m$, we have a morphism $f:X \to Y$. If $\lfloor \Delta \rfloor=0$, then the result is trivial because in this case $\PA(X,m(K_X+\Delta))=H^0(X,m(K_X+\Delta))$. Hence we may assume that $\lfloor \Delta \rfloor\neq 0$. \\
We have the following possibilities:
 \begin{enumerate}
 \item[(I)] $f^{-1}(y) \cap \lfloor \Delta \rfloor$ is connected for every $y\in Y$.
     \subitem (a) $\kappa(X,K_X+\Delta)=0$ and $\lfloor \Delta \rfloor$ is connected.
     \subitem (b) $\kappa(X,K_X+\Delta)\geq 1 $ and $f^{-1}(y) \cap  \lfloor \Delta \rfloor$ is connected for every $y \in Y$ and $f(\lfloor \Delta \rfloor)=Y$.
     \subitem (c) $\kappa(X,K_X+\Delta)\geq 1 $ and $f^{-1}(y) \cap  \lfloor \Delta \rfloor$ is connected for every $y \in Y$ and $f(\lfloor \Delta \rfloor)\subsetneq Y$.
     \item[(II)]$f^{-1}(y) \cap \lfloor \Delta \rfloor $ is not connected for some $y \in Y$.\\     
 \end{enumerate}
\textbf{Case (I(a))}: We have the following long exact sequence of cohomology\\
 \begin{equation*}    
 0 \to H^0(X,\mathcal{O}_X(m(K_X+\Delta)-  \lfloor \Delta \rfloor)) \to H^0(X,\mathcal{O}_X(m(K_X+\Delta))) \to H^0(\lfloor \Delta  \rfloor, m(K_X+\Delta)|_{\lfloor \Delta \rfloor}) 
 \end{equation*}
 
 Since $m$ is sufficiently large and divisible and $\mathcal{O}_X(m(K_X+\Delta))$ is globally generated, we can choose $s\in H^0(X,\mathcal{O}_X(m(K_X+\Delta)))$ such that $s$ does not vanish on $\lfloor \Delta \rfloor$, i.e., $0\neq s|_{\lfloor \Delta \rfloor}\in H^0(\lfloor \Delta  \rfloor, m(K_X+\Delta)|_{\lfloor \Delta \rfloor})$. Since, $\kappa(K_X+\Delta)=0, \mathcal{O}_X(m(K_X+\Delta))\cong \mathcal{O}_X $ and $\mathcal{O}_{\lfloor \Delta \rfloor}(m(K_{\lfloor \Delta \rfloor} +\Diff (\Delta -\lfloor \Delta \rfloor)))\cong \mathcal{O}_{\lfloor \Delta \rfloor}.$ Since $\lfloor \Delta \rfloor$ is connected, third term is also $1$-dimensional. Thus we get the required result.\\

\textbf{Case (I(b))}:
    From hypothesis $(2)$, we can construct a morphism $\phi: \lfloor \Delta \rfloor \to Z $ with connected fibers by the linear system corresponding to $\A(\lfloor \Delta \rfloor, m(K_X+\Delta)|_{\lfloor \Delta \rfloor})$. Since every  fiber of $f|_{ \lfloor \Delta \rfloor }$ is also contracted by $\phi$, by Lemma \ref{rig} there exists a morphism $\psi:Y \to Z$ such that $\psi\circ f|_{ \lfloor \Delta \rfloor }= \phi$. Now for $s\in \A(\lfloor \Delta \rfloor, m(K_X+\Delta)|_{\lfloor \Delta \rfloor})$, there exists a section $t$ on $Z$ such that $s=\phi^*t$. Define $u:= f^*\psi^*t \in H^0(X,m(K_X+\Delta))$. Note that $u|_{\lfloor \Delta \rfloor}=s \in \A(\lfloor \Delta \rfloor, m(K_X+\Delta)|_{\lfloor \Delta \rfloor}) $ implies that $ u\in \PA(X,m(K_X+\Delta))$. Hence, $\PA(X,m(K_X+\Delta)) \to \A ( \lfloor \Delta \rfloor, m(K_X+\Delta)|_{\lfloor \Delta \rfloor})$ is surjective. By \cite[Lemma 4.3]{Fu00}, we obtain $\PA(X,m(K_X+\Delta))$ generates $\mathcal{O}_{X}(m(K_X+\Delta))$.\\
    
\textbf{Case (I(c))}: 
By  \cite[Lemma 4.4]{Fu00}, we get the following exact sequence
 $$0 \to f_*\mathcal{O}_X(-\lfloor \Delta \rfloor)\to \mathcal{O}_Y \to f_*\mathcal{O}_{\lfloor \Delta \rfloor }\to 0$$
 Note that \cite[Lemma 4.4]{Fu00} uses \cite[Theorem 1-2-7]{KMM87}. In our setting we can use \cite[10.15.1 Page-121]{Kol95}.\\
 
Let $T$ be the complex analytic subspace of $Y$ associated to the ideal sheaf $\mathscr{I}_{T}:= f_*\mathcal{O}_X(-\lfloor \Delta \rfloor)$. Then
 we have $\mathcal{O}_{T}=f_*\mathcal{O}_{\lfloor \Delta \rfloor }$. By \cite[Proposition 4.5]{ambro1998locus} $T$ is seminormal. Similarly as in case I(b), we construct $\phi: \lfloor \Delta \rfloor \to Z $ and get a morphism $\psi:T \to Z$. Then by \cite[Lemma 4.3]{Fu00},
we can pull back any admissible section on $\lfloor \Delta \rfloor $ to a preadmissible section on $X$. Again by \cite[Lemma 4.3]{Fu00} $\PA(X,m(K_X+\Delta))$ generates $\mathcal{O}_{X}(m(K_X+\Delta))$.\\

\textbf{Case (II)}:
From Proposition \ref{Pro}, $(X,\Delta)$ is a generically $\mathbb{P}^1$- bundle over $(V,P)$. Let $f:(X,\Delta)\to Y$ be the morphism defined by Theorem \ref{abun} and $u:(X',\Delta')\to (V,P)$ be the extremal Fano contraction over $Y$. Since log canonical ring is preserved under the steps of MMP, we have \\
$$ H^0(X,m(K_X+\Delta))\cong H^0(X',m(K_{X'}+\Delta'))   $$
By Lemma \ref{N}  we know that the composition map $p:X \dashrightarrow X'$ is $B$-bimeromorphic. Choose a common log resolution of $(X, \Delta)$ and $(X', \Delta')$ which also resolves the map $p$. Let $\alpha:(W,\Gamma)\to (X,\Delta)$ and $\beta:(W,\Gamma)\to (X',\Delta')$ be the induced morphisms; then we have $K_W+\Gamma\sim_{\mathbb{Q}}\alpha^*(K_X+\Delta)\sim_{\mathbb{Q}}\beta^*(K_{X'}+\Delta')$. Let $\Gamma =\sum d_i\Gamma_i $. We define $\Gamma^b:=\sum_{d_i=1}\Gamma_i$.
Since both $(X,\Delta)$ and $(X,\Delta')$ are $\mathbb{Q}$-factorial dlt pair, both $\lfloor \Delta \rfloor$ and $\lfloor \Delta' \rfloor$ are seminormal \cite[Remark 1.2- (2),(3)]{Fu00}. Since $(W,\Gamma)$ is log smooth Nklt$(W,\Gamma)= \Gamma^b $, by connectedness lemma \cite[Theorem 17.4]{Kol92} we get surjective morphisms $\Gamma^b\to \lfloor \Delta \rfloor $ and $\Gamma^b \to \lfloor \Delta' \rfloor $ with connected fibers. Note that $ \alpha^{-1}(\rddelta)=\beta^{-1}(\lfloor \Delta' \rfloor)=\Gamma^b$. So we have $\alpha_*\mathcal{O}_{\Gamma^b}=\mathcal{O}_{\lfloor \Delta \rfloor}$ and $\beta_*\mathcal{O}_{\Gamma^b}=\mathcal{O}_{\lfloor \Delta' \rfloor}$. Then we have 
\begin{equation}\label{eqn}
    H^0(\lfloor \Delta \rfloor, m(K_X+\Delta)|_{\lfloor \Delta \rfloor})\cong H^0(\Gamma^b,m(K_W+\Gamma)|_{\Gamma^b}) \cong H^0(\lfloor \Delta' \rfloor, m(K_{X'}+\Delta')|_{\lfloor \Delta' \rfloor}).
    \end{equation}
 Let $s\in \A(\lfloor \Delta  \rfloor, m(K_X+\Delta)|_{\lfloor \Delta \rfloor})$. By the isomorphism $(4.1)$, $s$ is also an element of $H^0(\lfloor \Delta' \rfloor, m(K_{X'}+\Delta')|_{\lfloor \Delta' \rfloor})$. Let $\lfloor \Delta' \rfloor= \lfloor\Delta'\rfloor^h +\lfloor\Delta'\rfloor^v$ where $\lfloor\Delta'\rfloor^h$ and $\lfloor\Delta'\rfloor^v$ be the horizontal and vertical component respectively with respect to the morphism $u$. 
  Since $s$ is admissible, $s|_{\lfloor\Delta'\rfloor^h}$ is $B$- bimeromorphic involution invariant over $(V,P)$ and hence it descends to a section $t$ on $(V,P)$. By the isomorphism 
  \[ H^0(X,m(K_X+\Delta))\cong H^0(X',m(K_{X'}+\Delta'))\cong H^0(V,m(K_V+P)), \] we can pull back the section $t$ to a section $w\in  H^0(X',m(K_{X'}+\Delta')) $.\\
\textit{Claim:} $s=w|_{ \lfloor \Delta' \rfloor}$.\\
By Lemma \ref{biholo}, we can choose a small analytic open set $U$  in $V$ such that $u^{-1}(U)$ is biholomorphic to $U\times \mathbb{P}^1$ and $u|_{u^{-1}(U)}:U\times \mathbb{P}^1\to U$ is the first projection. We may replace $V$ by the open set $U$ and $X$ by $U\times \mathbb{P}^1$  and the section $t$ corresponds to a curve $C$ on $V$. Now we can view two sections $s$ and $w$ as section for the restriction map $C\times \mathbb{P}^1 \to C$ as both the sections $s$ and $w$ are pullback of $t$ under the morphism $u$. Then we can directly apply the same argument as in \cite[12.3.4]{Kol92}. Then assuming m is even we get $s|_{{\lfloor \Delta' \rfloor}^h}=w|_{{\lfloor\Delta'\rfloor}^h}$.\\

Now we check $s|_{\lfloor\Delta'\rfloor^v}=w|_{\lfloor\Delta'\rfloor^v}$. Let $\lfloor\Delta'\rfloor^v= \sum E_i$. Since $u$ is a contraction of negative extremal ray, by contraction theorem we have $E_i=u^*D_i$ for every $i$. It is enough to check that $s|_{E_i}=w|_{E_i}$ for every $i$. Since each $E_i$ is pullback of a divisor from $V$, if a fiber intersects the vertical part, then it is contained in the  support of $E_i$. On the other hand, horizontal part intersects with every fiber. Hence we have $\lfloor\Delta'\rfloor^h \cap E_i \neq \emptyset$. We can choose  an irreducible component, $E_i'$ of $E_i \cap \lfloor\Delta'\rfloor^h$ such that $u|_{E_i'}:E_i' \to D_i$ is dominant. Consider the following commutative diagram:

=
\[\begin{tikzcd}
	{H^0(E_i,m(K_{X'}+\Delta')|_{E_i})} \\
	&& {H^0(E_i',m(K_{X'}+\Delta')|_{E_i'})} \\
	{H^0(D_i,m(K_V+P)|_{D_i})}
	\arrow[from=1-1, to=2-3]
	\arrow["\cong", from=3-1, to=1-1]
	\arrow["(u|_{E_i'})^*",hook, from=3-1, to=2-3]
\end{tikzcd}\]

Since $u^*D_i=E_i,u|_{E_i}$ has connected fibers. Since $D_i\subset \lfloor P \rfloor, D_i$is seminormal \cite[Proposition 3.2]{ambro1998locus} and  hence the vertical map is an isomorphism by Lemma \ref{semi}. The map $(u|_{E_i'})^*$ is injective since $u|_{E_i'}:E_i'\to D_i$ is dominant. Since $E_i \subset \lfloor\Delta'\rfloor'^h$,for every $E_i'$ we get $s|_{E_i'}=w|_{E_i'}$, so we get $s|_{E_i}=w|_{E_i}$. Thus we are done.\\
 From given hypothesis $ \A(\lfloor \Delta \rfloor, m(K_X+\Delta)|_{\lfloor \Delta \rfloor})$ generates $\mathcal{O}_{\lfloor \Delta \rfloor}(m(K_{\lfloor \Delta \rfloor } $+Diff $(\Delta -\lfloor \Delta \rfloor)$, the restriction to $\lfloor \Delta'\rfloor^h$ generates $\mathcal{O}_{\lfloor \Delta' \rfloor^h}(m(K_{X'}+\Delta')|_{\lfloor \Delta' \rfloor^h})$. Therefore 
$\PA(X,m(K_X+\Delta))$ generates $\mathcal{O}_{X}(m(K_X+\Delta))$.

Now suppose $(X,\Delta)=\sqcup_{i=1}^k (X_i,\Delta_i)$. By definition, $\rddelta=\sqcup_{i=1}^k\lfloor\Delta_i\rfloor$
Let $t\in \A(\rddelta, (K_X+\Delta)|_{\rddelta})$. Then $t=(t_1,...,t_k)$ where $t_i$ is admissible on $\lfloor\Delta_i\rfloor$. Since surjectivity is already proved for each irreducible component, we obtain $s=(s_1,...,s_k)$ such that $s|_{\rddelta}=t$. Then by definition, $s$ is preadmissible. Thus surjectivity follows. Then by \cite[Lemma 4.3]{Fu00} we can show $\PA(X, m(K_X+\Delta))$ generates the line bundle.
 \end{proof}

\begin{lemma} \label{lem:admissible} 
Let $(X,\Delta)$ be a compact K\"{a}hler pure dimensional dlt pair of dimension  2 such that the pluricanonical representation is finite. Assume that $K_X+\Delta$ is nef. Then $\A(X,m(K_X+\Delta))$ generates $\mathcal{O}_X(m(K_X+\Delta))$ for sufficiently large and divisible integer $m$.
\end{lemma}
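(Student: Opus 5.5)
This is the surface analogue of Fujino's \cite[Lemma 4.7]{Fu00}. The plan is to combine the finiteness hypothesis on the pluricanonical representation with the one-dimensional case via Theorem \ref{imp}, and then average over the finite group.

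\textbf{Step 1: the boundary.} Write $(X,\Delta)=\sqcup_{i=1}^k(X_i,\Delta_i)$ with each $X_i$ a surface. Then $\lfloor \Delta \rfloor$ is a disjoint union of curves, and by adjunction $(\lfloor\Delta\rfloor,\Diff(\Delta-\lfloor\Delta\rfloor))$ is a one-dimensional sdlt pair with nef log canonical divisor.
\begin{itemize}
\item Its normalization is a disjoint union of smooth curves.
\item By Theorem \ref{C}, the one-dimensional pluricanonical representations are finite.
\item Since compact curves are projective, Fujino's one-dimensional argument applies verbatim. It gives that $\A(\lfloor \Delta \rfloor, m(K_X+\Delta)|_{\lfloor \Delta \rfloor})$ generates $\mathcal O_{\lfloor\Delta\rfloor}(m(K_X+\Delta)|_{\lfloor\Delta\rfloor})$ for $m$ sufficiently divisible. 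This is the one-dimensional instance of the statement, proved by the same averaging as in Step 3.
\end{itemize}
Theorem \ref{imp} then says that $\PA(X,m(K_X+\Delta))$ surjects onto the admissible sections of the boundary and generates $\mathcal O_X(m(K_X+\Delta))$.

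\textbf{Step 2: the finite group.} Group the components $X_i$ into $B$-bimeromorphism classes. Fix one component in each class and $B$-bimeromorphic identifications $\phi_{ij}:(X_i,\Delta_i)\dashrightarrow(X_j,\Delta_j)$ within the class. By hypothesis, and after enlarging $m$, the group $G_i:=\rho_m(\Bim(X_i,\Delta_i))$ is finite. The set $\PA(X_i,m(K_{X_i}+\Delta_i))$ is $\Bim$-stable. The reason is that $B$-bimeromorphic maps restrict, via connectedness, to $B$-bimeromorphic maps between the boundaries, and admissibility is defined to be invariant under all such maps; this is Lemma \ref{restriction}-type reasoning.

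\textbf{Step 3: averaging.} Take a point $x\in X_i$. By Step 1, choose $s\in \PA(X_i,m(K_{X_i}+\Delta_i))$ with $s(x)\neq 0$. Consider the finitely many translates $g^*s$ for $g\in G_i$, and form the elementary symmetric functions
\[\sigma_r=\mathrm{sym}_r\{g^*s\}_{g\in G_i}\in H^0(X_i,rm(K_{X_i}+\Delta_i)),\qquad r=1,\dots,|G_i|.\]
These are $G_i$-invariant. Their restrictions to $\lfloor\Delta_i\rfloor$ are again admissible, because they are polynomials in admissible sections. Some $\sigma_r$ is nonzero at $x$; for instance take $\prod_g g^*s$ after adjusting by a general second section so that no translate vanishes at $x$. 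Finally, transport along the $\phi_{ij}$ to define compatible sections on every component of the class, and set zero on the other classes. This yields an admissible section nonvanishing at $x$. Passing to a common multiple of all the degrees $rm$ finishes the proof.

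\textbf{Main obstacle.} The delicate point is Step 3's nonvanishing at a given $x$ together with a uniform $m$. The orbit $G_i\cdot s$ may vanish at $x$ for some $g$. To handle this, I would pick $s$ general in the linear span of $\PA$, avoiding the finitely many hyperplanes $\{s : (g^*s)(x)=0\}$. Each such set is a proper hyperplane because $g^*$ is invertible and $\PA$ generates. Then $\prod_g g^*s$ is invariant, admissible on the boundary, and nonzero at $x$. Compatibility across different components uses that every $B$-bimeromorphic map $X_i\dashrightarrow X_j$ is $\phi_{ij}\circ h$ with $h\in\Bim(X_i,\Delta_i)$.
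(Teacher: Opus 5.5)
\textbf{The gap: extending by zero in Step 3 destroys preadmissibility.} Your use of Theorem \ref{imp} and of symmetric functions of the orbit $\{g^*s\}$ matches the paper's argument. The way you assemble the final section in Step 3, however, does not produce an admissible section.

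You start from $s\in\PA(X_i,m(K_{X_i}+\Delta_i))$, which is preadmissible only on $X_i$. You then build $G_i$-invariant sections on the $B$-bimeromorphism class of $X_i$ and put $0$ on all other classes. But admissibility of the restriction to $\lfloor\Delta\rfloor$ is a condition on the whole sdlt curve $\lfloor\Delta\rfloor=\bigsqcup_j\lfloor\Delta_j\rfloor$. It requires $g^*(t|_{C'})=t|_C$ for every $B$-bimeromorphic map $g\colon C\dashrightarrow C'$ between any two of its components. This includes the case $C\subset X_i$ and $C'\subset X_k$ with $X_i$ and $X_k$ in different classes.

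Non-$B$-bimeromorphic components can easily have $B$-bimeromorphic boundary curves. For example, take $E\times\mathbb{P}^1$ with boundary $E\times\{0\}+E\times\{\infty\}$, and $E\times C$ with $g(C)\geq 2$ and boundary $E\times\{p\}$. Both have log canonical divisor nef, the boundary curves are isomorphic elliptic curves with empty different, and the surfaces have different Kodaira dimensions. If $x\in C$, your section restricts to $\binom{q}{r}(s|_C)^r\neq 0$ on $C$ and to $0$ on $C'$. So it is not in $\PA(X,m(K_X+\Delta))$, let alone in $\A(X,m(K_X+\Delta))$. Starting from a section that is only preadmissible on $X_i$ already has this defect, before you extend by zero.

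\textbf{The fix: work globally, without decomposing into classes.} By Definition \ref{def:b-bimeromorphic}, $\Bim(X,\Delta)$ of the disjoint union already contains the maps permuting components. Any $B$-bimeromorphic $X_i\dashrightarrow X_j$ extends to one by swapping $X_i$ and $X_j$ and taking the identity elsewhere. The finiteness hypothesis is about $G=\rho_m(\Bim(X,\Delta))$.

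Take a global $s\in\PA(X,m(K_X+\Delta))$ with $s(x)\neq 0$, which exists by Theorem \ref{imp}. Form $\sigma_r(g_1^*s,\dots,g_q^*s)$ on all of $X$. These sections are $G$-invariant, so they are compatible across components of $X$. By Lemma \ref{restriction}, their restriction to $\lfloor\Delta\rfloor$ is $\binom{q}{r}(s|_{\lfloor\Delta\rfloor})^r$, which is admissible on the whole boundary.

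Your ``main obstacle'' also disappears. Since $\mathrm{id}\in G$ and the elementary symmetric polynomials have only the trivial common zero, $s(x)\neq 0$ already forces $\sigma_r(x)\neq 0$ for some $r$. Replacing $\sigma_r$ by $\sigma_r^{q!/r}$ puts everything in the single degree $q!m$; this is exactly the paper's proof. Your alternative of choosing $s$ general so that $\prod_{g\in G}g^*s$ is nonzero at $x$ is also valid once done globally, and it gives the single degree $qm$.
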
  
\begin{proof}
Let $s\in \A(\lfloor \Delta  \rfloor, m(K_X+\Delta)|_{\lfloor \Delta \rfloor})$. By Proposition \ref{imp}, there exists a section  $s'\in \PA(X,m(K_X+\Delta))$ such that $s'|_{\lfloor \Delta  \rfloor}=s $. Define  $t:= \frac{1}{|G|}\sum_{g \in G} g^*s'$ where $G=\rho_m(\Bim(X,\Delta))$. Then $g^*t =t$ for every $g\in G$, and hence $t$ is admissible. We have, $t|_{\lfloor \Delta \rfloor}=s$ by Lemma \ref{restriction}.
 Therefore, the natural restriction morphism
  $\A(X,m(K_X+\Delta)) \to \A(\lfloor \Delta  \rfloor, m(K_X+\Delta)|_{\lfloor \Delta \rfloor})$ is surjective.\\
  
  Let $q:=|\rho_m(\Bim(X,\Delta))|< \infty$. Let $\sigma_i$ be the $i$-th elementary symmetric polynomial. Since identity morphism is also an automorphism and the only common zero  of all elementary symmetric polynomials in $q$ variables is trivial, we have 
   \[ \cap_{i=1}^{q} \{\sigma_i(g_1^*s,.....,g_q^*s)=0\}=\cap_{j=1}^{q}\{g_j^*s=0\} \subseteq \{s=0\}\]
  Moreover, each $\sigma_i(g_1^*s,.....,g_q^*s)$ is admissible for  $i\in \{ 1,2,...,q\}$. Hence, for $s\in \PA(X,m(K_X+\Delta))$ we may define,
   $$ s(i):= \sigma_i^{\frac{q!}{i}}(g_1^*s,.....,g_q^*s)\in \A(X,q!m(K_X+\Delta)) $$
  Given any point $x\in X$, choose a  section $s\in \PA(X,m(K_X+\Delta)) $ which does not vanish at $x$. Then there exists some $i$ such that $s(i)$ does not vanish at $x$. This shows that $\A(X,q!m(K_X+\Delta))$ generates $\mathcal{O}_{X}(q!m(K_X+\Delta))$, completing the proof.

\end{proof}
\begin{remark}
    For any lc curve $(C,B)$, Lemma \ref{lem:admissible} holds, since the pluricanonical representation is finite.
\end{remark}
\begin{lemma}\cite[Lemma 4.9]{Fu00}\label{restriction}
     Let $(X,\Delta)$ be a compact K\"{a}hler pure dimensional dlt pair of arbitrary dimension such that $K_X+\Delta$ is nef and let $m$ be a sufficiently large and divisible integer. Let $G=\rho_m(\Bim(X,\Delta))$. If $s\in \PA(X,m(K_X+\Delta))$ then $g^*s|_{\lfloor \Delta \rfloor}=s|_{\lfloor \Delta \rfloor}$ and $g^*s \in \PA(X,m(K_X+\Delta))$. Let $G=\rho_m(\Bim(X,\Delta))$. If $G$ is finite, then 
     \[ \sum_{g\in G} g^*s \in \A(X,m(K_X+\Delta))\]
     \[ \prod_{g\in G}g^*s \in \A(X,m|G|(K_X+\Delta))\]
     and 
     \[ \prod_{g\in G} g^*s|_{\rddelta}= (s|_{\rddelta})^{|G|}\].
     \end{lemma}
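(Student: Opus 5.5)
The plan is to follow Fujino's argument for \cite[Lemma 4.9]{Fu00} and reduce everything to the first assertion, that $g^*s|_{\rddelta}=s|_{\rddelta}$ and $g^*s$ is again preadmissible. I would argue by induction on $\dim X$, since preadmissibility is itself defined inductively (Definition \ref{def:pre&ad}).

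\textbf{Step 1: $g$ induces $B$-bimeromorphic maps on the boundary.} Let $g\in\Bim(X,\Delta)$, and take a common log resolution $\alpha,\beta\colon (W,\Gamma)\to(X,\Delta)$ with $K_W+\Gamma\sim_{\mathbb Q}\alpha^*(K_X+\Delta)\sim_{\mathbb Q}\beta^*(K_X+\Delta)$. The divisors with coefficient $1$ in $\Gamma$ are exactly the lc places. By the connectedness argument used in the proof of Theorem \ref{imp}, Case (II), we have $\alpha_*\mathcal O_{\Gamma^b}=\mathcal O_{\rddelta}=\beta_*\mathcal O_{\Gamma^b}$; this uses that $\rddelta$ is seminormal, Lemma \ref{semi}, and \cite[Theorem 17.4]{Kol92}. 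Both restriction maps therefore identify $H^0(\rddelta,m(K_X+\Delta)|_{\rddelta})$ with $H^0(\Gamma^b,m(K_W+\Gamma)|_{\Gamma^b})$, and $g^*$ on $\rddelta$ is computed through $\Gamma^b$. Each component $S$ of $\rddelta$ is the image of a unique component of $\Gamma^b$ dominating it under $\alpha$. That component maps under $\beta$ either onto a component $S'$ of $\rddelta$, or onto a lower-dimensional lc centre. In the first case, adjunction on $W$ shows that $g$ induces a $B$-bimeromorphic map $(S,\Diff_S)\dashrightarrow(S',\Diff_{S'})$. In the second case, the component is contracted, and its sections are determined by restriction to lc centres; I would handle this by restricting further, using induction on strata.

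\textbf{Step 2: invariance on the boundary.} Since $s$ is preadmissible, $s|_{\rddelta}$ is admissible. Admissibility means exactly that $s|_{\rddelta}$ is invariant under all $B$-bimeromorphic maps between components of the normalization of $\rddelta$, and these components are the $(S,\Diff_S)$. By Step 1, $(g^*s)|_{\rddelta}=g^*(s|_{\rddelta})=s|_{\rddelta}$. Hence $(g^*s)|_{\rddelta}$ is admissible, and so $g^*s$ is preadmissible. The main obstacle is the compatibility of restrictions when $g$ sends a divisor in $\rddelta$ to a smaller lc centre rather than to a divisor. I would first try to avoid this by replacing $(X,\Delta)$ with a $\mathbb Q$-factorial dlt model in which all lc places over lc centres are extracted only when needed. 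Alternatively, I would use that on a dlt pair $g$ restricts to a bimeromorphic self-map of the lc-centre stratification, since lc centres correspond to strata of $\Gamma^b$. This should give a bijection of the components of $\rddelta$ up to crepant birational equivalence.

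\textbf{Step 3: averaging when $G$ is finite.} Step 2 gives $(g^*s)|_{\rddelta}=s|_{\rddelta}$ for every $g\in G$. The sum and the product are then both $G$-invariant by reindexing. They are preadmissible because their restrictions to $\rddelta$ are $|G|\,s|_{\rddelta}$ and $(s|_{\rddelta})^{|G|}$ respectively, and sums, products and powers of admissible sections are admissible. A preadmissible $G$-invariant section is admissible: on a disjoint union, the maps between components are included in $\Bim$ by Definition \ref{def:b-bimeromorphic}. The last identity is immediate from Step 2.
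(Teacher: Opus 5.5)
The paper's proof simply defers to Fujino's argument. Your reconstruction of that argument has a genuine gap, exactly at the point you flag as the main obstacle.

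\textbf{Where it fails.} Step 2 works only when $g$ sends every component $S$ of $\lfloor\Delta\rfloor$ onto a component $S'$. Only then does $g$ induce a $B$-bimeromorphic map $(S,\Diff_S)\dashrightarrow(S',\Diff_{S'})$ to which admissibility applies. Neither of your proposed workarounds is valid.
\begin{itemize}
\item The claim that $g$ induces a bijection of the components of $\lfloor\Delta\rfloor$ up to crepant birational equivalence is false. The standard Cremona map $(x:y:z)\mapsto(yz:xz:xy)$ is inversion on the torus and preserves $\frac{dx}{x}\wedge\frac{dy}{y}$, so it lies in $\Bim(\mathbb P^2,\{xyz=0\})$. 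Yet it contracts each coordinate line to the opposite vertex, so no boundary component goes to a boundary component.
\item Replacing $(X,\Delta)$ by another dlt model does not help. The map $g$ is a fixed element of $\Bim(X,\Delta)$ and the statement is about sections on $X$. On any model, such maps can still contract boundary divisors to lower-dimensional lc centres while extracting other lc places.
\end{itemize}
So the chain $(g^*s)|_{\lfloor\Delta\rfloor}=g^*(s|_{\lfloor\Delta\rfloor})=s|_{\lfloor\Delta\rfloor}$ is unjustified as soon as $g$ contracts a component of $\lfloor\Delta\rfloor$. Step 3 is otherwise fine, but it rests on this.

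\textbf{What is missing.} You need the induction on strata that you only allude to in Step 1. Because $\alpha_*\mathcal O_{\Gamma^b}=\mathcal O_{\lfloor\Delta\rfloor}$, it suffices to show $\alpha^*s|_V=\beta^*s|_V$ for every stratum $V$ of $\Gamma^b$, by induction on $\dim V$.
\begin{itemize}
\item Suppose $\dim\alpha(V)=\dim V=\dim\beta(V)$. Then $V$ induces a $B$-bimeromorphic map between the strata $\alpha(V)$ and $\beta(V)$ of $\lfloor\Delta\rfloor$, with their differents. You then use that, by the inductive definition of $\PA$, $s$ is invariant under such maps on strata of every dimension, down to $0$-dimensional ones, not just on the top-dimensional components.
\item Suppose instead that, say, $C'=\beta(V)$ has smaller dimension than $V$. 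Both $\alpha^*s|_V$ and $\beta^*s|_V$ are sections of $(\beta|_V)^*\bigl(m(K_X+\Delta)|_{C'}\bigr)$. Since $(\beta|_V)_*\mathcal O_V=\mathcal O_{C'}$ (the pair is snc at the generic point of the dlt stratum $C'$), both are pulled back from $C'$.
\item Over a general $c'\in C'$, the fibre of $\Gamma^b$ is connected by the connectedness lemma. It also meets the $\beta$-strict transform of a component of $\lfloor\Delta\rfloor$ through $C'$. Hence $V$ meets another component of $\Gamma^b$ in a stratum dominating $C'$.
\item By induction the two sections agree on that stratum. Since both are pulled back from $C'$, they agree on $V$.
\end{itemize}
This is what makes $g^*s|_{\lfloor\Delta\rfloor}=s|_{\lfloor\Delta\rfloor}$ hold in general.
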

    \begin{proof}
We will  follow the argument of \cite[Lemma 4.9]{Fu00} directly. Since the argument there uses only the connectedness Lemma \cite[Theorem 17.4]{Kol92}  and existence of resolution of singularities - both valid in our setting, so the same proof works in our case as well.  
    \end{proof}

\section{Minimally Preadmissible and Minimally Admissible Sections}\label{sec:sdlt-abundance}
In this section, we introduce the notions of minimally admissible and minimally preadmissible sections (see Definition \ref{def:ma-mpa}), which generalize admissible and preadmissible sections. We then establish several results which will be used in the proof of our main Theorem \ref{thm:mainimp}.

We start with a slc pair $(X,\Delta):=\cup_{i=1}^{k}(X_i,\Delta_i)$ of dimension 3. Let $\mu:(X',\Delta'+D'):=\sqcup_{i=1}^{k}(X_i',\Delta_i'+D'_i)\to (X,\Delta)$ be the normalization morphism, where $D'$ is the conductor divisor on $X'$. Assume that $\nu:D^n \to D'$ is the normalization. Then there is a Galois involution $\tau:D^n \to D^n$ over $D'$. Let $\phi:(Z,\Gamma):=\sqcup_{i=1}^{k} (Z_i,\Gamma_i) \to (X',\Delta'+D')$ be a $\mathbb{Q}$ factorial dlt modification of $(X', \Delta'+D')$. Then by adjunction $K_{\rdgamma}+\Diff(\Gamma-\rdgamma))\sim_{\mathbb{Q}}(K_Z+\Gamma)|_{\rdgamma}$ such that $(\rdgamma,\Diff(\Gamma-\rdgamma))$ has sdlt singularities. \\

Write $\rdgamma =\cup_{i=1}^\ell T_i$; then by adjunction we may assume that  $(K_{\rdgamma}+\Diff(\Gamma-\rdgamma))|_{T_i} \sim_{\mathbb{Q}}K_{T_i}+\Theta_{T_i}$ for all $i\in \{1,2,\ldots, \ell\}$.
Let $\nu_{\rdgamma}:(\widetilde{\rdgamma},\Theta):= \sqcup_{i=1}^\ell(T_i,\bar\Theta_{T_i})=\sqcup_{i=1}^\ell(T_i,\Theta'_{T_i}+\bar{D}_{T_i})\to \cup_{i=1}^\ell (T_i,\Theta_{T_i})$ be the normalization morphism; we note that since each $T_i$ is normal (as $(\rdgamma, \Diff(\Gamma-\rdgamma))$ is sdlt), the normalization $\nu_{\rdgamma}$ only affects the boundary divisor.\\
In order to prove that $\PA(Z,m(K_Z+\Gamma))$ generates $\mathcal{O}_{Z}(m(K_Z+\Gamma))$, by Theorem \ref{imp}, it is enough to show that $\A(\rdgamma,\Diff(\Gamma-\rdgamma))$ generates $\mathcal{O}_{\rdgamma}(m(K_{\rdgamma}+\Diff(\Gamma-\rdgamma)))$. Since  the finiteness of pluricanonical representation fails on K\"ahler varieties (see Remark \ref{K3}), there are cases in which $\A(\rdgamma,\Diff(\Gamma-\rdgamma)$ fails to generate $\mathcal{O}_{Z}(m(K_Z+\Gamma))$. In particular,  $\PA(Z,m(K_Z+\Gamma))$  does not generate $\mathcal{O}_{Z}(m(K_Z+\Gamma))$.
To remedy this situation we introduce a new collection of sections which contains the preadmissible sections.

 \begin{definition}\label{def:bad}
Let $\phi:(Z,\Gamma) \to (X',\Delta'+D')$ be defined as in the above paragraph. Let $T \subset \rdgamma$ be an irreducible component.
We say 
\begin{enumerate}
    \item $T$ is \textit{bad}, if the pluricanonical representation of the pair$(T,\bar{\Theta}_T)$ is infinite.
    \item $T$ is \textit{good}, if it is not bad, i.e.,  if the pluricanonical representation of the pair $(T,\bar{\Theta}_T)$ is finite.
\end{enumerate}
 If $\rdgamma$ contains a bad component, then we say that the pair $(Z, \Gamma)$ is a \textit{bad} pair.
\end{definition}
\begin{remark}\label{rmk:bad}
Note that if $S$ and $T$ are two components of $\rdgamma$ and $\eta:(S,\bar\Theta_{S}) \dashrightarrow (T,\bar\Theta_T)$ is a $B$-bimeromorphic map, then $\Bim(S,\bar\Theta_S)\cong \Bim(T,\bar\Theta_T)$ by Remark \ref{rmk:b-bimeromorphic}. Therefore the pluricanonical representation is finite for the pair $(S,\bar\Theta_{S})$ if and only if it is finite for the pair $(T,\bar\Theta_T)$. This implies that $S$ is a bad component if and only if $T$ is a bad component.
\end{remark}
\begin{lemma}\label{lem:bad}
    With the same notation as in Definition \ref{def:bad}, if $T$ is a \textit{bad} component of $\rdgamma$, then the following hold:
    \begin{enumerate}
        \item $T\cap \Supp(\rdgamma-T)=\emptyset$, and 
        \item the Kodaira dimension $\kappa(T)=0$ and $\bar{\Theta}_T=0$.
    \end{enumerate}
\end{lemma}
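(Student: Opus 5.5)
The plan is to argue by contraposition using Theorem \ref{plu}. Fix a bad component $T$ of $\rdgamma$. Then $(T,\bar\Theta_T)$ is a compact K\"ahler lc surface: $T$ is normal, being a component of the sdlt locus, and the normalization only changes the boundary. Moreover $K_T+\bar\Theta_T\sim_{\mathbb Q}(K_Z+\Gamma)|_T$ is nef, since $K_Z+\Gamma\sim_{\mathbb Q}\phi^*\mu^*(K_X+\Delta)$ is nef in our setting. (This nefness is the standing assumption of the section; if it is not built in, I would pass to the situation where it holds, as in Theorem \ref{imp}.) Theorem \ref{plu} says the pluricanonical representation is finite whenever $\kappa(T,K_T+\bar\Theta_T)>0$, or when $\kappa=0$ and $\bar\Theta_T\neq 0$. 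Badness therefore forces $\kappa(T,K_T+\bar\Theta_T)=0$ and $\bar\Theta_T=0$.

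To get $\kappa(T)=0$ itself, observe that $\bar\Theta_T=0$ gives $K_T\sim_{\mathbb Q}(K_Z+\Gamma)|_T$. So $K_T$ is $\mathbb Q$-Cartier, nef, and has Kodaira dimension $0$, and $(T,0)$ is lc. This proves (2).

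For (1), I write $\bar\Theta_T=\Theta'_T+\bar D_T$, where $\bar D_T$ is the conductor part coming from the normalization of $\rdgamma$. Adjunction on the dlt pair $(Z,\Gamma)$ gives $\Diff_T(\Gamma-T)\geq (\rdgamma-T)|_T$, and every curve in $T\cap\Supp(\rdgamma-T)$ appears with coefficient $1$ in the boundary of $T$. Hence if $T\cap\Supp(\rdgamma-T)\neq\emptyset$, this intersection contains a curve, which forces $\bar\Theta_T\neq 0$ and contradicts (2).

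The main thing to check is that a nonempty intersection really is a curve and not an isolated point. I would handle this as follows. In a dlt pair, any intersection of two components of $\rdgamma$ is a union of lc centers of pure codimension $2$, by \cite[Theorem 17.4]{Kol92} and the standard dlt structure. Since $Z$ is a threefold, $T\cap S$ is therefore pure one-dimensional for any other component $S$. Those curves lie in the double locus of $\rdgamma$. By construction they then appear in $\bar D_T$ with coefficient $1$, and the conclusion follows.
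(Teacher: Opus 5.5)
Your proposal is correct and is essentially the paper's argument: both parts come from Theorem \ref{plu}, together with the observation that a nonempty $T\cap\Supp(\rdgamma-T)$ puts a double curve into $\bar D_T$ and hence makes $\bar\Theta_T\neq 0$; you only reverse the order (proving (2) first and deducing (1) from it) and justify the curve claim, which the paper asserts without proof. For that justification the simplest reason is the $\mathbb{Q}$-factoriality of $Z$: a multiple of each other component $S$ is Cartier, so $T\cap S$ is locally cut out on $T$ by one equation and is therefore of pure dimension one (note that \cite[Theorem 17.4]{Kol92} is the connectedness lemma, not the source of the pure-codimension-two statement).
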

\begin{proof}
(1) Assume that $T\cap \Supp(\rdgamma-T)\neq\emptyset$. Then $\bar{\Theta}_T\neq 0$, where $(K_T+\bar\Theta_T)\sim_{\mathbb Q} (K_{\rdgamma}+\Diff(\Gamma-\rdgamma))|_{T}$ is defined by adjunction (see above). Then by Theorem \ref{plu}, the pluricanonical representation of the pair $(T,\bar{\Theta}_T)$ is finite. This is a contradiction.\\

(2)  Since $T$ is a bad component, by Theorem \ref{plu}, $\bar\Theta_T=0$. Moreover, in this case from Theorem \ref{plu}, it follows that the Kodaira dimension $\kappa(T)=0$.

\end{proof}
Recall that $\nu_{\rdgamma}:\widetilde{\rdgamma} \to \rdgamma$ is the normalization morphism and $\widetilde{
\lfloor \Gamma \rfloor}:= \sqcup\; T_i$.
Now we define generalizations of admissible and preadmissible sections.
\begin{definition}\label{def:ma-mpa}
With the same notation and hypotheses as in Definition \ref{def:bad}, we fix a sufficiently large and divisible positive integer $m$. The \textit{minimally  admissible} sections ${\rm MA}\bigl(\rdgamma, m(K_{\rdgamma}+\Diff_{\rdgamma}(\Gamma-\rdgamma)\bigr)\bigr)$ is defined as the collection of sections  $s\in H^0\bigl(\rdgamma, m(K_{\rdgamma}+\Diff_{\rdgamma}(\Gamma-\rdgamma)))$
 satisfying the following properties:

\begin{enumerate}
\item $s$ is preadmissible (as in Definition \ref{def:pre&ad}).
\item If $\theta \colon (T_i,\bar\Theta_{T_i})\dashrightarrow (T_j,\bar\Theta_{T_j})$ is a $B$-bimeromorphic map of two good components of $\rdgamma$ (not necessarily distinct), then $\theta^*(s|_{T_j}) = s|_{T_i}$ holds.

\item If $T_i$ is a bad component of $\rdgamma$ and $\theta:(T_i,\bar\Theta_{T_i})\dashrightarrow (T_i,\bar\Theta_{T_i})$ is a $B$-bimeromorphic map such that $\theta^2=\mbox{id}$, then $\theta^*(s|_{T_i}) = s|_{T_i}$ holds.

\item If the Galois involution $\tau: D^n\to D^n$ (defined above) induces a $B$-bimeromorphic map on two distinct bad components $T_i$ and $T_j$, $i\neq j$, of $\rdgamma$, i.e., $\tau|_{T_i}:(T_i,\bar\Theta_{T_i})\dashrightarrow (T_j,\bar\Theta_{T_j})$ is $B$-bimeromorphic, then $\tau^*(s|_{T_j}) = (s|_{T_i})$. Note that in this case $T_i$ and $T_j$ are bimeromorphic to two components $D^n_1$ and $D^n_2$ of $D^n$ induced by the dlt model $\phi$ defined above.

\end{enumerate}
We further define \textit{minimally preadmissible} sections as follows:
\[
\operatorname{MPA}\Bigl(
Z,m(K_Z+\Gamma\bigr)
\Bigr)
:=
\left\{
t \in H^0(Z,m(K_Z+\Gamma)) \;:\; t|_{\rdgamma}\in \operatorname{MA}(\rdgamma,m(K_{\rdgamma}+\Diff_{\rdgamma}(\Gamma-\rdgamma)))
\right\}
\]

\end{definition}

Now we prove a new result (see Proposition \ref{pro:bad-abun} below) which is an analog of Theorem \ref{imp}, where $\operatorname{MPA}(Z,m(K_Z+\Gamma))$ plays the role of $\PA(Z,m(K_Z+\Gamma))$, and $\operatorname{MA}(\lfloor \Gamma \rfloor,
\, m\bigl(K_{\lfloor \Gamma \rfloor}
+ \operatorname{Diff}_{\lfloor \Gamma\rfloor}(\Gamma - \lfloor \Gamma \rfloor)))$ is that of  $\A(\lfloor \Gamma \rfloor,
\, m\bigl(K_{\lfloor \Gamma \rfloor}
+ \operatorname{Diff}_{\lfloor \Gamma \rfloor}
(\Gamma - \lfloor \Gamma \rfloor)))$. 
By definition, we have the natural inclusions
\[
\begin{aligned}
\A\Bigl(
\lfloor \Gamma \rfloor,
m\bigl(K_{\lfloor \Gamma \rfloor}
+\operatorname{Diff}_{\lfloor \Gamma \rfloor}
(\Gamma-\lfloor \Gamma \rfloor)\bigr)
\Bigr)
&\subset
\operatorname{MA}\Bigl(
\lfloor \Gamma \rfloor,
m\bigl(K_{\lfloor \Gamma \rfloor}
+\operatorname{Diff}_{\lfloor \Gamma \rfloor}
(\Gamma-\lfloor \Gamma \rfloor)\bigr)
\Bigr) \\
&\subset
\PA\Bigl(
\lfloor \Gamma \rfloor,
m\bigl(K_{\lfloor \Gamma \rfloor}
+\operatorname{Diff}_{\lfloor \Gamma \rfloor}
(\Gamma-\lfloor \Gamma \rfloor)\bigr)
\Bigr)
\end{aligned}
\]
\begin{center}
    and
\end{center}
\[
\PA\bigl(Z,m(K_Z+\Gamma)\bigr)
\subset
\operatorname{MPA}\bigl(Z,m(K_Z+\Gamma)\bigr).
\]
\begin{remark}\label{rmk:bad2}
Note that
if $\rdgamma$ contains no bad component, then a section $s$ is minimally admissible if and only if $s$ is admissible, and it is minimally preadmissible if and only if it is preadmissible.
\end{remark}
\begin{proposition}\label{pro:bad-abun}
     With the same notation and hypotheses as in Definition \ref{def:bad}, let $(Z,\Gamma):=\sqcup_{i=1}^{k} (Z_i,\Gamma_i)$ be a  pair. If  $K_Z+\Gamma$ is nef, then for $m$ sufficiently large and divisible the following hold:
     \begin{enumerate}
         \item [(A)]$\MA(\lfloor \Gamma \rfloor, 2m(K_Z+\Gamma)|_{\lfloor \Gamma \rfloor})$ generates $\mathcal{O}_{\lfloor \Gamma \rfloor}(2m(K_{\lfloor \Gamma \rfloor }+\Diff (\Gamma -\lfloor \Gamma\rfloor)))$, and 

         \item [(B)] Assume that the Kodaira dimension $\kappa(Z_i, K_{Z_i}+\Gamma_i)\geq 1$ for all $i\in\{1,2...,k\}$. Then  the restriction map $\MPA(Z,2m(K_Z+\Gamma)) \to \MA(\lfloor \Gamma \rfloor, 2m(K_Z+\Gamma)|_{\lfloor \Gamma \rfloor}) $ is surjective, and hence
     $\MPA(Z,2m(K_Z+\Gamma))$ generates $\mathcal{O}_{Z}(2m(K_Z+\Gamma))$.
     \end{enumerate}

\end{proposition}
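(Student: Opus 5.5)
For (A), I would split the components $T$ of $\rdgamma$ into good and bad ones and treat them separately. By Lemma \ref{lem:bad}, every bad component $T$ is disjoint from the other components of $\rdgamma$, and $\bar\Theta_T=0$, $\kappa(T)=0$. So $\rdgamma$ decomposes as a disjoint union $\rdgamma = \rdgamma_{\rm good}\sqcup \rdgamma_{\rm bad}$, and the conditions in Definition \ref{def:ma-mpa} never link a good component to a bad one. The reason is Remark \ref{rmk:bad}: a $B$-bimeromorphic map sends good components to good components and bad to bad. Hence it suffices to prove generation separately on each piece.

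On $\rdgamma_{\rm good}$, minimally admissible means admissible. First I would apply Lemma \ref{lem:admissible} to the dlt surfaces $(T,\bar\Theta_T)$, whose pluricanonical representation is finite by definition. Then I would run Fujino's argument \cite[Lemma 4.7]{Fu00} inductively on the double locus: restrict to the curves in $\lfloor\bar\Theta_T\rfloor$, use Theorem \ref{C} there, and lift through Theorem \ref{imp} in dimension $2$. Finally I would symmetrize over the finitely many $B$-bimeromorphic identifications between good components, using the sums and products of Lemma \ref{restriction}. This produces admissible sections of $\mathcal O(m(K+\Diff))$ on $\rdgamma_{\rm good}$ that generate.

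On a bad component $T$ we have $K_T\sim_{\mathbb Q}0$. After replacing $m$ by a multiple, $\mathcal O_T(mK_T)\cong\mathcal O_T$ and $H^0$ is one-dimensional, spanned by a nowhere vanishing section $\omega_T$. Condition (3) asks that $\theta^*\omega_T=\omega_T$ for every $B$-bimeromorphic involution $\theta$. A priori we only have $\theta^*\omega_T=\pm\omega_T$, which is why the statement uses $2m$: we get $\theta^*(\omega_T^{2})=\omega_T^{2}$. For condition (4), suppose $\tau$ swaps two bad components $T_i,T_j$. I would take $s|_{T_i}=\omega_{T_i}^2$ and define $s|_{T_j}:=(\tau^{-1})^*\omega_{T_i}^2$. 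This choice is consistent, since $\tau^2=\mathrm{id}$, and it is nowhere vanishing. Preadmissibility on bad components is vacuous, because $\bar\Theta_T=0$. Gluing these pieces with the good part gives (A).

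For (B), I would mimic the proof of Theorem \ref{imp} component by component on $Z_i$, with $\MA$ in place of $\A$. Since $\kappa(Z_i,K_{Z_i}+\Gamma_i)\ge1$, the Iitaka morphism $f_i\colon Z_i\to Y_i$ has positive-dimensional base. The main obstacle is a bad component $T\subset Z_i$. On such a $T$, $(K_Z+\Gamma)|_T\sim_{\mathbb Q}0$, so $T$ lies in a fibre of $f_i$, and $f_i(T)$ is a point. I would treat the cases separately.
\begin{itemize}
\item In cases I(b) and I(c), a section $t$ on $Y_i$, or on $T=f_i(\rdgamma)$, pulled back via $\psi$, restricts to $T$ as the constant $t(f_i(T))$ times a trivialization. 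Since $\psi$ is constructed from the chosen generating sections in (A), I would build it from the $\MA$ sections instead. A bad component then contributes a connected component of $\rdgamma$ that contracts to a point, so values there can be prescribed independently. This uses the seminormal sequence $0\to f_*\mathcal O(-\rdgamma)\to\mathcal O_Y\to f_*\mathcal O_{\rdgamma}\to0$ from \cite[Lemma 4.4]{Fu00}, which is exactly where $\dim Y_i\ge1$ is needed.
\item In case II (Proposition \ref{Pro}), bad components are fibre-vertical and connected components of a fibre. The involution-invariance in condition (3) is precisely what lets sections descend through the degree-$2$ cover, as in the claim inside Theorem \ref{imp}, and condition (4) handles the two-sheet case.
\end{itemize}
Generation then follows as in \cite[Lemma 4.3]{Fu00}.
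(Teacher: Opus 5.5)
Your part (A) matches the paper's argument. You separate good and bad components, which are disjoint by Lemma \ref{lem:bad}. You use Lemma \ref{lem:admissible} on the good part. On bad components you pass to $2m$, so that the one-dimensional space $H^0(T,2mK_T)$ is fixed by every involution, and you transport sections along $\tau$ between pairs of bad components. Your template for (B), rerunning Theorem \ref{imp} with $\MA$, $\MPA$ in place of $\A$, $\PA$, is also the paper's. Your observation that a bad component is contracted by the Iitaka map $f_i$ is correct. In case I(b) it even shows that no bad component can occur, since $\rdgamma$ is connected by Lemma \ref{lem:Top}.

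There is a gap in the $\mathbb{P}^1$-fibration case (your case II, the paper's Case III).

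\emph{What the descent needs.} To descend to $(V,P)$, $s$ must be invariant under the involution of a degree-two sheet over $V$. In the two-sheet case it must instead be invariant under the link $(u|_{S_2'})^{-1}\circ u|_{S_1'}\colon S_1'\to S_2'$.

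\emph{Why conditions (3) and (4) do not supply it.} Condition (3) of Definition \ref{def:ma-mpa} only concerns bad components. Condition (4) only concerns maps induced by the gluing involution $\tau$ of $D^n$. The link comes from the Mori fibre space $u$, not from $\tau$. So for bad sheets nothing in the definition of $\MA$ forces invariance under it.

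\emph{Why this is a real failure.} As written, your case II never uses $\kappa(Z_i,K_{Z_i}+\Gamma_i)\geq 1$. It would therefore also give surjectivity in Lemma \ref{lem: bad0}(2)(b), where surjectivity fails. In the example of Theorem \ref{counterexample}, the bad sheets $S\times\{0\}$ and $S\times\{\infty\}$ are linked by $\mathrm{id}_S$ but glued by $\sigma$. Condition (4) makes the two constants differ by $\lambda^{2m}\neq 1$. Restrictions of global sections, however, have equal constants.

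\emph{The missing step.} The sheets dominate $V$, and $V$ maps onto $Y_i$ with $\dim Y_i\geq 1$. Every bad component, by your own observation, is contracted by $f_i$. Hence the sheets are good, and condition (2) gives invariance under both the involution and the link. This is exactly where $\kappa\geq 1$ enters. It is what the paper does in Case III, item (3)(b): a bad sheet $D_i$ would satisfy $K_{D_i}\sim_{\mathbb Q}f^*\mathcal O_R(1)$ while dominating $R$, contradicting $\kappa(D_i)=0$. Used this way, your verticality observation also handles $\dim R=2,3$ uniformly, where the paper argues separately via projectivity of $Z$.
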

 \begin{proof}

 (A) Recall that $\nu_{\rdgamma}: \widetilde{\rdgamma}\to \rdgamma$ is the normalization morphism. Observe that a line bundle $\mathcal L$ on $\widetilde{\rdgamma}$ is globally generated if and only if $\mathcal L$ is globally generated on each irreducible component  of $\widetilde{\lfloor \Gamma \rfloor}$ (as the components are pairwise disjoint). Assume that $\mathcal L:=\nu_{\rdgamma}^*( \mathcal{O}_{\lfloor \Gamma \rfloor}(m(K_{\lfloor \Gamma \rfloor }+\Diff (\Gamma -\lfloor \Gamma\rfloor))))$. 
   Our goal is to identify a special collection of sections of $H^0(\widetilde{\rdgamma},\mathcal{L})$ which generate $\mathcal L$ and descend to minimally admissible sections on $\rdgamma$, i.e., for every point $x\in \Supp(\widetilde{\rdgamma})$ find a section $s\in H^0(\widetilde{\rdgamma},\mathcal{L}) $ such that $s(x)\neq 0$ and $s$ descends to a minimally admissible section on $\rdgamma$. To identify this collection we will work with global sections of $\mathcal L|_T$ which generate $\mathcal L|_T$, where $T$ is an irreducible component of $\widetilde{\rdgamma}$.
 First observe that for every good component $T$ of $\widetilde{\rdgamma}$, the admissible sections of $\mathcal{L}|_{T}$ generate the line bundle $\mathcal{L}|_{T}$ on $T$ by Lemma \ref{lem:admissible}. Thus we only need to focus on the bad components. Let $T$ be any such bad component. Then by Lemma \ref{lem:bad}, we know that the Kodaira dimension $\kappa(T)=0$, and hence, $H^0(T, mK_T)=\mathbb{C}$ for all sufficiently large and divisible $m\in \mathbb{N}$. Let $0\neq s\in H^0(T, mK_T)$; then $s^2\in H^0(T,2mK_T)$ generates the line bundle $\mathcal{L}^2|_{T}$. We claim that $s^2$
is invariant under every self $B$-bimeromorphic involution $\sigma:T \dashrightarrow T$. Indeed, let $\sigma$ be a $B$-bimeromorphic involution on $T$. Observe that $\sigma^*s=\lambda s$ for some $\lambda \in \mathbb{C}^{\times}$, as $H^0(T, mK_T)$ is $1$-dimensional. Hence, $\lambda^2=1$, as $\sigma^2=\mbox{id}$. Therefore $\sigma^*s^2=s^2$; this proves our claim.

Now we will show that $\MA(\rdgamma, 2m(K_Z+\Gamma)|_{\rdgamma})$ generates the line bundle $\mathcal{O}_{\lfloor \Gamma \rfloor}(2m(K_{\lfloor \Gamma \rfloor }+\Diff (\Gamma -\lfloor \Gamma\rfloor)))$.
Let $P$ and $Q$ be the supports of all bad components and all good components, respectively. By Lemma \ref{lem: bad0}, we can write $\rdgamma=P \sqcup Q$.
Note that $\widetilde{\rdgamma}= \widetilde{P}\sqcup \widetilde{Q}$.
We have following two cases:\\

\textbf{Case I:} Let $\widetilde{Q}=\sqcup S_i$ where each $S_i$ is good.
Since finiteness of pluricanonical representation holds for good components, admissible sections on $\widetilde{Q}$ generate the line bundle $\mathcal{L}|_{\widetilde{Q}}$ by Lemma \ref{lem:admissible}. In particular, minimally admissible sections generate the line bundle as $\A(\widetilde{Q}, \mathcal{L}|_{\widetilde{Q}}) \cong \MA( \widetilde{Q}, \mathcal{L}|_{\widetilde{Q}}) $ by Remark \ref{rmk:bad2}.\\

\textbf{Case II:}
Let $\widetilde{P}=\sqcup_{i=1}^r T_i$ where each $T_i$ is bad.
Define a section $w$ of $\mathcal{L^2}|_{\widetilde{P}}$ satisfying the following:
    $w|_{T} = \theta^*(w|_{S})$ if there exists a $B$-bimeromorphic map $\theta:T\dashrightarrow S$ induced by the Galois involution. Otherwise, we just take any non zero section. 
Since by Lemma \ref{lem: bad0}(2), no boundary curve occurs on $T_i$, the section $w$ is preadmissible on $\widetilde{P}$.

\begin{claim}
    \[\MA(\widetilde{\rdgamma}, \mathcal{L})= \MA(\widetilde{P},\mathcal{L}|_{\widetilde{P}}) \bigoplus \MA( \widetilde{Q}, \mathcal{L}|_{\widetilde{Q}})\]
\end{claim}

\begin{proof}
    By definition, it is obvious that $\MA(\widetilde{\rdgamma}, \mathcal{L)} \subset \MA(\widetilde{P},\mathcal{L}|_{\widetilde{P}})\bigoplus \MA( \widetilde{Q}, \mathcal{L}|_{\widetilde{Q}})$.
Let $(v,w) \in \MA(\widetilde{P},\mathcal{L}|_{\widetilde{P}})\bigoplus \MA( \widetilde{Q}, \mathcal{L}|_{\widetilde{Q}})$. Since no boundary curve occurs on bad component $(v,w)$ is preadmissible on $\widetilde{\rdgamma}$ as it is preadmissible on $\widetilde{Q}$. Therefore $(v,w)\in \MA(\widetilde{\rdgamma}, \mathcal{L)}$ by Remark \ref{rmk:bad}.
\end{proof}
This shows that $\MA(\widetilde{\rdgamma}, \mathcal{L)}$ generates the line bundle $\mathcal{L}$.
By Lemma \ref{lem:descend}, these sections descend to minimally admissible sections on $\rdgamma$ and, hence we obtain our desired result (A).

 (B) First we will show that for each $i\in \{1,2...,k\}$, $\MPA(Z_i,2m(K_{Z_i}+\Gamma_i))\to \MA(\lfloor \Gamma_i\rfloor, 2m(K_{Z_i}+\Gamma_i)|_{\lfloor\Gamma_i\rfloor})$ is surjective, and hence $\MPA(Z_i,2m(K_{Z_i}+\Gamma_i))$ generates $\mathcal{O}_{Z_i}(2m(K_{Z_i}+\Gamma_i))$. Then we will show that the restriction map $\mbox{MPA}(Z,2m(K_Z+\Gamma)) \to \mbox{MA}(\lfloor \Gamma \rfloor, 2m(K_Z+\Gamma)|_{\lfloor \Gamma \rfloor}) $ is surjective, and hence $\mbox{MPA}(Z,2m(K_Z+\Gamma))$ generates $\mathcal{O}_{Z}(2m(K_Z+\Gamma))$.
 
To see this, assume that $(Z,\Gamma)$ is a $\mathbb{Q}$-factorial irreducible dlt pair of dimension $3$. 
 Since $K_Z+\Gamma$ is nef, it is semiample by Theorem \ref{abun}. Let $f:Z \to R$ be the semiample fibration. We will do case by case analysis as  in Theorem \ref{imp}. We have the following possibilities:
 \begin{itemize}
 \item (I) $\kappa(Z,K_Z+\Gamma)\geq 1 $ and $f^{-1}(r) \cap  \lfloor \Gamma \rfloor$ is connected for every $r \in R$ and $f(\lfloor \Gamma \rfloor)=R$.
     \item (II) $\kappa(Z,K_Z+\Gamma)\geq 1 $ and $f^{-1}(r) \cap  \lfloor \Gamma \rfloor$ is connected for every $r \in R$ and $f(\lfloor \Gamma\rfloor)\subsetneq R$.
     \item(III)$f^{-1}(r) \cap \lfloor \Gamma \rfloor $ is not connected for some $r \in R$.   
 \end{itemize}

     \textbf{Case (I)}: Consider the restriction morphism $f|_{\rdgamma}: \rdgamma \to R$. Since the fibers of $f|_{\rdgamma}$ are connected and $R$ is also connected, it follows from Lemma \ref{lem:Top} that $\rdgamma$ is also connected. In this case the proof is identical to that of Case I(b) of Theorem \ref{imp} once we   replace $\PA(Z,2m(K_Z+\Gamma))$ and $\A(\lfloor \Gamma \rfloor,
\, 2m\bigl(K_{\lfloor \Gamma \rfloor}
+ \operatorname{Diff}_{\lfloor \Gamma \rfloor}
(\Gamma - \lfloor \Gamma \rfloor)))$ 
by $\operatorname{MPA}(Z,2m(K_Z+\Gamma))$ and $\operatorname{MA}(\lfloor \Gamma \rfloor,
\, 2m\bigl(K_{\lfloor \Gamma \rfloor}
+ \operatorname{Diff}_{\lfloor \Gamma\rfloor}(\Gamma-\rdgamma)))$, respectively.\\
. \\
  \noindent
     \textbf{Case (II)}: In this case the proof is identical to that of Case I(c) of Theorem \ref{imp} for the same reason.\\
     
\noindent
\textbf{Case III}: We follow the same argument as in the Case II of Theorem \ref{imp}. 
Recall that in this case we run a $K_Z+\Gamma-\epsilon\rdgamma$-MMP with respect to morphism $f$ and obtain the following commutative diagram:

\[\begin{tikzcd}
	{(Z,\Gamma)} && {(Z',\Gamma')} & \\
	&&& {(V,P)} \\
	R
	\arrow["p", dashed, from=1-1, to=1-3]
	\arrow["f"', from=1-1, to=3-1]
	\arrow["u", from=1-3, to=2-4]
	\arrow["g", from=2-4, to=3-1]
\end{tikzcd}\]
where $p$ is a $B$-bimeromorphic map defined by the composition of a sequence of flips and divisorial contractions, and $u$ is the Mori fiber space contraction such that $\dim V=2$. Note that we are in one of the following three cases:
\begin{enumerate}
    \item If $\dim R=3$, then $K_Z+\Gamma$ is big, hence $Z$ is Moishezon. Since $Z$ has rational singularities, it follows that $Z$ is projective by \cite[Theorem 1.6]{Nam02}. Since the pluricanonical representation is finite for projective lc pair by \cite[Theorem 3.5]{Fu00}, $\rdgamma$ has no bad component. This is a contradiction.
    \item  If $\dim R=2$, then $g:V\to R$ is a proper bimeromorphic morphism, which implies that $V$ is Moishezon, as $R$ is projective. Since $u$ is a projective morphism and $V$ is Moishezon, it follows that $Z'$ is Moishezon. This implies that $Z$ is Moishezon. Since $Z$ has rational singularities, it is again projective by \cite[Theorem 1.6]{Nam02}. This is again a contradiction.
    \item  Assume that $\dim R=1$. Recall that in this case there are two possibilities:
    \subitem (a) $\lfloor \Gamma'\rfloor$ contains exactly one horizontal component, say $S'$, over $V$ satisfying the following commutative diagram:
   
\[\begin{tikzcd}
	{S'} && {S'} \\
	& V
	\arrow["i", dashed, from=1-1, to=1-3]
	\arrow["{u|_{S}}"', from=1-1, to=2-2]
	\arrow["{u|_{S'}}", from=1-3, to=2-2]
\end{tikzcd}\]
where $i$ is a $B$-bimeromorphic involution over $V$. Let $s\in \MA(\lfloor \Gamma \rfloor, 2m(K_Z+\Gamma)|_{\lfloor \Gamma \rfloor})$. By the isomorphism $H^0(\rdgamma,2m(K_Z+\Gamma)|_{\rdgamma})\cong H^0(\lfloor \Gamma'\rfloor, 2m(K_{Z'}+\Gamma')|_{\lfloor \Gamma'\rfloor})$, $s$ is also an element of $H^0(\lfloor \Gamma'\rfloor, 2m(K_{Z'}+\Gamma')|_{\lfloor \Gamma'\rfloor})$. Since $s$ is minimally admissible, from Definition \ref{def:ma-mpa} it follows that $s|_{{\lfloor \Gamma' \rfloor}^h}=s|_{S'}$ is invariant under the involution $i$ over $V$. Then by an identical argument as in the proof of Case II of Theorem \ref{imp} we have (B) in this case.

\subitem (b)  $\lfloor \Gamma'\rfloor$ contains two horizontal components, say $S_1'$ and $S_2'$, over $V$ satisfying the following commutative diagram:

\[\begin{tikzcd}
	{S'_1} && {S_2'} \\
	& V
	\arrow["\cong", from=1-1, to=1-3]
	\arrow["{u|_{S'_1}\cong}"', from=1-1, to=2-2]
	\arrow["{u|_{S'_2} \cong}", from=1-3, to=2-2]
\end{tikzcd}\]
Assume that $D_1\subset Z$ and $D_2\subset Z$ are the strict transform of $S'_1$ and $S_2'$, respectively.
By Remark \ref{rmk:bad} we know that either both of them are bad or both of them are good.
First assume that both of them are good. Let $s\in \MA(\lfloor \Gamma \rfloor, 2m(K_Z+\Gamma)|_{\lfloor \Gamma \rfloor})$. From Definition \ref{def:ma-mpa}(1) it is clear that $s$ is invariant under the $B$-bimeromorphic map $(u|_{S_2'})^{-1}\circ (u|_{S_1'}) :S_1' \to S_2'$. Then by an identical argument as in the proof of Case II of Theorem \ref{imp} we have (B) in this case.

If both of them are bad, then by adjunction we have $K_{D_i}\sim_{\mathbb Q} (K_Z+\Gamma)|_{D_i}\sim_{\mathbb Q} f^*\mathcal O_R(1)$ for $i=1,2$ (since $D_i$ is bad, no boundary divisor occurs after adjunction). Since $D_i$  dominates $R$ and the Kodaira dimension $\kappa(K_{D_i})=0$, this yields a contradiction.
\end{enumerate}

Let $s\in \MA(\rdgamma,2m(K_Z+\Gamma)|_{\rdgamma})$. Then $s|_{\lfloor\Gamma_i\rfloor}$ is minimally admissible on $\lfloor\Gamma_i\rfloor$. Since $\MPA(Z_i,2m(K_{Z_i}+\Gamma_i))\to \MA(\lfloor \Gamma_i\rfloor, 2m(K_{Z_i}+\Gamma_i)|_{\lfloor\Gamma_i\rfloor})$ is surjective, we obtain $s_i\in \MPA(Z_i,2m(K_{Z_i}+\Gamma_i))$ such that $\widetilde{s}=(s_1,....,s_k)\in \MPA(Z,2m(K_Z+\Gamma))$ satisfying $\tilde{s}|_{\rdgamma}=s$. Therefore, it follows that $\MPA(Z,2m(K_Z+\Gamma))\to \MA(\rdgamma,2m(K_Z+\Gamma)|_{\rdgamma})$ is surjective and $\MPA(Z,2m(K_Z+\Gamma))$ generates $\mathcal{O}_{Z}(2m(K_Z+\Gamma))$.
\end{proof}

 The following lemma shows that the above Proposition may fail in case of log-Kodaira dimension zero.
 \begin{lemma}\label{lem: bad0}
     Let $(Z,\Gamma)$ be an irreducible dlt pair such that $K_Z+\Gamma$ is nef and $\kappa(Z,K_Z+\Gamma)=0$. Then for sufficiently large and divisible $m>0$, one of the following holds:
     \begin{enumerate}
         \item $\rdgamma$ is connected and $\MPA(Z,2m(K_Z+\Gamma)) \to \MA(\lfloor \Gamma \rfloor, 2m(K_Z+\Gamma)|_{\lfloor \Gamma \rfloor})$ is surjective.
         \item $\rdgamma$ is disconnected.
          \begin{enumerate}
              \item Every component of $\rdgamma$ is good and $\MPA(Z,2m(K_Z+\Gamma)) \to \MA(\lfloor \Gamma \rfloor, 2m(K_Z+\Gamma)|_{\lfloor \Gamma \rfloor})$ is surjective.
              \item $\rdgamma= S_1 \sqcup S_2$ where both $S_i$ are bad and $\MPA(Z,2m(K_Z+\Gamma)) \to \MA(\lfloor \Gamma \rfloor, 2m(K_Z+\Gamma)|_{\lfloor \Gamma \rfloor})$ may not be surjective.
          \end{enumerate}
     \end{enumerate}
 \end{lemma}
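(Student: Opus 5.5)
We work with the semiample fibration of $K_Z+\Gamma$. Since $\kappa(Z,K_Z+\Gamma)=0$ and $K_Z+\Gamma$ is semiample by Theorem \ref{abun}, we have $m(K_Z+\Gamma)\sim 0$ for $m$ sufficiently divisible. Hence $f:Z\to R$ is the constant map to a point, and $(K_Z+\Gamma)|_{\rdgamma}\sim_{\mathbb Q}0$. We may assume $\rdgamma\neq 0$. Proposition \ref{Pro} with $\dim Y=0$ then gives exactly two possibilities: either $\rdgamma$ is connected (case (1.1)), or $\rdgamma$ has exactly two connected components $\Delta_1,\Delta_2$ (case (1.2)). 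In the second case there is a meromorphic map $v:Z\dashrightarrow (V,P)$ with general fiber $\mathbb P^1$, and components $D_i\subset\Delta_i$ such that each $v|_{D_i}$ is $B$-bimeromorphic to $(V,P)$.

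\textbf{Connected case.} Here $H^0(Z,2m(K_Z+\Gamma))\cong\mathbb C$, generated by a nowhere vanishing section $s_0$. Its restriction $s_0|_{\rdgamma}$ is a nowhere vanishing section of the trivial bundle $\mathcal O_{\rdgamma}$. Since $\rdgamma$ is connected, $H^0(\rdgamma,2m(K_Z+\Gamma)|_{\rdgamma})$ is one-dimensional, as in Case I(a) of Theorem \ref{imp}. Therefore every element of $\MA$ is a scalar multiple of $s_0|_{\rdgamma}$, which lifts to $cs_0$. This lift lies in $\MPA$ by definition, so the restriction map is surjective, giving (1).

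\textbf{Disconnected case.} Write $\rdgamma=\Delta_1\sqcup\Delta_2$.

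\emph{Step 1: the components $D_1,D_2$ are either both good or both bad.} The composite $(v|_{D_2})^{-1}\circ v|_{D_1}:D_1\dashrightarrow D_2$ is $B$-bimeromorphic, so Remark \ref{rmk:bad} applies.

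\emph{Step 2: if $D_1,D_2$ are bad, then $\rdgamma=D_1\sqcup D_2$.} By Lemma \ref{lem:bad}(1), a bad component meets no other component of $\rdgamma$. Since each $\Delta_i$ is connected and contains $D_i$, we get $\Delta_i=D_i$. This is case (2)(b). Surjectivity can fail here, since $\MA$ only constrains the two pieces by the Galois involution or by $\theta^2=\mathrm{id}$. So nothing further needs to be proved in this case.

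\emph{Step 3: if $D_1,D_2$ are good, then every component of $\rdgamma$ is good.} Any other component $T$ of $\Delta_i$ lies in the connected set $\Delta_i$. If $T$ were bad, Lemma \ref{lem:bad}(1) would force it to be its own connected component, which contradicts $\Delta_i\supsetneq T$ being connected. So $T$ is good, and we are in case (2)(a). By Remark \ref{rmk:bad2}, $\MA=\A$ and $\MPA=\PA$.

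\emph{Step 4: surjectivity in case (2)(a).} We apply the argument of Case II in the proof of Theorem \ref{imp}, which is the case (1.2) of Proposition \ref{Pro}. Let $s\in\A$. Admissibility makes $s|_{D_1}$ and $s|_{D_2}$ correspond under the $B$-bimeromorphic map $D_1\dashrightarrow D_2$ over $V$, so they descend to a single section $t$ on $(V,P)$. The isomorphisms $H^0(Z,2m(K_Z+\Gamma))\cong H^0(Z',\cdot)\cong H^0(V,2m(K_V+P))$ then produce a lift $w$ of $t$. The horizontal and vertical comparisons from the claim in that proof give $w|_{\rdgamma}=s$. The even multiple $2m$ is exactly what is used in the \cite[12.3.4]{Kol92} step.

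\textbf{Main obstacle.} The delicate part is Steps 2 and 3, the bad/good dichotomy. The key point is that Lemma \ref{lem:bad}(1) isolates bad components as connected components, and this combines with the two-component structure from Proposition \ref{Pro}. Once that is established, the rest is bookkeeping: we verify that the hypothesis of Theorem \ref{imp} (generation by admissible sections) is not needed for surjectivity itself, only the lifting argument, which applies verbatim.
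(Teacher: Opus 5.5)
Your proposal is correct and follows essentially the same route as the paper. You split via Proposition \ref{Pro} into the connected case, handled as in Case I(a) of Theorem \ref{imp} (the lift lies in $\MPA$ automatically because $\MPA$ is the preimage of $\MA$), and the two-connected-component case, where Remark \ref{rmk:bad} and Lemma \ref{lem:bad}(1) force either all components good (surjectivity via Case II of Theorem \ref{imp}) or $\rdgamma=S_1\sqcup S_2$ with both components bad. Your version is a slightly streamlined reorganization: the paper additionally uses the MMP output to rule out vertical components of $\lfloor\Gamma'\rfloor$, which your direct use of Lemma \ref{lem:bad}(1) makes unnecessary, and it cites Theorem \ref{imp} wholesale, whereas you correctly note that only its lifting argument, not the generation hypothesis, is needed for surjectivity.
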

 \begin{proof}
 \begin{enumerate}
     \item If $\rdgamma$ is connected, then we have two subcases.
     \begin{enumerate}
         \item If $(Z,\Gamma)$ is not a bad pair,i.e. each component of $\rdgamma$ is good, then $\MA(\rdgamma, 2m(K_Z+\Gamma)|_{\rdgamma})\cong \A(\rdgamma, 2m(K_Z+\Gamma)|_{\rdgamma})$ and hence we are done by Case I(a) of Theorem \ref{imp}.
         \item If $(Z,\Gamma)$ is a bad pair, we are done by similar argument as in Case I(a) of Theorem \ref{imp} replacing $\A(\rdgamma, 2m(K_Z+\Gamma)|_{\rdgamma})$ by $\MA(\rdgamma, 2m(K_Z+\Gamma)|_{\rdgamma})$ and $\PA(Z,2m(K_Z+\Gamma))$ by $\MPA(Z,2m(K_Z+\Gamma))$.
     \end{enumerate}
     
\item If $\rdgamma$ is disconnected, we have two subcases.
\begin{enumerate}
    \item Each component of $\rdgamma$ is good. Then we have $\MA(\rdgamma, 2m(K_Z+\Gamma)|_{\rdgamma})\cong \A(\rdgamma, 2m(K_Z+\Gamma)|_{\rdgamma})$. Hence the statement follows from Theorem \ref{imp}.
    \item At least one component of $\rdgamma$ is bad.
    Recall that  by a similar argument as in the Case III of Theorem \ref{imp} we can run a $K_{Z}+\Gamma-\epsilon \rdgamma$-MMP and obtain  a Mori fiber space with base of dimension 2. We have the following diagram :

\[\begin{tikzcd}
	{(Z,\Gamma)} && {(Z',\Gamma')}& \\
	&&& {(V,P)}
	\arrow["p", dashed, from=1-1, to=1-3]
	\arrow["u", from=1-3, to=2-4]
\end{tikzcd}\]
where $p$ is the composite of a sequence of flips and divisorial contractions and $u$ is the Mori fiber space contraction. Recall that from Proposition \ref{Pro},
in this case we know that $\lfloor \Gamma'\rfloor$ has exactly two horizontal components with respect to $u$, say $\bar{S}_1$ and $\bar{S}_2$. $\bar{S}_1$ and $\bar{S}_2$ are contained in two distinct connected components. Recall that $(u|_{\bar{S_2}})^{-1}\circ (u|_{\bar{S_1}}):\bar{S_1} \to \bar{S_2}$ is an isomorphism. We claim that $\lfloor \Gamma'\rfloor$ has no vertical component  with respect to $u$. To the contrary assume that $D$ is a vertical component of $\lfloor \Gamma'\rfloor$ with respect to $u$. Since $u$ is a contraction of an extremal ray, by \cite[Proposition 3.1]{CHP16}, there is an effective $\mathbb Q$-Cartier divisor $Q$ on $V$ such that $D=u^*Q$. This implies that $D$ intersects every horizontal divisor positively. This implies that $\Supp D \cup \Supp \bar{S_1}\cup \Supp\bar{S_2}$ is connected. Hence, $\lfloor \Gamma' \rfloor$ is connected, which is a contradiction.
 Thus $\lfloor \Gamma'\rfloor=\bar{S_1}+\bar{S_2}$. Note that in this case $\bar{S_1}$ and $\bar{S_2}$ are two connected components of  $\lfloor \Gamma'\rfloor$. Let $S_1'$ and $S_2'$ be the strict transforms of $\bar{S_1}$ and $\bar{S_2}$ on $X'$, respectively. Observe that we have the following commutative diagram:

\[\begin{tikzcd}
	{S'_1} && {\bar{S_1}} & \\s
	&&& V \\
	{S'_2} && {\bar{S_2}}
	\arrow["p", dashed, from=1-1, to=1-3]
	\arrow["\psi"', dashed, from=1-1, to=3-1]
	\arrow["{u|_{\bar{S_1}}}", from=1-3, to=2-4]
	\arrow["\cong", from=1-3, to=3-3]
	\arrow["p"', dashed, from=3-1, to=3-3]
	\arrow["{u|_{\bar{S_2}}}"', from=3-3, to=2-4]
\end{tikzcd}\]
where $\psi:S'_1 \dashrightarrow S'_2$ is the induced $B$-bimeromorphic map.
\begin{claim}
Both $S'_1$ and $S'_2$ are bad components of $\rdgamma$.
\end{claim}
\begin{proof} 
If possible, assume that $S'_1$ and $S'_2$ are both good components. By Lemma \ref{Conn}, the number of connected components of $\lfloor \Gamma \rfloor$ is $2$. Let $B \subset \Supp(\lfloor \Gamma \rfloor)$ be a bad  component of $\rdgamma$. Then $B$ is contained in one connected component. In particular, $B \cap S'_i\neq \emptyset $ for some $i\in\{1,2\}$ which contradicts the fact that $B$ is bad by Lemma \ref{lem:bad}.
\end{proof}
Thus it follows that $\rdgamma= S_1' \sqcup S_2'$ and both $S_i'$ are bad components of $\rdgamma$. Observe that $\mathcal{O}_{Z}(2m(K_{Z}+\Gamma))\cong \mathcal{O}_{Z}$ (by Theorem \ref{abun}) and $\mathcal{O}_{Z}(2m(K_{Z}+\Gamma))|_{\lfloor \Gamma\rfloor}\cong \mathcal{O}_{S'_1} \oplus \mathcal{O}_{S'_2}$. We have following two cases:
\begin{enumerate}
    \item Assume that $\theta \colon S_1' \dashrightarrow S_2'$ is the $B$-bimeromorphic map induced by the Galois involution. Suppose moreover that the induced pullback map
$
\theta^* \colon H^0(K_{S_2'}) \to H^0(K_{S_1'}), 
\theta^*(t)=\lambda t$ for some $\lambda\in \mathbb{C}$. If $\lambda$
has finite order, then the statement is immediate.
\item Suppose that $S_1'$ and $S_2'$ are not $B$-bimeromorphic induced by the Galois involution. Then $\MA(\rdgamma,2m(K_Z+\Gamma)|_{\rdgamma}\cong H^0(\rdgamma, \mathcal{O}_{\rdgamma})\cong H^0(S_1',\mathcal{O}_{S_1'})\bigoplus H^0(S_2', \mathcal{O}_{S_2'})$. Therefore the restriction map is not surjective.
\end{enumerate}

 \end{enumerate}

\end{enumerate}                                                                                                                                                                                                     
\end{proof}

 The following Corollary shows that slc abundance holds under a restricted hypothesis.
  \begin{corollary}\label{main}
Let $(X,\Delta)$ be a compact K\"{a}hler slc threefold such that $K_X+\Delta$ is nef. Let  $\mu:\sqcup (X_i',\Delta_i'+D_i') \to (X,\Delta)$ be the normalization morphism. Assume that $\kappa(K_{X_i'}+\Delta_i'+D_i')\geq 1$ for all $i$. Then $\mathcal O_X(m(K_X+\Delta))$ is globally generated for a sufficiently large and divisible integer $m$.
\end{corollary}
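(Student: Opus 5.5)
We reduce Corollary \ref{main} to Proposition \ref{pro:bad-abun}(B) combined with the descent Lemma \ref{end}, in the spirit of Fujino's argument.

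\textbf{Step 1: dlt model.} Let $\phi:(Z,\Gamma)=\sqcup_{i=1}^k(Z_i,\Gamma_i)\to (X',\Delta'+D')$ be a $\mathbb{Q}$-factorial dlt modification as in Section \ref{sec:sdlt-abundance}, so that $K_Z+\Gamma\sim_{\mathbb Q}\phi^*\mu^*(K_X+\Delta)$. Then $K_Z+\Gamma$ is nef, being a pullback of a nef class. Kodaira dimension is preserved under crepant pullback, so $\kappa(Z_i,K_{Z_i}+\Gamma_i)=\kappa(X_i',K_{X_i'}+\Delta_i'+D_i')\geq 1$ for every $i$. The hypotheses of Proposition \ref{pro:bad-abun}(B) are therefore satisfied. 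For $m$ sufficiently large and divisible, it follows that $\MPA(Z,2m(K_Z+\Gamma))$ generates $\mathcal O_Z(2m(K_Z+\Gamma))$.

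\textbf{Step 2: no bad components.} For Lemma \ref{end} we need \emph{preadmissible} sections, not merely minimally preadmissible ones. By Remark \ref{rmk:bad2}, it suffices to show that $\rdgamma$ has no bad component. Suppose $T\subset\lfloor\Gamma_i\rfloor$ is bad. By Lemma \ref{lem:bad}, $T$ is disjoint from the other components of $\rdgamma$, $\bar\Theta_T=0$, and $\kappa(T)=0$. By adjunction, $K_T\sim_{\mathbb Q}(K_{Z_i}+\Gamma_i)|_T$. Let $f_i:Z_i\to R_i$ be the semiample fibration, which has $\dim R_i\geq1$.

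If $T$ dominates $R_i$, then $(K_{Z_i}+\Gamma_i)|_T$ is the pullback of an ample divisor under a surjection onto a variety of positive dimension. Hence $\kappa(T,K_T)\geq1$, a contradiction.

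If $T$ does not dominate $R_i$, then $T$ is vertical. In this situation we rerun the case analysis from the proof of Proposition \ref{pro:bad-abun}(B). In Cases (I) and (II) the relevant part of $\rdgamma$ meets the fibres connectedly, and we try to show that $T$ must then meet another component of $\rdgamma$, contradicting Lemma \ref{lem:bad}(1). In Case (III), with $\dim R_i=3$ or $2$, $Z_i$ is projective, and Fujino's finiteness \cite[Theorem 3.5]{Fu00} excludes bad components. With $\dim R_i=1$, the sub-case with two bad horizontal components was already ruled out in that proof. The remaining sub-case is a vertical $T$ lying over a point of the curve $R_i$, where the bad component is an isolated component of a fibre. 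Here I would use that $T$ is a connected component of $\lfloor\Gamma_i\rfloor\cap f_i^{-1}(r)$ and apply Proposition \ref{Pro}(2.2) to the relative MMP. Any horizontal component of $\lfloor\Gamma'\rfloor$ over $V$ meets every fibre, so the image of $T$ on $Z'$ is either contracted (which is impossible by Lemma \ref{Conn}) or connected to a horizontal component, which is again a contradiction.

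If this exclusion argument fails, the fallback is to check directly that the gluing conditions needed in Lemma \ref{end} involve only conditions (3)–(4) of Definition \ref{def:ma-mpa} on bad components. That is, descent to $X$ through the $S_2$ and nodal gluing along $D^n$ only requires invariance under the Galois involution $\tau$, which is exactly what minimal admissibility imposes. In that case minimally preadmissible sections descend as well.

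\textbf{Step 3: descent and conclusion.} With either route, every section in $\MPA(Z,2m(K_Z+\Gamma))=\PA(Z,2m(K_Z+\Gamma))$ descends to a section of $\mathcal O_X(2m(K_X+\Delta))$ by Lemma \ref{end}. Since $\phi$ and $\mu$ are surjective and these sections generate $\mathcal O_Z(2m(K_Z+\Gamma))$ upstairs, for every $x\in X$ we choose a preimage $z$ and a section not vanishing at $z$. Its descent then does not vanish at $x$, so $\mathcal O_X(2m(K_X+\Delta))$ is globally generated.

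The main obstacle is Step 2, namely excluding, or otherwise handling, bad components that are vertical for the Iitaka fibration, together with verifying that descent in Lemma \ref{end} tolerates the weaker invariance conditions.
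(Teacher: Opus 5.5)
\textbf{Gap in Step 2.} Your main route in Step 2 cannot work, because its claim is false: bad components do occur under the hypotheses of the corollary. Take a non-projective K3 surface $S$ with an automorphism $\sigma$ acting on $H^0(K_S)$ by a scalar $\lambda$ that is not a root of unity (Remark \ref{K3}). Take a smooth curve $C$ of genus $\geq 1$ with points $p\neq q$, and let $C'$ be the nodal curve obtained by identifying $p$ and $q$.
\begin{itemize}
\item $X=S\times C'$ is a compact K\"ahler slc threefold with $K_X$ nef.
\item Its normalization $(S\times C,\, S\times\{p\}+S\times\{q\})$ is already dlt.
\item $K_{X'}+D'$ is the pullback of $K_C+p+q$, so $\kappa=1$.
\item Both components of $\rdgamma=S\times\{p\}\sqcup S\times\{q\}$ have $\bar\Theta=0$, and $\sigma\in\Bim(S,0)$ acts by $\lambda^m$ on $H^0(mK_S)$, so both components are bad.
\end{itemize}
They are vertical over $C$, each is all of $\rdgamma\cap f^{-1}(r)$ over its point, and neither meets another component of $\rdgamma$. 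This is exactly the Case (II) configuration your vertical argument tries to rule out.

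Worse, admissibility forces invariance under $\sigma$, hence $s|_{S\times\{p\}}=s|_{S\times\{q\}}=0$. So $\PA(Z,2m(K_Z+\Gamma))$ does not even generate $\mathcal O_Z(2m(K_Z+\Gamma))$ along these surfaces. The equality $\MPA=\PA$ you use in Step 3 to invoke Lemma \ref{end} is therefore false in general, and no argument through preadmissible sections alone can give the corollary.

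\textbf{The fix.} What you call the fallback is not optional; it is the actual argument, and it is precisely the paper's Lemma \ref{lem:descend}. A minimally preadmissible section restricts on $D^n$ to a $\tau$-invariant section. Conditions (2)--(4) of Definition \ref{def:ma-mpa} cover the three possibilities: $\tau$ between good components, $\tau$ restricting to an involution of one bad component, and $\tau$ exchanging two distinct bad components. Such a section then descends by the same $S_2$ gluing argument used for Lemma \ref{end}.

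With Step 2 replaced by this lemma and your exclusion attempt deleted, Steps 1 and 3 coincide with the paper's proof. The paper splits into two cases:
\begin{itemize}
\item a bad pair: Proposition \ref{pro:bad-abun}(A),(B) together with Lemma \ref{lem:descend};
\item a non-bad pair: Lemma \ref{lem:admissible}, Theorem \ref{imp} and Lemma \ref{end}.
\end{itemize}
Applying Proposition \ref{pro:bad-abun}(B) uniformly, as you do, handles both cases at once, since $\MPA=\PA$ exactly when there is no bad component.
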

     \begin{proof}
Recall that $\mu:\sqcup (X_i',\Delta_i'+D'_i) \to (X,\Delta)$ is the normalization morphism and we can take a dlt modification by \cite[Lemma 2.28]{DH25} and replace $(X,\Delta)$ by the resulting dlt pair $(Z,\Gamma):=\sqcup (Z_i,\Gamma_i)$. We have the following two possiblities:
\begin{enumerate}
\item 
If $(Z,\Gamma)$ is a bad pair, then by Proposition \ref{pro:bad-abun}(A), we know that $\MA(\rdgamma,2m(K_Z+\Gamma)|_{\rdgamma})$ generates the line bundle $\mathcal{O}_{\rdgamma}(2m(K_Z+\Gamma)|_{\rdgamma})$ for sufficiently large and divisible $m>0$. Since $\kappa(Z,K_Z+\Gamma)\geq 1$, then by Proposition \ref{pro:bad-abun}(B), we know that $\mbox{MPA}(Z,2m(K_Z+\Gamma))$ generates $\mathcal{O}_{Z}(2m(K_Z+\Gamma))$. Hence, by Lemma \ref{lem:descend}, it follows that $\mathcal{O}_X(m(K_X+\Delta))$ is globally generated.
\item If $(Z,\Gamma)$ is not a bad pair, then by Lemma \ref{lem:admissible}, $\A(\rdgamma,2m(K_Z+\Gamma)|_{\rdgamma})$ generates the line bundle $\mathcal{O}_{\rdgamma}(2m(K_Z+\Gamma)|_{\rdgamma})$. Therefore, by Theorem \ref{imp} and Lemma \ref{end}, it follows that $\mathcal{O}_X(2m(K_X+\Delta))$ is globally generated for a sufficiently large and divisible $m>0$.
\end{enumerate}
\end{proof}

\begin{lemma}\label{lem:descend}
    Let $(X,\Delta)$ be a compact K\"{a}hler slc threefold. $\mu:X' \to X$ be the normalization morphism such that $K_{X'}+\Delta'+D'\sim_{\mathbb{Q}}\mu^*(K_X+\Delta)$. Let $\phi:(Z,\Gamma) \to (X',\Delta'+D')$ be its dlt modification and  $K_Z+\Gamma \sim _{\mathbb{Q}}\phi^*(K_{X'}+\Delta'+D')$. Then every section in $\mbox{MPA}(Z,m(K_Z+\Gamma))$ descends to a section on $(X,\Delta)$.
\end{lemma}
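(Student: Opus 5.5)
The plan is to reduce descent to a gluing condition along the conductor, following the template of Lemma \ref{end} (i.e.\ \cite[Lemma 4.2]{Fu00} and \cite[Lemma 4.9]{Xu19}). Let $s\in\MPA(Z,m(K_Z+\Gamma))$. Since $K_Z+\Gamma\sim_{\mathbb Q}\phi^*(K_{X'}+\Delta'+D')$ and $\phi$ is bimeromorphic with $\phi_*\mathcal O_Z=\mathcal O_{X'}$, $s$ is the pullback of a unique section $s'\in H^0(X',m(K_{X'}+\Delta'+D'))$; this step is the same as in the preadmissible case. Because $X$ is $S_2$ and normal crossing in codimension one, Koll\'ar's gluing theory \cite[Ch.~5]{Kol13} shows that $s'$ descends to $H^0(X,m(K_X+\Delta))$ as soon as $\nu^*(s'|_{D'})$ on the normalized conductor $D^n$ is invariant under the Galois involution $\tau$; indeed this condition can be checked at the generic points of $D$, and $S_2$ then extends the descended section across codimension $\geq 2$. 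So the whole proof comes down to verifying $\tau^*(s'|_{D^n})=s'|_{D^n}$.

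To check this, I would work on $\rdgamma$. Components of $D^n$ correspond bimeromorphically to those components $T_i$ of $\rdgamma$ that are strict transforms of components of $D'$, and the adjunction pairs $(T_i,\bar\Theta_{T_i})$ agree with the pairs $(D^n_i,\Diff)$ up to $B$-bimeromorphic equivalence. Hence $\tau$ induces $B$-bimeromorphic maps $\tau_{ij}\colon(T_i,\bar\Theta_{T_i})\dashrightarrow(T_j,\bar\Theta_{T_j})$ between these components. The restriction $t=s|_{\rdgamma}$ is minimally admissible, and we split into cases according to the type of component. If $T_i$ and $T_j$ are good, Definition \ref{def:ma-mpa}(2) gives $\tau_{ij}^*(t|_{T_j})=t|_{T_i}$. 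If $T_i=T_j$ is bad and $\tau$ maps that component to itself, then $\tau_{ii}$ is an involution, and Definition \ref{def:ma-mpa}(3) applies. If $T_i\neq T_j$ are both bad, Definition \ref{def:ma-mpa}(4) is exactly the required identity. By Remark \ref{rmk:bad}, a good component is never paired with a bad one, so these cases exhaust all possibilities. Transporting the identities back through the $B$-bimeromorphic identification of $T_i$ with $D^n_i$ (sections correspond via the common resolution, as in Lemma \ref{restriction}) gives $\tau$-invariance of $s'|_{D^n}$.

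The main obstacle is making precise the identification $H^0(D^n_i,m(K+\Diff))\cong H^0(T_i,m(K_{T_i}+\bar\Theta_{T_i}))$, compatibly with restriction of $s'$ and $s$ and with $\tau$. My first approach is to use a common log resolution $W$ of $Z$ and $X'$. On $W$, the strict transforms of $T_i$ and $D'_i$ coincide, and the crepant relation $K_Z+\Gamma\sim_{\mathbb Q}\phi^*(\cdots)$ makes the restricted log pairs crepant on that divisor. Consequently the restricted sections agree after pullback to it, and $\tau$ lifts to a $B$-bimeromorphic map there. Only codimension-one invariance on $D$ is needed, so it suffices to compare sections at general points of each $D^n_i$. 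This avoids the preadmissibility condition on lower-dimensional strata, which has already been encoded in condition (1) of Definition \ref{def:ma-mpa} and is handled exactly as in \cite[Lemma 4.9]{Xu19}.
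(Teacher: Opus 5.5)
Your proposal is correct and follows essentially the same route as the paper: identify $s$ with a section of $m(K_{X'}+\Delta'+D')$ on $X'$, deduce from minimal admissibility that its restriction to $D^n$ is $\tau$-invariant, and conclude by Koll\'ar-type gluing as in \cite[Lemma 4.9]{Xu19}. Your case split into good--good pairs, bad self-glued components, and pairs of distinct bad components, handled by conditions (2)--(4) of Definition \ref{def:ma-mpa}, just spells out the paper's one-line assertion of $\tau$-invariance; this implicitly takes $m$ even, so that invariance rather than anti-invariance of the residue is the correct descent criterion.
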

\begin{proof}
    Observe that $H^0(Z,m(K_Z+\Gamma)\cong H^0(X',m(K_{X'}+\Delta'+D'))$. Let $s\in \mbox{MPA}(Z,m(K_Z+\Gamma)) $. We can view $s$ inside $H^0(X',m(K_{X'}+\Delta'+D'))$. Since $s$ is minimally preadmissible, $s|_{D^n}$ is the Galois involution $\tau$-invariant. Then by the same argument as \cite[Lemma 4.9]{Xu19}, we obtain our desired result.
\end{proof}
\section{Abundance for non numerically trivial slc threefold}\label{sec:mainthm}
In this section, we prove abundance holds for slc compact K\"ahler threefolds when $K_X+\Delta$ is not numerically trivial.\\
Recall that $\mu:(X',\Delta'+D'):=\sqcup_{i=1}^{k}(X_i',\Delta_i'+D'_i)\to (X,\Delta)$ is the normalization morphism, where $D'$ is the conductor divisor on $X'$. Assume that $\nu:D^n \to D'$ is the normalization. Then there is a Galois involution $\tau:D^n \to D^n$ over $D'$. Let $\phi:(Z,\Gamma):=\sqcup_{i=1}^{k} (Z_i,\Gamma_i) \to (X',\Delta'+D')$ be a dlt modification of $(X', \Delta'+D')$. \\

We now introduce below the notion of a Dual graph, which will be used in the proof of our main Theorem \ref{thm:mainimp}.
We define a Dual graph \(G_Z\) associated to \((X,\Delta)\) as follows.

\begin{enumerate}
    \item[(i)] The vertices of \(G_Z\) are the irreducible components \(Z_i\) of \(Z\).

    \item[(ii)] If $P$ is a prime divisor contained in $X_i \cap X_j,\; i\neq j$, and $S\subset \Supp \lfloor\Gamma_i \rfloor\subset Z_i$ and $T\subset \Supp \lfloor\Gamma_j \rfloor\subset Z_j$ are the unique prime divisors such that $(\mu\circ\phi)(S)=P=(\mu\circ \phi)(T)$, then we join the vertices corresponding to $Z_i$ and $Z_j$ by an \emph{edge}. We label this edge by  $[S\;|\;T]$.

    \item[(iii)] We do not incorporate any data in this graph which creates a \emph{loop} (a vertex joined to itself by an edge) in the graph. This can happen only in the following scenario: there is a prime component $P$ of the conductor divisor $D$ contained in a unique component $X_i$ of $X$ such that $\phi^{-1}\mu^{-1}(P)$ has two irreducible components in $Z_i$.

\end{enumerate}

Thus the graph records how different irreducible components are glued together through the conductor divisors. Note that if $(X,\Delta)$ is connected, then the associated Dual graph $G_Z$ is also a connected graph.
\begin{theorem}\label{thm:abun-non-numerical}
    Let $(X,\Delta)$ be a connected compact slc threefold such that $K_X+\Delta$ is nef but not numerically trivial. Then $K_X+\Delta$ is semiample.
\end{theorem}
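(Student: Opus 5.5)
We reduce to the situation of Proposition \ref{pro:bad-abun} and Lemma \ref{lem:descend}, and use connectivity of the Dual graph $G_Z$ to treat the components of log Kodaira dimension zero. We pass to the normalization $\mu$ and a $\mathbb{Q}$-factorial dlt modification $\phi:(Z,\Gamma)=\sqcup(Z_i,\Gamma_i)\to(X',\Delta'+D')$ (by \cite[Lemma 2.28]{DH25}). Each $K_{Z_i}+\Gamma_i$ is nef and hence semiample by Theorem \ref{abun}. Since $K_X+\Delta\not\equiv 0$ and $\mu\circ\phi$ is surjective, at least one $Z_i$ has $\kappa(Z_i,K_{Z_i}+\Gamma_i)\ge 1$. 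Call such vertices of $G_Z$ \emph{positive} and the others \emph{null}; for a null vertex, $K_{Z_i}+\Gamma_i\sim_{\mathbb{Q}}0$. If every vertex is positive, Corollary \ref{main} already gives the result. So the plan is to prove the analogue of Proposition \ref{pro:bad-abun}(B) when null vertices are present. Then we combine part (A) with Lemma \ref{lem:descend} exactly as in the proof of Corollary \ref{main}.

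The first key step is a structural claim. Let $Z_i$ be a null vertex adjacent in $G_Z$ to some other vertex $Z_j$ through an edge $[S\,|\,T]$. Then $S$ is not bad, and moreover $S$ is $B$-bimeromorphic (via $\tau$) to $T$, so $(K_Z+\Gamma)|_T\sim_{\mathbb{Q}}0$. Restriction along the edges therefore propagates numerical triviality. Because $G_Z$ is connected and some vertex is positive, every null vertex $Z_i$ must therefore be joined, through a chain of null vertices, to a positive vertex. Along this chain we glue through divisors $T\subset\lfloor\Gamma_j\rfloor$ on which $(K_Z+\Gamma)|_T\sim_{\mathbb{Q}}0$. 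Such a $T$ is contained in a fiber of the semiample fibration $f_j:Z_j\to R_j$, or else is contracted by it onto a lower-dimensional locus.

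The main step is surjectivity of $\MPA(Z,2m(K_Z+\Gamma))\to\MA(\rdgamma,2m(K_Z+\Gamma)|_{\rdgamma})$ componentwise. On positive vertices we run Cases (I)--(III) of Proposition \ref{pro:bad-abun}(B) verbatim. On a null vertex, $H^0(Z_i,2m(K_{Z_i}+\Gamma_i))=\mathbb{C}$ is spanned by a nowhere vanishing section. We fall into the trichotomy of Lemma \ref{lem: bad0}, and the only problematic case is (2)(b): $\lfloor\Gamma_i\rfloor=S_1\sqcup S_2$ with both $S_k$ bad. For connected $\lfloor\Gamma_i\rfloor$, and in case (2)(a), surjectivity holds. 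In case (2)(b), bad components are disjoint from the rest of $\rdgamma$ (Lemma \ref{lem:bad}) and have $\bar\Theta=0$, so the only constraint on a minimally admissible section is condition (4) of Definition \ref{def:ma-mpa}: it must be compatible with $\tau$ between $S_k$ and its partner. We would handle this by choosing the section on $Z_i$ first and then propagating it along the graph. We orient $G_Z$ as a spanning tree rooted at a positive vertex. We use (A) to choose minimally admissible data that is $\tau$-compatible on the tree edges. Then we rescale the constant section on each null vertex so that it matches its parent along the edge, which is possible because the relevant spaces of sections are one-dimensional.

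The hard point, which I expect to be the real obstacle, is a cycle in $G_Z$ that passes only through null vertices with bad gluing divisors. Going around such a cycle multiplies the section by the composite scalar $\lambda$ of the $\tau$-gluings, which could be a non-root of unity, as in the K3 phenomenon of Remark \ref{K3} and Theorem \ref{counterexample}. My plan is to show that such a cycle cannot reach a positive vertex without passing through a non-bad edge. Every edge incident to a positive vertex $Z_j$ is a divisor in $\lfloor\Gamma_j\rfloor$ with $\kappa(Z_j,K_{Z_j}+\Gamma_j)\geq1$. By Lemma \ref{lem:bad} and the Case III(3)(b) argument of Proposition \ref{pro:bad-abun}, such a divisor cannot be bad when it dominates $R_j$. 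When it does not dominate $R_j$, it is vertical, and the section vanishing there or pulled back from $R_j$ allows us to choose $\lambda$-twisted values freely. Consequently the whole null part attached to a positive vertex is either tree-like, where the rescaling above works, or sits in a connected component of $X$ consisting only of null vertices. The latter is excluded by connectedness of $X$ together with $K_X+\Delta\not\equiv0$. Once the gluing is consistent, Lemma \ref{lem:descend} descends the sections and gives global generation of $\mathcal{O}_X(2m(K_X+\Delta))$.
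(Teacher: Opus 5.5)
Your reduction agrees with the paper's. You pass to the dlt model, reduce via Lemma \ref{lem:descend} to generation by $\MPA$, note that $\MPA\to\MA$ can fail to be surjective only on null components of type Lemma \ref{lem: bad0}(2)(b), and propagate the constants on those components along chains of bad gluings. Two things go wrong, the second being a genuine gap.

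First, your ``structural claim'' that a gluing divisor $S$ on a null vertex is not bad is false. Type (2)(b) null components are glued exactly along their two bad boundary surfaces, and they are the whole difficulty, as your own last paragraph admits. Only the consequence $(K_Z+\Gamma)|_T\sim_{\mathbb{Q}}0$ is correct.

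Second, your concluding dichotomy does not follow from the (correct) verticality of bad divisors on positive vertices, and it is false. You claim the null part attached to a positive vertex is either tree-like or lies in a connected component of null vertices. Here is a counterexample:
\begin{itemize}
\item Take a K3 surface $S$ as in Remark \ref{K3}, a curve $C$ of genus $\ge 1$ with points $p\neq q$, and $X=S\times C'$, where $C'$ is $C\sqcup\mathbb{P}^1$ with $p$ glued to $0$ and $q$ to $\infty$.
\item Then $X$ is K\"ahler and slc, and $K_X$ is nef but not numerically trivial.
\item $Z_1=S\times C$ is positive, with two vertical bad divisors $S\times\{p\}$ and $S\times\{q\}$, and $Y=S\times\mathbb{P}^1$ is null of type (2)(b).
\item $G_Z$ has two edges between $Z_1$ and $Y$, both bad. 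This is a cycle through a positive vertex.
\end{itemize}
Nothing in your argument forces the scalar around such a cycle to be a root of unity. In the Normalized-K\"ahler setting of Corollary \ref{cor:mainimp}, where the same proof is meant to apply, twisting one gluing by the automorphism of Remark \ref{K3} makes it a power of a $\lambda$ of infinite order. Rescaling along a spanning tree then cannot satisfy the second edge.

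What is missing is the paper's mechanism, in three steps:
\begin{itemize}
\item Each bad chain is uniquely determined (Claim \ref{clm:well}). Its two ends are bad divisors $S_\beta,S_{\beta'}$ of the part $Z_{i_0}$ where $\MPA\to\MA$ is surjective. The chain imposes a two-term condition $\lambda(m)\,s|_{S_\beta}=\mu(m)\,s|_{S_{\beta'}}$ on sections over $Z_{i_0}$.
\item The morphism $Z_{i_0}\to R$ defined by $\MPA$ sends the bad divisors to pairwise distinct points $c_\beta$ (Claim \ref{clm: distinct}). This uses surjectivity of $\MPA\to\MA$ and the freedom to put independent constants on bad components. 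Without it, two bad divisors in one fibre would have equal values, and the condition could force vanishing.
\item By Serre vanishing for $\mathcal{I}_{\{c_1,\dots,c_{2p},x\}}\otimes A^k$, for $k\gg 0$ the subspace $U_{mk}$ of sections satisfying all conditions still contains, for every $x$, a member with $t(x)\neq 0$.
\end{itemize}
Your phrase ``choose $\lambda$-twisted values freely'' asserts exactly the last two points without proof. The alternative you mention, sections vanishing on the bad divisors, does satisfy the constraints. However, it generates neither along those divisors nor on the null components attached to them, so it cannot give global generation.
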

\begin{proof}
 Observe that $K_X+\Delta$ is numerically trivial if and only if $K_{Z_i}+\Gamma_i\equiv 0$ for all $i$. Now from Corollary \ref{main}, we know that if the Kodaira dimension $\kappa(Z_i, K_{Z_i}+\Gamma_i) >0$ for all $i$, then $K_X+\Delta$ is semiample. Thus it is enough to prove that if $\kappa(Z_i, K_{Z_i}+\Gamma_i) >0$ for some $i$ (and hence $K_X+\Delta$ is not numerically trivial), and $\kappa(Z_j, K_{Z_j}+\Gamma_j)=0$ for some $j\neq i$, then $K_X+\Delta$ is semiample.
 Since abundance holds for lc pairs in dimension $3$, $\nu(Z_i, K_{Z_i}+\Gamma_i)=\kappa (Z_i,K_{Z_i}+\Gamma_i)$ for all $i$. 
Recall that since the log abundance theorem is known for lc pairs in dimension $3$ (see Theorem \ref{abun}), it follows that the section ring $R(Z, K_Z+\Gamma)=\oplus_{t\geq } H^0(t(K_Z+\Gamma))$ is a finitely generated $\mathbb{C}$-algebra. Thus there is a sufficiently divisible positive integer $d>0$ such that the section ring of $\mathcal{O}_Z(d(K_Z+\Gamma))$ is generated by $H^0(Z, \mathcal{O}_Z(d(K_Z+\Gamma)))$ over $\mathbb{C}$.

By Lemma \ref{lem:descend}, it is enough to show that $\MPA(Z,m(K_Z+\Gamma))$ generates the line bundle $\mathcal{O}_Z(m(K_Z+\Gamma))$ for sufficiently large multiple $m$ of $d$. 
\begin{claim}\label{clm:generation-on-irreducible-components}
    $\MPA(Z_i,m(K_{Z_i}+\Gamma_i))$ generates the line bundle $\mathcal{O}_{Z_i}(m(K_{Z_i}+\Gamma_i))$ for every $i\in \{1,2,...,k\}$.
\end{claim}

\begin{proof}[Proof of Claim \ref{clm:generation-on-irreducible-components}]
We split the proof into $3$ main cases according to the log-Kodaira dimension of the pair $(Z_i,\Gamma_i)$.
\begin{enumerate}
    \item If $\kappa(Z_i, K_{Z_i}+\Gamma_i)\geq 1$, then the claim follows directly from Proposition \ref{pro:bad-abun} applying to the irreducible pair $(Z_i, \Gamma_i)$.
    \item If $\kappa(Z_i, K_{Z_i}+\Gamma_i)=0$, then by Lemma \ref{lem: bad0}, we have the following subcases.
    \begin{enumerate}
    \item 
   If $\lfloor \Gamma_i \rfloor$ is connected, then by Lemma \ref{lem: bad0} (1), every minimally admissible section on $\lfloor \Gamma_i \rfloor$ lifts to minimally preadmissible section on $Z_i$ and generates the line bundle $\mathcal{O}_{Z_i}(m(K_{Z_i}+\Gamma_i))$.
    \item If $ \lfloor \Gamma_i \rfloor $ is disconnected,  then we have $2$ subcases:
    \begin{enumerate}
     \item Each component of $\lfloor \Gamma_i \rfloor$ is  good (see Definition \ref{def:bad}). In this case the minimally admissible sections on $\lfloor \Gamma_i\rfloor $ lift to minimally preadmissible sections on $Z_i$ by Lemma \ref{lem: bad0}, and hence, generate the line bundle $\mathcal{O}_{Z_i}(m(K_{Z_i}+\Gamma_i))$.
    \item At least one irreducible component of $\lfloor \Gamma_i\rfloor$ is bad. Then, we have $2$ subcases.
    \begin{enumerate}
    \item
    $\lfloor \Gamma_i \rfloor= S_1 \sqcup S_2$, and $S_1$ and $S_2$ are both bad (see Definition \ref{def:bad}) and there does not exist any $B$-bimeromorphic map between $S_1$ and $S_2$ induced by the Galois involution. In this case the claim is immediate, as $H^0(Z_i, 2m(K_{Z_i}+\Gamma_i))=\MPA(Z_i,2m(K_{Z_i}+\Gamma_i))$
    \item  $\lfloor \Gamma_i \rfloor= S_1 \sqcup S_2$, and both $S_1$ and $S_2$ are bad, and there is a  $B$-bimeromorphic map between them induced by the Galois involution. In this case, since $X$ is connected, there exists a component $X_j$ for some $j\neq i$ such that $X_j\cap X_i\neq \emptyset$. Hence, there exists another irreducible component $S$ which occurs in the support of $\lfloor \Gamma_i \rfloor$. Therefore $\lfloor \Gamma_i\rfloor $
has at least three irreducible components which contradicts the assumption. Thus this case cannot occur.\\
\end{enumerate}

\end{enumerate}
\end{enumerate}

\end{enumerate}
\end{proof}
Our goal is to construct global minimally preadmissible sections that generates the line bundle. The construction is not immediate as $\MPA(Z,2m(K_Z+\Gamma)) \to \MA(\lfloor \Gamma \rfloor, 2m(K_Z+\Gamma)|_{\lfloor \Gamma \rfloor})$ may not be surjective.(See Lemma \ref{lem: bad0})\\

But before that we will begin with a definition which will simplify our construction of compatible sections.
\begin{definition}
    Let $(Z,\Gamma)=\sqcup_{i=1}^k (Z_i, \Gamma_i)$ be a dlt pair defined as above. Let $(Z_p, \Gamma_p)$ and $(Z_q,\Gamma_q)$ be two irreducible components of $(Z,\Gamma)$ where $p\neq q\in \{1,2,..,k\}$. Let $S\subset \Supp \lfloor \Gamma_{p} \rfloor$ and $T\subset \Supp\lfloor \Gamma_{q}\rfloor$ be two bad components of the pair $(Z_p,\Gamma_{p})$ and $(Z_q, \Gamma_{q})$ respectively. 
We say $(S;Z_{p}, \Gamma_{p})$ is $\textit{specially connected}$  to $(T; Z_{q}, \Gamma_{q})$  if there exists a simple path (a path which contains no cycle) of the Dual graph $G_Z$ defined above joining $Z_{p}$ and $Z_{q}$ such that the following holds:
\[
\begin{tikzcd}
    {Z_{p}} & {V_1} & {V_2} & \bullet & \bullet & {V_g} & {Z_{q}}
    \arrow["{[S|S_1]}", no head, from=1-1, to=1-2]
    \arrow["{[T_1|S_2]}", no head, from=1-2, to=1-3]
    \arrow[no head, from=1-3, to=1-4]
    \arrow[no head, from=1-4, to=1-5]
    \arrow[dashed, no head, from=1-5, to=1-6]
    \arrow["{[T_g|T]}", no head, from=1-6, to=1-7]
\end{tikzcd}
\]
where $V_1,....,V_g \in \{Z_1,...,Z_k\} \setminus \{Z_{p},Z_{q}\}$.
Moreover, we say $S$ is specially connected to $T$ via a $\textit{bad path}$ if log Kodaira dimension of all $V_\alpha=0$ for all $\alpha \in \{1,2,...,g\}$.
\end{definition}
 $Z= Y_1 \sqcup Y_2 \sqcup..\sqcup Y_b \sqcup Z_{i_0}$, where 
 $Z_{i_0}$ is disjoint union of all irreducible components $Z_i$ of $Z$ for which 
the restriction map $\MPA(Z_i,2m(K_{Z_i}+\Gamma_i))\to \MA(\lfloor \Gamma_i\rfloor, 2m(K_{Z_i}+\Gamma_i)|_{\lfloor\Gamma_i\rfloor})$ is surjective. In particular, the map is not surjective for each $Y_i, i \in \{1,2..,b\}$.
Note that by Lemma \ref{lem: bad0} and Proposition \ref{pro:bad-abun}, $\kappa(Y_i, K_{Y_i}+\Gamma_{Y_i})=0$ and $\lfloor \Gamma_{Y_i}\rfloor = S_{Y_i} \sqcup T_{Y_i}$ where both components are bad.

\begin{claim}\label{clm:generation Z_{i_0}}
$\MPA(Z_{i_0}, 2m(K_{Z_{i_0}}+\Gamma_{i_0}))\to \MA(\lfloor \Gamma_{i_0} \rfloor, 2m(K_{Z_{i_0}}+\Gamma_{i_0})|_{\lfloor \Gamma_{i_0} \rfloor})$ is surjective and 
    $\MPA(Z_{i_0}, 2m(K_{Z_{i_0}}+\Gamma_{i_0}))$ generates the line bundle $\mathcal{O}_{Z_{i_0}}(2m(K_{Z_{i_0}}+\Gamma_{i_0}))$.
\end{claim}
\begin{proof}
    Write $(Z_{i_0},\Gamma_{i_0})=\sqcup_{i=1}^{k-b}(Z_i,\Gamma_i)$. By Claim \ref{clm:generation-on-irreducible-components} $\MPA(Z_i,m(K_{Z_i}+\Gamma_i))$ generates the line bundle $\mathcal{O}_{Z_i}(m(K_{Z_i}+\Gamma_i))$ for every $i\in \{1,2,...,k-b\}$. By Proposition \ref{pro:bad-abun}, we know that $\MA(\lfloor \Gamma_{i_0} \rfloor, 2m(K_{Z_{i_0}}+\Gamma_{i_0})|_{\lfloor \Gamma_{i_0} \rfloor})$ generates $\mathcal{O}_{\lfloor \Gamma_{i_0} \rfloor}(2m(K_{\lfloor \Gamma_{i_0} \rfloor }+\Diff (\Gamma_{i_0} -\lfloor \Gamma_{i_0}\rfloor)))$. Since the restriction map is surjective on each irreducible component, global surjectivity of the restriction map and global generation by preadmissible sections follow.
\end{proof}

\textbf{Construction:}\label{construction}
We start with a non-zero section $s\in\MPA(Z_{i_0},2m(K_{Z_{i_0}} +\Gamma_{i_0}))$. Choose an irreducible component $Y_i $ of $Z$ where $i\in \{1,2,,,,,b\}$. Define $\mathscr{L}=2m(K_{Y_i}+\Gamma_{Y_i})$. Note that $\lfloor \Gamma_{Y_i}\rfloor = S_{Y_i} \sqcup T_{Y_i}$ where both components are bad.
We define a section $w\in H^0(\widetilde{\lfloor \Gamma_{Y_i} \rfloor}, \mathscr{L}|_{\widetilde{\lfloor \Gamma_{Y_i} \rfloor}})$ as follows:
\begin{enumerate}
   
      \item  Exactly one component is specially connected to $(S;Z_{j},\Gamma_{j})$ via bad path for some $S\subset \Supp \lfloor \Gamma_{j}\rfloor$.  Without loss of generality, we assume that it is $T_{Y_i}$. In this case, we define $w|_{T_{Y_i}}=w|_{S_{Y_i}}=c_{T_{Y_i}}=\prod_{l=0}^g \lambda_l(m)(s|_S)\in \mathbb{C}$ where the scalar is obtained by 
      \[
\begin{tikzcd}[column sep=small]
{Z_{j}} 
& {V_1} 
& {V_2} 
& \bullet 
& \bullet 
& {V_g} 
& {Y_i} \\
{H^0(mK_S)} 
& {H^0(mK_{S_1})} 
& {H^0(mK_{T_1})} 
& {H^0(mK_{S_2})} 
&& {H^0(mK_{T_g})} 
& {H^0(mK_{T_{Y_i}})}
\arrow["{[S|S_1]}", no head, from=1-1, to=1-2]
\arrow["{[T_1|S_2]}", no head, from=1-2, to=1-3]
\arrow[no head, from=1-3, to=1-4]
\arrow[no head, from=1-4, to=1-5]
\arrow[dashed, no head, from=1-5, to=1-6]
\arrow["{[T_g|T_{Y_i}]}", no head, from=1-6, to=1-7]
\arrow["{\times \lambda_0(m)}", from=2-1, to=2-2]
\arrow["{\times \lambda_1(m)}", from=2-3, to=2-4]
\arrow["{\times \lambda_g(m)}", from=2-6, to=2-7]
\end{tikzcd}
\]
 \item\label{item:imp} Both $T_{Y_i}$ and $S_{Y_i}$ are specially connected to $(T;Z_{j'},\Gamma_{j'})$ and $(S;Z_{j},\Gamma_{j})$ where both $Z_j,Z_{j'}$ (not necessarily distinct) are components of $Z_{i_0}$ respectively via bad paths. In this case, we define 
      $w|_{T_{Y_i}}=c_{T_{Y_i}}$ and $w|_{S_{Y_i}}=c_{S_{Y_i}}$ as above.
 Observe that each $Y_i$ for $i\in \{1,2,,,,,b\}$ must be joined to some $Z_j$ of $Z_{i_0}$ via bad path.
 
 To lift this section to a minimally preadmissible section on $Y_i$, it must satisfy $c_{T_{Y_i}}=c_{S_{Y_i}}$. In particular, the well-definedness of this construction needs to guaranteed, which we provide below, see Claim \ref{clm:well}.
 
  \end{enumerate}

\begin{claim}\label{clm:well}
    The above construction is well defined.
\end{claim}
\begin{proof}[Proof of Claim \ref{clm:well}]
Let
\[
T_{Y_i} \subset \Supp \lfloor \Gamma_{Y_i} \rfloor
\]
be an irreducible component which is specially connected via bad path to a component
\[
S \subset \Supp \lfloor \Gamma_{j} \rfloor.
\]
Assume that the bad path is the following:

\[
\begin{tikzcd}
    {Z_{j}} & {V_1} & {V_2} & \bullet & \bullet & {V_g} & {Y_i}
    \arrow["{[S|S_1]}", no head, from=1-1, to=1-2]
    \arrow["{[T_1|S_2]}", no head, from=1-2, to=1-3]
    \arrow[no head, from=1-3, to=1-4]
    \arrow[no head, from=1-4, to=1-5]
    \arrow[dashed, no head, from=1-5, to=1-6]
    \arrow["{[T_g|T_{Y_i}]}", no head, from=1-6, to=1-7]
\end{tikzcd}
\]
We claim that this path is unique.
Observe that $T_{Y_i}$ is $B$-bimeromorphic, via the Galois involution, to a unique prime divisor $T_g$. Hence the edge $[T_g|T]$ is uniquely determined, and therefore $Y_i$ is connected to a unique vertex $V_g$.
Since $\kappa(V_g,K_{V_g}+\Gamma_{V_g})=0$, we have
\[
\Supp\lfloor\Gamma_{V_g}\rfloor=S_g\sqcup T_g.
\]
The divisor $S_g$ is $B$-bimeromorphic, via the Galois involution, to a unique prime divisor $T_{g-1}$. Consequently, the edge $[T_{g-1}|S_g]$ is uniquely determined, and thus $V_g$ is connected to a unique vertex $V_{g-1}$.
Repeating this argument inductively, we see that each preceding vertex and edge in the sequence is uniquely determined. Therefore the entire path
is unique.
\end{proof}
Now observe that, in order to extend a section $ s\in \MPA\bigl(Z_{i_0},2m(K_{Z_{i_0}}+\Gamma_{i_0})\bigr)$ to compatible sections on each irreducible component $Y_i$ of $Z$  so that $\widetilde{s}$ satisfying $\widetilde{s}|_{Z_{i_0}}$ defines a global minimally preadmissible section on $Z$, the section $s$ must satisfy $\lambda(m)s|{S}=\mu(m)s|{T}$, by \ref{item:imp}.

Let $(S_1,S_2).....(S_{2p-1},S_{2p})$ be $p$ many pairs of bad components of $\lfloor \Gamma_{i_0} \rfloor $ such that we obtain one equation as above for each pair. This gives us a system of homogeneous two-term linear equations.
We want to find a subspace $U_m=\{ s\in \MPA(Z_{i_0},2m(K_{Z_{i_0}}+\Gamma_{i_0}))\;|\; \sum_{1\leq \alpha \leq p, 1\leq \beta\leq 2p}a_{\alpha \beta}(m) s|_{S_\beta}=0\}$ such that $U_m$ generates the line bundle $\mathcal{O}_{Z_{i_0}}(Z_{i_0}, 2m(K_{Z_{i_0}}+\Gamma_{i_0}))$.\\

We know that $\MPA(Z_{i_0}, 2m(K_{Z_{i_0}}+\Gamma_{i_0}))$ generates the line bundle $\mathcal{O}_{Z_{i_0}}(2m(K_{Z_{i_0}}+\Gamma_{i_0}))$ by Claim \ref{clm:generation Z_{i_0}} and $\dim_{\mathbb C} \MPA(Z_{i_0},2m(K_{Z_{i_0}}+\Gamma_{i_0})) \sim O(m^e)$ for some $e\in\mathbb Z^+$ by Lemma \ref{inv}.
Let $f:Z_{i_0}\to R$ be the morphism (not necessarily with connected fibers) induced by the base-point free linear system $\MPA(Z_{i_0},2m(K_{Z_{i_0}}+\Gamma_{i_0}))$. Since $S_\beta$ is bad, $(K_{Z_{i_0}}+\Gamma_{i_0})|_{S_\beta}=K_{S_\beta}=\mathcal{O}_{S_\beta}$. Therefore every $S_\beta$ maps to a point on $R$. For every component $S_\beta$, define $c_\beta:=f(S_\beta)$. \\

\begin{claim}\label{clm: distinct}
    All $c_\beta$'s are distinct.
\end{claim}
\begin{proof}[Proof of Claim \ref{clm: distinct}] 
Note that for the morphism $Z_{i_0}\to R$
defined by the linear system $\MPA(Z_{i_0}, 2m(K_{Z_{i_0}}+\Gamma_{i_0}))$,
we have
$\MPA(Z_{i_0}, 2m(K_{Z_{i_0}}+ \Gamma_{i_0}))\cong H^0(R, A)$ where A is a very ample divisor on $R$. Fix two components $S_1$ and $S_2$. Since both $S_1$ and $S_2$ are specially connected to some components $T_1$ and $T_2$ where $T_j\subset \sum_{i=1}^b \lfloor\Gamma_{Y_i}\rfloor$ for $j=1,2$. This implies that $S_1$ and $S_2$ are not $B$-bimeromorphic induced by the Galois involution. Recall that $\nu_{\lfloor \Gamma_{i_0}\rfloor}:\widetilde{\lfloor \Gamma_{i_0}\rfloor}\to \lfloor \Gamma_{i_0}\rfloor$ is the normalization. Fix two numbers $a_1\neq a_2 \in \mathbb{C}$.  Consider the section $w\in H^0(\widetilde{\lfloor \Gamma_{i_0}\rfloor}, \nu_{\lfloor\Gamma_{i_0}\rfloor}^*(2m(K_{Z_{i_0}}+\Gamma_{i_0})|_{\lfloor \Gamma_{i_0} \rfloor}))$ such that $w|_{S_l}=a_l$ for $l=1,2$, $a_1\neq a_2$ and $w|_T=0$ for all other components of $\widetilde{\lfloor \Gamma_{i_0}\rfloor}$. Note that $w$ is preserved under any self $B$-bimeromorphic involution of bad component, any $B$-bimeromorphic map between good components and also preserved under any $B$-bimeromorphic map induced by the Galois involution. Thus it follows that $w$ descends to a minimally admissible section $s'$ on $\lfloor\Gamma_{i_0} \rfloor$. By Claim \ref{clm:generation Z_{i_0}}, this section $s'$ lifts to a minimally preadmissible section $s$ on $Z_{i_0}$. This implies that there exists $s\in \MPA(Z_{i_0},2m(K_{Z_{i_0}}+\Gamma_{i_0}))$
which takes distinct values on $S_1$ and $S_2$. Hence $c_1\neq c_2$. This proves our claim. \\
\end{proof}
Now we can identify the subspace $U_m$ as $\{t\in H^0(R,A)\;|\; \sum a_{\alpha \beta}(m)t(c_\beta)=0\}$. Fix any point $x\in R$. Let $D_x$ be the closed subset defined by $D_x:=\{c_1,...,c_{2p},x\}$. Consider the long exact sequence in Cohomology, 
\begin{equation*}
    0\to H^0(R, I_{D_x}\otimes A^k) \to H^0(R,A)\to \mathbb{C}^{2p+1}\to H^1(R, I_{D_x}\otimes A^k)\to.
    ..
\end{equation*}
Since $H^1(R, I_{D_x}\otimes A^k)=0$ for all $k\geq k_x$(depending on $x$), we can choose $t_x\in U_{mk_x}$ such that $t_x(x)\neq 0$. In this way, for every $x$, we can choose a section $t_x$ (depending on $x$) such that $t_x(x)\neq 0$. Note that $U_x=\{y\in R\;| \; t_x(y)\neq 0\}$ is an open set. Since $X$ is compact, we can choose $k_0$ to be large enough such that $H^1(R,I_{D}\otimes A^k)=0$ for all $k\geq k_0$ and we can choose $t_x\in U_{mk_0}$ for some fixed $k_0$. Therefore it follows that $U_{mk_0}$ generates the line bundle $\mathcal{O}_{Z_{i_0}}(Z_{i_0},2mk_0(K_{Z_{i_0}}+\Gamma_{i_0}))$. This shows that for sufficiently large and divisible $m$, $U_m$ generates the line bundle $\mathcal{O}_{Z_{i_0}}(2m(K_{Z_{i_0}}+ \Gamma_{i_0}))$. \\
\end{proof}

\begin{claim} 
    $\MPA(Z, 2m(K_Z+\Gamma)$ generates the line bundle $\mathcal{O}_Z(2m(K_Z+\Gamma))$.
\end{claim}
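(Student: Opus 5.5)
We build global sections on $Z=Y_1\sqcup\cdots\sqcup Y_b\sqcup Z_{i_0}$ by starting on $Z_{i_0}$ and extending to each $Y_i$ by the \textbf{Construction}. For every point we want such a section that does not vanish there.

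\emph{Step 1: sections on $Z_{i_0}$.} Fix $m$ sufficiently large and divisible as in the previous argument. Then the subspace $U_m\subset \MPA(Z_{i_0},2m(K_{Z_{i_0}}+\Gamma_{i_0}))$ cut out by the two-term linear relations $\lambda(m)s|_{S}=\mu(m)s|_{T}$ generates $\mathcal{O}_{Z_{i_0}}(2m(K_{Z_{i_0}}+\Gamma_{i_0}))$. There is one such relation for each $Y_i$ whose two bad components are both specially connected to $Z_{i_0}$ through bad paths.

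\emph{Step 2: extension to the $Y_i$.} Fix $s\in U_m$. For each $Y_i$ we have $\kappa(Y_i,K_{Y_i}+\Gamma_{Y_i})=0$, so $\mathcal{O}_{Y_i}(2m(K_{Y_i}+\Gamma_{Y_i}))\cong\mathcal{O}_{Y_i}$ and every global section is a constant. We set $\tilde s|_{Y_i}=c_{T_{Y_i}}$, the scalar obtained by transporting $s|_S$ along the unique bad path given by Claim \ref{clm:well}. The same recipe defines $\tilde s$ on each intermediate vertex $V_\alpha$ of log Kodaira dimension $0$. These $V_\alpha$ are either among the $Y_i$ or belong to $Z_{i_0}$.
\begin{itemize}
\item When only one component of $\lfloor\Gamma_{Y_i}\rfloor$ is joined to $Z_{i_0}$, the constant is consistent on both $S_{Y_i}$ and $T_{Y_i}$ automatically.
\item When both components are joined, the defining equations of $U_m$ give $c_{T_{Y_i}}=c_{S_{Y_i}}$.
\end{itemize}
Hence $\tilde s|_{Y_i}$ is a constant compatible with both boundary components.

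\emph{Step 3: minimal preadmissibility.} We check the conditions of Definition \ref{def:ma-mpa} for $\tilde s$.
\begin{itemize}
\item Across every edge $[S'|T']$ of $G_Z$, the Galois involution must match $\tilde s|_{S'}$ with $\tilde s|_{T'}$. For edges inside the bad paths this holds by construction, since each $\lambda_l(m)$ is precisely the scalar by which $\tau^*$ acts on $H^0(mK)$. For edges between good components it holds because $s$ is minimally preadmissible on $Z_{i_0}$.
\item Invariance under self-involutions of bad components holds because we work with $2m$-th powers, as in Proposition \ref{pro:bad-abun}(A).
\end{itemize}
Lemma \ref{lem:descend} then shows that $\tilde s$ descends.

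\emph{Step 4: generation.} A point of $Z_{i_0}$ is handled by choosing $s\in U_m$ nonvanishing there. A point of $Y_i$ is handled by choosing $s\in U_m$ with $s|_S\neq0$ for the $S$ to which $Y_i$ is joined. Then $\tilde s|_{Y_i}$ is a nonzero constant, because each $\lambda_l(m)\neq0$. Such an $s$ exists by Claim \ref{clm: distinct} and the same vanishing argument: the points $c_\beta$ are distinct, so we can prescribe a nonzero value at $c_\beta$ compatible with the linear relations.

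\emph{Main obstacle.} The hardest part is the global consistency of the scalars. Each $Y_i$ must receive a single constant, which requires that the graph $G_Z$ produces no further relation beyond those imposed in $U_m$. One danger is a closed bad cycle in which all vertices have log Kodaira dimension $0$ and which never reaches $Z_{i_0}$; such a cycle would impose an extra condition. The other danger is a $Y_i$ whose two ends return to the same $Z_j$ and force $\lambda(m)=1$. I would rule out the first using connectedness of $X$ together with the existence of some component with $\kappa\geq1$, which lies in $Z_{i_0}$: every bad path must end at $Z_{i_0}$, as observed in the \textbf{Construction}. The second is absorbed into $U_m$, because $S\neq T$ there and $c_\beta$ are distinct.
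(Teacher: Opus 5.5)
Your proposal is correct and follows essentially the same route as the paper. You extend each $s\in U$ from $Z_{i_0}$ to the $Y_i$ by the scalars transported along the (unique) bad paths. You then verify the conditions of Definition \ref{def:ma-mpa}; only preadmissibility on $\lfloor\Gamma_{i_0}\rfloor$ needs input, since the $\lfloor\Gamma_{Y_i}\rfloor$ carry no boundary curves. Finally you get generation on $Z_{i_0}$ from $U$, and on each $Y_i$ from any $s\in U$ with $s|_S\neq 0$.

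Your connectedness argument excluding closed chains of $Y$'s makes explicit what the paper only asserts. One small correction: the alternative ``$V_\alpha$ belongs to $Z_{i_0}$'' in your Step 2 never occurs. An intermediate vertex of a bad path has two bad boundary components that are not identified with each other by the Galois involution, so by Lemma \ref{lem: bad0}(2)(b) its restriction map is not surjective and it is one of the $Y_i$.
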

\begin{proof}
    Let $x\in Z_{i_0}$ be an arbitrary point. Since $U_{2m}$ generates the line bundle, we can find a section $s\in U_{2m}$ such that $s(x)\neq 0$. Since every section in $U_m$ satisfies the system of two term linear equations, by construction, we can define sections on each $Y_i$ by using  appropriate scalars. Let $w\in H^0(Z,2m(K_Z+\Gamma))$ be the constructed section extending $s$. We need to verify $w|_{\rdgamma}$ is minimally admissible. Note that from construction it is clear that the section is preserved under any $B$-bimeromorphic maps between any two good components, self $B$-bimeromorphic map, any $B$-bimeromorphic map between bad components induced by the Galois involution. 
    Thus we only need to verify $w|_{\rdgamma}$ is preadmissible on $\rdgamma$. Recall that $\rdgamma=\sqcup \lfloor \Gamma_{Y_i}\rfloor \sqcup \lfloor \Gamma_{{i_0}}\rfloor$. Since $\lfloor\Gamma_{Y_{i}}\rfloor$ is disjoint union of two bad components, there is no boundary curve on the adjunction pair. So it is enough to verify preadmissibility on $\lfloor\Gamma_{i_0}\lfloor$. This is immediate as $U_m \subset \MPA(Z_{i_0}, m(K_{Z_{i_0}}+\Gamma_{i_0}))$. 
Note that on every $Y_i$, only one  nonzero section is enough to generate the line bundle. Fix any $Y_i$. Take any component of $\lfloor \Gamma_{Y_i}\rfloor$, say, $T_{Y_i}$ (without loss of generality) which is specially connected to some $S\subset \lfloor \Gamma_{i_0}\rfloor$. Choose $s\in U_{2m}$ such that $s|_S\neq 0$. Then the section defined by the scalar on $Y_i$ must be non zero. This section generates the line bundle on $Y_i$. Hence there exists a sub collection of preadmissible sections which generates the line bundle $\mathcal{O}_Z(2m(K_Z+\Gamma))$.\\

By Lemma \ref{lem:descend}, we have $K_X+\Delta$ is semiample.
\end{proof}

\begin{lemma}\label{inv}
   Let $X$ be a compact complex analytic space. Let $L$ be a semiample line bundle on $X$. Let $V=\bigoplus_{m\geq0} V_m \subset H^0(X,L^m)$ be a graded sublinear system. Assume $L$ is globally generated by this sublinear system. Then $\dim V_m \sim \dim H^0(X,L^m)$.
\end{lemma}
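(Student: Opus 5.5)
The plan is to reduce everything to the asymptotic growth of the section ring of a finitely generated algebra, using the morphism defined by $L$. The statement should be read as $\dim V_m$ and $\dim H^0(X,L^m)$ having the same order of growth, i.e.\ both are $\sim c\,m^e$ with $e$ the Iitaka dimension, up to positive constants; this is the form used in the proof of Theorem \ref{thm:abun-non-numerical}. We work with $m$ sufficiently divisible throughout.

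First, the upper bound $\dim V_m\le \dim H^0(X,L^m)$ is trivial since $V_m\subset H^0(X,L^m)$. For the lower bound, fix $m_0$ such that $V_{m_0}$ generates $L^{m_0}$; this is possible by hypothesis. Consider the morphism $\varphi\colon X\to \mathbb{P}(V_{m_0}^*)$ it defines and let $R=\varphi(X)$, a projective variety (or scheme) with $L^{m_0}=\varphi^*\mathcal{O}_R(1)$. Because $V$ is a graded subalgebra, the image of $\operatorname{Sym}^k V_{m_0}\to H^0(X,L^{km_0})$ lies in $V_{km_0}$. That image is exactly the homogeneous coordinate ring of $R$ in degree $k$, so $\dim V_{km_0}\ge h_R(k)$, the Hilbert function of $R$. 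For large $k$ this equals the Hilbert polynomial, of degree $\dim R$ with positive leading coefficient $\deg R/(\dim R)!$.

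Next I bound $H^0(X,L^{km_0})$ from above. Take the Stein factorization $X\to R'\to R$ of $\varphi$, with $R'\to R$ finite. Then $H^0(X,L^{km_0})=H^0(R',\pi^*\mathcal{O}_R(k))$, and since $\pi$ is finite this equals $H^0(R,\pi_*\mathcal{O}_{R'}\otimes\mathcal{O}_R(k))$. By Serre, this last group grows like a polynomial of degree $\dim R$, with leading coefficient $\operatorname{rank}(\pi_*\mathcal{O}_{R'})\cdot\deg R/(\dim R)!$. Hence both dimensions are $\Theta(k^{\dim R})$, and $\dim R=\kappa(X,L)$. The main subtlety here is that $\varphi$ need not have connected fibres, so the two leading constants can differ by the degree of $R'\to R$. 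This is why I would assert only comparable growth $O(m^e)$, which suffices for the application.

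Finally, I need to pass from multiples of $m_0$ to all sufficiently divisible $m$. Given $V_{m_0}$ base point free, $V_{m}\supset V_{m_0}^{\otimes}$-products whenever $m_0\mid m$, and we only use such $m$. In a compact analytic (non-projective) $X$ the above works verbatim, because $\varphi$ lands in projective space, so $R$ is projective and Serre vanishing applies on $R$ rather than on $X$. I expect the only real obstacle to be the bookkeeping of the finite part of the Stein factorization. Everything else is standard Hilbert-polynomial growth.
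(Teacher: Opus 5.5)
Your argument is correct and follows the same route as the paper: you take the morphism defined by the base-point-free $V_{m_0}$, rewrite $H^0(X,L^{km_0})$ as sections of a coherent sheaf twisted by $\mathcal{O}(k)$ on the projective image, and read off growth of order $k^{\dim R}$ from the Hilbert polynomial. The paper uses $f_*\mathcal{O}_X$ directly via the projection formula, which is the same as your $\pi_*\mathcal{O}_{R'}$ from the Stein factorization. Two things you add that the paper only leaves implicit are correct and worth keeping: the explicit lower bound on $\dim V_{km_0}$ by the Hilbert function of $R$, via the image of $\operatorname{Sym}^k V_{m_0}$, and your remark that only comparable growth holds, not a ratio tending to $1$, which is all Theorem \ref{thm:abun-non-numerical} uses.
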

\begin{proof}
    Let $f$ be the semiample fibration induced by the sublinear system $V_{m_0}$. Let $f:X \to Y$ be the morphism. Then $L^{m_0}\cong f^*\mathcal{O}_Y(1)$. By projection formula $H^0(X,L^{mm_0})=H^0(Y,f_*\mathcal{O}_X\otimes \mathcal{O}_Y(m))$. By Grauert's direct image theorem and using degree of Hilbert-polynomial, we see that $H^0(Y,f_*\mathcal{O}_X(m)\sim O(m^{\dim Y})$. Thus the result follows.
\end{proof}
Now we extend our above result for compact Normalized-K\"ahler slc pair of dimension $3$.
\begin{proof}[Proof of corollary \ref{cor:mainimp}]
Let $(X,\Delta)$ be a Normalized-K\"ahler slc pair of dimension $3$. Recall that $\mu:(X',D'+\Delta') \to (X,\Delta)$ is its normalization. By definition, $X'$ is K\"ahler. Take a $\mathbb{Q}$-factorial dlt modification $Y$ of $X'$. Let $\phi: Y \to X'$ be the modification satisfying $K_Y+\Gamma=\phi^*(K_{X'}+\Delta'+D')$. Now the result follows directly from the proof of Theorem \ref{thm:abun-non-numerical} and Corollary \ref{main}.
\end{proof}

\section{abundance for numerically trivial slc threefold and Proof of the main theorem}\label{sec:main result}
In this section, we discuss the numerically trivial case. More precisely we prove the following :
\begin{theorem}\label{mainthm:num-trivial}
    Let $(X,\Delta)$ be a compact connected K\"ahler slc threefold such that $K_X+\Delta$ is numerically trivial. Then $K_X+\Delta$ is semiample.
\end{theorem}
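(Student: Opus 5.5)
Since $K_X+\Delta\equiv 0$, every component of the dlt modification $(Z,\Gamma)=\sqcup_i(Z_i,\Gamma_i)$ of the normalization satisfies $K_{Z_i}+\Gamma_i\equiv 0$. By Theorem \ref{abun}, each $K_{Z_i}+\Gamma_i\sim_{\mathbb Q}0$, so for $m$ divisible enough $\mathcal O_{Z_i}(m(K_{Z_i}+\Gamma_i))\cong\mathcal O_{Z_i}$. Semiampleness of $K_X+\Delta$ is then equivalent to producing one nowhere vanishing section of $\mathcal O_Z(m(K_Z+\Gamma))$ that descends to $X$. By Lemma \ref{lem:descend}, it suffices to find a single nonzero element of $\MPA(Z,m(K_Z+\Gamma))$. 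Such a section is a collection of constants $c_i\in H^0(Z_i,\mathcal O)\cong\mathbb C$, one per component, subject to compatibility along $\rdgamma$.

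The compatibility conditions come from two sources. Good components carry finite pluricanonical representations, and the admissibility constraints there are handled as in Proposition \ref{pro:bad-abun}(A), after passing to a multiple of $m$. Bad components are, by Lemma \ref{lem:bad}, surfaces with $\kappa=0$ and $\bar\Theta=0$, and the gluing by the Galois involution $\tau$ identifies pairs of them. Following the dual graph $G_Z$, each edge $[S|T]$ between bad components imposes $c_j=\lambda_{ST}(m)c_i$, where $\lambda_{ST}$ is the scalar by which the induced $B$-bimeromorphic map $\theta:S\dashrightarrow T$ acts on the one-dimensional space $H^0(mK)$. Propagating a choice $c_1=1$ along a spanning tree of $G_Z$ (connected since $X$ is connected) determines all the $c_i$. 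The only obstruction is consistency around cycles: going around a cycle returns to a component $Z_i$ and multiplies by a scalar $\Lambda$ that is the eigenvalue of a $B$-bimeromorphic self-map $\sigma$ of a bad surface $S$ (typically a K3 or a torus, possibly a non-projective one) acting on $H^0(S,mK_S)$. We need $\Lambda^N=1$ for some fixed $N$; replacing $m$ by $Nm$ then makes everything consistent. Cycles through good components or through a $Z_i$ with connected $\lfloor\Gamma_i\rfloor$ reduce to this case, using Lemma \ref{lem: bad0}.

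The key step, and the main obstacle, is showing that every such monodromy $\sigma$ acts on $H^0(S,K_S)$ by a root of unity of bounded order. This is exactly where the Fujiki-class counterexample fails and where the Kähler hypothesis on $X$ must enter. The plan is to fix a Kähler class $\omega$ on $X$ and pull it back to $Z$ and to $S$. Because $\sigma$ is built from the gluing of $X$, the class $\sigma^*\omega|_S-\omega|_S$ differs from zero only by contributions of exceptional curves of the dlt modification and the bimeromorphic maps. Hence $\sigma^*\omega-\omega\in NS(S)_{\mathbb R}$; this is the content of Lemma \ref{lem:preservation-kahler} mentioned in the introduction, which I would prove by resolving $\sigma$ and tracking pullbacks of $\omega$ through the common resolution. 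Granting this, $\sigma^*$ on $H^2(S,\mathbb Q)$ fixes the transcendental part of a Kähler class modulo $NS$. On a K3 surface or a torus, the transcendental lattice $T_S$ is an irreducible Hodge structure. Since $\sigma^*$ preserves $\omega$ modulo $NS$, and $\omega$ has nonzero projection to $T_S\otimes\mathbb R$ whenever $S$ is not projective, the eigenvalue on $H^{2,0}$ must have absolute value $1$ and be algebraic. By Kronecker's theorem together with the bound $\varphi(\mathrm{ord})\le\operatorname{rk}T_S\le 22$ (the standard Nikulin-type argument), it is a root of unity of bounded order. When $S$ is projective, the finiteness already follows from Fujino/Ueno.

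With $\Lambda^N=1$ established for all cycles, I would choose $m$ divisible by $N$ and by the orders required on the good components, define the constants $c_i$ along the spanning tree, and check that the resulting section satisfies conditions (1)–(4) of Definition \ref{def:ma-mpa}. The section is nowhere zero, so Lemma \ref{lem:descend} gives a nowhere vanishing section of $\mathcal O_X(m(K_X+\Delta))$, proving the theorem. Combined with Theorem \ref{thm:abun-non-numerical}, this yields Theorem \ref{thm:mainimp}.
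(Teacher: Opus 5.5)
\textbf{Overall.} Your architecture matches the paper's proof (Steps 2--3): constants $c_i$ on the components, edge scalars, cycle monodromy, a pulled-back K\"ahler class, invariance modulo $NS_{\mathbb{R}}$, and a root-of-unity lemma. Two small points first. You should use a multigraph with self-loops rather than $G_Z$, since clause (iii) of $G_Z$ discards loops. When $X$ is irreducible, as in Theorem~\ref{counterexample}, the only constraint is a self-loop. More importantly, the two steps that carry the content are not justified as written.

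\textbf{Invariance modulo $NS$.} The monodromy around a cycle is not built from the gluing of $X$ alone. It alternates two kinds of maps:
\begin{itemize}
\item the Galois identifications $\tau_j$, which preserve the restricted pullback of $\omega$ exactly;
\item link maps $\psi_j\colon S_1\dashrightarrow S_2$ between the two bad components of $\lfloor\Gamma_i\rfloor$ of a component $Z_i$ as in Lemma~\ref{lem: bad0}(2)(b).
\end{itemize}
The link maps are induced by the Mori fibre space $u\colon Z_i'\to V$. They are also what allows you to read the product of scalars as the eigenvalue of a self-map: sections on $Z_i'$ are pulled back from $V$, with $m$ even.

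Across $\psi_j$ you are restricting $\omega$ to two different surfaces, and these restrictions genuinely differ, not by exceptional curves. For example, take $Z_i=\mathbb{P}_S(\mathcal{O}_S\oplus L)$ over a non-projective K3 surface $S$ with $c_1(L)\neq 0$, with its two disjoint sections. Then $K_{Z_i}+S_1+S_2\sim 0$ and $\psi$ is the identity of $S$. Yet the restrictions of a K\"ahler class to $S_1$ and $S_2$ differ by a nonzero multiple of $c_1(L)$. So resolving $\sigma$ and tracking pullbacks cannot give $\sigma^*\omega-\omega\in NS(S)_{\mathbb{R}}$.

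The missing input is the lemma preceding Corollary~\ref{cor:num-trivial}:
\begin{itemize}
\item Over a dense open $U\subset V$ one has $u^{-1}(U)\cong\mathbb{P}_U(E)$, with $S_1,S_2$ disjoint sections.
\item Since $H^2(u^{-1}(U),\mathbb{R})=u^*H^2(U,\mathbb{R})\oplus\mathbb{R}\zeta$, the two restrictions differ over $U$ by $t(c_1(L_1)-c_1(L_2))$, a normal-bundle class.
\item The remaining error is supported on curves over $V\setminus U$, hence algebraic by the Thom isomorphism.
\end{itemize}
Corollary~\ref{cor:num-trivial} then transports this statement back to $Z_i$.

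\textbf{Root of unity.} Your final inference does not follow. The eigenvalue $\lambda$ of any automorphism on $H^{2,0}$ has modulus $1$ and is an algebraic integer. The automorphism of Remark~\ref{K3} has both properties, yet its $\lambda$ is not a root of unity. Kronecker's theorem needs all Galois conjugates of $\lambda$ on the unit circle, and controlling them is exactly where the hypothesis on $\omega$ must be used.

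The paper does this in Lemma~\ref{lem:preservation-kahler}. Suppose a conjugate $\eta$ has $|\eta|\neq 1$.
\begin{itemize}
\item This yields isotropic eigenvectors $v,w\in H^{1,1}(S,\mathbb{R})$ for $\eta^{\pm1}$ with $v\cdot w>0$.
\item Since the restricted class has positive square, its component in $\mathbb{R}v\oplus\mathbb{R}w$ has both coefficients nonzero.
\item Applying polynomials in $\sigma^*$ to $(\sigma^*-1)\omega\in NS(S)_{\mathbb{R}}$ then puts $v$ and $w$ in $NS(S)_{\mathbb{R}}$.
\item This contradicts the negative semidefiniteness of $NS(S)_{\mathbb{R}}$ on a non-projective surface.
\end{itemize}

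Your irreducibility remark can also be made to work if used properly.
\begin{itemize}
\item Let $\Phi_\lambda$ be the minimal polynomial of $\lambda$. The kernel of $\Phi_\lambda(\sigma^*)$ on $T_{\mathbb Q}$ is a rational sub-Hodge structure whose complexification contains $H^{2,0}$, so it is all of $T_{\mathbb Q}$.
\item Hence every eigenvalue of $\sigma^*$ on $T$ is a conjugate of $\lambda$.
\item The image of $\omega$ in $(H^2/NS)_{\mathbb{R}}\cong T_{\mathbb{R}}^{\vee}$ is nonzero and $\sigma^*$-fixed, so $1$ is a conjugate of $\lambda$, which forces $\lambda=1$.
\end{itemize}
Work in $H^2/NS$ here rather than with a ``projection to $T_S\otimes\mathbb{R}$''. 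That projection is not defined when $NS$ is degenerate, which does happen for non-projective K3 surfaces.
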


Before proving Theorem \ref{mainthm:num-trivial}, we prove a few results which we will use in the main proof. 
\begin{lemma}\label{lem:preservation-kahler}
    Let $S$ be a compact K\"ahler lc surface satisfying $K_S \sim _{\mathbb{Q}}0$ . Let $g: S \dashrightarrow S$ be a $B$-bimeromorphic satisfying $a^2>0$ and $g^*a-a \in NS(S)_{\mathbb{R}}$ for some $(1,1)$-class $a$. Then for every sufficiently divisible $m$ such that $mK_S\sim 0$, then the scalar $\lambda$ by which it acts on $H^0(mK_S)$ must be of finite order.
\end{lemma}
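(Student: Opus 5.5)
The plan is to reduce to a smooth minimal model with trivial canonical bundle and then run a Hodge-theoretic argument. The class $a$ will supply a $g^*$-invariant vector of positive square in the transcendental part of cohomology; such a vector forces $g^*$ to preserve a definite form on a lattice, so $g^*$ acts with finite order there, and $\lambda$ is one of its eigenvalues.

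\textbf{Reductions.} If $S$ is projective, the finiteness of the pluricanonical representation \cite[Theorem 3.5]{Fu00} already gives the claim, so assume $S$ is non-projective. If $(S,0)$ is lc but not klt, then $S$ has an lc centre with $K_S\sim_{\mathbb Q}0$. In that case I expect the minimal resolution to be uniruled, since it carries an effective anticanonical-type boundary; hence $S$ is projective, as in Case I of Theorem \ref{S}. So assume $S$ is klt, and pass to its terminal, hence smooth, crepant model $X$; the boundary there is $0$ because $K_S\sim_{\mathbb Q}0$. By Remark \ref{rmk:b-bimeromorphic} and Lemma \ref{terminal}, $g$ becomes an automorphism of the minimal surface $X$ with $K_X\sim_{\mathbb Q}0$. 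By Theorem \ref{thm:classification}, $X$ is a K3 surface or a torus in the non-projective case, so $H^0(X,K_X)=\mathbb C\sigma$ and $g^*\sigma=\lambda\sigma$. It suffices to show that $\lambda$ is a root of unity. Replace $a$ by its pullback to $X$; this preserves $a^2>0$ and the condition $g^*a-a\in NS(X)_{\mathbb R}$, because pullbacks of exceptional curve classes lie in $NS$.

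\textbf{Invariant positive class.} Consider the $g^*$-stable decomposition of $H^2(X,\mathbb R)$ into $NS(X)_{\mathbb R}$ and $T_{\mathbb R}:=NS(X)_{\mathbb R}^{\perp}$. Since $X$ is non-projective, the Hodge index theorem makes the intersection form on $NS(X)_{\mathbb R}$ negative semidefinite. I first treat the case where it is nondegenerate, so that $H^2=NS\oplus T$. Write $a=a_{NS}+a_T$. Because $g^*$ preserves both summands and $g^*a-a\in NS$, we get $g^*a_T=a_T$. Moreover $a_T^2=a^2-a_{NS}^2\ge a^2>0$, and $a_T$ is a real $(1,1)$-class, hence orthogonal to $\sigma$ and $\bar\sigma$.

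\textbf{Finite order on $T$.} The real plane $\langle \mathrm{Re}\,\sigma,\mathrm{Im}\,\sigma\rangle$ is positive definite, and $H^2$ has exactly three positive directions. Therefore the form on $T_{\mathbb R}$ is positive definite on $P:=\langle \mathrm{Re}\,\sigma,\mathrm{Im}\,\sigma, a_T\rangle$ and negative definite on $P^\perp\cap T_{\mathbb R}$. Both pieces are $g^*$-stable: the first because $g^*\sigma=\lambda\sigma$ and $g^*a_T=a_T$, the second because $g^*$ preserves the form. Flipping the sign on $P^\perp$ gives a $g^*$-invariant positive definite form on $T_{\mathbb R}$. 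Since $g^*$ also preserves the lattice $T\cap H^2(X,\mathbb Z)$, the group generated by $g^*|_T$ is discrete and compact, hence finite. As $\sigma\in T_{\mathbb C}$, $\lambda$ is an eigenvalue of $g^*|_T$ and so a root of unity. Consequently the scalar $\lambda^m$ by which $g$ acts on $H^0(mK_S)$ also has finite order.

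\textbf{Main obstacle.} The delicate step is the case where the form on $NS(X)_{\mathbb R}$ is degenerate, so that $NS\cap T\neq 0$ and the splitting $a=a_{NS}+a_T$ is not available. There I would work on $T/(NS\cap T)$ instead. Alternatively, I would choose $a_T$ as the image of $a$ in $H^2/NS$, identified with the dual of $T$ via the form. I would then check that the form induced on the quotient is nondegenerate, $g^*$-invariant, and still has $\sigma$, $\bar\sigma$ and the image of $a$ spanning a positive $3$-space, so the same compactness argument applies to $g^*$ on the quotient lattice. If that becomes messy, a fallback is Kronecker's theorem. $\lambda$ is an algebraic integer with all conjugates being eigenvalues of $g^*$ on $H^2(X,\mathbb Z)$. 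If $\lambda$ were not a root of unity, some conjugate would have modulus $>1$; its real isotropic eigenvector would lie in $H^{1,1}$, orthogonal to both $\sigma$ and the invariant positive class. That is impossible for a nonzero isotropic vector in the negative definite complement.
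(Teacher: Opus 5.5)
\textbf{Comparison with the paper.} When the form on $NS(X)_{\mathbb R}$ is nondegenerate, your argument is correct and takes a different route from the paper. You extract a $g^*$-invariant class $a_T\in T_{\mathbb R}$ with $a_T^2>0$. Then $g^*|_T$ preserves a positive definite form and a lattice, so it has finite order. The paper instead argues by contradiction via Kronecker. A conjugate $\eta$ of $\lambda$ with $|\eta|\neq1$ gives isotropic eigenvectors $v,w\in H^{1,1}(X,\mathbb R)$ for $\eta^{\pm1}$. Projection onto these eigenlines is a polynomial in $g^*$, and $NS(X)_{\mathbb R}$ is $g^*$-stable. So $(g^*-1)a\in NS(X)_{\mathbb R}$, together with $a^2>0$, forces $v,w\in NS(X)_{\mathbb R}$, which puts a class of positive square in $NS$. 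That route never splits $H^2$ along $NS$, and the splitting is exactly where your proposal has a gap.

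\textbf{The gap: the degenerate case.} This case does occur, for example for non-algebraic elliptic K3 surfaces or tori of algebraic dimension one, whose fibre class lies in $NS$. There $K:=NS(X)_{\mathbb R}\cap T_{\mathbb R}$ is an isotropic line $\mathbb R f$, since the form on $H^{1,1}(X,\mathbb R)$ is Lorentzian. Because $a^2>0$, we get $a\cdot f\neq0$. Hence the image of $a$ in $H^2/NS\cong T^\vee$ does not lie in the image of $T/K$. Moreover, $T/K$ has signature $(2,\ast)$, because $K^\perp/K$ has signature $(2,b_2-4)$ and $NS/K$ is negative definite. So the positive $3$-space you plan to check for does not exist. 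Your Kronecker fallback also appeals to ``the invariant positive class'', but you only produced one when $H^2=NS\oplus T$.

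\textbf{How to repair it.} The fix is short, and $a$ is not even needed in this case. The positive part of $T/K$ is exactly the $g^*$-stable plane spanned by the images of $\mathrm{Re}\,\sigma$ and $\mathrm{Im}\,\sigma$. Also, $T\cap H^2(X,\mathbb Z)$ maps onto a $g^*$-stable lattice in $T/K$. So your compactness argument runs verbatim on $T/K$, and $\lambda$ is an eigenvalue there because $\sigma\notin K_{\mathbb C}$.

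Alternatively, repair the fallback by using $f$ in place of the positive class. Since $g^*$ preserves the rank-one lattice $K\cap H^2(X,\mathbb Z)$, we have $g^*f=\pm f$. A real isotropic eigenvector $v$ with eigenvalue $\eta$, $|\eta|>1$, then satisfies $v\cdot f=\pm\eta\,(v\cdot f)$, so $v\cdot f=0$. Orthogonal isotropic vectors in a Lorentzian space are proportional, so $v\in\mathbb R f$, which is absurd. Or simply adopt the paper's projection argument, which handles both cases uniformly.

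\textbf{A smaller point in the reduction.} The boundary on the terminal crepant model does not vanish ``because $K_S\sim_{\mathbb Q}0$''; a $\tfrac13(1,1)$ point already gives boundary $\tfrac13$ times the exceptional curve. It vanishes because a nonzero boundary makes $\kappa=-\infty$ on the resolution, hence the resolution is ruled and projective. That is your own non-klt argument, and the paper applies it once to the minimal resolution to cover both cases.
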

\begin{proof}
    Since the finiteness of pluricanonical representation holds for projective varieties, we may assume that $S$ is non projective K\"ahler. Now let $f: T\to S$ be a minimal resolution. Observe that $K_T+\Delta=f^*K_S$ where $\Delta$ is effective. If $\Delta\neq 0$, then $T$ is uniruled and hence projective. Finiteness of the pluricanonical representation for $(T,\Delta)$ implies the finiteness of the pluricanonical represntation of $S$. This is a contradiction. Hence $\Delta=0$ and $K_T=f^*K_S$. Replacing $S$ by $T$, we may assume that $S$ is smooth and $g$ is automorphism by Lemma \ref{terminal}. 
Set 
\[ V =H^{1,1}(S,\mathbb{R}), N =NS(S)_{\mathbb{R}} \subset V\]
By Hodge-index theorem, we know that the intersection form on $V$ has signature $(1, h^{1,1}(S) -1)$. Moreover, note that the intersection form on $N$
is non positive because if there exists any divisor $D$ with positive self intersection, then either $D$ or $-D$ must be a big divisor. This implies that $S$ is Moishezon and hence, projective which gives a contradiction.\\
Let $\lambda $ be the eigen value of $g^*$ on $H^{2,0}(S)$. Since $g$ is an automorphism, it preserves the integral lattice $H^2(S,\mathbb{Z})$/torsion. Therefore $\lambda$ is an algebraic integer. By Kronecker's theorem, it has a Galois conjugate, say $\eta$ having modulus other than $1$. Since $g^*$ induces isometry on $V$, both $\eta$ and $\eta^{-1}$ are eigen values of $g^*|_V$. We have two $(1,1)$ class $v$ and $w$ such that $g^*v=\eta v$ and $g^*w=\eta^{-1}w$
satisfying $v^2=w^2=0$ and $v\cdot w>0$. [Note that $v\cdot w=0$ implies there exists two dimensional subspace $U\subset V$ such that $u^2=0$ for all $u\in U$ which contradicts signature of the intersection form]. Set $E=\mathbb{R}v\bigoplus \mathbb{R}w$. Then we have, $V=E\bigoplus E^{\perp}$. Now write $a=a_E+ (a_E)^{\perp}$. Since $((a_E)^{\perp})^2\leq 0$. This implies that $(a_E)^2>0$. Then both coefficients of $v$ and $w$ are non zero and hence same for $(g^*-1)a_E$. \\
Now, observe that $N$ is $g^*$ invariant and $(g^*-1)a \in N$ by hypothesis.
The projection operator $V\to E$ satisfies some polynomial in $g^*$. Thus it shows that $(g^*-1)a_E\in N$. Choosing appropriate polynomial in $g^*$, we can show that $v,w\in N$. Then $E\subset N$. Now since $(v+w)^2>0$, this contradicts the non positivity of intersection form on $N$. Hence $\lambda$ is a root of unity. 
\end{proof}
\begin{lemma}
    Let $Z$ be a $\mathbb{Q}$-factorial normal compact K\"ahler threefold. Let $f:Z\to V$ be a projective surjective morphism such that general fiber is isomorphic to $\mathbb{P}^1$. Let $T_1$ and $T_2$ be two disjoint divisors such that $f|_{T_i}\to V$ is $B$-bimeromorphic for both $i$. Let $g: Z' \to Z$ be a log resolution such that strict transform of both $T_i$, say $T_i'$ are smooth. Assume $r_i: T \to T_i'$ be a common resolution which resolves the induced $B$
$B$-bimeromorphic map. Then for any class $\alpha\in H^{1,1}(Z', \mathbb{R})$, $r_1^*(\alpha|_{T_1'})-r_2^*(\alpha|_{T_2'})\in NS(T)_{\mathbb{R}}$.
\end{lemma}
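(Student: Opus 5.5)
We would show that $\beta:=r_1^*(\alpha|_{T_1'})-r_2^*(\alpha|_{T_2'})$ pairs to zero with every holomorphic $2$-form on $T$. By the Lefschetz $(1,1)$-theorem, a real $(1,1)$-class on the compact Kähler surface $T$ with this property lies in $NS(T)_{\mathbb R}$.

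First we set up notation. Write $\psi_i:=f\circ g|_{T_i'}\circ r_i\colon T\to V$. Since $f|_{T_i}$ is bimeromorphic onto $V$ and $r_1,r_2$ resolve the induced map $T_1'\dashrightarrow T_2'$ over $V$, we have $\psi_1=\psi_2=:\psi$, and $\psi$ is a bimeromorphic morphism onto $V$. Let $\iota_i\colon T\to Z'$ be $r_i$ followed by the inclusion $T_i'\hookrightarrow Z'$. Then $\beta=(\iota_1^*-\iota_2^*)\alpha$. This map is defined on all of $H^2(Z',\mathbb R)$ and respects Hodge types, so it suffices to prove that $(\iota_1^*-\iota_2^*)\gamma$ is orthogonal to $H^{2,0}(T)$ for every $\gamma\in H^2(Z',\mathbb C)$.

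Next we compare the two maps through $V$. On a resolution $\tilde V$ of $V$, after modifying $T$ we may assume $\psi$ factors through $\tilde V$. Let $U\subset V$ be a dense Zariski open set over which $f$ is a $\mathbb P^1$-bundle and $g$ and $\psi$ are isomorphisms. Over $U$, the two sections $\iota_1,\iota_2$ of the $\mathbb P^1$-bundle $W:=(f\circ g)^{-1}(U)\to U$ are homotopic through the fibres, since the fibres $\mathbb P^1$ are simply connected. Taking the straight-line homotopy inside each $\mathbb P^1$ fibre, via a local trivialization from Lemma~\ref{biholo}, and gluing with a partition of unity, we get $\iota_1^*=\iota_2^*$ on $H^2(W)$ over $U$. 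Hence $(\iota_1^*-\iota_2^*)\gamma$ restricts to zero in $H^2(\psi^{-1}(U),\mathbb C)$. A class on the compact Kähler surface $T$ vanishing on the complement of the proper analytic subset $E:=T\setminus\psi^{-1}(U)$ lies in the image of $H^2_E(T)\to H^2(T)$. By standard Hodge theory that image is spanned by the classes of the curve components of $E$, plus contributions from isolated points, which vanish in degree $2$. So $\beta$ is a real combination of curve classes and lies in $NS(T)_{\mathbb R}$.

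An alternative route avoids the local-cohomology step. For a holomorphic $2$-form $\omega$ on $T$, bimeromorphic invariance gives $\omega=\psi^*\omega_V$ with $\omega_V$ on $\tilde V$. Since $(f\circ g)^*$ is injective on $H^{2,0}$, one can compute $\int_T\beta\wedge\bar\omega$ as an integral over $Z'$ of $\gamma\wedge\overline{(f\circ g)^*\omega_V}\wedge([T_1']-[T_2'])$. On $U$, the divisor $T_1'-T_2'$ restricted to each $\mathbb P^1$ fibre has degree $0$, so $[T_1']-[T_2']=(f\circ g)^*D+(\text{exceptional/vertical})$ up to classes supported over $V\setminus U$. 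This again yields only curve contributions on $T$.

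The main obstacle is the step that identifies classes supported on $E$, or over $V\setminus U$, with curve classes. This needs $H^2_E(T)\to H^2(T)$ to have image in $NS$, which holds because $H^2_E(T)\cong\bigoplus H_2(E_j)$ over the curve components $E_j$, generated by their fundamental classes. Technically, the $\mathbb P^1$-homotopy argument should be run on the honest open set $W$ before extending. The extension itself is handled purely by the exact sequence $H^2_E(T)\to H^2(T)\to H^2(T\setminus E)$.
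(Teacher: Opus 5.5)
There is a genuine gap. Your main route breaks at the claim that $\iota_1$ and $\iota_2$ are homotopic over $U$, so that $\beta$ restricts to zero on $\psi^{-1}(U)$. This is false in general. There is no straight-line homotopy in $\mathbb P^1$, and gluing local homotopies by a partition of unity requires convex (affine) fibres. Simple connectivity of the fibre only removes the degree-one obstruction; $\pi_2(\mathbb P^1)=\mathbb Z$ still leaves an obstruction in $H^2(U;\mathbb Z)$.

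Concretely, take $\alpha=[T_1']$. Since $T_1'\cap T_2'=\emptyset$, you get $\iota_2^*\alpha=0$ but $\iota_1^*\alpha=r_1^*c_1(N_{T_1'/Z'})$. For $Z=\mathbb P_V(\mathcal O_V\oplus N)$ with its two disjoint canonical sections, this gives $\beta=\pm c_1(N)$. That class is nonzero on $U=V$ when $V=\mathbb P^2$ and $N=\mathcal O(1)$. If $V$ has no curves, so that every admissible $U$ is $V$ minus finitely many points, and $c_1(N)\neq 0$, then no choice of $U$ makes it vanish. So your local-cohomology step, which is correct and is exactly the last step of the paper's proof, has nothing to act on.

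What is missing is the paper's correction term. Write $p:=f\circ g|_W$ and $W\cong\mathbb P_U(L_1\oplus L_2)$. By Leray--Hirsch, $\alpha|_W=p^*u+t\zeta$, where $\zeta$ is the tautological class and $t=\alpha\cdot F$ for a fibre $F$. Hence $\beta|_{\psi^{-1}(U)}=t\,(c_1(L_1)-c_1(L_2))$. Up to sign this is $t\,c_1(N_{T_2/Z})|_U$, the restriction of the global class $t\theta$ with $\theta=\frac1m h_2^*(\mathcal O(mT_2)|_{T_2})\in NS(T)_{\mathbb R}$; this is where $\mathbb Q$-factoriality of $Z$ is used. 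Only $\beta\mp t\theta$ vanishes on $\psi^{-1}(U)$, and it is to this class that $H^2_E(T)\to H^2(T)\to H^2(\psi^{-1}(U))$ applies. Equivalently, run your argument for $\alpha-t[T_1']$, whose restriction to $W$ is pulled back from $U$, and note separately that $(\iota_1^*-\iota_2^*)[T_1']=r_1^*c_1(\mathcal O_{Z'}(T_1')|_{T_1'})\in NS(T)$.

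Your opening criterion is also wrong as stated. Every real $(1,1)$-class on $T$ pairs to zero with $H^{2,0}(T)$ for type reasons, while $NS(T)_{\mathbb R}$ can be a proper subspace of $H^{1,1}(T,\mathbb R)$. The lemma is a rationality statement, not a statement about Hodge types.

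The reduction in your second paragraph can be rescued, but only through rationality. The map $\phi:=\iota_1^*-\iota_2^*$ is a morphism of rational Hodge structures. So if $\phi(H^{2,0}(Z'))=0$, then $\phi(H^2(Z',\mathbb Q))\subset H^{1,1}(T)\cap H^2(T,\mathbb Q)=NS(T)_{\mathbb Q}$ by Lefschetz, and $\phi(\alpha)\in NS(T)_{\mathbb R}$ by linearity. And $\phi$ does kill $H^{2,0}(Z')$. A holomorphic $2$-form on $W$ has zero image in $p^*\Omega^1_U\otimes\Omega^1_{W/U}$, which restricts to $\mathcal O(-2)^{\oplus 2}$ on each fibre. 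So the form equals $p^*\eta_U$, hence $\iota_1^*\eta=\iota_2^*\eta$ on the dense open set $\psi^{-1}(U)$, and therefore on all of $T$.

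Done this way, you would have a genuinely different and shorter proof than the paper's, using neither the disjointness of the $T_i$ nor $\mathbb Q$-factoriality. Your \emph{alternative route} paragraph does not do this, however. It never uses the rational structure, it rests on an unjustified decomposition of $[T_1']-[T_2']$, and it ends with curve contributions rather than the vanishing it set out to prove.
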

\begin{proof}
    First, we can choose a Zariski open subset $U\subset V$ such that $f':=f|_{Z_U} : Z_U:=f^{-1}(U) \to U $ is a morphism of smooth varieties such that each fiber is isomorphic to $\mathbb{P}^1$.
    Note that $T_{1,U}$ and $T_{2,U}$ are both disjoint sections over $U$. We can choose this open set such that $g$ and all surface modifications to obtain $T$ are isomorphic over $U$. Then clearly $Z_U\cong \mathbb{P}_U (E)$ where $E$ is a rank-2 vector bundle over $U$.
Define $\zeta:=c_1(\mathcal{O}_{\mathbb{P}_U(E)}(1))$.
Then we know that $H^2(Z_U, \mathbb{R})= f^*H^2(U,\mathbb{R})\bigoplus \mathbb{R}\zeta.
$.
In particular, $\alpha|_{Z_U}= f^*u+ t\zeta$.
Let $i_1$ and $i_2:U \to \mathbb{P}_U(E)$ be two disjoint sections that correspond to two line quotient of the vector bundle $E$. In particular, $i_1^*\zeta= c_1(L_1)$ and $i_2^*\zeta= c_1(L_2)$
Hence we have, 
\[ i_1^*\alpha-i_2^*\alpha= t(c_1(L_1) - c_1(L_2))\]
Now since two sections are disjoint, we can say $N_{T_2,U/Z_U}=L_1^*\otimes L_2$.
Observe that $mT_2$ is Cartier for some $m>0$ since $Y$ is $\mathbb{Q}$-factorial. Assume that $h_i: T\to T_i$ be the induced morphism. Define $\theta:= \frac{1}{m} h_2^*(\mathcal{O}(mT_2)|_{T_2})\in NS(T)_{\mathbb{R}}$. Over $T_U:=T\times_V U\cong U$, this class $\theta$ is same as $c_1(N_{T_2,U/Z_U}$. Therefore, if 
$\delta:= r_1^*(\alpha|_{T_1'})-r_2^*(\alpha|_{T_2'})$ and $\gamma:= \delta-t\theta$, then $\gamma|_{T_U}=0$.\\
Set $P:= T\setminus T_U$. Write $P$ as a union of curves,say $P=\cup P_i$. We can write cohomology sequence with support 
\[ H^2_P(T, \mathbb{R})\to H^2(T,\mathbb{R})\to H^2(T_U,\mathbb{R})\]
Clearly, $\gamma$ lies in the image of first map since it is in the kernel of second map. Now by Thom isomorphism theorem, image is generated by fundamental classes of $P_i$. In particular, $\gamma=a_i[P_i] \in NS(T)_{\mathbb{R}}$.
Thus we obtain $\delta\in NS(T)_{\mathbb{R}}$.
\end{proof}
The above lemma helps us to prove the following Corollary.
\begin{corollary}\label{cor:num-trivial}
    Let $Z$ be an irreducible component occurring in case \ref{lem: bad0}(2(b)). Let $Z\dashrightarrow Z'$ be the MMP and $u:Z'\to V$ be the Mori fiber space. Let $S_1$ and $S_2$ on $Z$ be the strict transform of two disjoint components $S_1'$ and  $S_2'$. Then the above lemma holds for $S_1$ and $S_2$ as well. We call the $B$-bimeromorphic map $S_1 \dashrightarrow S_2$ induced above is called a standard link map.
\end{corollary}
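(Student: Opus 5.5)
The plan is to reduce Corollary \ref{cor:num-trivial} to the preceding lemma. The point is that the MMP $p\colon Z\dashrightarrow Z'$ is $B$-bimeromorphic (Lemma \ref{N}) and is an isomorphism at the generic points of $S_1,S_2$, so everything is compared on common resolutions. By Lemma \ref{lem: bad0}(2(b)), $\rdgamma$ on $Z$ equals $S_1\sqcup S_2$ with both components bad, and $\lfloor\Gamma'\rfloor=S_1'\sqcup S_2'$, where $u|_{S_i'}\colon S_i'\to V$ are isomorphisms. Since $Z'$ is $\mathbb{Q}$-factorial K\"ahler and $u$ is a Mori fibre space contraction (hence projective with general fibre $\mathbb{P}^1$), the hypotheses of the previous lemma hold for $(Z',u,S_1',S_2')$.

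The key steps, in order, are as follows.

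First, take a log resolution $g\colon W\to Z$ that also resolves $p$, so that $q:=p\circ g\colon W\to Z'$ is a morphism. Arrange that the strict transforms $\hat S_i$ of $S_i$ (equivalently of $S_i'$) are smooth. Then $W$ is simultaneously a log resolution of $Z'$ as required in the lemma.

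Second, let $r_i\colon T\to \hat S_i$ be a common resolution of the induced $B$-bimeromorphic map $\hat S_1\dashrightarrow\hat S_2$. This map is the same whether it is viewed through $Z$ or through $Z'$, namely $(u|_{S_2'})^{-1}\circ u|_{S_1'}$ composed with the birational maps $\hat S_i\to S_i'$.

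Third, take any class $\alpha\in H^{1,1}(W,\mathbb{R})$, for instance the pullback of a class from $Z$. The lemma applied to $q$ gives
\[
r_1^*(\alpha|_{\hat S_1})-r_2^*(\alpha|_{\hat S_2})\in NS(T)_{\mathbb{R}}.
\]
This is exactly the assertion that the lemma holds for $S_1$ and $S_2$, since $\hat S_i$ are also the strict transforms of $S_i$ on a log resolution of $Z$. Moreover, any other choice of resolutions of $Z$ and of $T$ differs by further blow-ups of smooth surfaces and threefolds. Such blow-ups change $H^{1,1}$ only by classes of exceptional curves and divisors, which lie in $NS$, so the conclusion is independent of these choices.

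The main obstacle is the first step: making sure the lemma's proof survives the flips and divisorial contractions in $p$. Concretely, one has to check that the Zariski open $U\subset V$ can be chosen so that $q$ and all the surface modifications are isomorphisms over $u^{-1}(U)$. I would handle this by noting that the indeterminacy and exceptional loci of $p^{-1}$ and of $g$ are proper analytic subsets. Their images in $V$ therefore avoid a dense Zariski open set, provided none of them dominates $V$. If some exceptional divisor of $p$ does dominate $V$, then I would instead shrink $U$ only where the maps $S_i\dashrightarrow S_i'$ fail to be isomorphic. On the remaining part, the difference class is supported on curves, and the Thom-isomorphism argument from the lemma again places it in $NS(T)_{\mathbb{R}}$.
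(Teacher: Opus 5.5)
Your proposal is correct and follows essentially the same route as the paper: apply the preceding lemma on $Z'$ to a resolution that also resolves $Z\dashrightarrow Z'$, choose $U\subset V$ so that all the modifications are isomorphisms over it, and absorb the effect of any further surface modifications into $NS_{\mathbb{R}}$. Your fallback for an exceptional divisor of $p$ dominating $V$ is not needed: divisorial contractions send divisors to curves or points and flips only change curves, so the locus of $Z'$ where $p^{-1}$ is not an isomorphism has dimension at most $1$ and cannot dominate the surface $V$.
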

\begin{proof}
Observe that we can apply the previous lemma on $Z'$ as $u$ is projective and general fiber is isomorphic to $\mathbb{P}^1$. Moreover, we can choose the open set $U$ in $V$ such that the induced bimeromorphic map $Z\dashrightarrow Z'$ is also isomorphism over $U$. We can choose the resolution which resolves the induced bimeromorphic map $Z\dashrightarrow Z'$ and strict transforms of $S_1$ and $S_2$ are smooth. Any additional surface modifications changes the difference by pullback of divisor classes and exceptional divisor classes, all of which lie in $NS_{\mathbb{R}}$. Hence we obtain our required result.
\end{proof}
Now we are ready to prove our main theorem \ref{mainthm:num-trivial} of this section.
\begin{proof}
\textbf{Step 1:}
Recall that $(Z,\Gamma)=\sqcup_{i=1}^k (Z_i,\Gamma_i) \to (X',\Delta'+D')\to(X,\Delta)$ be $\mathbb{Q}$-factorial dlt modification. Since $K_X+\Delta$ is numerically trivial, $\kappa(Z_i, K_{Z_i}+\Gamma_i)=0$ for all $i$. 
\begin{enumerate}
    \item If $X$ is irreducible, then $(Z,\Gamma)$ is a $\mathbb{Q}$-factorial irreducible dlt pair with $\kappa(Z,K_Z+\Gamma)=0$. In this case, we may assume $\rdgamma$ is disjoint union of two bad components by Lemma \ref{lem: bad0} as for other two cases abundance automatically holds.
    \item If $X$ is not irreducible, then we have the following:
    Assume that $\rdgamma$ has at least one bad component. Without loss of generality assume that $\lfloor\Gamma_1\rfloor$ has a bad component, then by Lemma \ref{lem: bad0}(1) and (3), every component of $\lfloor \Gamma_1 \rfloor$ is bad. Since $X$ is connected, $X_1$ must intersect some $X_j$. Then there must be a component of $\lfloor\Gamma_j\rfloor$ which is bad and $B$-bimeromorphic (induced by Galois involution) to some bad component of $\lfloor\Gamma_1\rfloor$. Since $\kappa(Z_j, K_{Z_j}+\Gamma_j)=0$, hence each component of $\lfloor\Gamma_j\rfloor$ is bad. Thus we may assume that  every component of $\rdgamma$ is bad.
    
\end{enumerate}. Moreover, we can choose $m$ such that $\mathcal{O}_{Z_i}((m(K_{Z_i}+\Gamma_i))\cong \mathcal{O}_{Z_i}$. \\
\textbf{Step 2:}
Let $G$ be a multigraph whose vertices are normalization components and edge records identification between prime conductor divisors. More precisely, if $S\subset X_i'$ and $T\subset X_j'$ are identified by Galois involution, then we draw an edge $e_S^T: i \to j$
We draw a self loop on some vertex whenever two conductor divisors get identified on the same normalization component.\\
For any oriented edge $e_S^T: i \to j $ due to identification of bad components , define $a_{e_S^T}\in \mathbb{C}^*$ be the scalar acting on pluricanonical sections. 
Observe that $H^0(m(K_Z+\Gamma))\cong \bigoplus_{i=1}^k H^0(m(K_{Z_i}+\Gamma_i))= \bigoplus_{i=1}^k \mathbb{C}$.
pick any section $s=(c_1, c_2,...,c_k)$. We know the section descends to the base if it is preserved under the Galois involution. Then $s$ descends to base $X$ if and only if $c_i=a_e\cdot c_j$ for any oriented edge $e$. (Instead of $e_{S}^T$, we just use the notation $e$ for simplicity.)\\
The above system of linear equations has a non zero solution if and only if product of all edge scalars is $1$ on every closed chain on the multigraph $G$.\\
\textbf{Step 3:}
Choose any K\"ahler class $\omega $ on $X$. Its pullback on $X_i'$ or on any resolution  is nef and big class. Its restriction on any prime conductor surface remains nef and big. In particular, its restriction is actually a positive- square $(1,1)$- class. 
Take any closed chain in the multigraph $G$. We start with a surface $S$. The chain alternates the conductor identifications $\tau_j$ ( due to Galois involution) and standard link maps $\psi_j$.
\[ S=S_0 \xrightarrow{\tau_1} S_1 \overset{\psi_1}\dashrightarrow S_2 \xrightarrow{\tau_2} \cdots \overset{\psi_{2g}} \dashrightarrow S_{2g}=S\]
Let $g: S\dashrightarrow S$ be the composition.
Observe that restriction of the pullback of the K\"ahler class is preserved under conductor identifications $\tau_i$. Choose a common resolution which resolves the above maps. From Corollary \ref{cor:num-trivial}, the restrictions across $\psi_j$ agree modulo $NS_{\mathbb{R}}$. Any further modifications agree modulo pullback of divisor classes and exceptional divisor classes 
, all of which lie in $NS_{\mathbb{R}}$. Thus we obtain a common smooth model $\bar{S}$ with a class $\alpha$ on $\bar{S}$ which satisfies $\alpha^2>0$ and $g^*\alpha-\alpha \in NS_{\mathbb{R}}(\bar{S})$.
The product of all scalars around this closed chain is precisely the scalar $\lambda$ that acts by a scaler multiplication on pluricanonical sections. By Lemma \ref{lem:preservation-kahler}, $\lambda$ has finite order, say $N$. We obtain such $N$ for every closed chain. Then we replace $m$ by $mM$ where $M$ is the least common multiple of all such $N$ and obtain the system of linear equations has a nonzero solution. Since $\kappa(Z,K_Z+\Gamma)=0$, only one non zero section is enough to generate the line bundle.
This section descends to a section on $X$ which shows that $K_X+\Delta$ is semiample.
\end{proof}
Now we prove our main theorem.
\begin{proof}[Proof of the theorem \ref{thm:mainimp}]\label{proof:mainthm}
    Let $(X,\Delta)$ be a compact K\"ahler slc threefold. It is enough to prove the result for each connected component. Thus we may assume that $X$ is connected. If $K_X+\Delta$ is not numerically trivial, we are done by  Theorem \ref{thm:abun-non-numerical}. If $K_X+\Delta$ is numerically trivial, then we are done by Theorem \ref{mainthm:num-trivial}.
\end{proof}

\section{Counterexample}\label{sec:slc-abundance}
In this section we prove Theorem \ref{counterexample}.\\

Our goal is to construct an explicit counterexample to the failure of abundance  when $(X,\Delta)$ is an irreducible slc pair such that $\kappa(K_{X'}+\Delta'+D')=0$ where $K_{X'}+\Delta'+D'\sim_{\mathbb{Q}}\mu^*(K_X+\Delta)$ and $\mu:X'\to X$ is the normalization morphism. To achieve this, we follow the idea outlined below.\\
We know that a section $s\in H^0(X',m(K_{X'}+\Delta'+D'))\cong \mathbb{C}$ descends to a section inside $H^0(X,m(K_X+\Delta))$ iff $s$ is invariant under the Galois involution associated with the conductor divisor in $X'$. Since the pluricanonical representation is not always finite for compact K\"ahler surface (see Remark \ref{K3}), in particular, there exists an analytic isomorphism $\phi:S \to S$ whose image under representation is infinite. Suppose that the conductor divisor contains such a surface $S$, and that the induced Galois involution is given by this analytic isomorphism $\phi$. Note that, in this case $0\neq s$ is not invariant under $\phi$. Therefore $s$ cannot descend to a section in $H^0(X,m(K_X+\Delta))$. Building on this idea, we start with a normal K\"ahler threefold $X'$. We consider the conductor divisor and define the Galois involution so that the surface $S$ appears in the conductor divisor and the automorphism $\phi$ induces the Galois involution. Then, by using \cite[Corollary 5.33]{Kol13} we construct our desired $X$ and a morphism  $\mu:X' \to X$ such that $\mu$ is the normalization. The detailed construction is explained below.

\begin{proof}[Proof of Theorem \ref{counterexample}]\label{proof:counterexample}
    
Let $S$ be a non-algebraic $K3$ surface such that the pluricanonical representation is infinite (see Remark \ref{K3}), and set
\[
X' := S \times \mathbb{P}^1.
\]
Let $p_1:X'\to S$ and $p_2:X'\to \mathbb{P}^1$ denote the projections.
We define divisors $D_1:=S\times \{0\}, D_2:=S\times \{\infty\}$ and $D':=D_1+D_2$.

Observe that
\[
K_{X'}\sim p_1^*K_S + p_2^*\mathcal{O}_{\mathbb{P}^1}(-2).
\]
Since $K_S \sim 0$ and $D'\sim p_2^*\mathcal{O}_{\mathbb{P}^1}(2)$, it follows that
\[
K_{X'} +D' \sim 0.
\]
In particular, $K_{X'} +D'$ is nef and satisfies
\[
\kappa(X',K_{X'} +D') = 0.
\]

Now let $\sigma$ be an automorphism of $S$ such that its action on $H^0(K_S)$ is multiplication by a scalar $\lambda\in\mathbb C^*$ which is not a root of unity; we note that such an automorphism exists as explained in Remark \ref{K3}. Define an  isomorphism
\[
\phi : S \times \{0\} \longrightarrow S \times \{\infty\}
\]
by
\[
\phi(x,0) = (\sigma(x), \infty).
\]
This induces an involution
\[
\tau : D' \to D'.
\]
where $\tau|_{S \times \{0\}}=\phi$ and $\tau|_{S\times \{\infty\}}=\phi^{-1}$.
The gluing is described by an equivalence relation $R(\tau) \subset X'\times X'$ defined by
\[
R(\tau) = \Delta_{  X'} \cup \Gamma_{\phi} \cup \Gamma_{\phi^{-1}},
\]
where $\Delta_{ X'}$ denotes the diagonal and $\Gamma_{\phi}$, $\Gamma_{\phi^{-1}}$ are the graphs of $\phi$ and $\phi^{-1}$, respectively. Clearly, $R(\tau)$ is a proper closed analytic subset of $X' \times X'$.

In this situation, the following properties hold:
\begin{enumerate}
    \item $(X', D')$ is a compact K\"ahler dlt pair of dimension $3$; in particular, it is lc.
    \item The involution $\tau$ exchanges the two components $S \times \{0\}$ and $S \times \{\infty\}$, and it preserves lc centers.
    \item Each equivalence class with respect to $R(\tau)$ has cardinality at most two.
\end{enumerate}

Therefore,  by \cite[Theorem 7.1]{DG96}, and Theorem \ref{thm:appendix}, the quotient
\[
X := X'/ R(\tau)
\]
exists as $R(\tau)$ is a finite proper equivalence relation. Moreover, by a similar argument as in \cite[Corollary 5.33]{Kol13}, we obtain a morphism $\mu:X'\to X$ such that $K_{X'}+D'=\mu^*K_X$ and $\mu$ is the normalization of $X$.
Since $K_{X'} + D' \sim 0$, it follows that
\[
\mathcal{O}_{X'}(m(K_{X'}+D')) \cong \mathcal{O}_{X'}
\]
for every integer $m > 0$. In particular,
\[
H^0\!\left(X',\, m(K_{X'}+D')\right) \cong \mathbb{C}.
\]

Recall that $D_1$ and $D_2$ are the two irreducible components of $D'$. The gluing is induced by the automorphism $\sigma : S \to S$, which acts on $H^0(K_S)$ by multiplication by a scalar $\lambda$ which is not a root of unity. Consequently, the induced map
\[
\phi^* : H^0(K_{D_2}) \longrightarrow H^0(K_{D_1})
\]
acts by multiplication by $\lambda$.

A section $s \in H^0\!\left(X',\, d(K_{X'}+D')\right)$ descends to a section of $H^0(X, dK_X)$ if and only if its restriction to the conductor divisor $D'$ is invariant under the involution $\tau$, that is
\begin{equation}\label{eqn:invarience}
    \tau^*(s|_{D'}) = s|_{D'}.
\end{equation}

Since $H^0\!\left(X',\, d(K_{X'}+D')\right)$ is one-dimensional for every positive integer $d>0$, every nonzero section restricts to a nonzero constant on each component of $D'$. The invariance condition in \eqref{eqn:invarience} therefore forces that $\lambda^d = 1$. However, this is a contradiction, as $\lambda$ has infinite order according to our construction. Hence, no nonzero section of $H^0\!\left(X',\, d(K_{X'}+D')\right)$ descends to $X$. In particular,
\[
H^0(X, dK_X) = 0 \quad \text{for all } d> 0.
\]
Since $d>0$ is an arbitrary positive integer, this shows that $K_X$ is not semiample, in other words semi-log canonical abundance fails for $(X, 0)$.\\

By Remark \ref{rmk:jiang}, $X$ is not K\"ahler. However, since its normalization  $X'$ is a compact K\"ahler manifold, it follows that $X$ is Normalized-K\"ahler and hence, it belongs to the Fujiki class.\\
\end{proof}

\begin{remark}\label{rmk:jiang}
 Assume that $X$ is K\"ahler. Choose a K\"ahler class $\omega$ on $X$. $\omega'=\mu^*\omega$ is also a K\"ahler class on $X'$ as $\mu$ is finite.  Since $\omega'$ comes from the base $X$, it's restriction on $S$ is invariant under the gluing automorphism $\sigma$. In particular, $\sigma^*(\omega'|_S)=\omega'|_S$. By \cite[Proposition 1.1]{GM17}, $\sigma$ has finite order. In particular, $\lambda$ is a root of unity. This is a contradiction.
\end{remark}

\begin{remark}\label{rmk:slc surface}
Note that the abundance holds for a compact K\"ahler variety of dimension $2$ with slc singularities. To see this, we start with a compact K\"ahler pair $(X,\Delta)$ of dimension $2$ having slc singularities such that $K_X+\Delta$ is nef. Let $(Z,\Gamma)$ be its dlt modification. Since the pluricanonical representation is finite for curves with lc singularities (see Theorem \ref{C}), the condition $(2)$ of Theorem \ref{imp} is satisfied by Lemma \ref{lem:admissible}. Hence, Theorem \ref{imp} applies to the pair $(Z,\Gamma)$. Therefore, it follows that $K_X+\Delta$ is semiample by Lemma \ref{end}.
\end{remark}

\begin{remark}\label{rmk:slc high}
 From Theorem \ref{counterexample}, we observe that counterexamples to slc abundance for varieties in Fujiki class can be explicitly constructed in every dimension $n\geq 3$. We start with an slc threefold $X$ such that $K_X$ is nef but not semiample as in Theorem \ref{counterexample}. Define $Z:=X \times E$, where $E$ is an elliptic curve. We now verify that $Z$ has slc singularities. Note that for any point $(x, e)\in Z$, $\mathcal{O}_{Z,(x,e)}\cong \mathcal{O}_{X,x}\otimes \mathcal{O}_{E,e}$. Since $\mathcal{O}_{X,x}$ is $S_2$ and $\mathcal{O}_{E,e}$ is regular,  $\mathcal{O}_{Z,(x,e)}$ is also $S_2$ by \cite[Theorem 1.6 (b)]{TY03}.\\
Now we will show that $Z$ is normal crossing in co dimension $1$. Recall that the codimension $1$ points of a variety with slc singularities are either regular or have nodal singularities. In case of nodal singularities the complete local ring has the structure: $\widehat{\mathcal{O}_{X,x}}\cong\mathbb{C}[[u,v]]/(uv)$. Since $E$ is smooth, the complete local ring $\widehat{O}_{E,e}\cong \mathbb{C}[[t]]$ at every point $e\in E$. 
Let $(x,e)\in Z$ be a general point of co dimension $1$ subvariety $V\subset Z$. Then we have the following two possibilities:
\begin{enumerate}
    \item If $X$ is smooth in co dimension $1$, then $\widehat{\mathcal{O}_{X,x}}\cong \mathbb{C}[[u]]$, and hence $\widehat{\mathcal{O}_{Z,(x,e)}}\cong \mathbb{C}[[u,t]]$.
    \item If $X$ has nodal singularity in co dimension $1$, then $\widehat{\mathcal{O}_{X,x}}\cong\mathbb{C}[[u,v]]/(uv)$, and hence $\widehat{\mathcal{O}_{Z,(x,e)}}\cong \mathbb{C}[[u,v,t]]/(uv)$.
\end{enumerate} 
Thus $Z$ has slc singularities. Let $p:Z \to X$ be the projection. Note that $K_Z=p^*K_X$, as $K_E\sim 0$, and hence $K_Z$ is nef. We also have $H^0(Z, mK_Z)\cong H^0(X, mK_X)\otimes_{\mathbb{C}} H^0(E, mK_E)\cong H^0(X, mK_X)=0$ for all $m>0$ sufficiently large and divisible. In particular, $K_Z$ is not semiample.\\  
\end{remark}
\appendix
\section{Existence of Pushout in Complex Analytic Category}
The following theorem is an analytic version of Ferrand's existence of pushout diagram in the complex analytic category, see \cite[Proposition 37.67.3]{SP} and \cite[Theorem 7.1]{DF03}. For the convenience of the readers and to avoid any unpleasant surprises having to do with analytic varieties, we include a  complete proof below.
\begin{theorem}\label{thm:appendix}
    Let $X$,$Y$, $Z$ be a complex analytic space, $i:Y \hookrightarrow X$  a closed embedding, and $f:Y \to Z$ a finite morphism. Then the pushout diagram exists in the complex analytic category, i.e., there exist a compact complex analytic space $P$ and morphisms $\alpha:X \to P$ and $\beta:Z \to P$ such that the following diagram commutes.
   
\[\begin{tikzcd}
	Y & X \\
	Z & P
	\arrow["i", hook, from=1-1, to=1-2]
	\arrow["f"', from=1-1, to=2-1]
	\arrow["\alpha", from=1-2, to=2-2]
	\arrow["\beta"', from=2-1, to=2-2]
\end{tikzcd}\]
and satisfies the universal property below: if there exists a compact complex analytic space $P'$ together with morphisms $\alpha':X \to P'$ and $\beta':Z \to P'$ such that the above diagram commutes, then there exists a unique morphism $\theta:P \to P'$ satisfying the following commutative diagram :
\[\begin{tikzcd}
Y & X & \\
Z & P \\
&& {P'}
\arrow["i", hook, from=1-1, to=1-2]
\arrow["f"', from=1-1, to=2-1]
\arrow["\alpha", from=1-2, to=2-2]
\arrow["{\alpha'}", from=1-2, to=3-3]
\arrow["\beta"', from=2-1, to=2-2]
\arrow["{\beta'}"', from=2-1, to=3-3]
\arrow["{ \exists! \theta}"{description}, dashed, from=2-2, to=3-3]
\end{tikzcd}\]

\end{theorem}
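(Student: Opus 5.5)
We construct $P$ as a ringed space and then check that it is a complex analytic space, following Ferrand's algebraic construction. Topologically, set $|P| := |X| \sqcup_{|Y|} |Z|$, the quotient of $|X|\sqcup|Z|$ by $i(y)\sim f(y)$. Let $\alpha:|X|\to|P|$ and $\beta:|Z|\to|P|$ be the induced continuous maps. Define the structure sheaf as the fibre product of sheaves of rings
\[
\mathcal{O}_P := \alpha_*\mathcal{O}_X \times_{(\alpha\circ i)_*\mathcal{O}_Y} \beta_*\mathcal{O}_Z .
\]
Here the map from the second factor is $\beta_*(f^\#)$, and we use $\alpha\circ i=\beta\circ f$. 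Because $f$ is finite, hence proper with finite fibres, and $i$ is a closed embedding, $|P|$ is Hausdorff and locally compact. Moreover $\beta$ is a closed embedding of topological spaces and $\alpha$ is finite, i.e.\ proper with finite fibres. The universal property is then formal in the category of locally ringed spaces. Given $\alpha',\beta'$ into $P'$, the underlying map $\theta$ is forced on points. On sheaves, $\theta^\#$ is induced by $(\alpha'^\#,\beta'^\#)$, which lands in the fibre product because the square commutes. Uniqueness holds because $\mathcal{O}_P$ injects into $\alpha_*\mathcal{O}_X\times\beta_*\mathcal{O}_Z$. Compactness of $P$ holds when $X$ and $Z$ are compact, since $|X|\sqcup|Z|\to|P|$ is surjective.

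The main step is to show that $(|P|,\mathcal{O}_P)$ is locally isomorphic to a model space. The question is local on $P$. Over $P\setminus\beta(Z)\cong X\setminus i(Y)$ there is nothing to do. Near a point $p=\beta(z)$, we note that $f^{-1}(z)$ is finite. We choose a small Stein neighbourhood $W$ of $z$, a neighbourhood $U\subset X$ of $i(f^{-1}(W))$ that is a disjoint union of Stein open sets around the points of $f^{-1}(z)$, and shrink so that $U\cap Y=f^{-1}(W)$. We then reduce to the Stein case. Passing to global sections gives Stein algebras $A=\mathcal{O}(U)$, $B=\mathcal{O}(U\cap Y)$ and $C=\mathcal{O}(W)$, with $A\to B$ surjective and $C\to B$ finite. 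We form $D=A\times_B C$. The goal is to show that $D$ is the algebra of a Stein space $V$ whose underlying space is $U\sqcup_{U\cap Y}W$. We will do this by showing that $D\to A$ is finite and that $\operatorname{Spec}$-type analytic spectra behave well. Concretely, we choose generators: $C$-module generators $b_1,\dots,b_r$ of $B$ (available by finiteness), lifts $a_j\in A$, and functions $c_1,\dots,c_n\in C$ giving an embedding $W\hookrightarrow\mathbb{C}^n$. We choose further elements of $\ker(A\to B)\subset D$ so that, together with $(a_j,\cdot)$-type elements adjusted into $D$, they embed $U$ finitely. This produces a holomorphic map $\Phi:U\sqcup W\to\mathbb{C}^N$ that is finite and factors through $|V|$ injectively. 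Then $V$ can be realized as the image $\Phi(U\sqcup W)$, which is an analytic set by Remmert's proper mapping theorem, endowed with the structure sheaf $\ker\bigl(\Phi_*\mathcal{O}_U\times\Phi_*\mathcal{O}_W\rightrightarrows\Phi_*\mathcal{O}_{U\cap Y}\bigr)$.

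The key technical point is that this kernel is a coherent $\mathcal{O}_{\mathbb{C}^N}$-algebra and that it coincides with $\mathcal{O}_P$ locally. Coherence follows because the sheaves $\Phi_*\mathcal{O}_U$, $\Phi_*\mathcal{O}_W$ and $\Phi_*\mathcal{O}_{U\cap Y}$ are coherent by Grauert's finite mapping theorem, and kernels of maps between coherent sheaves are coherent. A coherent sheaf of $\mathcal{O}_{\mathbb{C}^N}$-algebras that is finite and whose reduced support has the right points defines a complex space, namely its analytic spectrum $\operatorname{Specan}$. We then check that its local rings are local and that its points are exactly $|V|$. This uses the surjectivity of $A\to B$: the maximal ideals of $D$ correspond to points of $U\setminus Y$ and points of $W$. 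Finally, we glue the local models, using that the construction is canonical, i.e.\ it is the fibre product sheaf, and therefore compatible on overlaps. I expect the main obstacle to be this local representability step, namely showing that the fibre-product structure sheaf is locally the structure sheaf of a complex space. The cleanest route I would try first is to realize it as $\operatorname{Specan}$ of a coherent finite algebra over a local embedding of $U$. This would replace the Noetherian/Artin–Tate arguments in Ferrand's proof by Grauert coherence.
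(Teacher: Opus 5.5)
Your proposal is correct. Its skeleton is the paper's: quotient topology, fibre-product structure sheaf, and reduction near a glued point to the algebra $D=A\times_B C$. The difference is in how the local model is obtained.

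The paper fixes a single point $x\in f^{-1}(z')$ with a Stein neighbourhood $U_X$ and shrinks $U_Z$ so that $f^{-1}(U_Z)\subset U_X$. It then simply asserts that $\Spec_{an}\bigl(\mathcal{O}_X(U_X)\times_{\mathcal{O}_Y(f^{-1}(U_Z))}\mathcal{O}_Z(U_Z)\bigr)$ is a closed analytic subset of $\Omega\times\Omega'$. You instead take $U$ around all of $f^{-1}(z)$ with $U\cap Y=f^{-1}(W)$. You then build a finite map from functions that genuinely lie in $D$, namely lifts $\tilde c_k$ of $f^\# c_k$ and elements of $\ker(A\to B)$. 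Finally you identify the model with $\operatorname{Specan}$ of a coherent kernel sheaf via Grauert's finite mapping theorem.

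Your route is longer but actually proves representability, and it repairs points the paper glosses over:
\begin{itemize}
\item The condition $f^{-1}(U_Z)\subset U_X$ cannot hold for a neighbourhood of one point once the fibre has two points. This is exactly the $2{:}1$ conductor gluing in the proof of Theorem \ref{counterexample}.
\item $\pi_X(U_X)\cup\pi_Z(U_Z)$ is open in $P$ only under your saturation condition.
\item The coordinates of $\Omega\times\Omega'$ are not sections of $\mathcal{O}_P$.
\item Your fibre product over $\mathcal{O}_Y$, rather than the pointwise condition $s(y)=t(f(y))$, is the right sheaf when $Y$ is non-reduced.
\item The universal property becomes formal rather than ``by gluing''.
\item You are also right that compactness of $P$ needs $X$ and $Z$ compact, which the statement omits.
\end{itemize}

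When you write it up, tighten four things:
\begin{itemize}
\item Claim finiteness of $\Phi$ only over a neighbourhood $\Omega$ of $\Phi(p)$. This holds because each point of $\{z\}\cup f^{-1}(z)$ is isolated in its $\Phi$-fibre: elements of $\ker(A\to B)$ cut out $Y$, and $\tilde c_k|_Y=c_k\circ f$.
\item Keep $U$ Stein while forcing $U\cap Y=f^{-1}(W)$, for example by taking $U=\{|\tilde c_k|<\epsilon\}$ inside a Stein neighbourhood of $f^{-1}(z)$.
\item Replace ``maximal ideals of $D$'' by stalks or characters, since a Stein algebra has maximal ideals that are not points.
\item Check explicitly that the stalks $\bigl(\prod_{x}\mathcal{O}_{X,x}\bigr)\times_{\prod_x\mathcal{O}_{Y,x}}\mathcal{O}_{Z,z}$ are local. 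This is what makes $\operatorname{Specan}$ of the pushed-forward sheaf coincide with $(|V|,\mathcal{O}_P|_V)$.
\end{itemize}
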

\begin{proof}
    We may assume that $Y \subset X$ is a closed analytic subset.
Since $f$ is finite, $f(Y)$ is a closed analytic subset of $Z$. Let $\sim$
be an equivalence relation generated by $y \sim f(y)$ for every $y\in Y$. We define the quotient space $P:=X \sqcup Z/\sim$. Clearly $P$ is Hausdorff.

We define the structure sheaf $\mathcal{O}_P$ on $P$ as follows:
Let $\pi_X:X \to P$ and $\pi_Z:Z \to P$ be the projection maps.
Then for every open set $U\subset P$ define $\mathcal{O}_P(U):=\{(s,t) \in \mathcal{O}_X(\pi_X^{-1}(U))\times \mathcal{O}_{Z}(\pi_Z^{-1}(U)) |    s(y)=t \circ f(y) \mbox{ for all }y \in Y\}$. Clearly $(P,\mathcal{O}_P)$ is a locally ringed space. We need to show that $(P,\mathcal{O}_P)$ is a complex analytic space.\\
Note that $P=(X\setminus Y) \sqcup (Z\setminus f(Y)) \sqcup \{\mbox{glued locus\}}$.
Observe that $(X\setminus Y)$ and $Z\setminus f(Y)$ are open subsets of $X$ and $Z$ respectively, hence it follows that for any point $p\in (X\setminus Y) \sqcup (Z\setminus f(Y))$, its neighborhood is modeled by the local models in \(X\) or \(Z\). Therefore, we only need to verify existence of a local model for the points inside the glued locus.
Let $x\in Y \subset X$, $z'=f(x) \in Z$ and $p \in P$ the corresponding glued point.
For $x$, we can choose a Stein neighborhood $U_X \cong \{f_1(z)=0,f_2(z)=0,\ldots, f_r(z)=0\}\subset \Omega \subset \mathbb{C}^n$ for some open subset $\Omega$ of $\mathbb C^n$. Similarly, for $z'$ we can choose a Stein neighborhood $U_Z\cong \{g_1(w)=0,g_2(w)=0,\ldots, g_s(w)=0\}\subset \Omega' \subset \mathbb{C}^m$.
Shrinking $U_Z$ if necessary, we may assume that $f^{-1}(U_Z)\subset U_X$. Then $U_P=\pi_X(U_X) \cup \pi_Z(U_Z)$ is an open neighborhood of $p$ in $P$.
Since $f:Y \to Z$ is a finite map, $\mathcal{O}_{Y}(f^{-1}(U_Z))$ is a finite $\mathcal{O}_{Z}(U_Z)$-module.
We know that for any Stein space $W$, $W\cong \Spec_{an}(\mathcal{O}_W(W))$, where $\Spec_{an}(\mathcal{O}_W(W))$ denotes the analytic spectrum of the  $\mathbb{C}$-algebra $\mathcal{O}_W(W)$. Thus, we obtain the following:

\[\begin{tikzcd}
{\Spec_{an}(\mathcal{O}_Y(f^{-1}(U_Z))} && {\Spec_{an}(\mathcal{O}_X(U_X))} \\
\\
{\Spec_{an}(\mathcal{O}_Z(U_Z))} && {\Spec_{an}(\mathcal{O}_X(U_X)\times_{\mathcal{O}_Y(f^{-1}(U_Z))} \mathcal{O}_Z(U_Z))}
\arrow["i", hook, from=1-1, to=1-3]
\arrow["f"', from=1-1, to=3-1]
\arrow["\alpha", from=1-3, to=3-3]
\arrow["\beta"', from=3-1, to=3-3]
\end{tikzcd}\] 
We note that the above commutative diagram also appears in Ferrand's construction where Stein spaces are replaced by affine schemes (see \cite[Tag 37.67]{SP} and \cite{DF03}).
Thus, it follows that 
$U_P\cong \Spec_{an}(\mathcal{O}_P(U_P))\cong \Spec_{an}(\mathcal{O}_X(U_X))\times_{\mathcal{O}_Y(f^{-1}(U_Z))} \mathcal{O}_Z(U_Z))$ is a local model defined by a closed analytic subset inside the open set $\Omega \times \Omega' \subset \mathbb{C}^{m+n}$ for the neighborhood $U_P$ of the glued point $p$. Therefore \((P, \mathcal{O}_P)\) is a complex analytic space. The universal property is immediate from the local construction, since it holds on Stein charts and hence globally by gluing, in the same way as in Ferrand's construction.
\end{proof}

\bibliographystyle{hep}
\bibliography{references} 
\end{document}